\numberwithin{equation}{section}
\renewcommand{\ge}{\geqslant}
\renewcommand{\le}{\leqslant}
\renewcommand{\geq}{\geqslant}
\renewcommand{\leq}{\leqslant}
\newcommand{\wx}{\widehat{w}}
\newcommand{\wy}{{w}}
\DeclareMathOperator{\Pol}{{Pol}}
\DeclareMathOperator{\Id}{{Id}}
\def\testb#1{\testb@i#1,,\@nil}%
\def\testb@i#1,#2,#3\@nil{%
  \draw[->, thick] (O) --++(#1);
  \ifx\relax#2\relax\else\testb@i#2,#3\@nil\fi}
\newcommand{\tpic}[1]{
%\begin{minipage}{0.8cm}
%\mbox{}\\
\includegraphics[height=0.8cm]{#1-eps-converted-to}%\hskip -6mm
%\end{minipage}
}
\def\testbb#1{\testbb@i#1,,\@nil}%
\def\testbb@i#1,#2,#3\@nil{%
  \draw[->] (O) --++(#1);%  \draw (O) --++(#1);
  \ifx\relax#2\relax\else\testbb@i#2,#3\@nil\fi}
\def\section{\@startsection{section}{1}%
 \z@{.7\linespacing\@plus\linespacing}{.5\linespacing}%
 {\normalfont\bfseries\scshape\centering}}
\def\subsection{\@startsection{subsection}{2}%
  \z@{.5\linespacing\@plus\linespacing}{.5\linespacing}%
  {\normalfont\bfseries\scshape}}
\def\subsubsection{\@startsection{subsubsection}{3}%
 \z@{.5\linespacing\@plus\linespacing}{-.5em}
 {\normalfont\bfseries}}
\newtheorem{Theorem}{Theorem}[section]
\newtheorem{Lemma}[Theorem]{Lemma}
\newtheorem{Proposition}[Theorem]{Proposition}
\newtheorem{Definition}[Theorem]{Definition}
\newtheorem{Corollary}[Theorem]{Corollary}
\def\qed{$\hfill{\vrule height 3pt width 5pt depth 2pt}$}
\def\qee{$\hfill{\Box}$}
\newfont{\bbold}{msbm10 scaled \magstep1}
\newfont{\bbolds}{msbm7 scaled \magstep1}
\newcommand{\ns}{\mathbb{N}}%{\mbox{\bbold N}}
\newcommand{\zs}{\mathbb{Z}}%{\mbox{\bbold Z}}
\newcommand{\qs}{\mathbb{Q}}%{\mbox{\bbold Q}}
\newcommand{\rs}{\mathbb{R}}%{\mbox{\bbold R}}
\newcommand{\cs}{\mathbb{C}}%{\mbox{\bbold C}}
\newcommand{\fps}{formal power series}
\newcommand{\bx}{\bar x}%{\overline x}
\newcommand{\bu}{\bar u}
\newcommand{\bv}{\bar v}
\newcommand{\by}{\bar y}%{\overline y}
\newcommand{\om}{\omega}
\newcommand{\GL}{\mathcal G_{\mathcal L}}
\newcommand{\cD}{\mathscr D}
\newcommand{\GK}{\mathbb{K}}
\newcommand{\cS}{\mathcal S}
\newcommand{\cL}{\mathcal L}
\newcommand{\cM}{\mathcal M}
\DeclareMathOperator{\id}{id}
\newcommand{\beq}{\begin{equation}}
\newcommand{\eeq}{\end{equation}}
\newcommand{\gf}{generating function}
\newcommand{\gfs}{generating functions}
\newcommand{\al}{\alpha}
\newcommand{\cG}{\mathcal{G}}
\newcommand{\cZ}{\mathcal{Z}}
\def\emm#1,{{\em #1}}
\newcommand{\vareps}{\varepsilon}
\DeclareMathOperator{\sign}{sign}
\author[O. Bernardi]{Olivier Bernardi}
\author[M. Bousquet-M\'elou]{Mireille Bousquet-M\'elou}
\author[K. Raschel]{Kilian Raschel}
\thanks{This project has received funding from the European Research Council (ERC) under the European Union's Horizon 2020 research and innovation programme under the Grant Agreement No 759702. OB was partially supported by NSF grants DMS-1400859 and DMS-1800681. KR was partially supported by the project MADACA from the R\'egion Centre-Val de Loire.}
\title[Counting  quadrant walks via Tutte's invariant method]
{Counting  quadrant walks via Tutte's invariant method\\\smallskip{\it\today}}
\address{OB: Brandeis University, Department of Mathematics, 415 South
  Street, Waltham, MA 02453, USA\\
   MBM: CNRS, LaBRI, Universit\'e de Bordeaux, 351 cours de la
   Lib\'eration,  33405 Talence Cedex, France\\
 KR: CNRS, Institut Denis Poisson, Universit\'e de Tours et Universit\'e d'Orl\'eans, Parc de Grandmont, 37200 Tours, France}
\keywords{Lattice walks, enumeration, differentially algebraic series, conformal mappings}
\begin{document}

\begin{abstract}
In the 1970s, William  Tutte developed a clever algebraic
approach, based on certain ``invariants'', to solve a functional
equation that arises in the enumeration of properly colored
%planar
 triangulations.  The enumeration of plane lattice walks
confined to the first quadrant is governed by similar equations, and
has led in the past 20 years to a rich collection of attractive results
dealing with the nature (algebraic, D-finite or not) of the associated
generating function, depending on the set of allowed steps, taken in
$\{-1, 0,1\}^2$.  

We first adapt Tutte's approach to prove (or reprove) the algebraicity
of all quadrant models known or conjectured to be
algebraic.
This includes Gessel's famous
model, and the first proof ever found for one  model with weighted
steps. To be applicable, the method requires the existence of two rational functions called   \emph{invariant} and  \emph{decoupling function} respectively. When they exist, algebraicity follows almost automatically.

Then, we move to a complex analytic viewpoint that has already proved very
powerful, leading in particular to integral expressions for the
generating function in the non-D-finite cases, as well as to proofs of
non-D-finiteness. We develop  in this
context a weaker notion of  invariant. Now all 
quadrant models have invariants, and for those that
have in addition a decoupling function, we obtain integral-free
expressions for 
the generating function, and a proof that this series is D-algebraic (that is, satisfies polynomial differential equations).
\end{abstract}

\maketitle

\begin{flushright} \emph{A tribute to William Tutte (1917-2002), on
    the occasion of his centenary}\end{flushright}

%%%%%%%%%%%%%%%%%%%%%%%%%%%%%%%%%%%%%%%%%%%%%%%%%%%%%%%%%%%%%%%%%
\section{Introduction}
\label{sec:intro}
%%%%%%%%%%%%%%%%%%%%%%%%%%%%%%%%%%%%%%%%%%%%%%%%%%%%%%%%%%%%%%%%%
We consider 2-dimensional lattice walks confined to the first quadrant $\ns^2$ of the plane, as illustrated  in Figure~\ref{fig:examples}. The enumeration of such  quadrant walks  has received a lot of attention in the past 20 years, and
given rise to many interesting methods and results.
Given a set of steps $\cS \subset \zs^2$, the main question is to determine the \gf
\begin{equation}\label{eq:qdef}
     Q(x,y;t) \equiv Q(x,y)=\sum_{i,j,n\geq 0} q(i,j;n) x^iy^jt^n,
\end{equation}
where $q(i,j;n)$ is the number of $n$-step quadrant walks from $(0,0)$
to $(i,j)$, taking their steps in $\cS$. This is one instance of a more general
question consisting in counting walks confined to a given cone. This is a natural and
versatile problem, rich of many applications in algebraic
combinatorics~\cite{mbm-ascending,chen-crossings,gessel-zeilberger,grabiner,kratt-skew}, queuing
theory~\cite{adan,cohen-boxma,flatto-hahn}, and of course in enumerative  combinatorics
via encodings of numerous discrete objects (e.g.\ permutations,
maps\dots) by lattice
walks~\cite{Bern07,bouvel-baxter,gessel-weinstein-wilf,kenyon-bipolar,li-schnyder}.

\begin{figure}[htb]
  \centering
  \includegraphics[height=3cm]{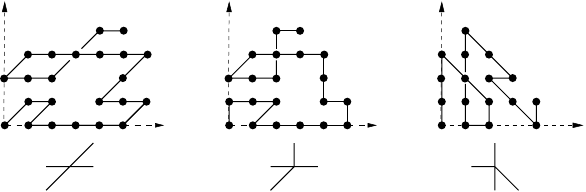}%\hskip -6mm

  \caption{Three models of walks in the quadrant. The \gf\  $Q(x,y;t)$
    is algebraic for the first one, but only D-algebraic for the
    second. For the third one, it is not even D-algebraic.}
  \label{fig:examples}
\end{figure}

\medskip
\noindent{\bf At the crossroads of several mathematical fields.}
Most of the recent progress on this topic deals with  quadrant walks \emm
 with small
steps, (that is, $\cS\subset \{-1,0,1\}^2$). Then  there are~$79$ inherently
different and relevant step sets (also called \emm models,) and a lot
is known on the associated \gfs\ $Q(x,y;t)$. One of the charms of
these results is that  their proofs involve an attractive variety of
 mathematical fields. Let us illustrate this by two results:
\begin{itemize}
\item A certain group $\cG$ of birational transformations associated
  with the model plays a crucial role in the nature of
  $Q(x,y;t)$. Indeed, this series is \emm D-finite, (that
is, satisfies three linear differential equations, one in $x$, one in
$y$, one in $t$, with polynomial coefficients in $x, y$ and~$t$) if
and only if $\cG$  is finite. This happens for 23 of the 79 models. The positive
side of this result (D-finite cases) mostly involves algebra
on formal power
series~\cite{bousquet-versailles,BMM-10,gessel-proba,gessel-zeilberger,mishna-jcta}. The
negative part relies on a detour via complex analysis and a
Riemann-Hilbert-Carleman boundary value problem~\cite{Ra-12,KR-12},
or, alternatively, on a combination of ingredients coming from probability theory and
from the arithmetic properties of G-functions~\cite{BoRaSa-14}. The
complex analytic approach also provides integral expressions for
$Q(x,y;t)$ in terms of Weierstrass' function. 

\item Among the 23 models with a D-finite \gf, exactly 4 are in fact
  \emm algebraic, 
(that is, $Q(x,y;t)$ satisfies a polynomial equation with polynomial coefficients in $x$,
$y$ and $t$). For the most mysterious of them, called
  Gessel's model  (Figure~\ref{fig:examples}, left), a simple conjecture appeared
  around 2000 for the numbers $q(0,0;n)$, but resisted many attempts
  during a decade.  A first proof was then found, based on subtle (and heavy)
  computer algebra~\cite{KaKoZe08}. The algebraicity was only discovered a bit
  later, using even heavier computer algebra~\cite{BoKa-10}. Since then, two
  other proofs have been given: one is based on complex analysis~\cite{BKR-13},
  and the other is, at last, elementary~\cite{mbm-gessel}. 
\end{itemize}

\medskip
\noindent{\bf Classifying solutions of functional equations.}
Beyond the solution of a whole range of combinatorial problems, the
enumeration of quadrant  walks is motivated by an intrinsic interest in the class of functional equations that govern
the series $Q(x,y;t)\equiv Q(x,y)$. These equations involve \emm divided differences, (or
\emm discrete derivatives,) \emm in two variables,. For instance, for  Kreweras' walks (steps
$\nearrow$, $\leftarrow$, $\downarrow$), there holds:
\beq\label{eq:Kreweras}
Q(x,y)=1+ t xy Q(x,y) + t\, \frac{Q(x,y)-Q(0,y)}x +  t \, \frac{Q(x,y)-Q(x,0)}{y}.
\eeq
This equation  is almost self-explanatory, each term corresponding
to one of the three allowed steps. For instance, the term
\[ 
 t\, \frac{Q(x,y)-Q(0,y)}x
\] 
counts walks ending with a West step, which can never be added at the
end of a walk ending on the $y$-axis. The variables $x$ and $y$ are
sometimes called  \emm catalytic,. Such equations (sometimes linear
as above, sometimes polynomial) occur in many enumeration problems,
because divided differences like
\[ 
 \frac{F(x)-F(0)}x \qquad \hbox{or} \qquad \frac{F(x)-F(1)}{x-1} 
\] 
have a natural combinatorial interpretation for any \gf\
$F(x)$. Examples can be found in the enumeration of lattice
paths~\cite{banderier-flajolet,BMM-10,bousquet-petkovsek-recurrences}, maps~\cite{brown-quadrangulations,brown-tutte-non-separable,chapuy-fang,tutte-chromatic-revisited}, permutations~\cite{mbm-ascending,bousquet-motifs,bouvel-baxter}\dots\ A complete
bibliography would include hundreds of references. 

Given a class of functional equations, a natural question is to
decide if (and where) their solutions fit in a classical hierarchy of power
series:
\beq\label{hierarchy}
  \hbox{ rational } \subset \hbox{ algebraic } \subset  \hbox{ D-finite }
        \subset \hbox{ D-algebraic},
\eeq
where we say that a series (say $Q(x,y;t)$ in our case) is \emm D-algebraic,
if it satisfies three polynomial differential equations (one in $x$,
one in $y$, one in $t$). {A} %An 
historical example is H\"older's proof
that the gamma function  is \emm hypertranscendental, (that is, not D-algebraic), based on the difference
equation $\Gamma(x+1)=x\Gamma(x)$. Later, differential Galois theory
was developed (by Picard, Vessiot, then Kolchin) to study algebraic
relations between D-finite functions~\cite{vdP-singer-Galois-diff}. This theory was then
adapted to $q$-equations, to difference equations~\cite{vdP-singer-difference},  and also extended
to D-algebraic functions~\cite{malgrange-non-linear}. Let us also cite~\cite{dreyfus-Mahler} for recent results
on the 
hypertranscendence of solutions of Mahler's equations. 

Returning to equations with divided differences, it is known that
those involving  only \emm one, catalytic variable $x$
(arising  for instance when counting walks in {a} half-plane) have
algebraic solutions, and this result is effective~\cite{mbm-jehanne}. 
Algebraicity  also follows from a deep theorem in Artin's approximation
theory~\cite{popescu,swan-popescu}. 
For quadrant equations like~\eqref{eq:Kreweras} (with
\emm two, catalytic variables $x$ and $y$), the classification   with respect to the first three steps of the
hierarchy~\eqref{hierarchy} is now completely understood. One outcome of
this paper deals with the final step: D-algebraicity.

\medskip
\noindent{\bf Contents of the paper.}
 We introduce \emm two new objects, related to quadrant
equations, called \emm invariants, and \emm decoupling
functions,. Both are rational functions in $x$, $y$ and $t$. Not all
models admit invariants or decoupling functions. We show that these
objects play a key role in the classification of quadrant walks (see Table~\ref{tab:nat} for a summary):
\begin{itemize}
\item First, we prove that  invariants exist if
  and only if the group of the model is
  finite (that is, if and only if $Q(x,y;t)$ is D-finite); this happens for 23 models.  In this
  case, decoupling functions 
  exist if and only if the so-called \emm orbit sum, vanishes
  (Section~\ref{sec:extensions}). This holds precisely for  the $4$ algebraic models
  (Figure~\ref{fig:alg_models}, left).

\begin{figure}[t!]
\begin{center}
\begin{tikzpicture}[scale=.45] % q6654
    \draw[->] (0,0) -- (0,-1);
    \draw[->] (0,0) -- (1,1);
    \draw[->] (0,0) -- (-1,0);
    \draw[-] (0,-1) -- (0,-1) node[below] {\phantom{$\scriptstyle 1$}};
  \end{tikzpicture}\hspace{2mm}
  \begin{tikzpicture}[scale=.45] % q6654
    \draw[->] (0,0) -- (0,1);
    \draw[->] (0,0) -- (-1,-1);
    \draw[->] (0,0) -- (1,0);
    \draw[-] (0,-1) -- (0,-1) node[below] {\phantom{$\scriptstyle 1$}};
  \end{tikzpicture}\hspace{2mm}
    \begin{tikzpicture}[scale=.45] % q6654
      \draw[->] (0,0) -- (0,-1);
    \draw[->] (0,0) -- (1,1);
    \draw[->] (0,0) -- (-1,0);
    \draw[->] (0,0) -- (0,1);
    \draw[->] (0,0) -- (-1,-1);
    \draw[->] (0,0) -- (1,0);
    \draw[-] (0,-1) -- (0,-1) node[below] {\phantom{$\scriptstyle 1$}};
  \end{tikzpicture}\hspace{2mm}
          \begin{tikzpicture}[scale=.45] % q6654
      \draw[->] (0,0) -- (-1,-1);
    \draw[->] (0,0) -- (1,1);
    \draw[->] (0,0) -- (-1,0);
    \draw[->] (0,0) -- (1,1);
    \draw[->] (0,0) -- (1,0);
    \draw[-] (0,-1) -- (0,-1) node[below] {\phantom{$\scriptstyle 1$}};
  \end{tikzpicture}\hspace{12mm}
\begin{tikzpicture}[scale=.45] % q6654
    \draw[->] (0,0) -- (-1,0) node[left] {$\scriptstyle 1$};
    \draw[->] (0,0) -- (-1,-1) node[left] {$\scriptstyle 1$};
    \draw[->] (0,0) -- (0,-1) node[below] {$\scriptstyle \lambda$};
    \draw[->] (0,0) -- (1,-1) node[right] {$\scriptstyle 1$};
    \draw[->] (0,0) -- (1,0) node[right] {$\scriptstyle 2$};
    \draw[->] (0,0) -- (1,1) node[right] {$\scriptstyle 1$};
  \end{tikzpicture}\hspace{2mm}
\begin{tikzpicture}[scale=.45] % q6654
    \draw[->] (0,0) -- (-1,0) node[left] {$\scriptstyle 1$};
    \draw[->] (0,0) -- (-1,1) node[left] {$\scriptstyle 1$};
    \draw[->] (0,0) -- (0,1) node[above] {$\scriptstyle 2$};
    \draw[->] (0,0) -- (1,1) node[right] {$\scriptstyle 1$};
    \draw[->] (0,0) -- (1,0) node[right] {$\scriptstyle 2$};
    \draw[->] (0,0) -- (1,-1) node[right] {$\scriptstyle 1$};
    \draw[->] (0,0) -- (0,-1) node[below] {$\scriptstyle 1$};
  \end{tikzpicture}\hspace{2mm}
  \begin{tikzpicture}[scale=.45] % q6654
    \draw[->] (0,0) -- (-1,0) node[left] {$\scriptstyle 2$};
    \draw[->] (0,0) -- (-1,1) node[left] {$\scriptstyle 1$};
    \draw[->] (0,0) -- (0,1) node[above] {$\scriptstyle 1$};
    \draw[->] (0,0) -- (-1,-1) node[left] {$\scriptstyle 1$};
    \draw[->] (0,0) -- (1,0) node[right] {$\scriptstyle 1$};
    \draw[->] (0,0) -- (1,-1) node[right] {$\scriptstyle 1$};
    \draw[->] (0,0) -- (0,-1) node[below] {$\scriptstyle 2$};
  \end{tikzpicture}\hspace{2mm}
  \begin{tikzpicture}[scale=.45] % q6654
    \draw[->] (0,0) -- (-1,0) node[left] {$\scriptstyle 2$};
    \draw[->] (0,0) -- (-1,1) node[left] {$\scriptstyle 1$};
    \draw[->] (0,0) -- (0,1) node[above] {$\scriptstyle 2$};
    \draw[->] (0,0) -- (1,1) node[right] {$\scriptstyle 1$};
    \draw[->] (0,0) -- (1,0) node[right] {$\scriptstyle 1$};
    \draw[->] (0,0) -- (0,-1) node[below] {$\scriptstyle 1$};
        \draw[->] (0,0) -- (-1,-1) node[left] {$\scriptstyle 1$};
  \end{tikzpicture} 
  \end{center}
 \vskip -5mm  
\caption{\emph{Left:} The four algebraic quadrant models (Kreweras, reversed Kreweras, double
    Kreweras, Gessel).
    \emph{Right:} Four algebraic models with weights~\cite{KaYa-15}.}
  \label{fig:alg_models}
\end{figure}

\item In those $4$ cases,  we combine invariants and decoupling
  functions to give  \emm short
  and uniform proofs, of algebraicity.
This includes the shortest proof ever found for
  Gessel's famously difficult model, and extends to models with weighted steps~\cite{KaYa-15},
  for which algebraicity was sometimes still
  conjectural (Sections~\ref{sec:gessel} and~\ref{sec:extensions}). 
\item The 56 models with an infinite group have no  invariant. But we
  define  for them  a certain (complex analytic) \emm weak
  invariant,, which is explicit. Then for the $9$ infinite group models that admit decoupling functions, we
  give a new, integral free expression for %of
  $Q(x,y;t)$   (Section~\ref{sec:analysis}). This  expression implies
  that $Q(x,y;t)$ is D-algebraic in $x$, $y$ and~$t$. This is
    the first time that D-algebraicity is proved for some non-D-finite quadrant models.
  We compute explicit differential equations
  in $y$ for $Q(0,y;t)$ (Section~\ref{sec:DA}).
  \item  The existence of invariants only depends on the
step set~$\cS$, but the existence of decoupling functions is also
sensitive to the starting point: in Section~\ref{sec:start}, we describe for
which points they actually exist. In particular, we show that some
quadrant models that have no decoupling function  when starting at
$(0,0)$  (and are now known to be non-D-algebraic, as discussed below) still admit decoupling functions
when starting at  other points. Even though we have not worked out the
details, we expect them to be D-algebraic for
these points.
\end{itemize}

 \begin{figure}[h!]
% \small
\begin{tabular}{cccccccccc}
%&&&&&&\\
\begin{tikzpicture}[scale=.45] % q6654
    \draw[->] (0,0) -- (0,1);
    \draw[->] (0,0) -- (1,0);
    \draw[->] (0,0) -- (-1,0);
    \draw[->] (0,0) -- (-1,-1);
  %  \draw[-] (0,-1) -- (0,-1) node[below] {$\scriptstyle \# 1$};
  \end{tikzpicture} &   \begin{tikzpicture}[scale=.45] % q6654
    \draw[->] (0,0) -- (0,1);
    \draw[->] (0,0) -- (1,0);
    \draw[->] (0,0) -- (-1,1);
    \draw[->] (0,0) -- (-1,-1);
 %   \draw[-] (0,-1) -- (0,-1) node[below] {$\scriptstyle \# 2$};
  \end{tikzpicture} &     \begin{tikzpicture}[scale=.45] % q6654
    \draw[->] (0,0) -- (0,1);
    \draw[->] (0,0) -- (1,1);
    \draw[->] (0,0) -- (0,-1);
    \draw[->] (0,0) -- (-1,0);
  %  \draw[-] (0,-1) -- (0,-1) node[below] {$\scriptstyle \# 3$};
  \end{tikzpicture} & \begin{tikzpicture}[scale=.45] % q6654
    \draw[->] (0,0) -- (0,1);
    \draw[->] (0,0) -- (1,0);
    \draw[->] (0,0) -- (1,-1);
    \draw[->] (0,0) -- (-1,0);
   % \draw[-] (0,-1) -- (0,-1) node[below] {$\scriptstyle \# 4$};
  \end{tikzpicture} &     \begin{tikzpicture}[scale=.45] % q6654
    \draw[->] (0,0) -- (0,1);
    \draw[->] (0,0) -- (1,0);
    \draw[->] (0,0) -- (1,1);
    \draw[->] (0,0) -- (-1,-1);
    \draw[->] (0,0) -- (-1,0);
  %  \draw[-] (0,-1) -- (0,-1) node[below] {$\scriptstyle \# 5$};
  \end{tikzpicture} &       \begin{tikzpicture}[scale=.45] % q6654
    \draw[->] (0,0) -- (0,1);
    \draw[->] (0,0) -- (0,-1);
    \draw[->] (0,0) -- (1,1);
    \draw[->] (0,0) -- (-1,-1);
    \draw[->] (0,0) -- (-1,0);
 %   \draw[-] (0,-1) -- (0,-1) node[below] {$\scriptstyle \# 6$};
  \end{tikzpicture} &
     \begin{tikzpicture}[scale=.45] % q6654
    \draw[->] (0,0) -- (-1,1);
    \draw[->] (0,0) -- (-1,0);
    \draw[->] (0,0) -- (1,0);
    \draw[->] (0,0) -- (-1,-1);
    \draw[->] (0,0) -- (0,1);
 %   \draw[-] (0,-1) -- (0,-1) node[below] {$\scriptstyle \# 7$};
  \end{tikzpicture} & \begin{tikzpicture}[scale=.45] % q6654
    \draw[->] (0,0) -- (1,1);
    \draw[->] (0,0) -- (0,-1);
    \draw[->] (0,0) -- (1,0);
    \draw[->] (0,0) -- (-1,0);
    \draw[->] (0,0) -- (0,1);
   % \draw[-] (0,-1) -- (0,-1) node[below] {$\scriptstyle \# 8$};
  \end{tikzpicture} &    \begin{tikzpicture}[scale=.45] % q6654
    \draw[->] (0,0) -- (1,0);
    \draw[->] (0,0) -- (0,-1);
    \draw[->] (0,0) -- (0,1);
    \draw[->] (0,0) -- (-1,1);
    \draw[->] (0,0) -- (1,-1);
    %\draw[-] (0,-1) -- (0,-1) node[below] {$\scriptstyle \# 9$};
  \end{tikzpicture} \\
\end{tabular}
  \caption{The nine D-algebraic models having an infinite group.}
  \label{tab:infinite}
\end{figure}

An extended abstract of this paper, establishing D-algebraicity for
these 9 non-D-finite models, first appeared in 2016 in the
proceedings of the  FPSAC conference (Formal power series and algebraic
combinatorics~\cite{BeBMRa-FPSAC-16}). Later, 
Dreyfus, Hardouin, Roques and Singer
  completed the differential classification of quadrant walks by
proving that the remaining $47$ infinite group models  are neither
D-algebraic in $x$ (nor in $y$, by symmetry)~\cite{DHRS-17,DHRS-sing}, nor in $t$~\cite{DH-t} (see also the recent preprint~\cite{hardouin-singer2020} on walks with weighted steps).
Their proofs rely  on   Galois theory for
 difference equations. The complete classification of
 quadrant models with small steps  can now be
   summarized as in Table~\ref{tab:nat}, which emphasizes the key role
   of invariants and decoupling functions. 

\begin{table}[htb]
    \begin{center}
      \begin{tabular}{|l|c|c|}
\hline
&  Decoupling function & No decoupling function\\
\hline
        Rational invariant  & \color{red}{4 models} & \color{red}{19 models}
        \\
 ($\Leftrightarrow$ Finite group)  &      Algebraic  \cite{BMM-10,BoKa-10} & D-finite \cite{BMM-10} \\
                                     & Uniform proofs in Section~\ref{sec:extensions} &  
 and transcendental    \cite{MR3588720}\\
\hline
        No rational invariant  & \color{red}{9 models} & \color{red}{47 models} \\
($\Leftrightarrow$ Infinite group)        &{D-algebraic} (Theorem~\ref{thm:DA})
                                                             & Not
                                                               D-algebraic 
        \\
& and not  D-finite \cite{KR-12,BoRaSa-14} & \cite{DHRS-17,DHRS-sing,DH-t}\\
\hline
     \end{tabular}
     \end{center}
 \caption{Algebraic and differential nature of the generating function
     $Q(x,y;t)$.   }
 \label{tab:nat}
\end{table}

\medskip \noindent
{\bf The genesis of invariants.} This paper is inspired by a series of
nine papers published by Tutte between
1973 and 1984, starting with~\cite{lambda12} and ending with~\cite{tutte-differential},  and  later surveyed
in~\cite{tutte-chromatic-revisited},   devoted to the following functional
equation in two catalytic variables:
\beq\label{eq:Tutte}
G(x,y;t)\equiv G(x,y)=xq(q-1)t^2+\frac{xy}{qt}G(1,y)G(x,y)-x^2yt\frac{G(x,y)-G(1,y)}{x-1}+x\frac{G(x,y)-G(x,0)}{y}.
\eeq
This equation appears naturally when counting  planar triangulations
 {properly} colored in $q$ colors. Tutte worked on it for  a decade, and finally established that  $G(1,0)$ is
D-algebraic  in $t$. One key step in his study was to prove
that for certain (infinitely many) values of $q$, the series $G(x,y)$
is algebraic, using a pair  of (non-rational) series that he called \emm
invariants,~\cite{tutte-chromatic-revisited}. They are replaced in
our approach by  (rational) invariants and  decoupling functions. 
After an extension of Tutte's approach to more general map
  problems~\cite{BeBM-11,BeBM-15}, this is now the third time that his
  notion of  invariants proves useful, and we believe  it to have a strong
  potential in the study of  equations with divided differences.

Strictly speaking, adapting Tutte's ideas to quadrant walks
  only provides the algebraicity results of
  Section~\ref{sec:extensions}. In terms of techniques, this  simply involves algebraic
  manipulations on formal power series. The
  D-algebraicity results of Section~\ref{sec:DA} require however an \emm analytic,
  notion of invariants, which is the topic of
  Section~\ref{sec:analysis}. The analytic framework that we use there
  was first developed to study stationary distribution of random
  walks~\cite{FIM-99}, and then adapted to counting problems~\cite{Ra-12}. In fact,
  the \emm weak invariant, that we introduce coincides with the so-called \emm gluing
  function, that was already a key object in these analytic
  approaches. Hence the notion of invariants appears as one way to bridge the gap
between the algebraic and analytic approaches to quadrant walks.

%%%%%%%%%%%%%%%%%%%%%%%%%%%%%%%%%%%%%%%%%%
\section{First steps to quadrant walks}
\label{sec:tools}
%%%%%%%%%%%%%%%%%%%%%%%%%%%%%%%%%%%%%%%%%%%%%
In this section we introduce some basic tools and notation for the study of quadrant walks with
small steps (see e.g.~\cite{BMM-10} or~\cite{Ra-12}).

Let us begin with some standard notation on power series.
For a ring $R$, we denote by $R[t]$
 (resp.~$R[[t]]$, $R((t))$) 
the ring of
polynomials (resp.\ formal power series, Laurent series) 
in $t$ with coefficients in
$R$. If $R$ is a field, then $R(t)$ stands for the field of rational functions
in $t$. 
This notation is generalized to several variables.
For instance, the series $Q(x,y)$ belongs to $\qs[x,y][[t]]$.
The \emm valuation, of a series in
$R[[t]]\setminus\{0\}$ is the smallest $n$ such that the coefficient of $t^n$ is non-zero.

We will often use bars to denote reciprocals (as long as we remain in
an algebraic, non-analytic context): $\bx:=1/x$, $\by:=1/y$.

\medskip
Consider now the  generating function of quadrant walks $Q(x,y)$
defined by~\eqref{eq:qdef}, and recall that the set of steps  $\mathcal S$ is contained in $\{\-1,0,1\}^2$.
 A simple step-by-step construction of the  walks gives the following functional
equation:
\beq
\label{eq:functional_equation}
     K(x,y) Q(x,y) = K(x,0) Q(x,0) + K(0,y) Q(0,y)-K(0,0) Q(0,0)- xy,
\eeq
where 
\[  
     K(x,y) = xy\Bigg(t\sum_{(i,j)\in \mathcal S} x^iy^j-1\Bigg)
\]  
is  the \emm kernel,  of the model.
It is a  polynomial of degree $2$ in $x$ and $y$,
which  we often write  as
\beq\label{K-abc}
     K(x,y)  =\widetilde a(y)x^2+\widetilde b(y)x+\widetilde c(y)
= a(x)y^2+ b(x)y+ c(x) .
\eeq
We shall also denote  
\[ 
K(x,0)Q(x,0)=R(x) \qquad \hbox{and} \qquad K(0,y) Q(0,y)=S(y) .
\] 
Note that  $K(0,0) Q(0,0)=R(0)=S(0)$, so that the basic functional
equation~\eqref{eq:functional_equation} reads
\beq\label{eqfunc}
K(x,y)Q(x,y)=R(x)+S(y)-R(0)-xy.
\eeq

 Seen as a polynomial in $y$,
the kernel  has two roots $Y_0$ and $Y_1$, which are
Laurent series in $t$ with coefficients in $\qs(x)$. 
If the series $Q(x,Y_i)$ is well defined, setting $y=Y_i$ in~\eqref{eqfunc} shows that
\beq
\label{eq:func_spec}
     R(x)+S(Y_i)= xY_i+R(0).
\eeq
If this holds for $Y_0$ and $Y_1$, then
\beq\label{SYi}
S(Y_0)-xY_0=S(Y_1)-xY_1.
\eeq
This equation will be crucial in our paper.

We define analogously %symmetrically
the roots $X_0$ and $X_1$ of $K(x,y)=0$ (when
solved for $x$). 

The \emm group of the model,, denoted by  $\mathcal G (\cS)$,
{is the group of birational transformations of {ordered} pairs $(u,v)$ generated by the following two transformations:
\beq\label{eq:generators}
  \Phi(u,v) = \left(\frac{\widetilde c(v)}{\widetilde
      a(v)}\frac{1}{u},v\right) \qquad \hbox{and } \qquad    \Psi(u,v) = \left(u,\frac{c(u)}{a(u)}\frac{1}{v}\right),
\eeq
where the polynomials $a, \widetilde a, c$ and $\widetilde c$ are the coefficients
of $K$ defined by~\eqref{K-abc}. The group operation is 
composition. For instance,
\[
\Psi \circ \Phi(u,v) = \Psi \left(U,v\right) =  
\left(U,\frac{c(U)}{a(U)}\frac{1}{v}\right),
\]
where $U=\frac{\widetilde c(v)}{\widetilde
  a(v)}\frac{1}{u}$.}
One easily checks that both  transformations $\Phi$ and $\Psi$
are involutions {(that is, $\Phi \circ \Phi= \Psi\circ\Psi=\Id$)}. Thus $\mathcal G (\cS)$ is  a dihedral group, which,
depending on the step set $\cS$,  is finite or not.

Let us take for instance %, if
$\cS=\{\uparrow,\leftarrow,\searrow \}$, {so that
  $K(x,y)=txy^2+ty+tx^2-xy$. Then $\widetilde a(y)=t$, $\widetilde c(y)=ty$,
  $a(x)=tx$, $c(x)=tx^2$, and} the basic   transformations are 
\[ 
\Phi: (u,v)\mapsto (\bu v,v) \qquad \hbox{and}   \qquad \Psi: (u,v)\mapsto(u,u\bv),
\] 
with $\bu:=1/u$ and $\bv:=1/v$. They generate a group of order 6:
\[ 
(u,v)  
 {\overset{\Phi}{\longleftrightarrow}} (\bu v,v)
 {\overset{\Psi}{\longleftrightarrow}} (\bu v,\bu)
 {\overset{\Phi}{\longleftrightarrow}} (\bv,\bu)
 {\overset{\Psi}{\longleftrightarrow}} (\bv,u\bv)
 {\overset{\Phi}{\longleftrightarrow}} (u,u\bv)
 {\overset{\Psi}{\longleftrightarrow}} (u,v).
 \] 
 
Returning to the general case, note that $\Phi$ and $\Psi$
{never} depend on $t$, although $K$ does. Indeed,
\[ 
\frac{c(u)}{a(u)}= \frac{\sum_{(i,-1) \in \cS} u^i}{\sum_{(i,1) \in
    \cS} u^i},
\] 
and {analogously} %symmetrically
for ${\widetilde c(v)}/{\widetilde a(v)}$.
 One key property of the transformations $\Phi$ and $\Psi$ is that they
leave the \emm step polynomial,, namely
\[ 
P(u,v):=\sum_{(i,j)\in \cS} u^i v^j,
\] 
unchanged. {This is readily checked from the definition of
  $\Phi$ and $\Psi$. By composition, the same holds for all elements
  of $\mathcal G (\cS)$}.

{This group was first introduced in the probabilistic context of random
walks confined to the quadrant~\cite{FIM-99}. 
In our applications, we  will  typically let it act on pairs $(u,v)$
formed of} algebraic functions of the variables $x$, $y$ and $t$. 
In particular, note that
\[ 
\Phi(X_0,y)=(X_1,y) \qquad \hbox{and} \qquad \Psi(x,Y_0)=(x,Y_1).
\] 
More generally, since $K(x,y)=xy (tP(x,y)-1)$, every element $(x',y')$
in the orbit of $(x,Y_0)$ (or $(X_0,y)$) satisfies $K(x',y')=0$.

The above constructions (functional equation, kernel, roots, group\dots)\ can be
extended in a straightforward fashion to the case of weighted
steps. {In this
  context, if the step $(i,j)$ is weighted by $w_{i,j}$, the weight of
  a quadrant walk is the product $W$ of the weights of its
  steps, and this walk  contributes
  $Wx^k y^\ell t^n$ to the \gf\ $Q(x,y;t)$ if it has $n$ steps and ends at
  $(k, \ell)$.}
In particular, the kernel becomes:
\beq\label{K-w}
K(x,y)=xy\Bigg(t \sum_{(i,j)\in \cS} w_{i,j} x^i y^j -1\Bigg).
\eeq

A step set $\cS$ is \emm singular, if
each step $(i,j)\in \cS$ satisfies $i+j \geq 0$.

%%%%%%%%%%%%%%%%%%%%%%%%%%%%%%%%%%%%%%%%%%%%%%%%%%%%%%%%%%%%%%%
\section{A new solution of Gessel's model}
\label{sec:gessel}
%%%%%%%%%%%%%%%%%%%%%%%%%%%%%%%%%%%%%%%%%%%%%%%%%%%%%%%%%%%%%%%
In this section we illustrate the notions of invariants and decoupling functions, and their use in the solution of quadrant models, by solving
% considering
Gessel's model.
This model, with steps $\rightarrow, \nearrow, \leftarrow, \swarrow$, appears as the most difficult  model
with a finite group. Around 2000, Ira Gessel conjectured that
the number of $2n$-step quadrant walks  starting and ending at $(0,0)$ was
\[ %\beq\label{conj:gessel}
q(0,0;2n)=16^n\, \frac{ (1/2)_n(5/6)_n}{(2)_n(5/3)_n},
\] %\eeq
where $(a)_n=a(a+1) \cdots (a+n-1)$ is the %ascending
rising  factorial.
This 
conjecture was  proved in 2009 by Kauers, Koutschan and
Zeilberger~\cite{KaKoZe08}. A year later, by a computer algebra \emm
tour de force,, Bostan and
Kauers~\cite{BoKa-10} proved that the 
three-variate series $Q(x,y;t)$ %of these walks
is not only D-finite, but even algebraic.
Two other, more ``human'', proofs  have then  been
given~\cite{BKR-13,mbm-gessel}.
Here, we give yet another proof based on Tutte's idea of \emm
 invariants,.

The basic functional equation~\eqref{eqfunc} holds with $K(x,y)=t (y +x^2y
 +x^2y^2+1)-xy$, 
$R(x)=tQ(x,0)$, and $S(y)=t(1+y)Q(0,y)$. 

It follows from  $K(x,Y_0)=K(x,Y_1)=0$ that
\beq\label{inv-J-G}
J(Y_0)=J(Y_1),  \qquad \hbox{ with } \qquad J(y)= \frac y{t(1+y)^2}
+t\by(1+y)^2.
\eeq
In Tutte's terminology,  $J(y)$ is a (rational) \emm $y$-invariant.,
Note that \emm checking, that $J(Y_0)=J(Y_1)$ from the identities  $K(x,Y_0)=K(x,Y_1)=0$ is
  {straightforward}. We explain in the next section
  (Theorem~\ref{thm:finite-invariant}) how $J(y)$ can be \emm
  constructed,.

We now introduce a new variable $u$ that replaces $x$, and 
 grants interesting properties to the series $Y_i(x)$ once they are expressed
  in terms of $u$.

\begin{Lemma}\label{lem:conv-gessel}
      Let $X=t+t^2(u+\bu)$, where $u$ is a new variable and $\bu$
  stands for $1/u$. We slightly abuse notation by denoting $Y_0$ and $Y_1$ the roots of $K(X,y)$. Then  $Y_0$ and $Y_1$ are  Laurent series in $t$
with coefficients in $\qs(u)$, satisfying
\[ 
Y_0= \frac u t + \frac{u^2(3+2u^2)}{1-u^2} + O(t), \qquad
Y_1= \frac \bu t + \frac{\bu^2(3+2\bu^2)}{1-\bu^2} + O(t).
\] 
The series $Y_0$ and $Y_1$ simply differ by the transformation $u\mapsto \bu$.
For $i\in\{0,1\}$, the series   $Q(X,Y_i)$ and $Q(0,Y_i)$
are well defined as series in $t$ (with coefficients in $\qs(u)$).
\end{Lemma}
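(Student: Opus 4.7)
My plan is to prove the three claims of the lemma in sequence: the explicit Laurent expansions of $Y_0,Y_1$, the $u\leftrightarrow\bar u$ symmetry, and well-definedness of $Q(x,Y_i)$ and $Q(0,Y_i)$.

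First, I would derive the expansions of $Y_0$ and $Y_1$ directly from Vieta's formulas applied to the kernel viewed as a quadratic in $y$: for Gessel's steps, $K(x,y)=tx^2y^2+(t(1+x^2)-x)y+t$, so $Y_0Y_1=1/x^2$ and $Y_0+Y_1=(x-t(1+x^2))/(tx^2)$. Plugging in $x=t+t^2(u+\bar u)$ and expanding as Laurent series in $t$ gives
$$Y_0+Y_1=\frac{u+\bar u}{t}-1+O(t),\qquad Y_0Y_1=\frac{1}{t^2}-\frac{2(u+\bar u)}{t}+3(u+\bar u)^2+O(t).$$
Since these two relations determine the unordered pair $\{Y_0,Y_1\}$, the leading terms must be $\{u/t,\bar u/t\}+O(1)$. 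I would then either iterate the Vieta relations order by order or, equivalently, compute the discriminant $\Delta=(x-t(1+x^2))^2-4t^2x^2=t^4(u-\bar u)^2-10t^5(u+\bar u)+O(t^6)$, extract a square root (choosing the sign so that the leading term of $Y_0-Y_1$ is $(u-\bar u)/t$), and substitute into the quadratic formula. A short computation then recovers the claimed next term $u^2(3+2u^2)/(1-u^2)$ for $Y_0$, and symmetrically for $Y_1$.

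The symmetry statement follows at once from the invariance of $x=t+t^2(u+\bar u)$ under the involution $u\leftrightarrow\bar u$: since $x$ is unchanged, the two Laurent series solutions of $K(x,y)=0$ are merely permuted by this involution, and because $u\mapsto\bar u$ sends the leading term $u/t$ of $Y_0$ to the leading term $\bar u/t$ of $Y_1$, we conclude that $Y_0(\bar u,t)=Y_1(u,t)$.

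The main obstacle is the well-definedness of $Q(x,Y_i)$ and $Q(0,Y_i)$: since $Y_i$ has $t$-valuation $-1$, the powers $Y_i^j$ have arbitrarily deep poles at $t=0$, and one must control the substitution $y=Y_i$ in $Q(x,y)=\sum q(i,j;n)x^iy^jt^n$. The key combinatorial input is the inequality $2j\leq n+i$, valid for every walk of length $n$ ending at $(i,j)$ in Gessel's model. This is immediate from inspecting the four steps: tracking the quantity $n+i-2j$ along the walk, the steps $\rightarrow$ and $\swarrow$ each contribute $+2$, while $\nearrow$ and $\leftarrow$ contribute $0$, so $n+i-2j$ starts at $0$ and is nondecreasing. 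Combined with $\mathrm{val}_t(x)=1$ and $\mathrm{val}_t(Y_i)=-1$, this yields
$$\mathrm{val}_t\bigl(t^n x^i Y_i^j\bigr)=n+i-j\geq \tfrac{n+i}{2}\geq\tfrac{n}{2}$$
for every $(i,j,n)$ with $q(i,j;n)\neq 0$. Writing $Q(x,y)=\sum_n t^n Q_n(x,y)$ with $Q_n$ a polynomial, we get $\mathrm{val}_t(t^n Q_n(x,Y_i))\geq n/2\to\infty$, so the partial sums converge in $\qs(u)[[t]]$ and define $Q(x,Y_i)$. Specializing $x=0$ in the same bound (which is even simpler, since the $x$-factor disappears) handles $Q(0,Y_i)$, and the argument for $Y_1$ is identical. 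I expect this combinatorial bound to be the only delicate point of the proof; everything else is a bookkeeping exercise in formal Laurent expansions.
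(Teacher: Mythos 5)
Your proof is correct and follows essentially the same route as the paper's: the expansions of $Y_0,Y_1$ are obtained by solving the quadratic kernel equation, and well-definedness of $Q(x,Y_i)$ is secured via the valuation bound, which in the paper's step-count coordinates $(a,b,c,d)$ reads $v=a+2b+c$ and is exactly your inequality $2j\le n+i$ rephrased (indeed with $n=a+b+c+d$, $i=a+b-c-d$, $j=a-c$ one has $n+i-j=a+2b+c$). One small arithmetic slip in your sketch: after substituting $x=t+t^2(u+\bar u)$, the constant term of $Y_0+Y_1$ is $-1-2(u+\bar u)^2$, not $-1$, but this is in fact consistent with the lemma's claimed expansions since $\frac{u^2(3+2u^2)}{1-u^2}+\frac{\bar u^2(3+2\bar u^2)}{1-\bar u^2}=-5-2(u^2+\bar u^2)=-1-2(u+\bar u)^2$, and the slip does not propagate into your argument.
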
 
\begin{proof} The expansions of the $Y_i$ near $t=0$ are found either
  by solving explicitly $K(X,Y_i)=0$, or using Newton's polygon
  method~\cite{abhyankar}. To prove the second point, let us write
\[ 
Q(x,y)= \sum_{\substack{a+b \geq c+d \\ a\geq c} }{\widehat q}(a,b,c,d)
x^{a+b-c-d} y^{a-c} t^{a+b+c+d},
\] 
where ${\widehat q}(a,b,c,d)$ is the number of quadrant walks consisting of $a$
North-East steps, $b$ East steps,~$c$ South-West steps and $d$ West
steps. 
Given that  $X$ and $Y_i$ are series in $t$ with respective
valuation $\alpha=1$ and $\gamma=-1$, the valuation of the summand
associated with the 4-tuple $(a,b,c,d)$ in $Q(X,Y_i)$ is
\[ %\begin{align*}
  v(a,b,c,d)= \alpha( a+b-c-d) +\gamma (a-c)+(a+b+c+d)
= a+2 b + c.
\] %\end{align*}
For $Q(X,Y_i)$ to be well defined, we want that for any 
$n \in \ns$, only finitely many 4-tuples  $(a,b,c,d)$ satisfy $ a+b
\geq c+d$,  $a\geq c$ and  $v(a,b,c,d)\leq n$. The above expression for %of
$v$ shows that $a$, $b$ and $c$
must be bounded (for instance by $n$), and the inequality $a+b \geq c+d$
bounds $d$ as well. 
Hence $Q(X,Y_i)$ is well defined.

 This implies that $Q(0,Y_i)$ is also well defined, as
$Q(0,y)$ is just obtained by selecting  the 4-tuples such that $a+b=c+d$. 
in the expression  of %{for}
$Q(x,y)$.% the 4-tuples such that $a+b=c+d$. 
\end{proof}

Applying now the generalities of Section~\ref{sec:tools}, we conclude
from Lemma~\ref{lem:conv-gessel} that~\eqref{SYi}
holds:
\beq\label{diff-gessel}
S(Y_0)-{ X} Y_0=S(Y_1)-{X}Y_1.
\eeq
Moreover, the kernel equation $K(X,Y_i)=0$ implies that
\[ %  \beq\label{decouple-gessel-X}
XY_0-XY_1= \frac 1{t(1+Y_1)}- \frac 1{t(1+Y_0)}.
\] %\eeq
Note that this is not specific to the choice of $X$ of the form $t+t^2(u+\bu)$: when $Y_0$ and $Y_1$ are the roots of $K(x,y)$, we still have
  \beq\label{decouple-gessel}
xY_0-xY_1= \frac 1{t(1+Y_1)}- \frac 1{t(1+Y_0)}. 
\eeq
We will later say that  $G(y):=- \frac 1{t(1+y)}$ is a \emph{decoupling function} for Gessel's model (see Section~\ref{subsec:decoupling_functions} for a precise definition). 
We can {then} rewrite~\eqref{diff-gessel} as
\beq\label{L-def}
L(Y_0)=L(Y_1), \qquad
\hbox{with } \qquad L(y)=S(y)+\frac{1}{t(1+y)}.
\eeq
This should be compared to~\eqref{inv-J-G}. In Tutte's terminology, the series $L(y)$ is, as $J(y)$, an \emm
invariant,, but this time it is (most likely) {non}-rational. 
The connection between $J(y)$ and $L(y)$ will stem from the following
lemma, which states, roughly speaking, that  invariants with \emm polynomial
coefficients in, $y$ are trivial.
\begin{Lemma}\label{lem:inv-gessel}
  Let $A(y)$ be a Laurent series in $t$ with coefficients in $\qs[y]$, of the form 
\[ 
A(y)=\sum_{0\leq j\leq n/2+n_0} a(j,n) y^jt^{n} 
\] 
for some $n_0\geq 0$.  {Let $X=t+t^2(u+1/u)$, and define $Y_0$ and $Y_1$ as in Lemma~\ref{lem:conv-gessel}.} Then the series $A(Y_0)$ and $A(Y_1)$ are 
well defined Laurent series in $t$, with coefficients in $\qs(u)$. 
If they coincide, then $A(y)$ is in fact independent of $y$. 
\end{Lemma}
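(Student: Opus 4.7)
My plan is to exploit the explicit leading expansion $Y_0 = u/t + O(1)$ provided by Lemma~\ref{lem:conv-gessel} together with the fact that $Y_1$ is obtained from $Y_0$ by the involution $u \mapsto \bar u$, and then to track the term of lowest $t$-valuation in $A(Y_0) - A(Y_1)$.

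Write $A(y) = \sum_{j \geq 0} A_j(t)\, y^j$ with $A_j(t) \in \qs((t))$. The support constraint $j \leq n/2 + n_0$ forces the $t$-valuation $v_j$ of $A_j(t)$ (when $A_j \neq 0$) to satisfy $v_j \geq 2j - 2n_0$. Since $Y_0$ and $Y_1$ each have $t$-valuation $-1$, the series $A_j(t)\, Y_0^j$ has $t$-valuation at least $j - 2n_0$, which tends to $+\infty$ with $j$. Hence for each integer $N$, only finitely many values of $j$ can contribute to the coefficient of $t^N$, which proves that $A(Y_0)$ and $A(Y_1)$ are well-defined Laurent series in $t$ with coefficients in $\qs(u)$.

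For the main assertion, I would argue by contradiction: assume $A(Y_0) = A(Y_1)$ and that some $A_j$ with $j \geq 1$ is nonzero. The factorization $Y_0^j - Y_1^j = (Y_0 - Y_1)\sum_{k=0}^{j-1} Y_0^k Y_1^{j-1-k}$, combined with the expansions of $Y_0, Y_1$, yields
$$
Y_0^j - Y_1^j = \frac{u^j - \bar u^j}{t^j} + O\!\left(t^{-j+1}\right),
$$
so $A_j(t)\,(Y_0^j - Y_1^j)$ has $t$-valuation exactly $v_j - j \geq j - 2n_0$, with leading coefficient $a(j, v_j)(u^j - \bar u^j)$. Since $v_j - j \to +\infty$ with $j$, the set of these valuations admits a minimum $M$, attained on a finite nonempty set $J \subset \{j \geq 1 : A_j \neq 0\}$. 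Reading off the coefficient of $t^M$ in the assumed identity $A(Y_0) - A(Y_1) = 0$ would give
$$
\sum_{j \in J} a(j, v_j)\,(u^j - \bar u^j) = 0 \quad \text{in } \qs(u),
$$
with every $a(j, v_j) \in \qs \setminus \{0\}$. Since the Laurent polynomials $\{u^j - \bar u^j\}_{j \geq 1}$ involve pairwise disjoint monomial pairs $\{u^j, u^{-j}\}$, they are $\qs$-linearly independent, a contradiction. It follows that $A_j = 0$ for every $j \geq 1$, and thus $A(y) = A_0(t)$ is independent of~$y$.

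The only subtlety lies in guaranteeing that a lowest $t$-valuation \emph{does} exist on the right-hand side, i.e., that $v_j - j$ is bounded from below uniformly in $j$; this is precisely what the support condition $j \leq n/2 + n_0$ provides, via $v_j - j \geq j - 2n_0$. Everything else is a routine valuation-and-linear-independence argument.
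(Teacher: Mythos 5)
Your proof is correct and takes essentially the same route as the paper: both isolate the minimal $t$-valuation in $A(Y_0)-A(Y_1)$ and read off a forced identity $\sum_j c_j(u^j - \bar u^j)=0$ with $c_j\in\qs\setminus\{0\}$, which is impossible since positive and negative powers of $u$ cannot cancel. The only cosmetic difference is bookkeeping: the paper minimizes $n-j$ directly over the support of $A$, while you group by powers of $y$ first (computing $v_j$, the valuation of $A_j(t)$) and then minimize $v_j - j$; since $v_j - j = \min\{n-j : a(j,n)\neq 0\}$, the two minima and the resulting critical sets coincide.
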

\begin{proof}
  By considering $A(y)-A(0)$, we can  assume that $A(0)=0$. In this case,
\[ 
A(y)=\sum_{1\leq j\leq n/2+n_0} a(j,n) y^jt^{n} .
\] 
Assume that $A(y)$ is not uniformly zero, and let
 \[ 
m=\min_{n,j}\{n-j: a(j,n) \not = 0\}
\] 
(see Figure~\ref{fig:subst} for an illustration).
The inequalities $1\leq j\leq n/2+n_0$ imply that $m$ is finite,
at least equal to $1-2n_0$.  Moreover, only finitely many pairs
$(j,n)$ satisfy $m=n-j$ and $j\leq n/2+n_0$.

\begin{figure}
  \scalebox{0.8}{\input{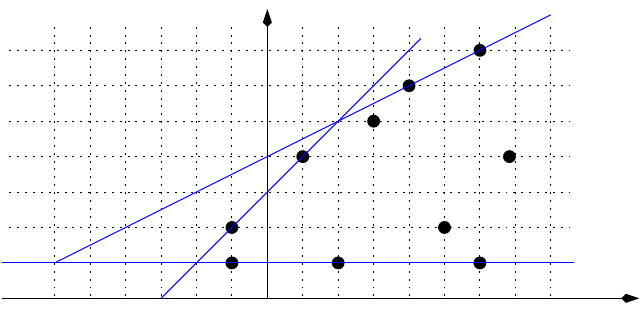_t}}
  \caption{The support of the series $A(y)$, shown with dots, {and} the
    definition of $m$.}
  \label{fig:subst}
\end{figure}

Since $j\leq n/2 +n_0$, 
any  series $Y=c/t+O(1)$ can be substituted for $y$ in $A(y)$, and
\[ 
A(Y)= t^m \left(\sum_{n-j=m} a(j,n) c^j\right) + O(t^{m+1}).
\] 
Applying this to  $Y_0= u /t +O(1)$ and  $Y_1=\bu /t+O(1)$, and
writing that $A(Y_0)=A(Y_1)$, we obtain
\[ 
P(u):=\sum_{j\ge 1} a(j,m+j) u^j=\sum_{j \ge 1} a(j,m+j) \bu^j=P(\bu).
\] 
Hence the polynomial $P(u)$ must vanish, which is incompatible with the definition of $m$.
\end{proof}
The  series $J$ and $L$ defined by~\eqref{inv-J-G} and~\eqref{L-def} do not satisfy the assumptions of the
lemma, as  their coefficients are \emm rational, in $y$ with
poles at $y=0, -1$ (for $J$) and $y=-1$ (for $L$):
\beq\label{def-JL-G}
 J(y)= \frac y{t(1+y)^2}
+t\by(1+y), \qquad L(y)=S(y)+\frac{1}{t(1+y)},
\eeq
with $S(y)=t(1+y)Q(0,y)$.
Still, we can
construct from them a series~$A(y)$ 
satisfying the assumptions of the lemma. First, we eliminate the
simple pole of $J$ at $0$ by
considering  $(L(y)-L(0))J(y)$, which still takes the same value at
$Y_0$ and $Y_1$.  The coefficients
of this series have a pole of order at most $3$ at $y=-1$. By
subtracting an appropriate series of the form $C_1L(y)^3+C_2L(y)^2+C_3L(y)$, where $C_1, C_2$ and $C_3$ depend on $t$ but not on
$y$, we obtain a Laurent series in $t$ satisfying the assumptions
of the lemma: the polynomiality of the coefficients in $y$ holds by
construction, and the fact that in each monomial $y^j t^n$, the
exponent of $j$ is (roughly) at most half the exponent of $n$ comes
from the fact that this holds in $S(y)$, due to the choice of the step
set (a walk ending at $(0,j)$ has at least $2j$ steps). Thus this
series must be constant, equal for instance to its value at $y=-1$. In
brief, 
\beq\label{eq:JL}
(L(y)-L(0))J(y)= C_3 L(y)^3+C_2L(y)^2+C_1L(y)+C_0
\eeq
for some series $C_0,C_1,C_2,C_3$ in $\mathbb{Q}((t))$.
Expanding this identity near $y=-1$ determines the series $C_0$, $C_1$,
$C_2$, $C_3$ in terms of $S$. Their expressions are given in the following proposition. 
\begin{Proposition}\label{prop:abcd}
  For $J$ and $L$ defined by~\eqref{def-JL-G}, and
  $S(y)=t(1+y)Q(0,y)$, Equation~\eqref{eq:JL} holds with 
\[ %\begin{align*}
 C_3=-t, \qquad C_2=2+tS(0), \qquad C_1=-S(0)+ 2S'(-1)-1/t, 
\] %\end{align*}
and
\[
  C_0=-2S(0)S'(-1)-3S'(-1)/t+S''(-1)/t.
\]
\end{Proposition}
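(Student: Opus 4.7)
The identity \eqref{eq:JL} itself was justified just before the statement, by invoking Lemma~\ref{lem:inv-gessel} to force $(L(y)-L(0))J(y)-aL(y)^{3}-bL(y)^{2}-cL(y)$ to be independent of $y$ once $a,b,c$ are chosen to cancel the poles at $y=-1$. So my only remaining task is to identify the four series $a,b,c,d$ explicitly. My plan is to expand both sides of \eqref{eq:JL} as Laurent series in the local variable $u:=y+1$ near $u=0$, and match the coefficients of $u^{-3},u^{-2},u^{-1},u^{0}$.

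First I would record the needed local expansions. Since $S(y)=t(1+y)Q(0,y)$, one has $S(-1)=0$, hence
$$L(y)=\frac{1}{tu}+S'(-1)\,u+\frac{S''(-1)}{2}\,u^{2}+O(u^{3}),$$
and this vanishing of the constant term of $L$ is what lets the clean expression $b=2+tS(0)$ emerge (without $S(-1)=0$ an extra $2tS(-1)$ would appear). Using $y=u-1$, I expand
$$J(y)=\frac{y}{t(1+y)^{2}}+\frac{t(1+y)^{2}}{y}=-\frac{1}{tu^{2}}+\frac{1}{tu}+O(u^{2}),$$
where crucially the coefficients of $u^{0}$ and $u^{1}$ in $J$ vanish. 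I also need $L(0)=S(0)+1/t$, and the Laurent expansions of $L^{2}$ and $L^{3}$ up to order $u^{0}$, which follow routinely from the expansion of $L$ — in particular, the absence of a $u^{0}$ term in $L$ makes $[L^{3}]_{u^{-2}}=0$ and $[L^{2}]_{u^{-1}}=0$.

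Matching coefficients then yields a triangular system solved step by step. The $u^{-3}$ equation reads $-1/t^{2}=a/t^{3}$, giving $a=-t$. The $u^{-2}$ equation, using $[L^{3}]_{u^{-2}}=0$, gives $b=2+tS(0)$. The $u^{-1}$ equation, with $a$ and $b$ already known and $[L^{2}]_{u^{-1}}=0$, solves for $c=-S(0)+2S'(-1)-1/t$. Finally the $u^{0}$ equation, with $a$, $b$, $c$ now fixed, yields $d=-2S(0)S'(-1)-3S'(-1)/t+S''(-1)/t$. The main (and essentially only) obstacle is bookkeeping: carefully assembling all contributions to the $u^{0}$ coefficient of $(L-L(0))J$, which requires collecting products of the four terms of $L-L(0)$ in degrees $u^{-1},u^{0},u^{1},u^{2}$ against the terms of $J$ in degrees $u^{-2}$ to $u^{1}$. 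No conceptual difficulty arises.
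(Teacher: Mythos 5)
Your proposal is correct and takes essentially the same route as the paper: one expands both sides of~\eqref{eq:JL} in Laurent series in $u=y+1$ near $y=-1$ and matches the coefficients of $u^{-3},u^{-2},u^{-1},u^{0}$ to read off $a,b,c,d$ (using $S(-1)=0$ and the vanishing of the $u^{0}$, $u^{1}$ terms of $J$). I checked the coefficient bookkeeping and your triangular system does yield the stated $a,b,c,d$.
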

Replacing in~\eqref{eq:JL} the series $J$, $L$ and $C_0, \ldots, C_3$
% and the $C_i's$
by their expressions in terms of $t$, $y$ and $S$ gives  for $S(y)$
a cubic equation, involving $t$, $y$, and three auxiliary unknown series in $t$, namely
{$A_1:=S(0)$,
%Q(0,0)$,
$A_2:=S_y(-1)$,
% Q(0,-1)$
and $A_3:=S_{yy}(-1)$: 
% Q'_y(0,-1)$.
\begin{multline}
  {t}^{2}y \left( y+1 \right) ^{2} S (y) ^{3}
-ty  \left( y+1 \right) \left( {t} \left( y+1 \right) A_1 + \left(
    2\,y-1 \right)  \right)   S(y) ^{2}
\label{eq-a-voir}
\\  + \left( ty \left( y-1 \right)  \left( y+1 \right) A_1-2\,ty \left( y+1 \right) ^{2}A_2+{t}^{2}{y}^{4}+4\,{t}^{2}{y}
^{3}+6\,{t}^{2}{y}^{2}+4\,{t}^{2}y+{y}^{3}+{t}^{2}-{y}^{2} \right) S(y)
\\+ t( y+1 ) ^{2} ( 2y A_2-{t} ( y+1 ) ^{2} ) A_1
+y ( y+1 ) 
 ( 3\,y+1 ) A_2-y ( y+1 ) ^{2}A_3-ty
 ( y+1 ) ^{3}
 =0.
\end{multline}
(The letter $A$ stands
  for ``auxiliary'', and these series $A_i$, which depend on $t$
  only, have no direct connection with the series $A(y)$ of
  Lemma~\ref{lem:inv-gessel}.)
It is not hard to see that this
equation defines a unique 4-tuple of power series, with $A_i\in {t}\qs[[t]]$ and $S(y)$ in $t\qs[y][[t]]$. 

Equations of the form
\[  
\Pol({S(y)}, A_1, \ldots , A_k,t,y)=0
\]  
occur in the enumeration of many combinatorial objects (lattice paths,
maps, permutations\dots). 
The variable $y$ is often said to be a \emm catalytic variable,.
Under certain hypotheses 
(which generally hold for  combinatorially founded equations, and
essentially say that these equations have a unique solution
$({S(y) }, A_1, \ldots, A_k)$ in the world of power series),
the solutions of such equations are always algebraic, and a
procedure for finding them is given in~\cite{mbm-jehanne}.

Applying the procedure of~\cite{mbm-jehanne}
to the equation  obtained just above for Gessel's walks
(Proposition~\ref{prop:abcd})
shows in particular that $A_1, A_2$ and $A_3$ belong to  ${t}\qs(Z)$, where
$Z$ is the  unique series in $t$ with constant term 1
satisfying $Z^2=1+256t^2Z ^6/(Z^2+3)^3$. Details on the solution are
given in Appendix~\ref{app:gessel}. Let us mention that, in the other
``elementary'' solution of this model, one has to solve an analogous
 equation satisfied by $R(x)$~\cite[Sec.~3.4]{mbm-gessel}.
Once {$S(y)$ (or equivalently $Q(0,y)$)} is proved to be algebraic, the
  algebraicity of $Q(x,0)$, and finally of $Q(x,y)$, follow using~\eqref{eq:func_spec} and~\eqref{eqfunc}.

%%%%%%%%%%%%%%%%%%%%%%%%%%%%%%%%%%%%%%%%%%%%%%%%%%%%%%%%%%%%%%%%%%%%%%%%%%%%%%
\section{Extensions and obstructions: uniform algebraicity proofs}
\label{sec:extensions}
%%%%%%%%%%%%%%%%%%%%%%%%%%%%%%%%%%%%%%%%%%%%%%%%%%%%%%%%%%%%%%%%%%%%%%%%%%%%%%
We now formalize and generalize
the three main ingredients in the above solution of Gessel's model: the rational invariant $J(y)$ given
by~\eqref{inv-J-G},  the
identity~\eqref{decouple-gessel} expressing $xY_0-xY_1$ as a
difference $G(Y_0)-G(Y_1)$, and finally the ``invariant lemma''
(Lemma~\ref{lem:inv-gessel}). We discuss the existence of rational
invariants $J$, and of \emm decoupling functions, $G$, for all
quadrant models with small steps in Sections~\ref{sec:invariants} and~\ref{subsec:decoupling_functions} respectively. In particular, we relate the existence
of rational invariants to the finiteness of the group $\cG(\cS)$.
Then, in Sections~\ref{sec:invariant-lemma} to~\ref{sec:effective},
we show that the above solution of Gessel's model extends,
in a uniform fashion, to all quadrant models (possibly weighted) known
or conjectured to have an algebraic \gf \ (see
Figure~\ref{fig:alg_models}). {These~8 models} are precisely those that have a
rational invariant \emm and, a decoupling function. 
For one
of them, we need an algebraic variant of the invariant lemma, which is
described in Section~\ref{sec:alternative}. 

%We let $\cS$ be one of the $79$ small step sets. 

%=======================================
\subsection{Invariants}
\label{sec:invariants}
%=======================================
To begin with, let us observe that for all models $\cS$ that we consider, the associated kernel $K(x,y)$ is irreducible in $\qs(t)[x,y]$. This could be (tediously) checked case by case, but has been proved more generally in~\cite[Lem.~2.3.2]{FIM-99}.
\begin{Definition}
  Given a quadrant model $\cS$, and the associated kernel $K(x,y)$, we define an equivalence relation on elements of $\qs(x,y,t)$ as follows:
  \[
    A(x,y) \equiv B(x,y) \Leftrightarrow A(x,y)-B(x,y)= K(x,y) \frac{N(x,y)}{D(x,y)}
  \]
  for $N(x,y)$ and $D(x,y)$ in $\qs(t)[x,y]$ such that $D(x,y)$ is not divisible by $K(x,y)$ in $\qs(t)[x,y]$.
\end{Definition}
We have the following simple property.
\begin{Lemma}\label{lem:equiv-equiv}
  Let $A(x,y)$ and $B(x,y)$ be two elements of $\qs(x,y,t)$ that do not have a factor $K(x,y)$ in their denominator (once written in an irreducible form). Then the following conditions are equivalent:
  \begin{itemize}
  \item $A(x,y)$ and $B(x,y)$ are equivalent,
  \item $A(x,Y_0)=B(x,Y_0)$,
        \item $A(x,Y_1)=B(x,Y_1)$.
  \end{itemize}
\end{Lemma}
\begin{proof}
  The first point implies the second (or the third) by setting $y=Y_i$ in the definition of equivalence. The second (or third) point implies the first by writing $A(x,y)-B(x,y)$ in irreducible form, and using the fact that $K(x,y)$ is irreducible.
 \end{proof}
\begin{Definition}\label{def:rat-inv}
  A quadrant model  admits \emm invariants, if there exist
  rational functions $I(x) \in \qs(x,t)$ and $J(y)\in \qs(y,t)$, not
  both in $\qs(t)$, such  that $I(x)\equiv J(y)$.
 The functions $I(x)$ and $J(y)$ are said to be an $x$-invariant and
a $y$-invariant for the model, respectively.
\end{Definition}
Our definition is more restrictive than that of
Tutte~\cite{tutte-chromatic-revisited}, who was simply requiring $I(x)$ and $J(y)$ to be series in
$t$ with rational coefficients in $x$ (or $y$).

\smallskip  The existence of (rational) invariants is equivalent to the following
(apparently weaker) condition, which is the one we met in Section~\ref{sec:gessel} (see~\eqref{inv-J-G}).
\begin{Lemma}\label{lem:xy-inv}
 Assume that there exists a rational function $J(y) \in \qs(t,y)
\setminus \qs(t)$ such that $J(Y_0) = J(Y_1)$
when $Y_0$ and $Y_1$ are the roots of the kernel $K(x,y)$, solved for
$y$. 
Then $I(x):=J(Y_0)=J(Y_1)$ is a rational function of $x$, and
$(I(x),J(y))$ forms a pair of invariants. % for the model.
\end{Lemma}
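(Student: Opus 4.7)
The plan is to combine Vieta's formulas with the fundamental theorem of symmetric polynomials to get the rationality of $I(x)$, and then to deduce the invariant identity from the factor theorem.

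For the first part, write the kernel as $K(x,y) = a(x)y^2+b(x)y+c(x)$, so that Vieta's formulas yield $Y_0+Y_1 = -b(x)/a(x) \in \qs(t,x)$ and $Y_0Y_1 = c(x)/a(x) \in \qs(t,x)$. Writing $J(y) = P(y)/Q(y)$ with $P, Q \in \qs(t)[y]$ coprime, the combination
$$
J(Y_0)+J(Y_1) \;=\; \frac{P(Y_0)Q(Y_1)+P(Y_1)Q(Y_0)}{Q(Y_0)Q(Y_1)}
$$
is a symmetric rational function of $Y_0, Y_1$ with coefficients in $\qs(t)$; by the fundamental theorem of symmetric polynomials it can be written rationally in $Y_0+Y_1$ and $Y_0Y_1$, and hence lies in $\qs(t,x)$. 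The hypothesis $J(Y_0) = J(Y_1)$ then forces $I(x) := J(Y_0) = J(Y_1)$ to equal half that sum, proving that $I(x) \in \qs(t,x)$, as claimed.

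Next I would check that $(I,J)$ satisfies Definition~\ref{def:rat-inv}. The pair is not both in $\qs(t)$ since $J \notin \qs(t)$ by assumption. It remains to show that if $F(x,y) := I(x)-J(y)$ is written as an irreducible fraction $N(x,y)/D(x,y)$ in $\qs(t)[x,y]$, then $K(x,y)$ divides $N(x,y)$. Regarded as a rational function of $y$ over $\qs(t,x)$, the function $F$ vanishes at both $y = Y_0$ and $y = Y_1$, since $J(Y_i) = I(x)$ for $i = 0,1$. Hence, by the factor theorem applied in $\qs(t,x)[y]$, the product $(y-Y_0)(y-Y_1) = K(x,y)/a(x)$ divides $N(x,y)$ in $\qs(t,x)[y]$.

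The main obstacle is the descent of this divisibility from $\qs(t,x)[y]$ to the polynomial ring $\qs(t)[x,y]$, so that $K(x,y)$ itself, and not merely $K(x,y)/a(x)$, appears as a factor of $N(x,y)$. This is a content/primitive-part bookkeeping argument via Gauss's lemma: one clears the denominators in $x$ introduced by the factor $1/a(x)$, then observes that any extra factors in $\qs(t)[x]$ so produced can only share with $K(x,y)$ factors that are independent of $y$, while the essential factor $tP(x,y)-1$ of $K$ genuinely involves $y$ and therefore must divide $N$. Once that point is settled, $K(x,y) \mid N(x,y)$, so $(I,J)$ is a pair of invariants in the sense of Definition~\ref{def:rat-inv}.
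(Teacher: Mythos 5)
Your rationality argument for $I(x)$ is exactly the paper's: $I(x) = \tfrac12(J(Y_0)+J(Y_1))$ is a symmetric rational function of the two roots of the quadratic $K(x,\cdot)$, so Vieta's formulas and the fundamental theorem of symmetric polynomials put it in $\qs(t,x)$.

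Where you diverge is in certifying that $(I,J)$ is a pair of invariants. Definition~\ref{def:rat-inv} offers two formulations asserted to be equivalent: (i) $I(x)=J(y)$ as soon as $K(x,y)=0$, and (ii) $K(x,y)$ divides the numerator of $I(x)-J(y)$ in its irreducible fraction. The paper simply observes that $I(x)=J(Y_i)$ \emph{is} formulation (i), and stops; you instead try to verify (ii) directly, which amounts to re-proving the nontrivial direction of the stated equivalence. That is legitimate, but it is more work than needed, and your sketch of it has a gap you acknowledge but do not actually close. Once you know $K(x,y)/a(x)$ divides $N(x,y)$ in $\qs(t,x)[y]$, a Gauss-lemma descent gives you divisibility of $N$ by the $y$-\emph{primitive part} of $K$, not by $K$ itself; these differ by the content $\gcd(a(x),b(x),c(x))\in\qs(t)[x]$, which is not trivial for every step set. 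Your text slides from ``the essential $y$-dependent factor of $K$ divides $N$'' to ``$K\mid N$'' without handling that content. To finish one would either show the content divides $N$ as well, or argue that a nontrivial content would make some vertical line $x=x_0$ a component of $\{K=0\}$, forcing $J(y)=I(x_0)$ to be constant and contradicting $J\notin\qs(t)$ — so the content is in fact a unit under the lemma's hypothesis. Absent such an argument, the divisibility conclusion is not established; the most economical fix is simply to invoke formulation (i) of the definition, as the paper does.
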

\begin{proof}
We have  $I(x)= (J(Y_0)+J(Y_1))/2$, hence $I(x)$ is a
rational function of $x$ and $t$ as any symmetric function of
the roots $Y_0$ and $Y_1$. 
The property $I(x)=J(Y_i)$ then allows us to conclude that $I(x)\equiv J(y)$, using Lemma~\ref{lem:equiv-equiv}.
%tells us that $I(x)$ equals   $J(y)$  as soon as $K(x,y)=0$.which is the condition of Definition~\ref{def:rat-inv}. 
\end{proof}
\noindent{\bf Example.} In Gessel's case, $J(y)$ was given by~\eqref{inv-J-G}, and we find 
\[ 
I(x)=\frac 1 2 \left( J(Y_0)+J(Y_1)\right) = -\frac t{x^2}+\frac 1
x+2t+x-tx^2.
\] 
We can also check that $K(x,y)$ divides $I(x)-J(y)$. Indeed,
\[ 
I(x)-J(y) = -\frac{K(x,y)K(\bx,y)}{ty(1+y)^2}.
\] 
The factor $K(\bx,y)$ shows that the pair $(I(x),J(y))$ also forms a
pair of invariants for the model $\{\rightarrow, \nwarrow, \leftarrow,
\searrow\}$ obtained by reflection in a vertical line.
\qee

\medskip

We now generalize this observation, by showing that
two models differing by a symmetry of the
square have (or have not) invariants simultaneously. Since these
 symmetries  are generated by 
the  reflections in the main diagonal and in the vertical axis, it
 suffices to consider these two cases.
\begin{Lemma}
\label{lem:invariant_invariants}
  Take  a model $\cS$ with kernel $K(x,y)$ and its diagonal reflection
  $\widetilde \cS$, with kernel
$
\widetilde K(x,y)= K(y,x).
$
Then $\widetilde \cS$ admits invariants if and only if $\cS$ does, and in this case
a possible choice is
$
\widetilde I(x)= J(x)$  and $\widetilde J(y)= I(y).
$
A similar statement holds for the vertical reflection $\overline{\cS}$, with
  kernel
$
\overline{K}(x,y)= x^2 K(\bx,y),
$
where $\bx:=1/x$. A possible choice is then
$
\overline{I}(x)= I(\bx)$ and  $ \overline{J}(y)= J(y).
$
\end{Lemma}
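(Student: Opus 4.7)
The natural approach is a direct verification based on the criterion of Definition~\ref{def:rat-inv}: a pair $(I(x),J(y))$ forms a pair of invariants for a model with kernel $K(x,y)$ exactly when $K(x,y)$ divides the numerator of $I(x)-J(y)$ (or equivalently, when $I(x)-J(y) = K(x,y)F(x,y)$ for some $F\in\qs(x,y,t)$), together with the non-triviality condition that $(I,J)\notin \qs(t)^2$. Both parts of the lemma then reduce to applying an involution to the identity
\begin{equation*}
I(x)-J(y)=K(x,y)\,F(x,y).
\end{equation*}

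For the diagonal reflection $\widetilde\cS$ (with $\widetilde K(x,y)=K(y,x)$), I would swap the roles of the two variables in the identity above: writing $\widetilde I(x)-\widetilde J(y) = J(x)-I(y) = -\bigl(I(y)-J(x)\bigr)$ and substituting $(x,y)\mapsto(y,x)$ in the invariant identity yields
\begin{equation*}
\widetilde I(x)-\widetilde J(y) = -K(y,x)\,F(y,x) = -\widetilde K(x,y)\, F(y,x),
\end{equation*}
so that $\widetilde K(x,y)$ divides the numerator of $\widetilde I(x)-\widetilde J(y)$. Non-triviality is preserved: if $I(x)\notin \qs(t)$ then $\widetilde J(y)=I(y)\notin \qs(t)$, and symmetrically for $J$.

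For the vertical reflection $\overline\cS$ (with $\overline K(x,y)=x^2K(\bx,y)$), the same idea works with the substitution $x\mapsto \bx$. Applying it to the defining identity gives
\begin{equation*}
\overline I(x)-\overline J(y) = I(\bx)-J(y) = K(\bx,y)\,F(\bx,y) = \frac{1}{x^2}\,\overline K(x,y)\,F(\bx,y),
\end{equation*}
and after clearing the $x^{-2}$ one sees that $\overline K(x,y)$ divides the numerator of $\overline I(x)-\overline J(y)$. Non-triviality carries over as before.

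I do not anticipate any serious obstacle here: the statement is essentially a symmetry observation, and both involutions act in a transparent way on the factorization identity furnished by Definition~\ref{def:rat-inv}. The only minor points are to make sure the factor $K$ remains in the numerator after clearing $x^2$ (it does, since $\overline K$ is a polynomial), and to confirm that the non-triviality condition is preserved in both cases, which is immediate from the fact that the transformations $J\mapsto J(\cdot)$ composed with $x\leftrightarrow y$ or $x\mapsto\bx$ send rational functions of $x$ (resp.\ $y$) to rational functions of $y$ (resp.\ $x$).
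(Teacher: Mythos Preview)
Your proposal is correct and is precisely the elementary verification the paper has in mind (the paper's own proof is the single sentence ``The proof is elementary''). Your use of the factorization criterion $I(x)-J(y)=K(x,y)F(x,y)$ together with the two involutions $(x,y)\mapsto(y,x)$ and $x\mapsto\bx$ is exactly the intended argument, and the non-triviality check is handled correctly.
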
   
\begin{proof}
 The proof is elementary.  
\end{proof}

\medskip
We can now tell exactly which models admit invariants. Note that it is
easy to \emm decide, whether a given pair $(I,J)$ is a pair of invariants: it
suffices to check whether $I(x)-J(y)$ has a factor $K(x,y)$. 
The following result tells us how to \emm construct, such pairs.

\begin{Theorem}\label{thm:finite-invariant}
A (possibly weighted) quadrant model $\cS$ has rational invariants if and only
 if the associated group $\mathcal G (\cS)$ defined
 by~\eqref{eq:generators} is finite.  

Assume this is the case, and let $H(x,y)$ be a
rational function in $\qs(x,y,t)$. 
Consider the rational function
\[ 
H_\sigma(x,y):= \sum_{\gamma \in \mathcal G (\cS)} H(\gamma(x,y)).
\] 
Then 
\[ 
I(x)=  H_\sigma(x,Y_0) \qquad \hbox{and}  \qquad J(y)=H_\sigma(X_0,y)
\] 
are respectively rational functions in $(t,x)$ and $(t,y)$, and they form a pair
of invariants, as long as they do not both belong to $\qs(t)$.
\end{Theorem}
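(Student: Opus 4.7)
The plan is to handle the two implications separately.

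\textbf{Sufficiency.} Assume $\cG(\cS)$ is finite. Fix $H \in \qs(x,y,t)$; since the group is finite, $H_\sigma$ is a well-defined rational function in $\qs(x,y,t)$, invariant under every $\gamma \in \cG(\cS)$, and in particular under the generators $\Phi$ and $\Psi$. The identity $\Psi(x,Y_0) = (x,Y_1)$ recalled in Section~\ref{sec:tools} then gives $H_\sigma(x,Y_0) = H_\sigma(x,Y_1)$, so this value is a symmetric function of the two roots of $K(x,\cdot)$; rewriting it via $Y_0+Y_1 = -b(x)/a(x)$ and $Y_0Y_1 = c(x)/a(x)$ shows that it lies in $\qs(x,t)$. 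Symmetrically, $\Phi(X_0,y) = (X_1,y)$ yields $J(y) \in \qs(y,t)$. Finally, whenever $K(x,y) = 0$, the membership $y \in \{Y_0,Y_1\}$ gives $H_\sigma(x,y) = I(x)$ while $x \in \{X_0,X_1\}$ gives $H_\sigma(x,y) = J(y)$; hence $I(x) = J(y)$ on the kernel curve, which is the condition of Definition~\ref{def:rat-inv}, provided $I$ and $J$ are not both in $\qs(t)$.

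\textbf{Necessity.} Suppose $(I,J)$ is an invariant pair. By Lemma~\ref{lem:xy-inv} the common value $I(x) = J(y)$ on the kernel curve defines a rational function $F$ on that curve. This $F$ is invariant under both generators of $\cG(\cS)$: since $\Phi$ fixes $y$ and swaps the two $x$-roots of $K(\cdot,y)=0$, we have $F\circ\Phi = I\circ \Phi_1 = J = F$ on the curve, and likewise for $\Psi$. Hence $F$ is $\cG(\cS)$-invariant on the kernel curve. Now invoke the standard analysis of this curve (as in \cite{BMM-10,Ra-12}): for non-singular step sets the curve has geometric genus one at generic $t$, and the birational action of $\cG(\cS)$ lifts to a group of translations on the associated elliptic curve, of the same cardinality. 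If $\cG(\cS)$ were infinite, the orbit of a generic point would be dense on a real component of the curve, forcing the meromorphic invariant $F$ to be constant; pulling back to $x$ and $y$ would give $I, J \in \qs(t)$, contrary to hypothesis. The few singular step sets can be handled by direct inspection, finishing the proof.

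\textbf{Main obstacle.} The sufficiency direction is a brief formal manipulation, once the action of $\Phi$ and $\Psi$ on the roots is laid out. The content of the theorem lies in the necessity direction, which rests on the genus-one (Weierstrass) parametrization of the kernel curve and the identification of the group action with translations by a fixed pair of periods — material that belongs to the complex-analytic development of Section~\ref{sec:analysis} and the references~\cite{BMM-10,Ra-12}. A subsidiary verification is to exhibit some $H$ (for instance $H(x,y) = x$ or $H(x,y) = y$) for which the resulting pair $(I,J)$ is not entirely in $\qs(t)$; generically this is automatic, but a brief case analysis is needed for the smallest groups.
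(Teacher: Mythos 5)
Your sufficiency argument is essentially the paper's: use $\Psi(x,Y_0)=(x,Y_1)$ and $\Phi(X_0,y)=(X_1,y)$ to see that $H_\sigma(x,Y_0)$ and $H_\sigma(X_0,y)$ are symmetric in the kernel roots, hence rational, and then identify $I(x)=J(y)$ on the kernel curve.

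Your necessity argument, however, takes a genuinely different and considerably heavier route than the paper, and as written it has real gaps. The paper's necessity proof stays entirely inside the algebraic framework: it observes that the invariance of $I$ under the orbit of $(x,Y_0)$ (via the form of $\Phi,\Psi$) gives $I(x')=I(x)$ for every first coordinate $x'$ in the orbit; it then proves directly that this orbit is infinite when $\cG(\cS)$ is, because a nontrivial $\gamma\in\cG(\cS)$ fixing $(x,Y_0)$ would force $r(x,y)=x$ as rational functions (the coordinate functions of $\gamma$ contain no $t$, while $K$ does), hence $\gamma=\id$; finally a non-constant rational function $I$ cannot take one value at infinitely many distinct series $x'$. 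This argument is short, self-contained, and works uniformly for weighted models and for the singular step sets. Your proof instead invokes the genus-one uniformization of the kernel curve, the identification of $\Theta=\Psi\Phi$ with a translation on the elliptic curve, and density of the orbit for a non-torsion translation. None of this is proved in your proposal, and pointing to \cite{BMM-10,Ra-12} is problematic because those facts are developed (in this very paper) only in Section~\ref{sec:analysis} for non-singular, unweighted models with real $t\in(0,1/|\cS|)$; the theorem here asserts a purely algebraic equivalence for possibly weighted models. In particular, for the five singular step sets the kernel curve degenerates to genus zero, so the elliptic-curve translation picture you lean on simply does not apply, and ``direct inspection'' is not an argument. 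You also leave the final non-triviality step (``$I,J$ not both in $\qs(t)$ for some $H$'') as a remark, whereas the paper gives an explicit argument with $H^{(k)}(x,y)=x^k$: if every $I^{(k)}(x)=2\sum_i x_i^k$ were in $\qs(t)$, then all elementary symmetric functions of the $x_i$'s would be, making each $x_i$ algebraic over $\qs(t)$ — contradicting $x_0=x$. In short: the elementary orbit-counting argument the paper uses is exactly what makes this theorem live in the algebraic part of the paper; replacing it by the elliptic-curve machinery, without supplying the proofs and without handling the singular and weighted degenerations, leaves the necessity direction incomplete.
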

\begin{proof}
Assume that the model has invariants $I(x),J(y)$, with
$I(x)\not \in \qs(t)$.  
If $(x',y')$ is any element in the orbit of
$(x,Y_0)$, then $K(x',y')=0$, hence $I(x')=J(y')$. But the form of
$\Phi$ and~$\Psi$ implies by transitivity 
that  $I(x')=I(x)$ and
$J(y')=J(y)$.  

Assume that the group of the model is infinite.  Then the orbit
of $(x,Y_0)$ is infinite as well. Indeed, if it were finite, then
there would exist $\gamma\in  \mathcal G (\cS)$, different from the identity, such that
$\gamma(x,Y_0)=(x,Y_0)$. Denoting $\gamma(x,y)=(r(x,y), s(x,y))$,
where both coordinates $r$ and $s$ are in $\qs(x,y)$, this would mean
in particular that $r(x,y)-x$ vanishes at $y=Y_0$, forcing this rational function to zero, or to have a factor
$K(x,y)$ in its numerator. But this is impossible since $r(x,y)$ does not involve the
variable $t$ (while $K$ does), hence $r(x,y)=x$. By the same argument, $s(x,y)=y$,
hence $\gamma$ is the identity, which contradicts our assumption.
Hence the orbit of $(x,Y_0)$ is infinite. This implies that infinitely
many series~$x'$ occur in it (as the first coordinate of a pair), and thus the equation
(in $x'$)
$I(x')=I(x)$ has infinitely many solutions. This is clearly impossible
since we have assumed that $I(x)\not \in \qs(t)$.

Conversely, take a model with finite group, a rational function $H(x,y)$  in
$\qs(x,y,t)$, and define $H_\sigma$ as above. For instance, for a model
$\cS$ with a vertical symmetry, $\mathcal G (\cS)$ has 
order 4, and the orbit of $(x,Y_0)$ reads:
\[ 
(x,Y_0)  {\overset{\Phi}{\longleftrightarrow}} (\bx, Y_0)
{\overset{\Psi}{\longleftrightarrow}} (\bx, Y_1)
{\overset{\Phi}{\longleftrightarrow}} (x, Y_1).
\] 
 Thus if we take $H(x,y)=x$, then $H_\sigma(x,y)=2(x+\bx)$ and $J(y)=2(X_0+X_1)=-2\frac{\widetilde b(y)}{\widetilde a(y)}$.

Returning to a general group, observe that $H_\sigma$ takes the same value,
by construction, on all elements of the orbit of $(x,y)$. In
particular, $H_\sigma(x,Y_0)=H_\sigma(x,Y_1)$. Hence the
above defined function   $I(x)$ is rational in $x$ and $t$. Analogously,  $J(y)$ is rational in $y$ and $t$.
Moreover,  $J(Y_0)$, being the sum of $H$ over the orbit
of $(x,Y_0)$, coincides with $I(x)$, and by Lemma~\ref{lem:equiv-equiv}, $(I,J)$ is a pair of invariants (unless $I$ and $J$ both  depend on $t$ only).

For instance, for the reverse Kreweras model $ \left\{ \rightarrow, \uparrow,
    \swarrow\right\}$, and $H(x,y)=x$, we find $I(x)=J(y)=1/t$. But
    taking instead  $H(x,y)=1/x$ gives true invariants:
\[ 
I(x)= \bx + x/t-x^2 , \qquad J(y)= \by +y/t-y^2.
\]

Let us finally prove that, for any (possibly weighted) model, there exists
  $k\ge 1$ such that the function   $I^{(k)}(x)$ obtained from the function
  $H^{(k)}(x,y)=x^k$ actually depends on $x$. Assume this is not the
  case. Let $\cG(\cS)$ have order $2n$, and let $x_0=x, x_1, \ldots,
  x_{n-1}$ be the $n$ distinct series~$x'$ that occur in the orbit of
  $(x,Y_0)$ as the first coordinate of some pair. Then by assumption, $I^{(k)}(x)= 2\sum _{i=0}^{n-1} x_i^k$ is
  an element of $\qs(t)$ for all $k$, which shows that all symmetric functions of
  the $x_i$'s depend on $t$ only. This implies that each $x_i$ is an
  algebraic function of $t$ only, which is impossible since $x_0=x$.
 \end{proof}

Since one of the main objectives of this section is to obtain a uniform solution for algebraic quadrant models,
we only give explicit invariants for the four algebraic
  (unweighted) models (see Table~\ref{tab:ratinv}). The remaining $19$ models with a
finite group either have a vertical symmetry (in which case they admit
$I(x)=x+\bx$ as $x$-invariant), or differ from an algebraic
model by a symmetry of the square (in which case
Lemma~\ref{lem:invariant_invariants} applies). Invariants for the four weighted models of Figure~\ref{fig:alg_models} are given in
Table~\ref{tab:decoupling_functions-finite}.

\newcommand\Tstrut{\rule{0pt}{7.0ex}}         % = `top' strut
\newcommand\Bstrut{\rule[-1.9ex]{0pt}{2.0ex}}

\begin{table}[htb]

\small
 \begin{tabular}{|l|c|c|c|c|c|}
\hline
& \begin{tikzpicture}[scale=.4] % q6654
    \draw[->] (0,0) -- (1,1);
    \draw[->] (0,0) -- (-1,0);
    \draw[->] (0,0) -- (0,-1);
  \end{tikzpicture}% Kreweras 
&\begin{tikzpicture}[scale=.4] % q6654
    \draw[->] (0,0) -- (-1,-1);
    \draw[->] (0,0) -- (1,0);
    \draw[->] (0,0) -- (0,1);
  \end{tikzpicture}% reverse Kreweras  
 &\begin{tikzpicture}[scale=.4] % 
    \draw[->] (0,0) -- (-1,-1);
    \draw[->] (0,0) -- (1,1);
    \draw[->] (0,0) -- (-1,0);
    \draw[->] (0,0) -- (1,0);
    \draw[->] (0,0) -- (0,-1);
    \draw[->] (0,0) -- (0,1);
  \end{tikzpicture}% double Kreweras 
 &\begin{tikzpicture}[scale=.4] % 
    \draw[->] (0,0) -- (1,1);
    \draw[->] (0,0) -- (-1,-1);
    \draw[->] (0,0) -- (-1,0);
    \draw[->] (0,0) -- (1,0);
  \end{tikzpicture}% Gessel  
\Tstrut
\\
\hline
 $I$& 
 $\frac{t}{x^2}-\frac{1}{x}-t x$ &  $tx^2-x-\frac{t}{x}$&
$\frac{t}{x}-t x-  \frac{1+2t}{1+x}$
&$x+\frac{1}{x}-tx^2-\frac{t}{x^2}+2t$ {\rule{0pt}{3.0ex}}   
\\ & & & &
\\$J$& 
 $\frac{t}{y^2}-\frac{1}{y}-ty$&$ty^2-y-\frac{t}{y}$&
$\frac{t}{y}-ty-\frac{1+2t}{1+y}$  &
$\frac y {t(1+y)^2} + t\frac{(1+y)^2}{y} $
 \Bstrut
\\
\hline
  \end{tabular}
\medskip
 \caption{Rational invariants for algebraic unweighted
   models.}
 \label{tab:ratinv}
\end{table}
\normalsize

\renewcommand\Tstrut{\rule{0pt}{3.0ex}}         % = `top' strut
\begin{table}[htb]
\small
\begin{tabular}[h!]{|c|c|c|c|}
\hline
&  \begin{tikzpicture}[scale=.3] % q6654
    \draw[->] (0,0) -- (-1,0) node[left] {$\scriptstyle 1$};
    \draw[->] (0,0) -- (-1,-1) node[left] {$\scriptstyle 1$};
    \draw[->] (0,0) -- (0,-1) node[below] {$\scriptstyle \lambda$};
    \draw[->] (0,0) -- (1,-1) node[right] {$\scriptstyle 1$};
    \draw[->] (0,0) -- (1,0) node[right] {$\scriptstyle 2$};
    \draw[->] (0,0) -- (1,1) node[right] {$\scriptstyle 1$};
  \end{tikzpicture}&\begin{tikzpicture}[scale=.3] % q6654
    \draw[->] (0,0) -- (-1,0) node[left] {$\scriptstyle 1$};
    \draw[->] (0,0) -- (-1,1) node[left] {$\scriptstyle 1$};
    \draw[->] (0,0) -- (0,1) node[above] {$\scriptstyle 2$};
    \draw[->] (0,0) -- (1,1) node[right] {$\scriptstyle 1$};
    \draw[->] (0,0) -- (1,0) node[right] {$\scriptstyle 2$};
    \draw[->] (0,0) -- (1,-1) node[right] {$\scriptstyle 1$};
    \draw[->] (0,0) -- (0,-1) node[below] {$\scriptstyle 1$};
  \end{tikzpicture} & \begin{tikzpicture}[scale=.3] % q6654
    \draw[->] (0,0) -- (-1,0) node[left] {$\scriptstyle 2$};
    \draw[->] (0,0) -- (-1,1) node[left] {$\scriptstyle 1$};
    \draw[->] (0,0) -- (0,1) node[above] {$\scriptstyle 1$};
    \draw[->] (0,0) -- (-1,-1) node[left] {$\scriptstyle 1$};
    \draw[->] (0,0) -- (1,0) node[right] {$\scriptstyle 1$};
    \draw[->] (0,0) -- (1,-1) node[right] {$\scriptstyle 1$};
    \draw[->] (0,0) -- (0,-1) node[below] {$\scriptstyle 2$};
  \end{tikzpicture}\ \ 
    \begin{tikzpicture}[scale=.3] % q6654
    \draw[->] (0,0) -- (-1,0) node[left] {$\scriptstyle 2$};
    \draw[->] (0,0) -- (-1,1) node[left] {$\scriptstyle 1$};
    \draw[->] (0,0) -- (0,1) node[above] {$\scriptstyle 2$};
    \draw[->] (0,0) -- (1,1) node[right] {$\scriptstyle 1$};
    \draw[->] (0,0) -- (1,0) node[right] {$\scriptstyle 1$};
    \draw[->] (0,0) -- (0,-1) node[below] {$\scriptstyle 1$};
        \draw[->] (0,0) -- (-1,-1) node[left] {$\scriptstyle 1$};
  \end{tikzpicture} 
\\
\hline
$I$ &{$-\frac {t^2}{x^2}+\frac t x-t^2+tx(1+\lambda t) $}
%$\frac{t}{x^2}-\frac{1}{x} -x(1+\lambda t)$
&$\frac{{t}^{2}}{x^{2}}- \frac{( 1+2t) t}{x}-{ { (1+ 3t
 ) t}x}-{\frac { ( 1+3t )  (1+ 4t ) 
}{x+1}}+{\frac { (1+ 3t ) ^{2}}{ ( x+1 ) ^{2}}}$ &see Lemma~\ref{lem:invariant_invariants} \Tstrut
\\ & & & and the
\\
$J$ & $t^2y +\frac{1+\lambda t}{y+1}- \left(\frac{1+\lambda
    t}{y+1}\right)^2$&
$\frac{{t}^{2}}{y^{2}}- \frac{( 1+2t) t}{y}-{ { (1+ 3t
 ) t}y}-{\frac { ( 1+3t )  ( 1+4t ) 
}{y+1}}+{\frac { ( 1+3t ) ^{2}}{ ( y+1 ) ^{2}}}$ & previous model \Bstrut
\\
\hline
  \end{tabular}
\medskip
  \caption{Rational invariants for weighted models.}
  \label{tab:ratinv-weighted}
\end{table}
\normalsize

In Section~\ref{sec:analysis}, we introduce a weaker notion of {(possibly
non-rational)}   invariants, which guarantees that any {non-singular} quadrant
model now has a weak invariant. One key difference with the
algebraic setting of this section is that the new notion is
analytic in nature.

%==============================================
 \subsection{Decoupling functions}
 \label{subsec:decoupling_functions}
%==============================================
We now return to  the
identity~\eqref{decouple-gessel}, which we first formalize into an
apparently more demanding condition.
\begin{Definition}\label{def:decoupled}
  A quadrant model  is \emm decoupled, if there exist
  rational functions $F(x) \in \qs(x,t)$ and $G(y)\in \qs(y,t)$ such
  that $xy\equiv F(x)+G(y)$.
   The functions $F(x)$ and   $G(y)$ are said to form a \emm  decoupling pair, for the model.
\end{Definition}

Again, this is equivalent to a statement involving a single function
$G(y)$, as used in the previous section (see~\eqref{decouple-gessel}).

\begin{Lemma}\label{lem:dec-weak}
  Assume that there exists a rational function $G(y) \in \qs(y,t)$
  such that 
\beq\label{G-eq}
xY_0-xY_1= G(Y_0)-G(Y_1),
\eeq
where $Y_0$ and $Y_1$ are the
  roots of the kernel $K(x,y)$, solved for $y$. Define
  $F(x):=xY_0-G(Y_0)=xY_1-G(Y_1)$. Then $F(x) \in \qs(x,t)$, and
  $(F(x), G(y))$ is a decoupling pair for the model.
\end{Lemma}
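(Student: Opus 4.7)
The plan is to verify the three claims in order: that $F(x)$ is well-defined, that it lives in $\qs(x,t)$, and that $(F,G)$ satisfies the decoupling identity of Definition~\ref{def:decoupled}.

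First I would note that the well-definedness of $F(x):=xY_0-G(Y_0)$ is immediate from the hypothesis: rearranging $xY_0-xY_1=G(Y_0)-G(Y_1)$ as $xY_0-G(Y_0)=xY_1-G(Y_1)$ shows that the two prescribed expressions agree. So one can equally write
$$F(x)=\tfrac{1}{2}\bigl[(xY_0-G(Y_0))+(xY_1-G(Y_1))\bigr]=\frac{x(Y_0+Y_1)}{2}-\frac{G(Y_0)+G(Y_1)}{2}.$$

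Second, to obtain $F(x)\in\qs(x,t)$, I would invoke the standard fact that a symmetric rational function of $Y_0,Y_1$ is a rational function of the elementary symmetric polynomials $Y_0+Y_1=-b(x)/a(x)$ and $Y_0Y_1=c(x)/a(x)$, where $a,b,c$ are as in~\eqref{K-abc}. The term $x(Y_0+Y_1)/2$ is clearly in $\qs(x,t)$. For the other term, writing $G=P/Q$ with $P,Q\in\qs[y,t]$, one has
$$G(Y_0)+G(Y_1)=\frac{P(Y_0)Q(Y_1)+P(Y_1)Q(Y_0)}{Q(Y_0)Q(Y_1)},$$
whose numerator and denominator are symmetric polynomials in $Y_0,Y_1$ with coefficients in $\qs(x,t)$, and hence belong to $\qs(x,t)$. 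Thus $F(x)\in\qs(x,t)$.

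Third, to check the decoupling identity, I would observe that whenever $K(x,y)=0$ one has $y\in\{Y_0,Y_1\}$, and for either choice $y=Y_i$,
$$F(x)+G(y)=\bigl(xY_i-G(Y_i)\bigr)+G(Y_i)=xY_i=xy.$$
Hence $F(x)+G(y)-xy$ vanishes on the zero set of $K(x,y)$, so its numerator (as an irreducible fraction) is divisible by the irreducible polynomial $K(x,y)$, matching Definition~\ref{def:decoupled}.

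The only genuinely non-routine point is the rationality step, and even there the obstacle is mild: one simply needs the clean symmetric-function argument above rather than any analysis of the explicit series expansions of $Y_0,Y_1$. Everything else is direct substitution.
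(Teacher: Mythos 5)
Your proof is correct and follows essentially the same route as the paper's: express $F$ as the symmetric average $\frac12\big((xY_0-G(Y_0))+(xY_1-G(Y_1))\big)$, deduce rationality from symmetry in $Y_0,Y_1$, and observe that the identity $F(x)+G(Y_i)=xY_i$ is exactly the condition of Definition~\ref{def:decoupled}. The only slip is your parenthetical remark that $K(x,y)$ is ``the irreducible polynomial''; $K(x,y)=xy\big(tP(x,y)-1\big)$ is manifestly reducible. This is harmless since you have already verified the first formulation of the decoupling condition (that $F(x)+G(y)=xy$ whenever $K(x,y)=0$), and the definition itself states the equivalence with the divisibility formulation, but the word ``irreducible'' should be dropped.
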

\begin{proof}
  We have 
\beq\label{F-G}
F(x)= \frac 1 2 \left( xY_0-G(Y_0)+xY_1-G(Y_1)\right),
\eeq
hence $F(x)$ is a rational function of $x$ and $t$ since it is
symmetric in $Y_0$ and $Y_1$. By Lemma~\ref{lem:equiv-equiv}, the property  $F(x)=xY_i-G(Y_i)$ tells
us precisely that $F(x)+G(y)\equiv xy$.
\end{proof}

\noindent{\bf Example.} In Gessel's case, we had $G(y)=-1/(t(1+y))$ (see~\eqref{decouple-gessel}),
corresponding to $F(x)=1/t-1/x$. \qee

\medskip
\noindent {\bf Remark.} By combining~\eqref{SYi} and~\eqref{G-eq}, we see
that if both series $Q(x,Y_i)$ are well defined, then
\[ 
S(Y_0)-G(Y_0)=S(Y_1)-G(Y_1),
\] 
with $S(y)=K(0,y)Q(0,y)$. In Tutte's terminology, this would make
$S-G$ a second ``invariant''. But our terminology is more restrictive,
as our invariants must be rational.

\medskip
Now, which of the $79$ quadrant models are decoupled? Not all, at any rate: for any model
that has a vertical symmetry, the series $Y_i$ are symmetric in $x$
and $1/x$, and so any expression for 
$x$ of the form $
(G(Y_0)-G(Y_1))/(Y_0-Y_1)$ would be at the same time an expression for
$1/x$. 

In the case of a finite group, we give in Theorem~\ref{Thm:decoupling_orbit-sum}
below a criterion for the existence of a decoupling pair, as well as
{an explicit pair} when the criterion holds. This shows that exactly four of
the $23$ finite group models are decoupled  (and these are, as one can expect from the algebraicity
result of Section~\ref{sec:gessel}, those with an
algebraic \gf). The four weighted models of Figure~\ref{fig:alg_models}, right, are
also decoupled. 

For models with an infinite
group, we have first resorted to an experimental approach to construct  decoupling functions.
Indeed, one can try to prescribe the form of the partial fraction expansion of $G(y)$: we first set
\[ 
G(y)= \sum_{i=1}^d a_i y^i + \sum_{i=1}^m \sum_{e=1}^{d_i}
\frac{\alpha_{i,e}}{(y-r_i)^e},
\] 
for fixed values of $d$, $m$, $d_1,  \ldots, d_m$,  with the
values $r_i$ of the poles and the coefficients $a_i$ and
$\alpha_{i,e}$ being yet to determine.
% We then compute $F(x)$ using~\eqref{F-G} and {require} that the numerator of $F(x)+G(y)-xy$ is divisible by $K(x,y)$.
We then express $(G(Y_0)-G(Y_1))/(Y_0-Y_1)$ as a rational function in $t$, $x$, the $r_i$ and $\alpha_{i,e}$,  and require that this is equal to $x$. This gives a system of polynomial equations
relating the $a_i$, $\alpha_{i,e}$ and $r_i$. Solving this system
tells us whether the model has a decoupled pair  for our
choice of $d$, $m$ and the $d_i$.

In this way, we discovered 9 decoupled models among the 56 that
have an infinite group. We could then prove that there are no others. 
The following theorem summarizes our results.

\begin{Theorem}
\label{thm:identification_decoupling}
Among the $79$ %relevant
quadrant models, exactly $13$ are  decoupled: the $4$ %algebraic
 models of Figure~\ref{fig:alg_models}, left, and the $9$
%non-D-finite 
models of Table~\ref{tab:decoupling_functions-infinite}.
Moreover, the $4$ weighted models shown on the right of
Figure~\ref{fig:alg_models} are also  decoupled.
\end{Theorem}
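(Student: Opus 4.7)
The plan is to split the proof into a \emph{positive} direction (exhibiting decoupling pairs) and a \emph{negative} direction (ruling out their existence). For the positive direction, for each of the $13$ unweighted and $4$ weighted models claimed to be decoupled, I simply exhibit an explicit pair $(F(x),G(y))$: for the four algebraic cases I read them off from~\eqref{decouple-gessel} and its analogues (using the invariants of Table~\ref{tab:ratinv}), for the nine infinite-group cases I display them in Table~\ref{tab:decoupling_functions-infinite}, and for the weighted ones in Table~\ref{tab:decoupling_functions-finite}. In each case verification reduces, by Definition~\ref{def:decoupled}, to checking that the numerator of $F(x)+G(y)-xy$ (in lowest terms) is divisible by $K(x,y)$, which is a routine polynomial computation.

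The negative direction is the substantive part. I first dispose of every step set possessing a vertical axis of symmetry. For any such model the roots $Y_0,Y_1$ of $K(x,y)=0$ are invariant under $x\mapsto 1/x$, so if a decoupling $G$ existed, Lemma~\ref{lem:dec-weak} and the identity $x(Y_0-Y_1)=G(Y_0)-G(Y_1)$ would yield, after substitution $x\mapsto 1/x$, the relation $(x-1/x)(Y_0-Y_1)=0$, which is impossible since $Y_0\neq Y_1$. This single argument eliminates a large fraction of the $66$ non-decoupled models.

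For the remaining non-symmetric models I study the linear map
\[
\Lambda:\ G(y)\longmapsto \frac{G(Y_0)-G(Y_1)}{Y_0-Y_1}\in\qs(x,t),
\]
which is well-defined because the numerator is antisymmetric in $(Y_0,Y_1)$; the task is to show that $x$ is not in its image. Writing a candidate in partial-fraction form
\[
G(y)=\sum_{i=1}^{d} a_i y^i+\sum_{i=1}^{m}\sum_{e=1}^{d_i}\frac{\alpha_{i,e}}{(y-r_i)^e},
\]
I use the local behavior of $Y_0(x)$ and $Y_1(x)$ near $x=0$, $x=\infty$ and near the branch points of the kernel (obtained by the Newton-polygon expansions already used in the proof of Lemma~\ref{lem:conv-gessel}) to extract \emph{a priori} bounds on $d$, $m$ and the $d_i$ in terms of the step set. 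Once these bounds are in hand, the equation $\Lambda(G)=x$ becomes a finite algebraic system in the parameters $a_i,\alpha_{i,e},r_i$, whose inconsistency can be checked by Gröbner-basis elimination model by model. I expect the extraction of tight effective bounds to be the main obstacle, since one must preclude decouplings that would exploit cancellations between many poles; the finite case verification, once the search space is bounded, is essentially mechanical and mirrors the experimental procedure described before the statement.

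Finally, for the four weighted models of Figure~\ref{fig:alg_models} (right) only the positive claim is needed, and the decoupling pairs listed in Table~\ref{tab:decoupling_functions-finite} are again verified directly. Combining these three ingredients establishes the full classification.
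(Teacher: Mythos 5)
Your positive direction and the elimination of vertically symmetric models are sound and in fact mirror remarks in the paper (the vertical-symmetry argument is exactly the one given informally just after Lemma~\ref{lem:dec-weak}). However, this only disposes of the $16$ finite-group models with a vertical axis; it leaves $3$ finite-group models and all $47$ non-decoupled infinite-group models untreated, and it is precisely there that your proposal has a genuine gap.

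For the finite-group case, the paper establishes a clean necessary and sufficient criterion (Theorem~\ref{Thm:decoupling_orbit-sum}): $H(x,y)=xy$ is decoupled if and only if the alternating orbit sum $H_\alpha(x,y)$ vanishes, and in that case an explicit decoupling pair is built from a specific element $\tau$ of the group algebra. Checking this on the $23$ finite-group models is a finite computation with no degree bounds needed, and it is what the paper does. Your proposal does not use the group structure at all, so you miss this clean route for the three non-symmetric, non-algebraic finite-group models (the ones of Figure~\ref{fig:transc-models}).

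For the infinite-group case, your plan is to write $G$ in partial-fraction form, extract a priori bounds on the degree, the number of poles, and the pole orders, and then verify by elimination that the resulting finite system is inconsistent. You yourself flag the extraction of these bounds as ``the main obstacle,'' and indeed this is not a proof: without the bounds, the search space is infinite and nothing is ruled out. The paper does not bound anything a priori; instead it argues \emph{ad absurdum} using two concrete mechanisms. First, Lemma~\ref{lem:finite-orbit} shows that any pole of $F$ must have a finite $x$-orbit, and the $x$-orbits at the relevant points are shown to be infinite by explicit asymptotic expansions in $t$ (Methods 1 and 2 in Section~\ref{sec:infinite}). Second, when the pole argument is exhausted, the paper feeds the asymptotics $Y_i(x)\sim |x|^{1/2}$ or $Y_i(x)\sim -x$ through the decoupling identity to force $F$ or $G$ to grow like $|x|^{3/2}$ or $|y|^{1/2}$, which is impossible for a rational function (Methods 3, 4 and 5). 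Your proposal contains neither of these ideas, and the Gröbner-elimination step you envision cannot be carried out until the missing bounds are supplied. In short, what you have for the $50$ models that are neither decoupled nor vertically symmetric is a search strategy, not an argument.
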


The rest of Section~\ref{subsec:decoupling_functions} is devoted to
proving the above theorem. Tables~\ref{tab:decoupling_functions-finite}
and~\ref{tab:decoupling_functions-infinite} give explicit 
decoupling pairs, respectively for finite and infinite groups. One can
easily check that they satisfy Definition~\ref{def:decoupled}. The key
point is then to prove that there are no other (unweighted) decoupled models. To prove this, we consider separately the finite and
infinite group cases.

\begin{table}[ht]
\begin{center}
% \small
\begin{tabular}{|c|c|c|c|c||c|c|}
\hline
Model & \begin{tikzpicture}[scale=.4] % q6654
    \draw[->] (0,0) -- (1,1);
    \draw[->] (0,0) -- (-1,0);
    \draw[->] (0,0) -- (0,-1);
        \draw[-] (0,-1) -- (0,-1) node[below] {\phantom{$\scriptstyle 1$}};
  \end{tikzpicture} &  \begin{tikzpicture}[scale=.4] % q6654
    \draw[->] (0,0) -- (0,1);
    \draw[->] (0,0) -- (-1,-1);
    \draw[->] (0,0) -- (1,0);
    \draw[-] (0,-1) -- (0,-1) node[below] {\phantom{$\scriptstyle 1$}};
  \end{tikzpicture} &     \begin{tikzpicture}[scale=.4] % q6654
      \draw[->] (0,0) -- (0,-1);
    \draw[->] (0,0) -- (1,1);
    \draw[->] (0,0) -- (-1,0);
    \draw[->] (0,0) -- (0,1);
    \draw[->] (0,0) -- (-1,-1);
    \draw[->] (0,0) -- (1,0);
    \draw[-] (0,-1) -- (0,-1) node[below] {\phantom{$\scriptstyle 1$}};
  \end{tikzpicture} &   \begin{tikzpicture}[scale=.4] % q6654
      \draw[->] (0,0) -- (-1,-1);
    \draw[->] (0,0) -- (1,1);
    \draw[->] (0,0) -- (-1,0);
    \draw[->] (0,0) -- (1,1);
    \draw[->] (0,0) -- (1,0);
    \draw[-] (0,-1) -- (0,-1) node[below] {\phantom{$\scriptstyle 1$}};
  \end{tikzpicture} & \begin{tikzpicture}[scale=.3] % q6654
    \draw[->] (0,0) -- (-1,0) node[left] {$\scriptstyle 1$};
    \draw[->] (0,0) -- (-1,-1) node[left] {$\scriptstyle 1$};
    \draw[->] (0,0) -- (0,-1) node[below] {$\scriptstyle \lambda$};
    \draw[->] (0,0) -- (1,-1) node[right] {$\scriptstyle 1$};
    \draw[->] (0,0) -- (1,0) node[right] {$\scriptstyle 2$};
    \draw[->] (0,0) -- (1,1) node[right] {$\scriptstyle 1$};
  \end{tikzpicture} & \begin{tikzpicture}[scale=.3] % q6654
    \draw[->] (0,0) -- (-1,0) node[left] {$\scriptstyle 1$};
    \draw[->] (0,0) -- (-1,1) node[left] {$\scriptstyle 1$};
    \draw[->] (0,0) -- (0,1) node[above] {$\scriptstyle 2$};
    \draw[->] (0,0) -- (1,1) node[right] {$\scriptstyle 1$};
    \draw[->] (0,0) -- (1,0) node[right] {$\scriptstyle 2$};
    \draw[->] (0,0) -- (1,-1) node[right] {$\scriptstyle 1$};
    \draw[->] (0,0) -- (0,-1) node[below] {$\scriptstyle 1$};
   \end{tikzpicture}  \\
\hline
$F$& $-\frac{1}{x}+\frac 1 t  $& $\frac x t -x^2$&
$\frac{x-t-tx^2}{t(1+x)}$
&$-\frac{1}{x}  +\frac  1t  $                    
& $-x-\frac 1 x +\frac 1 t$ 
& $-x+\frac{1}{x}-\frac{1+3t}{t(1+x)}+ \frac{1+4t}t$ \Tstrut\Bstrut
\\
%\hline
$G$& $-\frac{1}{y}$& $-\frac{1}{y}$& $-\frac{1}{y}$
 &$- \frac1{t(1+y)}$ &$-\frac{1+\lambda t}{t(1+y)}$
& $-y+\frac{1}{y}-\frac{1+3t}{t(1+y)}$ \Bstrut
 \\ 
\hline
\end{tabular}

\vskip 2mm
\begin{tabular}{|c|c|}
\hline
%Model &
  \begin{tikzpicture}[scale=.3] % q6654
    \draw[->] (0,0) -- (-1,0) node[left] {$\scriptstyle 2$};
    \draw[->] (0,0) -- (-1,1) node[left] {$\scriptstyle 1$};
    \draw[->] (0,0) -- (0,1) node[above] {$\scriptstyle 1$};
    \draw[->] (0,0) -- (-1,-1) node[left] {$\scriptstyle 1$};
    \draw[->] (0,0) -- (1,0) node[right] {$\scriptstyle 1$};
    \draw[->] (0,0) -- (1,-1) node[right] {$\scriptstyle 1$};
    \draw[->] (0,0) -- (0,-1) node[below] {$\scriptstyle 2$};
  \end{tikzpicture} & \begin{tikzpicture}[scale=.3] % q6654
    \draw[->] (0,0) -- (-1,0) node[left] {$\scriptstyle 2$};
    \draw[->] (0,0) -- (-1,1) node[left] {$\scriptstyle 1$};
    \draw[->] (0,0) -- (0,1) node[above] {$\scriptstyle 2$};
    \draw[->] (0,0) -- (1,1) node[right] {$\scriptstyle 1$};
    \draw[->] (0,0) -- (1,0) node[right] {$\scriptstyle 1$};
    \draw[->] (0,0) -- (0,-1) node[below] {$\scriptstyle 1$};
        \draw[->] (0,0) -- (-1,-1) node[left] {$\scriptstyle 1$};
  \end{tikzpicture}  \\
\hline
%
%$F$&  
$
  \frac{(1+3t)^2}{t^2(1+x)^2}-\frac{(1+2t)(1+3t)}{t^2(1+x)}+\frac{1+2t}t
  -x$
&$-x-\frac{1+3t}{t(1+x)}+\frac{1+t}t$ \Tstrut\Bstrut\\
%\hline
%$G$&  
$-y^2 +\frac{ y(1+{t})}t+\frac{1+3t}{yt}$ & $-y-\frac{1}{y}$ \Bstrut\\ 
\hline
\end{tabular}
\end{center}
\medskip
  \caption{Decoupling functions for algebraic models
(unweighted or weighted).}
  \label{tab:decoupling_functions-finite}
\end{table}

%\vspace{-50mm}
\renewcommand\Tstrut{\rule{0pt}{9.0ex}}         % = `top' strut
 \begin{table}[h!]
% \small
\begin{center}
\begin{tabular}{|c|c|c|c|c|c|c|}
\hline
%&&&&&&\\
Model & \begin{tikzpicture}[scale=.4] % q6654
    \draw[->] (0,0) -- (0,1);
    \draw[->] (0,0) -- (1,0);
    \draw[->] (0,0) -- (-1,0);
    \draw[->] (0,0) -- (-1,-1);
    \draw[-] (0,-1) -- (0,-1) node[below] {$\scriptstyle \# 1$};
  \end{tikzpicture} &   \begin{tikzpicture}[scale=.4] % q6654
    \draw[->] (0,0) -- (0,1);
    \draw[->] (0,0) -- (1,0);
    \draw[->] (0,0) -- (-1,1);
    \draw[->] (0,0) -- (-1,-1);
    \draw[-] (0,-1) -- (0,-1) node[below] {$\scriptstyle \# 2$};
  \end{tikzpicture} &     \begin{tikzpicture}[scale=.4] % q6654
    \draw[->] (0,0) -- (0,1);
    \draw[->] (0,0) -- (1,1);
    \draw[->] (0,0) -- (0,-1);
    \draw[->] (0,0) -- (-1,0);
    \draw[-] (0,-1) -- (0,-1) node[below] {$\scriptstyle \# 3$};
  \end{tikzpicture} & \begin{tikzpicture}[scale=.4] % q6654
    \draw[->] (0,0) -- (0,1);
    \draw[->] (0,0) -- (1,0);
    \draw[->] (0,0) -- (1,-1);
    \draw[->] (0,0) -- (-1,0);
    \draw[-] (0,-1) -- (0,-1) node[below] {$\scriptstyle \# 4$};
  \end{tikzpicture} &     \begin{tikzpicture}[scale=.4] % q6654
    \draw[->] (0,0) -- (0,1);
    \draw[->] (0,0) -- (1,0);
    \draw[->] (0,0) -- (1,1);
    \draw[->] (0,0) -- (-1,-1);
    \draw[->] (0,0) -- (-1,0);
    \draw[-] (0,-1) -- (0,-1) node[below] {$\scriptstyle \# 5$};
  \end{tikzpicture} &       \begin{tikzpicture}[scale=.4] % q6654
    \draw[->] (0,0) -- (0,1);
    \draw[->] (0,0) -- (0,-1);
    \draw[->] (0,0) -- (1,1);
    \draw[->] (0,0) -- (-1,-1);
    \draw[->] (0,0) -- (-1,0);
    \draw[-] (0,-1) -- (0,-1) node[below] {$\scriptstyle \# 6$};
  \end{tikzpicture} \Tstrut\\
\hline
$F$& $-x^2+\frac{x}{t}-1$ &$-x^2+\frac{x}{t}$ &$-\frac{1}{x}+\frac 1 t$
&$\frac{1}{x^2}-\frac{1}{xt}-x+1$ 
& $-\frac{1}{x}+\frac 1 t +1$ &$\frac{x-t}{t(x+1)}$ 
\renewcommand\Tstrut{\rule{0pt}{3.0ex}}         % = `top' strut
\Tstrut \Bstrut\\
%\hline
$G$& $-\frac{1}{y}$ &$-y-\frac{1}{y}$ &$-y-\frac{1}{y}$ & $-y^2+\frac{y}{t}+\frac{1}{y}$&$-\frac{1+t}{t(y+1)}-y$ & $-\frac{1}{y}$ \Bstrut \\
\hline
\end{tabular}

\medskip\renewcommand\Tstrut{\rule{0pt}{9.0ex}}         % = `top' strut
\begin{tabular}{|c|c|c|}
\hline
     \begin{tikzpicture}[scale=.4] % q6654
    \draw[->] (0,0) -- (-1,1);
    \draw[->] (0,0) -- (-1,0);
    \draw[->] (0,0) -- (1,0);
    \draw[->] (0,0) -- (-1,-1);
    \draw[->] (0,0) -- (0,1);
    \draw[-] (0,-1) -- (0,-1) node[below] {$\scriptstyle \# 7$};
  \end{tikzpicture} & \begin{tikzpicture}[scale=.4] % q6654
    \draw[->] (0,0) -- (1,1);
    \draw[->] (0,0) -- (0,-1);
    \draw[->] (0,0) -- (1,0);
    \draw[->] (0,0) -- (-1,0);
    \draw[->] (0,0) -- (0,1);
    \draw[-] (0,-1) -- (0,-1) node[below] {$\scriptstyle \# 8$};
  \end{tikzpicture} &    \begin{tikzpicture}[scale=.4] % q6654
    \draw[->] (0,0) -- (1,0);
    \draw[->] (0,0) -- (0,-1);
    \draw[->] (0,0) -- (0,1);
    \draw[->] (0,0) -- (-1,1);
    \draw[->] (0,0) -- (1,-1);
    \draw[-] (0,-1) -- (0,-1) node[below] {$\scriptstyle \# 9$};
  \end{tikzpicture} \Tstrut\\
\hline
 $-x^2+\frac{x}{t}-1$ &$-x-\frac{1}{x}+\frac 1 t$
&$\frac{(t+1)^2}{(x+1)^2t^2}-\frac{(1+t)(1+2t)}{(x+1)t^2}-\frac{1}{x}-x+1+\frac
  1 t$ 
\renewcommand\Tstrut{\rule{0pt}{3.0ex}}         % = `top' strut
\Tstrut \Bstrut\\
$-y-\frac{1}{y}$ &$-\frac{1}{y}-y$  &$-\frac{1}{y^2}+\frac{1}{ty}+\frac{(t+1)y}{t}-y^2$   \Bstrut\\
\hline
\end{tabular}
\normalsize
\end{center}
\medskip
  \caption{Decoupling functions for nine infinite group    models.}
  \label{tab:decoupling_functions-infinite}
\end{table}

\subsubsection{The finite group case}
In this case, we have found a systematic procedure to decide whether there exists a decoupling pair, and to construct one (when it exists). We consider in fact a more general problem, consisting
in writing a rational function $H(x,y)$ as $F(x)+G(y)$ when the pair
$(x,y)$ cancels the kernel. The above definition of decoupled models deals
with the case $H(x,y)=xy$, but the general case is not harder and allows
us to consider starting points other than $(0,0)$. This is further
discussed in Section~\ref{sec:start}. 

 As we shall see, decoupling functions exist if and only if  a certain
 rational function, called \emm orbit sum,, vanishes. Our approach
 adapts~\cite[Thm.~4.2.9 and Thm.~4.2.10]{FIM-99} to our context\footnote{In~\cite{FIM-99}, decoupling functions are called
  \emm particular rational solutions,, as indeed they are particular
  solutions of the functional equation~\eqref{SYi}. In~\cite{DHRS-17}, they
  are a certain type of \emm telecopers,.}.

\medskip
\noindent {\bf Notation.} We recall that  the
group $\mathcal G (\cS)$ is generated by the involutions $\Phi$ and
$\Psi$ defined by~\eqref{eq:generators}. Its elements consist of all
alternating products of $\Phi$ and $\Psi$. We denote $\Theta=\Psi\cdot\Phi$, and observe that
$\Theta^{-1}=\Phi\cdot\Psi$. Each element $\gamma$ of the group has a
\emm sign,, depending on the number of generators $\Phi$ and $\Psi$ that it
involves:  $\sign(\Theta^k)=1$, while $\sign(\Phi\cdot\Theta^k)=-1$ for all $k$.
 
 For $A(x,y)\in \mathbb{Q}(x,y)$, and $\omega=\sum_{\gamma\in
  \mathcal G (\cS)}c_\gamma\cdot \gamma$ an element of the group
algebra $\mathbb{Q}[\mathcal G(\cS)]$, we denote 
\begin{equation}
\label{eq:A_omega}
     A_\omega(x,y):=\sum_{\gamma\in  \mathcal G (\cS)} c_\gamma\,
     %\cdot
     A(\gamma(x,y)).
\end{equation}
This is again a rational function in $x$ and $y$. 
Defining $\sigma$ as the sum of all elements of 
the group  $\mathcal G(\cS)$,
we obtain as a special case the notation
$H_\sigma(x,y):=\sum_{\gamma\in  \mathcal G (\cS)}
H(\gamma(x,y))$ used in Theorem~\ref{thm:finite-invariant}.

\medskip

We now generalize Definition~\ref{def:decoupled}.
\begin{Definition}\label{def:dec-gen}
 Given a quadrant model $\cS$, and its kernel $K(x,y)$,  a function $H(x,y)\in \mathbb{Q}(x,y)$ is \emm decoupled, if there exist $F(x) \in
 \mathbb{Q}(x,t)$ and $G(y) \in \qs(y,t)$ such that
\[  %\beq\label{H-dec}
H(x,y)\equiv F(x)+G(y).
\]  %\eeq
\end{Definition}

\begin{Theorem}
\label{Thm:decoupling_orbit-sum} 
Let $\mathcal S$ be a step set such that the associated group
$\mathcal G (\cS)$ is finite of order $2n$. 
 Then $H\in \mathbb{Q}(x,y)$ is decoupled 
if and only if $H_\alpha(x,y)=0$, where $\alpha$ is the following alternating sum:
\[  
\alpha=  \sum_{\gamma \in  \mathcal G (\cS)}\sign(\gamma)\gamma.
\] 
In this case, one can take $F(x)=H_\tau(x,Y_0)+H_\tau(x,Y_1)$,
where
\begin{equation*}
     \tau = -\frac{1}{n}\sum_{i=1}^{n-1}i\, \Theta^i.
\end{equation*}  
The corresponding value of $G$ is then
$
G(y)= H_{\widetilde \tau} (X_0,y)+ H_{\widetilde \tau} (X_1,y) + (1-1/(2n))
J(y)$, where
$J(y)$ is the invariant defined in Theorem~\ref{thm:finite-invariant} and
\[ 
\widetilde \tau= -\frac{1}{n}\sum_{i=1}^{n-1}i\, \Theta^{-i}.
\] 
\end{Theorem}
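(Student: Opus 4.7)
The plan is to base both implications on the structure of the orbit of a generic pair $(x_0,y_0)$ satisfying $K(x_0,y_0)=0$. Since $\Phi$ and $\Psi$ preserve the kernel, every orbit element also satisfies $K=0$, and the $2n$ orbit elements can be listed cyclically as
\[
(x_0,y_0),(x_1,y_0),(x_1,y_1),(x_2,y_1),\ldots,(x_{n-1},y_{n-1}),(x_0,y_{n-1}),
\]
consecutive pairs being obtained alternately by $\Phi$ and $\Psi$. The positive-sign elements are the $(x_i,y_i)$, the negative ones are the $(x_{i+1},y_i)$ (indices mod $n$), and $\Theta=\Psi\Phi$ cyclically advances the $+$-indexing while retreating the $-$-indexing.

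\emph{Necessity.} If $H(x,y)=F(x)+G(y)$ on $\{K=0\}$, substituting $H=F+G$ at every orbit point gives
\[
H_\alpha(x_0,y_0)=\sum_{i=0}^{n-1}F(x_i)-\sum_{i=0}^{n-1}F(x_{i+1})+\sum_{i=0}^{n-1}G(y_i)-\sum_{i=0}^{n-1}G(y_i)=0,
\]
since the $y$-multisets on the $\pm$ halves coincide and the $x$-multisets do as well (using $x_n=x_0$). Hence $H_\alpha(x,Y_0)=H_\alpha(x,Y_1)=0$ as Laurent series in $t$, so the numerator of $H_\alpha$ is divisible by $K(x,y)$ in $\qs(x,t)[y]$; since this numerator is $t$-free while $K$ is affine in~$t$, this forces $H_\alpha\equiv 0$.

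\emph{Sufficiency.} Assume $H_\alpha=0$ and take $F,G$ as in the statement. They are respectively symmetric functions of $(Y_0,Y_1)$ and of $(X_0,X_1)$ (modulo the term $(1-\tfrac{1}{2n})J(y)\in\qs(y,t)$ provided by Theorem~\ref{thm:finite-invariant}), so they lie in the required fields $\qs(x,t)$ and $\qs(y,t)$. To verify $H=F+G$ at every orbit point I would expand $H_\tau(x_0,Y_0), H_\tau(x_0,Y_1), H_\tau(x_1,Y_0), H_\tau(x_1,Y_1)$ as $\qs$-linear combinations of $h_i=H(x_i,y_i)$ and $h_i'=H(x_{i+1},y_i)$ using how $\Theta^k$ acts on each half-orbit. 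A direct simplification then yields
\[
F(x_0)-F(x_1)=h_0-h_0'-\tfrac{1}{n}H_\alpha(x_0,y_0)=H(x_0,y_0)-H(x_1,y_0),
\]
the last step invoking $H_\alpha=0$. A mirror computation produces $G(y_0)-G(y_1)=H(x_0,y_0)-H(x_0,y_1)$, the correction $(1-\tfrac{1}{2n})J(y)$ being precisely what is needed to match the additive normalization of $G$ with that of $F$ at the base point. These two increment relations propagate a single identity $H(x_0,y_0)=F(x_0)+G(y_0)$ around the whole orbit via $\Phi$ and $\Psi$.

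The main obstacle is the bookkeeping in the sufficiency step: one must carefully track how $\Theta^{\pm i}$ permutes the two halves of the orbit, combine the resulting weighted sums of the $h_i$ and $h_i'$, and recognize the coefficients collapsing to $H_\alpha/n$. Equally subtle is pinning down the precise coefficient $(1-\tfrac{1}{2n})$ in front of $J(y)$, which is not determined by the increments but by the requirement that $H(x_0,y_0)=F(x_0)+G(y_0)$ holds exactly, not merely up to an additive constant.
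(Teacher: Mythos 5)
Your necessity direction is correct and matches the paper's: both amount to observing that, along the orbit of a generic kernel pair, the alternating sum of $F(u)+G(v)$ telescopes to zero because $\Phi$ fixes the second coordinate and $\Psi$ the first; you make this explicit by listing the orbit, while the paper phrases it more abstractly, but the content is the same, including the final step that deduces $H_\alpha\equiv 0$ from $H_\alpha(x,Y_0)=0$ by a $t$-dependence argument.

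The sufficiency direction, however, is not actually proven in your proposal: you lay out a plan to expand $H_\tau$ at orbit points, verify the increment relations $F(x_0)-F(x_1)=H(x_0,y_0)-H(x_1,y_0)$ and $G(y_0)-G(y_1)=H(x_0,y_0)-H(x_0,y_1)$, and propagate a base identity around the orbit, but you then explicitly concede that you have carried out neither the "bookkeeping" of how $\Theta^{\pm i}$ permutes the two half-orbits nor the determination of why the coefficient $(1-\tfrac{1}{2n})$ makes the additive constants match. These are precisely the nontrivial parts of the sufficiency direction, so the argument is incomplete as written. The paper sidesteps both difficulties at once by working in the group algebra $\qs[\cG(\cS)]$: writing $F(x)=H_{\tau+\tau\Psi}(x,Y_0)$ and $G(Y_0)=H_{\widetilde\tau+\widetilde\tau\Phi+(1-1/(2n))\sigma}(x,Y_0)$, and using the identities $\Theta^i\Psi=\Theta^{i+1}\Phi$ and $\Theta^{-i}=\Theta^{n-i}$, one computes directly that
\[
\tau+\tau\Psi+\widetilde\tau+\widetilde\tau\Phi+\Bigl(1-\tfrac{1}{2n}\Bigr)\sigma=\id-\tfrac{1}{2n}\alpha,
\]
whence $F(x)+G(Y_0)=H_{\id}(x,Y_0)-\tfrac{1}{2n}H_\alpha(x,Y_0)=H(x,Y_0)$ as soon as $H_\alpha=0$. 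This single group-algebra computation replaces the increment bookkeeping, handles the normalization automatically (the identity is exact, not "up to a constant"), and is what you would need to supply to complete your argument.
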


\begin{proof} Assume that $H$ is decoupled. Then for every pair
  $(u,v)$ in the orbit of $(x,Y_0)$, we have $K(u,v)=0$, and hence $H(u,v)=F(u)+G(v)$.
  Now recall that if $(u',v')=  \Phi(u,v)$, then $v'=v$ (and
  analogously
  for the transformation $\Psi$). Hence taking the
  alternating sum of $H(u,v)=F(u)+G(v)$ over the orbit of $(x,Y_0)$,
  we find  that $H_\alpha(x,Y_0)=0$, which implies that $H_\alpha(x,y)$ is
  uniformly zero since $Y_0$ depends on $t$ while $x$ and $H$ do not.

\medskip
Suppose now that $H_\alpha=0$, and define $F(x)$ and $G(y)$ as
above. Note that 
\[ 
F(x) = H_{\tau + \tau \Psi}(x,Y_0)% = H_{\tau + \tau \Psi}(x,Y_1).
\] 
and
\[ 
G(Y_0)= H_{\widetilde \tau + \widetilde \tau \Phi+ (1-1/(2n))\sigma}(x,Y_0),
\] 
so that
\beq\label{FplusG}
  F(x)+G(Y_0) = H_{\tau + \tau \Psi+ \widetilde \tau + \widetilde \tau
    \Phi+ (1-1/(2n))\sigma}(x,Y_0).
\eeq
But $\Theta^i \Psi= \Theta^{i+1}\Phi$ and $\Theta^{-i}=
\Theta^{n-i}$. Hence
\begin{align*}
\tau + \tau \Psi+ \widetilde \tau + \widetilde \tau    \Phi &= 
- \frac 1 n \sum_{i=1}^{n-1} i \left( \Theta^i + \Theta^i \Psi +
                                                      \Theta^{-i} +
                                                      \Theta^{-i}\Phi\right)\\
& = - \frac 1 n \sum_{i=1}^{n-1} i \left( \Theta^i+ \Theta^{i+1}\Phi +
                                                      \Theta^{n-i} +
                                                      \Theta^{n-i}\Phi\right).
\end{align*}
Upon grouping the terms in $\Theta^j$, and those in $\Theta^j \Phi$,
we obtain
\begin{align*}
\tau + \tau \Psi+ \widetilde \tau + \widetilde \tau    \Phi+ (1-1/(2n))\sigma
& 
=\id  - \frac 1 {2n} \alpha,
\end{align*}
where $\sigma$ and $\alpha$ are respectively the sum, and the
alternating sum, of the elements of $\mathcal G(\cS)$. Returning
to~\eqref{FplusG}, and using $H_\alpha=0$, we obtain that
$F(x)+G(Y_0)=H_{\id}(x,Y_0)=H(x,Y_0)$, so that the pair $(F,G)$ indeed decouples $H$, by Lemma~\ref{lem:equiv-equiv}.
\end{proof}

\begin{proof}[Proof of Theorem~\ref{thm:identification_decoupling} in the finite group case]
We now apply Theorem~\ref{Thm:decoupling_orbit-sum} to $H(x,y)=xy$ and
to the $23$ models
with a finite group (listed  for instance in~\cite[Sec.~8]{BMM-10}). We find
indeed that the alternating orbit sum of $H(x,y)$ vanishes in four
cases only. Applying to them the procedure of the theorem gives
the decoupling functions of Table~\ref{tab:decoupling_functions-finite}. The  procedure applies as
well to weighted models. Note that we have sometimes replaced the pair $(F,G)$
of Theorem~\ref{Thm:decoupling_orbit-sum} by the decoupling pair $(F+cI, G-cJ)$, where
$(I,J)$ is a pair of invariants, to simplify the expression  $G(y)$. \end{proof}

%================================================================
\subsubsection{The infinite group case.}\label{sec:infinite}
%================================================================

Recall that we have discovered experimentally $9$ decoupled models with an infinite group, shown in Table~\ref{tab:decoupling_functions-infinite}. Of course, it is straightforward to check, in each case, that $F(x)+G(y)-xy$ is divisible by the kernel. Now how can we prove that the remaining $47$ models with an infinite group are \emm not, decoupled? Unfortunately, when $\cG(\cS)$ is infinite,
  we have not found any criterion comparable to Theorem~\ref{Thm:decoupling_orbit-sum}  that would decide whether {the function $H(x,y)=xy$ is decoupled}. 
Our approach involves
 some case-by-case analysis, and relies on the two following observations. Here, we assume that $(F(x), G(y))$ is a decoupling pair, and denote by $\GK$ the algebraic closure of $\qs(t)$. 
\begin{itemize}
\item
  % {\bf[Propagation of poles]}
   If $X \in \GK$ is a pole of $F(x)$, and $Y\in \GK$ satisfies $K(X,Y)=0$, then $Y$ is a pole of $G(y)$. By a symmetric argument, if $X'\in \GK$ satisfies $K(X',Y)=0$, then $X'$ is another pole of~$F(x)$. Since a rational function has only finitely many poles, this procedure must stop or loop at some point. This is formalized in Lemma~\ref{lem:infinite-orbit}.
\item
  % {\bf[Finding poles]}
  If $u\in \cs$ is a root of the polynomial $a(x)= [y^2] K(x,y)$,  then either it is a pole of $F(x)$, or $G(y)/y$ tends to $u$ at infinity (Lemma~\ref{lem:infinite-orbit2}).
\end{itemize}
The idea is then to argue \emm ad absurdum,. We  choose a root $u$ of $a(x)$, and prove, thanks to the first observation,  that it cannot be a pole of  $F$ (because the propagation of poles does not stop for the value $u$). We then use the second observation to determine the behavior of $G(y)/y$ at infinity, and derive some contradiction from it.
Note that the idea of propagating poles is  classical when  searching for rational
solutions of  difference equations~(see e.g.~\cite{abramov}), and is also used in the
quadrant context in~\cite{DHRS-17}.

In order to formalize the above observations, we first need to introduce a variant of the transformations $\Phi$ and $\Psi$ defined in~\eqref{eq:generators}.

\begin{Definition}\label{def:phi-psi}
  Let $(X,Y)\in \GK^2$ satisfy $K(X,Y)=0$. We define $\phi(X,Y)=(X',Y)$, where $X'$ is
the other root (if any) of the equation $K(x,Y)=0$ (solved for
$x$). We define analogously
$\psi(X,Y)=(X,Y')$, where $Y'$ is the other root of {the equation}
  $K(X,y)=0$, solved for $y$.

For $X\in \GK$, the equation $K(X,y)=0$, when solved for $y$, 
has at most two solutions  $Y$ and
  $Y'$, which belong to $\GK$ as well (we ignore infinite solutions). The
\emm $x$-orbit, of $X$ is the set of pairs in $\GK^2$ that can be
obtained from $(X,{Y})$ or $(X,{Y'})$ by repeated  applications of the
transformations $\phi$ and $\psi$ (as long as they are well defined).

We define the $y$-orbit of an element $Y$ of $\GK$ in a similar
fashion, starting from the pairs $({X},Y)$ and
$({X'},Y)$ {such that $K(X,Y)=K(X',Y)=0$}.
\end{Definition}

Recall the expansion~\eqref{K-abc} of $K(x,y)$ in powers of $x$ or
  $y$, and the definition~\eqref{eq:generators} of the transformations $\Phi$ and
  $\Psi$. Then $\phi(X,Y)$ is well defined if and only if $\widetilde
  a(Y)\not = 0$. In this case,  $\phi(X,Y)$ coincides with
  $\Phi(X,Y)$, unless $X=0$. Note that $X=0$ implies that $\widetilde
  c(Y)=0$, and then $X'=-\widetilde b(Y)/\widetilde a(Y)$, while  $\Phi(X,Y)$
  is undefined. If $\widetilde d(Y)=0$, where $\widetilde d(y)=\widetilde b(y)^2
  -4 \widetilde a(y) \widetilde c(y)$, then $X'=X$ and the iterated
  application of $\phi$ and $\psi$ will not produce any new pair. Of
  course, analogous statements hold for the determination of $\psi(X,Y)$.

\medskip

\noindent {\bf Examples.}  Consider the (decoupled) model $\cS=\{\nearrow, \uparrow,
\leftarrow, \downarrow\}$, and take $X=0$, which satisfies $a(X)=0$. Then $K(X,y)=K(0,y)=ty$, 
and the equation $K(X,y)=0$ admits only one root, 
which is $Y=0$. So we start from the pair $(0,0)$. But $\widetilde a(Y)=tY^2=0$, thus
$K(x,Y)=K(x,0)=tx$, and the $x$-orbit of $0$ reduces to the pair $(0,0)$.

Consider now
$\cS=\{\uparrow, \nwarrow, \downarrow, \searrow, \rightarrow\}$ (which
is also decoupled), and let us determine again the $x$-orbit of
$X=0$, which satisfies $d(X)=0$. Since $K(X,y)=K(0,y)=ty^2$, we start from the pair $(X,Y)=(0,0)$. Now
$K(x,Y)=K(x,0)=tx(x+1)$ so we add the pair $\phi(0,0)=(-1,0)=(X',Y)$ (note that
$\Phi(0,0)$ is not well defined). Finally, $a(X')=a(-1)=0$, thus $K(X',y)=K(-1,y)=y(1-t)$ admits
only the root $0$. Hence the $x$-orbit of $0$ consists of $(0,0)$ and $(-1,0)$.\qee

\medskip
We can now formalize the two observations made above  Definition~\ref{def:phi-psi}.

\begin{Lemma}\label{lem:infinite-orbit}
Let $(F(x),G(y))$ be a decoupling pair for a model $\cS$.
If $X\in \GK$ is a pole of $F(x)$, then for
 each element $(X',Y')$ in its $x$-orbit, $X'$ is a pole of $F$ and $Y'$ is a
 pole of $G$. In particular, the $x$-orbit of $X$ must be finite.

 Consequently, if $X\in \GK$ has an infinite $x$-orbit, then it is not a pole of $F(x)$.
 \end{Lemma}
 
\begin{proof} Let us denote 
\begin{equation}\label{eq:fracFG}
F(x)=\frac{M(x)}{D(x)}~ \textrm{ and }~G(y)=\frac{N(y)}{E(y)},
\end{equation}
for some coprime polynomials $M(x),D(x)\in \cs(t)[x]$, and some coprime polynomials $N(y),E(y)\in \cs(t)[y]$.
Since $(F,G)$ is a decoupling pair, there exists a polynomial $P(x,y)\in \cs(t)[x,y]$ such that 
\[
  F(x)+G(y)-xy=\frac{K(x,y)P(x,y)}{D(x)E(y)},
\]
or equivalently
\begin{equation}\label{eq:decoupling-poly}
M(x)E(y)+D(x)N(y)-xyD(x)E(y)= K(x,y)P(x,y).
\end{equation}
If $X$ is a pole of $F$ (that is, a root of $D$) and $K(X,Y)=0$, then~\eqref{eq:decoupling-poly} gives $M(X)E(Y)=0$. Since $M$ and $D$ have no common root,
$E(Y)=0$ and $Y$ is a pole of $G(y)$. Propagating the reasoning along the $x$-orbit of $X$ proves the first statement of Lemma~\ref{lem:infinite-orbit}. The second statement follows because $F(x)$ has a finite number of poles.
\end{proof}

\begin{Lemma}\label{lem:infinite-orbit2}
Let $(F(x),G(y))$ be a decoupling pair for a model $\cS$.
Let $u\in \cs$ be a root of $a(x):=[y^2]K(x,y)$ that
is not a pole of $F(x)$. Then $\lim_{y\to{\infty}}G(y)/y=u$.
\end{Lemma}

\begin{proof} We use the notation~\eqref{eq:fracFG}, so that~\eqref{eq:decoupling-poly} holds.   Let $E_d \, y^d$ (resp.\ $N_\delta\, y^\delta$) be the leading monomial of $E(y)$ (resp.\ $N(y)$). Note that $G(y) \sim N_\delta / E_d\, y^{\delta-d}$ at infinity. Let $p(x)$
  be the leading coefficient  of $P(x,y)$ % (resp.\ $N(y)-xyE(y)$)
  in the variable $y$. Let us examine the leading monomials, and leading coefficients, in both sides of~\eqref{eq:decoupling-poly}:
  \begin{itemize}
\item if $\delta<1+d$, we find $-xD(x)E_d=a(x)p(x)$, 
\item if $\delta=1+d$, we find $D(x)(N_\delta-x E_d)=a(x) p(x)$,
 \item if $\delta>1+d$, we find $N_\delta D(x)=a(x) p(x)$.
  \end{itemize}
Assume now that $u\in \cs$ is a root of  $a(x)$ but not of $D(x)$. In the first case, we get  $u=0=\lim_{y\rightarrow \infty} G(y)/y$; in the second case, we get $N_\delta-u E_d$ so that  $\lim_{y\rightarrow \infty} G(y)/y =N_\delta/ E_d= u$; the third case is impossible.
\end{proof}

We now derive from the above two lemmas three corollaries that will form our toolbox to prove that none of the 47 models that remain under consideration are decoupled. The first corollary builds on the observation that no
decoupled model found so far has $a(x)=t(1+x^2)$ nor $a(x)=t(1+x+x^2)$.

\begin{Corollary}\label{cor:infinite-2-roots}
If the polynomial $a(x)$ has two distinct roots $u$ and $u'$, each of them with an infinite $x$-orbit, then the model  is not decoupled.
\end{Corollary}

\begin{proof}
Assume on the contrary that $(F(x), G(y))$ is a decoupling pair.   By Lemma~\ref{lem:infinite-orbit}, neither~$u$ nor $u'$ can be a pole of $F(x)$. Hence by Lemma~\ref{lem:infinite-orbit2}, we would have $G(y)/y \to u$ and $G(y)/y\to u'\not = u$, which is of course impossible.
\end{proof}

\begin{Corollary}\label{cor:infinite-asympt}
  Assume that the polynomial $a(x)$ has a root $u$ with infinite $x$-orbit, and that one of the branches $Y_i(x)$ grows   as $|x|^\nu$ as $x\to -\infty$ (up to a multiplicative constant, and for infinitely many values of $t$), where $\nu>0$ and $\nu\not \in \ns$.
  Then the model is not decoupled.
\end{Corollary}
In the applications of this corollary that follow,  the value of $\nu$ will always be $1/2$.

\begin{proof} Suppose on the contrary that a decoupling pair $(F(x),G(y))$ exists. We fix $t\in\cs$ such that $F(x)$ and $G(y)$ are well defined and $Y_i(x)\sim_{x\to -\infty}  |x|^{\nu}$  (up to a multiplicative constant).
  By Lemmas~\ref{lem:infinite-orbit} and~\ref{lem:infinite-orbit2} we get $G(Y_i(x))\sim_{x\to -\infty} uY_i(x)$. Hence using the decoupling identity $F(x)+G(Y_i(x))=xY_i(x)$ as $x\to -\infty$, we get $F(x)\sim_{x\to -\infty}-|x|^{1+\nu}$, which is not a possible asymptotic behavior for a rational function.
 \end{proof}

\begin{Corollary}\label{cor:2-infinite}
Assume that  $a(x)$ has a root $u$ with infinite $x$-orbit, that $\widetilde a(y):=[x^2]K(x,y)$ has a root $v$ with infinite $y$-orbit, and that moreover one of the branches 
$Y_i(x)$ tends to infinity as $x\to +\infty$ (for infinitely many $t\in\cs$). Then the model is not decoupled.
\end{Corollary}

\begin{proof} Suppose on the contrary that a decoupling pair $(F(x),G(y))$ exists.
  The decoupling identity gives $\frac{F(x)}{xY_i(x)}+\frac{G(Y_i(x))}{xY_i(x)}=1$, but Lemmas~\ref{lem:infinite-orbit} and~\ref{lem:infinite-orbit2} (and their counterparts obtained by swapping $x$ and $y$) imply that,  as $x\to +\infty$,
  \[
\frac{F(x)}{xY_i(x)}\sim    \frac v{Y_i(x)}, \qquad \frac{G(Y_i(x))}{xY_i(x)}\sim \frac u{x},
  \]
which both tend to $0$. This yields a contradiction.
\end{proof}

As suggested by the above three corollaries, it will be
  crucial, in what follows, to prove that for a given model $\cS$ and
  a given root $u$ of $a(x)$, the $x$-orbit of $u$ is infinite. How can one
  prove this? We start from the (unique) pair $(x_0,y_0):=(u,y_0)$ such that
  $K(u,y_0)=0$, and iterate $\phi$ and $\psi$, thus producing a sequence
  of $x$-orbit elements:
  \beq\label{x-orbit}
(x_0,y_0){\overset{\phi}{\longrightarrow}}   (x_1,y_0){\overset{\psi}{\longrightarrow}}  (x_1 , y_1) {\overset{\phi}{\longrightarrow}} (x_2, y_1){\overset{\psi}{\longrightarrow}} \cdots
\eeq
Each $x_k, y_k$ is rational in $t$, with coefficients in $\qs(u)$.

In some cases, it is very simple to prove that
the above sequence does not stop nor loop, because $(x_0,y_0)$
is the only pair $(X,Y)$ of $\qs(u,t)^2$ such that $K(X,Y)=0$ and
$a(X)d(X)\widetilde a(Y) \widetilde d(Y)=0$. As discussed below
Definition~\ref{def:phi-psi}, these are the only pairs where the above chain can
stop or loop. Consider for instance
$\cS=\{\uparrow, \nwarrow, \swarrow, \searrow\}$, with $u=-1$. This
value $u$ is
the only root of $a(x)$. We find that $y_0=-2t$, so that the $x$-orbit of $u$ is non-empty. The polynomial $\widetilde a(y)$ has no
root, and the discriminants $d(x)$ and $\widetilde d(y)$ do not factor
over $\qs(t)$. Hence the $x$-orbit of $u=-1$ is infinite. This
argument applies to~$8$ cases below and can
be adapted to models that are symmetric in the first or second
diagonal and have exactly two pairs $(X,Y)$ such that $K(X,Y)=0$ and
$a(X)d(X)\widetilde a(Y) \widetilde d(Y)=0$. This proves infiniteness of the
orbit for $7$ other models. 

There is also a more tedious method that applies uniformly to all
models under consideration, and proves that the sequence~\eqref{x-orbit} is infinite (and does not loop) by looking at
the expansions at $t=0$ of the rational functions $x_k$ and $y_k$. Two cases occur:
\begin{itemize}
\item If the model $\cS$ is singular, that is, $c(x)=tx^2$ and $\widetilde
  c(y)=ty^2$, then it is easy to see, by induction on $t$, that
$x_k$ has valuation $2k$, while $y_k$ has valuation $2k+1$. Indeed,
$x_0=u$ is a root of $a(x)$, and is thus a non-zero element of $\cs$ since
$a(0)=1$. Then $y_0= -c(u)/b(u)$ has valuation $1$, because $b(u)$ is
a non-zero multiple of $t$ while $c(u)$ is a polynomial in $t$ with
constant term $-u\not = 0$. Now, suppose
% assume
that the assumption holds true for $x_k$ and~$y_k$. We have
\[
  x_{k+1}= \frac 1 {x_k} \frac {\widetilde c(y_k)} {\widetilde a(y_k)}.
\]
Using $\widetilde c(y)=ty^2$ and $\widetilde a(y)=t(1+O(y))$, we find that
$x_{k+1}$ has valuation $2k+2$. Then
\[
  y_{k+1}= \frac 1 {y_{k}} \frac { c(x_{k+1})} { a(x_{k+1})},
\]
and an analogous argument proves that $ y_{k+1}$ has valuation $2k+3$
in $t$.
\item If $\cS$ is not singular, the details of the argument depend on the
  details of the model.   Consider for instance $\cS=\{\nearrow, \nwarrow, \swarrow,
\downarrow\}$, with $u=i$. Using again the
notation~\eqref{x-orbit}, we have $x_0=i$, $y_0=(1-i)t$, and pushing
the calculations further suggests that
$x_{3k}=i+4kit^2+O(t^4)$. Clearly, this would imply that the
$x$-orbit is infinite. To prove this, we proceed as follows. Let 
$X_0=i+\sum_{n\ge 2 } a_nt^n$ be a series in $t$ with complex
coefficients, with $a_2\not = -4i$ (the reason for this condition will
appear later). One of the roots of the equation $K(X_0,Y)=0$ reads
 $Y_0=(1-i)t+O(t^3)$, as can be seen from the equation $Y= \frac t{X_0}
 (1+X_0+Y^2+X_0^2Y^2)$. The coefficients of $Y_0$ lie in $\qs[i, a_2,
 a_3, \ldots]$. We then compute $X_1, X_2, X_3$, and $Y_1, Y_2 ,Y_3$
 such that
 \[
   (X_0,Y_0){\overset{\phi}{\longrightarrow}}
   (X_1,Y_0){\overset{\psi}{\longrightarrow}}  (X_1 , Y_1)
   {\overset{\phi}{\longrightarrow}} (X_2,  Y_1)
   {\overset{\psi}{\longrightarrow}} (X_2,  Y_2)
  {\overset{\phi}{\longrightarrow}} (X_3,  Y_2)
   {\overset{\psi}{\longrightarrow}} (X_3,  Y_3).
   \]
(In practice, we compute them using the transformations $\Phi$ and
$\Psi$.) We thus obtain:
\begin{alignat*}{6}
  X_1&=  \frac 1 {2t^2} + O(1),  &&
  Y_1=(1+i)t + O(t^3),\\
  X_2&=-i +(4i+a_2) t^2+O(t^3), &\hskip 8mm &
  Y_2=\frac 1 {2(4i+a_2) t^3}+ O(t^{-2}),\\
  X_3&=i+(4i+a_2) t^2+O(t^3),&&
  Y_3=(1-i)t+O(t^3).
\end{alignat*}
The condition $a_2\not = -4i$ is required
 because, while the coefficients of $X_1, X_2, X_3$, and $Y_1, Y_3$
 lie in $\qs[i, a_2, a_3, \ldots]$, those of $Y_2$ lie in  $\qs[i,
 1/(a_2+4i), a_2, a_3, \ldots]$. We observe that the pair $(X_3,Y_3)$ has the
 same form as $(X_0, Y_0)$, with $a_2$ replaced by $4i+a_2$, and this
 completes the proof. We say that the $x$-orbit of $u=i$ has \emm
 pseudo-period, $3$. 
\end{itemize}

We now apply our toolbox to the proof of
Theorem~\ref{thm:identification_decoupling}, in the infinite group case. We distinguish
several cases, depending on the value of $a(x)= [x^2] K(x,y)$.  A {\sc Maple} session, available on the authors' webpages,
examines in detail all non-decoupled models.

\medskip
\paragraph{\bf Case 1:  $\boldsymbol{a(x)=t(1+x^2)}$.}  The polynomial
$a(x)$ has two roots, namely $i$ and $-i$. By
Corollary~\ref{cor:infinite-2-roots}, it suffices to prove that their
$x$-orbits are infinite to conclude that the model does not decouple.
The corresponding $7$ models  are labeled 1a in
Table~\ref{tab:nodec}. One is singular, and for the others, the
$x$-orbit of $u=\pm i$ has pseudo-period ranging from $2$ to $7$.
The same argument applies to the $10$ models labeled 1b in
Table~\ref{tab:nodec}, for which $\widetilde a(y)=t(1+y^2)$, upon
exchanging the roles of~$x$ and $y$. Again, one of these models is singular,  and for the others, the
$y$-orbit of $v=\pm i$ has pseudo-period ranging from $2$ to $7$. We have thus proved non-decoupling for 17 models with an infinite group.

\medskip
\paragraph{\bf Case 2: $\boldsymbol{a(x)=t(1+x+x^2)}$.} 
 We now apply Corollary~\ref{cor:infinite-2-roots} to the  roots $j$ and $1/j$, with $j=e^{2i\pi/3}$. This proves  non-decoupling for 12 additional models, indicated
in Table~\ref{tab:nodec} by 2a and~2b (depending on whether the
argument is applied to $a(x)$ or $\widetilde a (y)$). As before, we
find $x$- and $y$-orbits with pseudo-periods ranging from $2$ to $7$,
in addition to one singular model.

\medskip
\paragraph{\bf Case 3: $\boldsymbol{a(x)=t(1+x)}$.} It can be checked
that for every (yet untreated) model such that $a(x)=t(1+x)$, the
$x$-orbit of $u=-1$ is infinite. 

If moreover $\deg(b(x))\le 1$, then $\deg(c(x))=2$ (because there is at least one step with $x$-coordinate $+1$), and we have, for any non-zero real $t$,
\[
  Y_i(x)=\frac{-b(x)\pm \sqrt{b(x)^2-4a(x)c(x)}}{2a(x)}\sim_{x\to -\infty} |x|^{1/2},
\]
up to some multiplicative constant. We then conclude by Corollary~\ref{cor:infinite-asympt}. This takes care of the~$7$ models labeled 3.1a in Table~\ref{tab:nodec}.

For the four models labeled 3.2a in the table, we can apply instead
Corollary~\ref{cor:2-infinite}. First, $u=-1$ is a root of $a(x)$ and
has an infinite orbit. Then, $\widetilde a(y)=t(1+y)$, and $v=-1$ is a
root of $\widetilde a(y)$ with an infinite $y$-orbit.  Finally, $b(x)$ has
degree 2, and for $t>0$,
\[
  Y_1(x)=\frac{-b(x)-\sqrt{b(x)^2-4a(x)c(x)}}{2a(x)}\sim_{x\to \pm \infty} -x.
\]
 Let us now exchange the roles of $x$ and $y$. Then  the same argument
 applies to the model labeled~3.2b: we have  $\widetilde a(y)=t(1+y)$ and
 $v=-1$ has an infinite $y$-orbit, $a(x)=tx$ and $u=0$ has an infinite
 $x$-orbit, and finally one of the branches $X_i(y)$ diverges at infinity.

As before, we find $x$- and $y$-orbits with pseudo-periods ranging from $2$ to $7$ (in addition to two singular models).

\medskip
\paragraph{\bf Case 4:  $\boldsymbol{a(x)=tx}$.} The argument is similar to the one used in the previous case.  This time we apply it  with $u=0$. It can be checked that for each (yet untreated) model of the table such that $a(x)=tx$, the $x$-orbit of $0$ is infinite.

For the three non-symmetric such models (labeled 4.1a), $b(x)$ has degree $0$, the
branches $Y_i(x)$ grow like $| x|^{1/2}$ at $-\infty$, and we
conclude using Corollary~\ref{cor:infinite-asympt}. For the symmetric
one, labeled~4.2a, $b(x)$ has degree $2$, and we apply Corollary~\ref{cor:2-infinite} instead, with $u=v=0$.

The pseudo-periods are found to be $7$ or $8$ in these four cases.

\medskip
\paragraph{\bf Case 5: the last two models.} We are left with the two
symmetric ``forks'', $\cS_1=\{\uparrow, \swarrow, \rightarrow,
\nearrow\}$ and its reverse $\cS_2=\{\leftarrow, \swarrow,
\downarrow, \nearrow\}$. Note that when a symmetric model  decouples,
there exists a decoupling pair of the form $(F(x),F(y))$.

For $\cS_1$ we have $a(x)=tx(1+x)$, and we can check that the
$x$-orbit of $-1$ is infinite, with pseudo-period $4$  (the $x$-orbit of $0$ is empty).  Hence by Lemma~\ref{lem:infinite-orbit2}, $G(y) \sim
-y$ at infinity. Both branches $Y_i$ are finite at infinity, but as
${x\to 0^+}$, one of the branches $Y_i$ is equivalent to $x^{-1/2}$, so that
$G(Y_i(x))\sim - x^{-1/2}$. Plugging this into the decoupling identity
$F(x)+G(Y_i(x))=xY_i(x)$ shows that $F(x)\sim x^{-1/2}$, which is impossible for a rational function.
 
For $\cS_2$ we have $a(x)=tx^2$. We can check that the $x$-orbit of 0
is infinite,  again with pseudo-period $4$, hence by Lemma~\ref{lem:infinite-orbit} the decoupling
function $F(x)$ is finite at $x=0$. As ${x\to 0}$, one of the
branches $Y_i(x)$ is equivalent to $-1/x^2$, hence expanding the
decoupling identity $F(x)+G(Y_i(x))=xY_i(x)$ around $x=0$ gives
$G(Y_i(x))\sim -1/x$. However since $Y_i(x)$ grows like $-1/x^2$, this is impossible for a rational function.

This concludes the proof of Theorem~\ref{thm:identification_decoupling}.\qed

\renewcommand\Tstrut{\rule{0pt}{6.0ex}}         % = `top' strut

%\newcolumntype{C}{ >{\centering\arraybackslash} m{13mm} }
%\setlength{\doublerulesep}{\arrayrulewidth}
%\renewcommand{\arraystretch}{1.5}
\begin{table}[htb]
  \centering
  \begin{tabular}{|c|c|c|c|c|c|c|c|c|c|c|c|}
%{|l|l|l|l|l|l|l|l|l|l|l|l|}
  \hline
 % singular
\tpic{10010001}& \tpic{01010001}& \tpic{11010001}&  \tpic{10110001}&
\multicolumn{4}{c|}{ 
%\tpic{10101100} 
% 
  \tpic{10100110} %\includegraphics[height=10MM]{10100110-crop}
\tpic{11001010} \tpic{10110010} \tpic{10011010}}&
 \multicolumn{4}{c|}{ \tpic{11010100} \tpic{10010101}  \tpic{01001101} \tpic{01011001}}  \Tstrut
\\
3.1a  & 1a & 1b & 3.2a & 
\multicolumn{4}{l|}{{\bf dec.}  \ \ {\bf dec.} \  \ {\bf dec.}\ \ \ \ 4.1a }& 
\multicolumn{4}{l|}{ \ \ 1b \ \ \ \  3.1a \ \ \ \  1a\ \ \  \ \ \ 1a}
\\ \hline

\multicolumn{4}{|c|}{  \tpic{11100100} \tpic{01001110} \tpic{10010011}}&
 \multicolumn{4}{c|}{ 
%  
%\tpic{10110100} 
\tpic{10100101} \tpic{10010110} \tpic{11010010} \tpic{01001011}}&
 \multicolumn{4}{c|}{ \tpic{11101010} \tpic{10101110} \tpic{10111010}} \Tstrut
\\
\multicolumn{4}{|l|}{\ \ \ \  \ \ \ \ \ 5 \ \ \ \ \ \  5 \ \ \ \ \   3.1a} &
 \multicolumn{4}{l|}{\ {\bf dec.} \ \   4.1a \ \ \ \  1b\ \ \ \ \ \ 1a} &
 \multicolumn{4}{l|}{\ \ \ \ \ {\bf dec.}\ \  \   4.2a\ \ \    3.2b} 
\\ \hline

 \multicolumn{4}{|c|}{ 
%  
% \tpic{10111100}
\tpic{10100111} \tpic{11101001}  \tpic{11001011}   \tpic{10011110} }
 &  \multicolumn{4}{c|}{ \tpic{11001110}  \tpic{11100110} \tpic{10111001}  \tpic{10011011}}&
 \multicolumn{4}{c|}{   \tpic{11100101}  \tpic{10010111} \tpic{11010011} \tpic{01001111}} \Tstrut
\\
 \multicolumn{4}{|l|}{\ \ \ {\bf dec.} \ \ \  2a\ \ \ \ \  2a\ \ \ \ \  4.1a} &
 \multicolumn{4}{l|}{\  {\bf dec.}\   \ \  {\bf dec.} \   \ {\bf dec.} \  \ 3.1a }&
 \multicolumn{4}{l|}{\ \ 2a \ \ \ \ \ 3.1a\ \ \ \   1b\ \ \ \ \  1a} 
\\ \hline

 \multicolumn{4}{|c|}{\tpic{11011100} \tpic{11011001}   \tpic{10011101}   \tpic{11001101}}&
 \tpic{11110001}& 
\multicolumn{3}{c|}{ \tpic{10110101}  \tpic{11010110} \tpic{01011011}}  & 
\multicolumn{4}{c|}{\tpic{10011111}  \tpic{11110011}  \tpic{11001111}   \tpic{11111100}} \Tstrut
\\
 \multicolumn{4}{|l|}{\ \ \ 1b\ \ \ \ \ \  1b\ \ \ \ \  3.1a\ \ \ \ \  2a} & 2a &
 \multicolumn{3}{l|}{\  \   3.2a\ \ \ \   1b \ \ \ \  1a}&
 \multicolumn{4}{l|}{\  3.1a \ \ \ \   2a\ \ \ \ \ \   2a \ \ \ \ \  2b}
\\ \hline
\multicolumn{4}{|c|}{ \tpic{11110110}   \tpic{10110111} \tpic{11011110}   \tpic{11011011}}&
\multicolumn{4}{c|}{ \tpic{11010111}  \tpic{11110101}   \tpic{01011111}} &
\multicolumn{4}{c|}{ \tpic{10111111}  \tpic{11111011} \tpic{11101111}}  \Tstrut
\\
\multicolumn{4}{|l|}{\ \ \ 2b\ \ \ \ \ \  3.2a\ \  \ \  1b \ \ \ \ \ 1b }&
\multicolumn{4}{l|}{\ \ \ \ \ \   1b\ \  \ \ \ \  2a\ \ \ \ \ \ 1a}&
\multicolumn{4}{l|}{\ \ \ \ \ \  3.2a\ \ \ \ \  2a\ \ \ \ \  2a}
\\ \hline
  \end{tabular}
\vskip 3mm
  \caption{The $56$ models with an infinite group. Exactly $9$ are
    decoupled. For the others, we give a label that tells which method can be used to prove that it is not decoupled. These labels refer to the numbering in Section~\ref{sec:infinite}.
 We have put in the
  same cell models that only differ by a symmetry of the
  square.}
  \label{tab:nodec}
\end{table}

%=======================================================
\subsection{The invariant lemma}
\label{sec:invariant-lemma}
%===================================================
At this stage, we have found $8$ models ($4$ unweighted, $4$ weighted),
which, as Gessel's model, admit invariants and are decoupled. They
are in fact the $8$ algebraic (or conjecturally algebraic) models of
Figure~\ref{fig:alg_models}. In order to prove their algebraicity as we did in
Section~\ref{sec:gessel} for Gessel's model, we still need to adapt the third
and last ingredient of Section~\ref{sec:gessel}, namely
Lemma~\ref{lem:inv-gessel}.  We can do this for $7$ of our $8$ models. The
resisting model is the reverse Kreweras model, with steps
$\rightarrow, \uparrow, \swarrow$. We shall circumvent this difficulty
in the next subsection.
\begin{Lemma}[\bf{The invariant lemma}]\label{lem:inv} Let $\cS$ be one of the models of
  Figure~\ref{fig:alg_models}, distinct from the reverse Kreweras
  model. If $\cS$ is one of the last two models, let  $X=(1+u)(1+\bu)t$, 
with $\bu=1/u$.  Otherwise, let $X=t+(u+\bu) t^\beta$, where
  $\beta$ is given in Table~\ref{tab:abc}
  below. As in Section~\ref{sec:gessel}, we slightly abuse notation by denoting $Y_0$ and $Y_1$ the roots of $K(X,y)$. Then $Y_0$ and $Y_1$  can be expanded around $t=0$ as    Puiseux series in $t$ with coefficients in $\cs(u)$, starting with
\[ 
Y_0= ut^\gamma (1+ o(1)) \qquad \hbox{and} \qquad Y_1= \bu t^\gamma  (1+
o(1)),
\] 
where $\gamma$ is given by Table~\ref{tab:abc}. The series $Q(X,Y_i)$ and $Q(0,Y_i)$ are
well defined as series in $t$ (or $\sqrt t$ when $\gamma$ is a half-integer) with coefficients in
$\qs(u)$.

Let $A(y)$ be a Laurent series in $t$ with polynomial coefficients in $y$,
of the form
\[ 
A(y)= \sum_{0\leq j \leq \rho n+n_0} a(j,n) y^j t^n,
\] 
where $a(j,n)  \in \qs$ and  $n_0,~\rho$ are  constants such that $\rho<1/\vert \gamma\vert$ if $\gamma<0$. Then $A(Y_0)$
and $A(Y_1)$ are well defined  series in $t$ (or $\sqrt t$) with coefficients in
$\qs(u)$. If they coincide, then $A(y)$ is in fact independent of $y$.
\end{Lemma}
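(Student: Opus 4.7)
The plan is to adapt the three-part structure of the proof of Lemma~\ref{lem:inv-gessel} to the more general values of $\gamma$ and the model-dependent parametrizations of $x$. First, for each of the seven models I would substitute the prescribed expression of $x$ into $K(x,y)=0$ and solve the resulting quadratic in $y$ by Newton's polygon method. The parametrizations are designed so that the discriminant of this quadratic becomes, modulo higher-order terms in $t$, a perfect square in $u$; the two branches therefore acquire leading behaviors $ut^\gamma$ and $\bu t^\gamma$, swapped by $u\leftrightarrow \bu$, as claimed. When $\gamma$ is a half-integer (the last two models), one simply works over $\qs(u)((t^{1/2}))$ throughout.

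Second, to show that $Q(x,Y_i)$ and $Q(0,Y_i)$ are well defined as (possibly fractional) Laurent series in $t$, I would imitate the combinatorial argument in Lemma~\ref{lem:conv-gessel}: write $Q(x,y)$ as a sum indexed by the tuple $\mathbf n$ of step-counts of a walk, each summand carrying weight $x^{p(\mathbf n)}y^{q(\mathbf n)}t^{|\mathbf n|}$ for explicit linear forms $p,q$ in $\mathbf n$. After substitution $y=Y_i$, the $t$-valuation of each summand becomes a linear form $v(\mathbf n)=|\mathbf n|+\alpha\,p(\mathbf n)+\gamma\,q(\mathbf n)$ in the step counts, and one checks in each case that the quadrant constraints together with $v(\mathbf n)\le N$ cut out a bounded polytope of integer points, for every $N$. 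The case $y=0$ reduces to restricting to $q(\mathbf n)=0$.

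Third, for the substance of the lemma, subtract the constant term in $y$ from $A$ so that $A(0)=0$, and define
\[
m:=\min\{\,n+\gamma j : a(j,n)\neq 0\,\}.
\]
The growth bound $0\le j\le \rho n+n_0$ together with $\rho<1/|\gamma|$ (when $\gamma<0$) yields $n+\gamma j\ge (1-|\gamma|\rho)n-|\gamma|n_0$, so $m$ is finite, bounded below, and attained by only finitely many pairs $(j,n)$; the case $\gamma\ge 0$ is immediate. Substituting $Y_0=ut^\gamma(1+o(1))$ and $Y_1=\bu t^\gamma(1+o(1))$ and extracting the coefficient of $t^m$ gives
\[
[t^m]\,A(Y_0)=P(u),\qquad [t^m]\,A(Y_1)=P(\bu),
\]
where $P(u):=\sum_{n+\gamma j=m} a(j,n)\,u^j$ is a \emph{polynomial} in $u$ (since $j\ge 0$) vanishing at $u=0$ (since $a(0,m)=0$ after our normalisation). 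The assumption $A(Y_0)=A(Y_1)$ forces $P(u)=P(\bu)$; the only polynomial equal to its image under $u\mapsto \bu$ is a constant, so combined with $P(0)=0$ we get $P\equiv 0$, contradicting the definition of $m$.

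The main obstacle I expect lies in the second step: the parametrizations tabulated in Table~\ref{tab:abc} must be chosen so that $|\gamma|$ is small enough for the quadrant polytope of step-compositions to remain bounded at every order in $t$, while simultaneously ensuring that the invariants $J(y)$ and the auxiliary series built from $Q(0,y)$ that will be fed into this lemma in Section~\ref{sec:extensions} satisfy the growth bound $j\le \rho n+n_0$ for some admissible $\rho<1/|\gamma|$. Verifying that the tabulated values meet both requirements is routine but model-specific; the clean structural argument in the third step is unaffected.
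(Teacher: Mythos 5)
Your proposal is correct and takes essentially the same route as the paper: the paper's own proof consists of a single sentence asserting that the argument mimics Lemmas~\ref{lem:conv-gessel} and~\ref{lem:inv-gessel} (the Gessel case, $\beta=2$, $\gamma=-1$, $\rho=1/2$), and you have spelled out precisely what that mimicry entails. In particular, the key adaptations you identify --- replacing $m=\min\{n-j\}$ by $m=\min\{n+\gamma j\}$, using the bound $\rho<1/|\gamma|$ to keep this minimum finite and attained finitely often, and checking case-by-case that the quadrant constraints bound the step-composition polytope once the $t$-valuation of $x$ and $Y_i$ are fixed --- are exactly the model-dependent details the paper leaves implicit.
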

The proof mimics the proofs of  Lemmas~\ref{lem:conv-gessel}
and~\ref{lem:inv-gessel} used in the Gessel case, where we had
$\beta=2$, $\gamma=-1$ and $\rho=1/2$.
  
\medskip

\begin{table}[htb]
  \centering
  \begin{tabular}{|c|c|c|c||c|c|c|c|}
\hline
Model & \begin{tikzpicture}[scale=.3] % q6654
    \draw[->] (0,0) -- (1,1);
    \draw[->] (0,0) -- (-1,0);
    \draw[->] (0,0) -- (0,-1);
        \draw[-] (0,-1) -- (0,-1) node[below] {\phantom{$\scriptstyle 1$}};
  \end{tikzpicture} &      \begin{tikzpicture}[scale=.3] % q6654
      \draw[->] (0,0) -- (0,-1);
    \draw[->] (0,0) -- (1,1);
    \draw[->] (0,0) -- (-1,0);
    \draw[->] (0,0) -- (0,1);
    \draw[->] (0,0) -- (-1,-1);
    \draw[->] (0,0) -- (1,0);
    \draw[-] (0,-1) -- (0,-1) node[below] {\phantom{$\scriptstyle 1$}};
  \end{tikzpicture} &   \begin{tikzpicture}[scale=.3] % q6654
      \draw[->] (0,0) -- (-1,-1);
    \draw[->] (0,0) -- (1,1);
    \draw[->] (0,0) -- (-1,0);
    \draw[->] (0,0) -- (1,1);
    \draw[->] (0,0) -- (1,0);
    \draw[-] (0,-1) -- (0,-1) node[below] {\phantom{$\scriptstyle 1$}};
  \end{tikzpicture} & \begin{tikzpicture}[scale=.3] % q6654
    \draw[->] (0,0) -- (-1,0) node[left] {$\scriptstyle 1$};
    \draw[->] (0,0) -- (-1,-1) node[left] {$\scriptstyle 1$};
    \draw[->] (0,0) -- (0,-1) node[below] {$\scriptstyle \lambda$};
    \draw[->] (0,0) -- (1,-1) node[right] {$\scriptstyle 1$};
    \draw[->] (0,0) -- (1,0) node[right] {$\scriptstyle 2$};
    \draw[->] (0,0) -- (1,1) node[right] {$\scriptstyle 1$};
  \end{tikzpicture} & \begin{tikzpicture}[scale=.3] % q6654
    \draw[->] (0,0) -- (-1,0) node[left] {$\scriptstyle 1$};
    \draw[->] (0,0) -- (-1,1) node[left] {$\scriptstyle 1$};
    \draw[->] (0,0) -- (0,1) node[above] {$\scriptstyle 2$};
    \draw[->] (0,0) -- (1,1) node[right] {$\scriptstyle 1$};
    \draw[->] (0,0) -- (1,0) node[right] {$\scriptstyle 2$};
    \draw[->] (0,0) -- (1,-1) node[right] {$\scriptstyle 1$};
    \draw[->] (0,0) -- (0,-1) node[below] {$\scriptstyle 1$};
  \end{tikzpicture}&
  \begin{tikzpicture}[scale=.3] % q6654
    \draw[->] (0,0) -- (-1,0) node[left] {$\scriptstyle 2$};
    \draw[->] (0,0) -- (-1,1) node[left] {$\scriptstyle 1$};
    \draw[->] (0,0) -- (0,1) node[above] {$\scriptstyle 1$};
    \draw[->] (0,0) -- (-1,-1) node[left] {$\scriptstyle 1$};
    \draw[->] (0,0) -- (1,0) node[right] {$\scriptstyle 1$};
    \draw[->] (0,0) -- (1,-1) node[right] {$\scriptstyle 1$};
    \draw[->] (0,0) -- (0,-1) node[below] {$\scriptstyle 2$};
  \end{tikzpicture} & \begin{tikzpicture}[scale=.3] % q6654
    \draw[->] (0,0) -- (-1,0) node[left] {$\scriptstyle 2$};
    \draw[->] (0,0) -- (-1,1) node[left] {$\scriptstyle 1$};
    \draw[->] (0,0) -- (0,1) node[above] {$\scriptstyle 2$};
    \draw[->] (0,0) -- (1,1) node[right] {$\scriptstyle 1$};
    \draw[->] (0,0) -- (1,0) node[right] {$\scriptstyle 1$};
    \draw[->] (0,0) -- (0,-1) node[below] {$\scriptstyle 1$};
        \draw[->] (0,0) -- (-1,-1) node[left] {$\scriptstyle 1$};
  \end{tikzpicture}  \\
\hline
$\beta $ & 5/2 & 3/2 & 2 & 2 & 3/2 &&\\
$\gamma$ & $-1/2$ & $-1/2$ & $-1$ & $-1$ & $1/2$ & $0$ & $0$ \\
%$\rho$ & 1/2 & 1 & 1/2 & 1/2 & 1 & 1 & 1\\
\hline
\end{tabular}
\medskip
 \caption{The values  $\beta$ and $\gamma$ occurring in Lemma~\ref{lem:inv}.}
\label{tab:abc}
 \end{table}

It now remains, for each of the $7$ models to which the above lemma applies,
to construct a series $A(y)$ satisfying the conditions of the
lemma, by combining the invariants of  Tables~\ref{tab:ratinv} and~\ref{tab:ratinv-weighted} and the decoupling
functions of Table~\ref{tab:decoupling_functions-finite}. Applying
Lemma~\ref{lem:inv} gives for each of these $7$ models a polynomial equation of the form 
\beq\label{cateq}
\Pol(Q(0,y), A_1, \ldots, A_k,t,y)=0,
\eeq
where the series $A_i$ are derivatives of $Q(0,y)$ with respect to
$y$, evaluated at $y=0$ or $y=-1$. These equations are made explicit in
Appendix~\ref{app:alg}. The next step will be to solve them, using the
general procedure of~\cite{mbm-jehanne}. This is described in
Section~\ref{sec:effective}, and detailed in Appendix~\ref{app:alg}, but we  delay this description to establish an
equation of the same form for the reverse Kreweras model, to which
Lemma~\ref{lem:inv} does not apply.

%=======================================================
\subsection{An alternative to the invariant lemma}
\label{sec:alternative}
%===================================================
The above  method fails for the reverse Kreweras model. The reason
  is that we have no counterpart
of Lemma~\ref{lem:inv}: there exists no Puiseux series $X$  in $t$,
with coefficients in $\cs(u)$,  such that  both roots of  $K(X,y)=0$
  can be substituted for $y$ in $Q(X,y)$ (the proof is elementary, by
considering the valuation of $X$ in $t$).
However, we will now show that this tool is not
essential:
  if instead of using the equations 
  \[
    J(Y_0)=J(Y_1) \qquad \hbox{ and } \qquad xY_0-xY_1=G(Y_0)-G(Y_1),
\]  
we stick to their algebraic origin, namely the fact that 
$
I(x)-J(y)$ and $F(x)+G(y)-xy
$
are both divisible by the kernel $K(x,y)$, then we can still complete
the proof using an algebraic argument that
does not involve substituting $y$ by the series $Y_i$. This
substitution-free approach does work in particular for the reverse Kreweras model.

To clarify this, let us first consider Kreweras' model, with steps
$\nearrow, \leftarrow, \downarrow$, for which the method that we have
described in the previous subsections works. The kernel is
\[ 
K(x,y)=t(x^2y^2+x+y)-xy,
\] 
and the functional equation reads
\beq\label{eq-Kreweras}
K(x,y)Q(x,y)=R(x)+S(y)-xy,
\eeq
with $R(x)=txQ(x,0)=S(x)$.  The invariants and decoupling functions
can be taken as
\[ 
I(x)=\frac t {x^2}-\frac 1 x -tx =J(x)
\] 
and 
\[ 
F(x)=-\frac 1 x + \frac 1 {2t}=G(x).
\] 
 The fact that we can take $I=J$ and $F=G$ comes from the
 $x/y$-symmetry of the model.

How does the method presented so far work?
First, when $x$ is taken to be $t+(u+\bu)t^{5/2}$, both $Y_0$ and $Y_1$ can be
substituted for $y$ in the functional equation, yielding
\beq\label{S-diff}
S(Y_0)-xY_0=S(Y_1)-xY_1.
\eeq
Then, the decoupling function allows us to rewrite this as
\beq\label{SG}
S(Y_0)-G(Y_0)= S(Y_1)-G(Y_1).
\eeq
The invariant $J$ satisfies the same equation as $S-G$:
\[ 
J(Y_0)=J(Y_1).
\] 
We now form a third series $A(y)$ satisfying this equation, but having no pole at $y=0$ (nor $t=0$):
\beq\label{LK-def}
A(y):=t^2\big(S(y)-G(y)\big)^2-tJ(y).
\eeq
By the invariant lemma (Lemma~\ref{lem:inv}), $A(y)$  must be independent of $y$.

We now give a substitution-free version of this argument.
What plays the role of~\eqref{S-diff} is simply the functional
equation~\eqref{eq-Kreweras}. The decoupling property stems from
\[ 
G(x)+G(y)=xy- \frac{K(x,y)}{xyt},
\] 
and allows us to rewrite the functional equation as
\[ 
\big(S(x)-G(x) \big) + \big( S(y)-G(y) \big)= K(x,y) \left(
Q(x,y)+\frac 1{xyt}\right).
\] 
This is the counterpart of~\eqref{SG}. We multiply this equation  by $\big(S(x)-G(x) \big) - \big( S(y)-G(y)
\big)$, which gives:
\[ 
\big(S(x)-G(x) \big)^2 - \big( S(y)-G(y) \big)^2= K(x,y) \left(
Q(x,y)+\frac 1{xyt}\right)\big((S(x)-G(x))-(S(y)-G(y))\big).
\] 
This should be compared to the equation
\[ 
J(x)-J(y)=K(x,y) \frac{y-x}{x^2y^ 2},
\] 
which underlies the invariant property of $J$.
We now derive from the last two equations an equation satisfied by the
pole-free series $A(y)$ defined by~\eqref{LK-def}:
\[ 
A(x)-A(y)=K(x,y) C(x,y),
\] 
with 
\[ 
C(x,y)=t^2 
\left(Q(x,y)+\frac 1{xyt}\right)\big(S(x)-G(x) -  S(y)+G(y)
\big) - t\  \frac{y-x}{x^2y^ 2}.
\] 
Using the expressions for
% of
$G(y)$ and $S(y)$, we observe that $C(x,y)$
is a formal power series in $t$ with coefficients in
$\qs[x,y]$. This series,  multiplied by the polynomial
$K(x,y)$, decouples as $A(x)-A(y)$. The following lemma shows that this is
impossible, unless $C(x,y)=0=A(x)-A(y)$. Thus we conclude that
$A(x)$ is independent of $x$, which was a consequence of the invariant
lemma in our first approach.

\begin{Lemma}\label{lem:generic-inv-lemma} Consider a quadrant model
  and its kernel $K(x,y)$. If there are
  series $A(x),B(y)$ and $C(x,y)$ in $\rs[[x,t]]$, $\rs[[y,t]]$ and
  $\rs[[x,y,t]]$, respectively, such that $A(x)-B(y)=K(x,y)C(x,y)$, then
  $A(x)=B(y)\in\rs[[t]]$ and $C(x,y)=0$.  
\end{Lemma}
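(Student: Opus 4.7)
The plan is to expand everything in powers of $t$ and proceed by induction. The crucial observation is that, writing $K(x,y) = -xy + t\, \widehat{K}(x,y)$ with $\widehat{K}(x,y) = xy \, P(x,y) \in \rs[x,y]$, the constant term in $t$ of the kernel is simply $-xy$. This rigid structure (a pure product of the two variables) will force both sides of the equation $A(x)-B(y) = K(x,y)C(x,y)$ to collapse coefficient by coefficient in $t$.

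Concretely, write
\[
A(x) = \sum_{n\ge 0} a_n(x)\, t^n, \quad B(y) = \sum_{n\ge 0} b_n(y)\, t^n, \quad C(x,y) = \sum_{n\ge 0} c_n(x,y)\, t^n,
\]
with $a_n\in\rs[[x]]$, $b_n\in\rs[[y]]$, $c_n\in\rs[[x,y]]$. Extracting the coefficient of $t^n$ in the identity $A(x)-B(y)=K(x,y)C(x,y)$ yields
\[
a_n(x) - b_n(y) = -xy\, c_n(x,y) + \widehat{K}(x,y)\, c_{n-1}(x,y),
\]
with the convention $c_{-1}=0$. I would prove by induction on $n$ that $c_n=0$ and that $a_n(x)=b_n(y)=:\alpha_n$ is a constant.

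For the base case $n=0$, the identity reads $a_0(x) - b_0(y) = -xy\, c_0(x,y)$ in $\rs[[x,y]]$. Setting $x=0$ gives $b_0(y) = a_0(0)$, hence $b_0$ is a constant; setting $y=0$ gives $a_0(x) = b_0(0)$, hence $a_0$ is also a constant equal to $b_0$. The identity then reduces to $0 = -xy\, c_0(x,y)$, forcing $c_0=0$ since $\rs[[x,y]]$ is an integral domain. The inductive step is identical: assuming $c_{n-1}=0$, the equation again reduces to $a_n(x)-b_n(y)=-xy\, c_n(x,y)$, and the same two substitutions $x=0$ and $y=0$ give $a_n(x)=b_n(y)=\alpha_n\in\rs$ and $c_n=0$.

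Summing over $n$, we conclude $A(x)=B(y)=\sum_{n\ge 0}\alpha_n t^n \in\rs[[t]]$ and $C(x,y)=0$, as required. There is no real obstacle here; the only point that merits care is to verify that one may set $x=0$ or $y=0$ in the coefficient-wise identity, which is legitimate because every $c_n$ lies in $\rs[[x,y]]$ and the evaluation map $\rs[[x,y]]\to\rs[[y]]$, $f\mapsto f(0,y)$, is a well-defined ring homomorphism.
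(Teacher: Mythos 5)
Your proof is correct, and it takes a somewhat different route from the paper's. You expand in powers of $t$ and induct on the $t$-degree, using the substitutions $x=0$ and $y=0$ at each step to force $a_n(x)=b_n(y)$ to be a constant and $c_n=0$. The paper instead introduces the lexicographic order on triples $(n,i,j)$ indexing the monomials $t^n x^i y^j$, notes that the minimal monomial of $K$ is $xy$, and argues in one shot: if $C\neq 0$ with minimal monomial $M$, then $xyM$ is the minimal monomial of $KC$, and since it carries positive exponents in both $x$ and $y$ it cannot occur in $A(x)-B(y)$, a contradiction. Both arguments ultimately exploit the same structural fact — that the $t^0$ coefficient of the kernel is $-xy$ — which your $t$-expansion makes explicit, while the paper's ordering encodes it by ranking $t$ first. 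The paper's version is more compact (no induction, no term-by-term recursion); yours is more elementary and makes the recursive cancellation transparent, which some readers may find more illuminating. Both are complete and correct.
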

\begin{proof}
   We define a total order on monomials  $t^n x^i y^j$, for $(n,i,j)\in \ns^3$, by taking the lexicographic order on $(n,i,j)$. For a series $S$, we denote by $\min(S)$ the smallest monomial occurring in $S$. Then
$\min K(x,y)=xy$. Assume $C(x,y)\not =0$, and let $M$ be its minimal
monomial. Then $xyM$ is the minimal monomial of $K(x,y)C(x,y)$, and
should thus occur in $A(x)-B(y)$, which is impossible.
\end{proof}

\medskip
We now adapt this to the reverse Kreweras model, with steps
$\rightarrow, \uparrow, \swarrow$.  The kernel is
\[ 
K(x,y)=t(1+x^2y+xy^2)-xy,
\] 
and the functional equation reads 
\[ 
K(x,y)Q(x,y)=R(x)+S(y)-S(0)-xy
\] 
where now $R(x)=tQ(x,0)=S(x)$.  The invariants and decoupling functions
can be taken as
\[ 
I(x)= t {x^2}- x -\frac t x =J(x)
\] 
and 
\[ 
F(x)=-\frac{x^2}2 +\frac{x}{2t}-\frac 1 {2x}=G(x).
\] 
 The decoupling property stems from
\[ 
G(x)+G(y)=xy-K(x,y) \frac{x+y}{2xyt},
\] 
and allows us to rewrite the functional equation as
\[  
\left(S(x)-\frac{S(0)}2 -G(x) \right) + \left( S(y)-\frac{S(0)}2-G(y) \right)= K(x,y) \left(
Q(x,y)+ \frac{x+y}{2xyt}\right).
\]  
Once multiplied by $\big(S(x)-G(x)\big) -\big(S(y)-G(y)\big)$, this reads
\begin{multline}\label{eq1}
\left(S(x)-\frac{S(0)}2-G(x) \right)^2 - \left( S(y)-\frac{S(0)}2-G(y)
\right)^2=\\
 K(x,y) \left(
Q(x,y)+ \frac{x+y}{2xyt}\right)\big((S(x)-G(x)) -(S(y)-G(y))\big).
\end{multline}
 This should be compared to the invariant property
\beq\label{eq2}
J(x)-J(y)=K(x,y) \frac{x-y}{xy}.
\eeq
We now cancel poles at $y=0$ (and $t=0$) by considering the series
\[ 
A(y):= 4t^2\left(S(y)-\frac{S(0)}2-G(y) \right)^2 -J(y)^2+2tS(0)J(y).
\] 
A linear combination of~\eqref{eq1} and~\eqref{eq2} gives
\[ 
A(x)-A(y)=K(x,y) C(x,y),
\] 
with 
\[ 
C(x,y)=4t^2\left(
Q(x,y) +\frac{x+y}{2xyt}\right)\left(S(x)-G(x) -S(y)+G(y)\right)
-\frac{x-y}{xy} \left( J(x)+J(y) -2tS(0)\right).
\] 
Using the expressions {for}
% of
$G(y)$ and $S(y)$, we observe that $C(x,y)$ is a series in $t$ with coefficients in
$\qs[x,y]$. We conclude as in Kreweras' case that
$C(x,y)=0=A(x)-A(y)$, so that $A(y)$ is independent of $y$. 

{By expanding
the series $A(y)$ around $y=0$, we obtain an equation of the
form~\eqref{cateq}, as for the $7$ other models of
Figure~\ref{fig:alg_models}}:
\beq\label{eqinv-revK}
 4t^2\left(S(y)-\frac{S(0)}2-G(y) \right)^2 -J(y)^2+2tS(0)J(y)
= {t}^{2}S ( 0 )  ^{2}+4 {t}^{2} S'(0)-4\,t.
\eeq

We have not checked whether this ``substitution-free'' invariant
  lemma works for all models of Figure~\ref{fig:alg_models}.

%==================================================================
\subsection{Effective solution of algebraic models}
\label{sec:effective}
%==================================================================
At this stage, for each of the eight models of
Figure~\ref{fig:alg_models}, we have obtained an equation of the
form
\begin{equation}
\label{eq:pol_eq}
\Pol(Q(0,y), A_1, \ldots, A_k,t,y)=0,
\end{equation}
where the series $A_i$ are  derivatives of
$Q(0,y)$ with respect to $y$, evaluated at $y=0$ or $y=-1$.  Their exact
forms are given in Appendix~\ref{app:alg}. It remains to apply the general
procedure of~\cite{mbm-jehanne} to solve them. This is also detailed
in the Appendix, and a {\sc Maple} session supporting the calculations
is available on the authors' webpages. These calculations are of course
heavier when {the number} $k$ in~\eqref{eq:pol_eq} is large: the most complicated models turn out to be
Gessel's model and the last weighted model, for which
$k=3$ (we recall that this model was only \emm conjectured, to be
  algebraic~\cite{KaYa-15}). For the reverse and double Kreweras models, and for the third
weighted model, {we have} $k=2$; while for Kreweras' model  and for the
first two weighted models we have $k=1$.

In all cases the solution is (as already claimed) algebraic. In
particular, the \gf\ $Q(0,0)$ of excursions has degree $3$, $3$, $4$, $8$, $6$,
$3$, $3$, $8$ over $\qs(t)$, if we scan models from left to right in
Figure~\ref{fig:alg_models}. It is also worth noting that the
minimal polynomial of $Q(0,0)$ has genus zero (so that the
corresponding curve has a rational parametrization), except for the
last weighted model, which had never been solved so far: 
\[ 
Q(0,0)= \frac{-1-6t+\sqrt{Z}}{2t^2},
\] 
where $Z=1+12t+40t^2+O(t ^3)$ satisfies a quartic equation of genus $1$:
\begin{multline*}
   27\,{Z}^{4}- 
18 \left( 10000\,{t}^{4}+9000\,{t}^{3}+2600\,{t}^{2}+240\,t+1 \right)
{Z}^{2}
\\
+8 \left( 10\,{t}^{2}+6\,t+1 \right) 
 \left( 102500\,{t}^{4}+73500\,{t}^{3}+14650\,{t}^{2}+510\,t-1
 \right) Z\\
= \left( 10000\,{t}^{4}+9000\,{t}^{3}+2600\,{t}^{2}+240\,t+1
 \right) ^{2}.
\end{multline*}

%%%%%%%%%%%%%%%%%%%%%%%%%%%%%%%%%%%%%%%%%%%%%%%%%%%%%%%%%%%%%%%%%%%%%%%%%%%%%%
\section{An analytic invariant method}
\label{sec:analysis}
%%%%%%%%%%%%%%%%%%%%%%%%%%%%%%%%%%%%%%%%%%%%%%%%%%%%%%%%%%%%%%%%%%%%%%%%%%%%%%
In this section we  move to an analytic world, and consider
$Q(x,y)\equiv Q(x,y;t)$ as a function of three complex variables.
We will use several important results from  the analytic approach of quadrant walks, developed first in a probabilistic
framework~\cite{FIM-99}, and then in an enumerative one~\cite{Ra-12}. For the
reader's convenience, we will recall all relevant statements.

The main result in this section, Theorem~\ref{thm:9models}, tells that, for each decoupled model with an infinite
group (shown in Table~\ref{tab:decoupling_functions-infinite}), the series $Q(0,y)$ has a rational expression in terms of $t$,~$y$, 
and an {explicit} function $w(y)$, which will be seen as a \emm weak, invariant.
So far, only \emm integral,   expressions, also involving $w(y)$, were known.
We establish  some preliminary technical
results in Section~\ref{sec:prel}, introduce $w(y)$ in Section~\ref{subsec:weak_invariants}, and
prove our main theorem  in Section~\ref{subsec:analinv}. The readers
who are familiar with the analytic approach to quadrant walks will recognize
in the weak invariant $w(y)$ the \emm conformal gluing function, that is central
in this approach. We conclude in Section~\ref{sec:weak-finite}
  by showing that this analytic approach applies as well to the four
  decoupled   models with a finite group that we solved in an
  algebraic fashion in the previous section.
% 
%=========================================
\subsection{Preliminaries}
\label{sec:prel}
%=========================================

Observing that the coefficients of $Q(x,y;t)$ satisfy 
\[ 
\left\vert\sum_{i,j\ge 0} q(i,j;n) x^i y^j \right\vert\leq \vert\cS\vert^n \max(1, \vert x\vert^n) \max(1,
\vert y\vert^n) ,
\]  
we see that $Q(x,y;t)$ is analytic {in} $\{ \vert t\vert\max(1, \vert x\vert) \max(1,\vert y\vert) <1/\vert\cS\vert\}$
(at least), and that this domain is a neighborhood of  the
polydisc $\{\vert x\vert\leq 1, \vert y\vert\le1,  \vert t\vert<1/\vert\cS\vert\}$.

 The roots
$Y_{0,1}$ of the kernel (now called \emm branches,) are %given by
\[ 
    Y_{0,1}(x)=\frac{- b(x)\pm\sqrt{ b(x)^2-4 a(x) c(x)}}{2 a(x)},
\] 
where $a, b$ and $c$ are defined in~\eqref{K-abc}.  The discriminant  $ d(x):=
 b(x)^2-4 a(x) c(x)$ has degree three or four,
hence there are four branch points 
% $x_\ell$ 
$x_1, \ldots, x_4$ 
(depending on $t$), with
$x_4=\infty$ if $d(x)$ has degree three.   We define analogously %symmetrically
the branches $X_{0,1}(y)$ and  their four branch 
points $y_1,\ldots ,y_4$.
One key difference with the  formal framework adopted so far is the following:
\begin{center}
\emph{In this section, $t$ is a fixed real number in $(0,1/\vert \mathcal S\vert)$.}  
\end{center}
Moreover, we only consider  non-singular, {unweighted} models.

\begin{Lemma}[{\bf{Properties of the branch points~\cite[Sec.\ 3.2]{Ra-12}}}]
\label{lem:branch_points}
The branch points $x_\ell$ are real and distinct. Two
of them (say $x_1$ and $x_2$) are in the open unit disc, with
$ x_1 < x_2$ and $x_2>0$. The other two  (say
$x_3$ and $x_4$) are outside the closed unit disc, with 
$x_3>0$ and  $ x_3 <  x_4$ if $x_4>0$. The discriminant
$d(x)$ is negative on $(x_1,x_2)$ and $(x_3,x_4)$, where if $x_4<0$, 
the set 
$(x_3,x_4)$ stands for the union of intervals $(x_3,\infty)\cup
(-\infty,x_4)$.

Of course, analogous results hold for the branch points $y_\ell$.
\end{Lemma}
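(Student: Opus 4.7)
The plan is to analyse the real variations of the discriminant $d(x) = b(x)^2 - 4a(x)c(x)$ on $\mathbb{R}$, for fixed $t \in (0, 1/|\cS|)$, exploiting the probabilistic reading of the kernel: away from the axes $x=0$ and $y=0$, the equation $K(x,y)=0$ is equivalent to $tP(x,y)=1$, where $P(x,y)=\sum_{(i,j)\in \cS} x^iy^j$ is the step polynomial. The real branch points are thus the real $x$ for which $tP(x,\cdot)=1$ has a double root in $y$.

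First I would work on the positive axis. For $x>0$, the map $y\mapsto tP(x,y)$ is a Laurent polynomial with positive coefficients, strictly convex in $\log y$ (non-singularity ensures that both positive and negative powers of $y$ appear), and tends to $+\infty$ as $y\to 0^+$ or $y\to +\infty$. Hence it attains a unique positive minimum $tq(x)$, and $q$ itself is strictly convex in $\log x$ with $q(x)\to +\infty$ at the endpoints. At its minimum, $q(x^\ast)\leq q(1)\leq P(1,1)=|\cS|$, so $tq(x^\ast)\leq t|\cS|<1$, while $tq(x)\to+\infty$ at $0^+$ and $+\infty$. By the intermediate value theorem, $tq(x)=1$ has exactly two positive real solutions $x_2<x_3$, which are simple real positive branch points (simplicity comes from the strict convexity: a double branch point would correspond to a degenerate minimum). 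Evaluating at $x=1$ gives $K(1,1)=t|\cS|-1<0$, whereas $c(1),a(1)\geq 0$; thus $y\mapsto K(1,y)$ is a parabola opening upward and negative at $y=1$, so it has two positive real roots and $d(1)>0$. This forces $x_2<1<x_3$, placing $x_2$ in the open unit disc and $x_3$ outside the closed unit disc.

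Next I would treat the negative axis (or, equivalently, large $x$) in the same fashion. The goal is to exhibit two more real branch points $x_1<0$ and $x_4$, with $x_1$ inside and $x_4$ outside the closed unit disc, possibly with $x_4=\infty$ when $d$ has degree~$3$. The clean way is to bound the total degree of $d$ by $4$, then combine the two sign changes of $d$ on $(0,\infty)$ found above with a sign analysis at $x=-1$, $x=0$ and $x=\pm\infty$: evaluating $K(-1,y)$ (together with the parity information coming from $\cS$), one produces a point $x=-1$ where, after the appropriate substitution, the reduced polynomial in $y$ has two real roots, giving $d(-1)>0$; this combined with $d(0)\geq 0$ and the leading-coefficient behaviour of $d$ at $\pm\infty$ accounts for the remaining two real roots in the stated positions. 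The statement about $d(x)<0$ on $(x_1,x_2)$ and on $(x_3,x_4)$ (with the convention for $x_4<0$) then follows immediately from the signs of $d$ at $x=1$ and at $x=x^\ast$: on the intervals where the two $y$-roots of $K$ are non-real complex conjugates, $d$ is negative, while on the complementary intervals $d\geq 0$.

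The main technical obstacle is the uniform treatment of the negative axis across all non-singular unweighted models, because $P(x,y)$ is no longer a sum of positive terms when $x<0$, so the convexity argument that worked on $(0,\infty)$ is not directly available. One navigates this by a small case discussion based on the parity of the $i$-coordinates of steps in $\cS$ (essentially a change of variable $x\mapsto -x$ that swaps two families of steps), reducing each case to a sign evaluation of $d$ at $-1$ and at $\pm\infty$. All of this is elementary but tedious; since the statement is essentially \cite[Sec.~3.2]{Ra-12}, in practice one would invoke that reference after sketching the positive-axis half, which carries all the conceptual content.
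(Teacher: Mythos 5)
The paper itself does not prove this lemma: it is stated with a pointer to \cite[Sec.~3.2]{Ra-12}, so your decision to defer to that reference after a sketch is exactly what the paper does, and your positive-axis sketch (log-convexity of $P(x,\cdot)$ over $y>0$ for $x>0$, and the two crossings $x_2<1<x_3$ of the level $1/t$) is indeed the conceptual heart of Raschel's argument.

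One part of the sketch, however, would not close the gap if pressed. Your count of branch points on $(0,\infty)$ via ``two solutions of $tq(x)=1$'' only detects those $x>0$ at which $K(x,\cdot)$ has a \emph{positive} double root. A zero of $d$ on $(0,\infty)$ can equally well come from a negative double root: for $x>0$ the product $c(x)/a(x)$ of the roots of $K(x,\cdot)$ is positive, and whenever $b(x)>0$ (which happens near $x=0$ as soon as $(-1,0)\in\cS$) the two roots are both negative. The lemma itself does not assert $x_1<0$: for several models --- Kreweras being the simplest --- one has $d(0)=t^2>0$ and hence $0<x_1<x_2$, and $x_1$ is precisely a branch point with negative double root, invisible to the $tq(x)=1$ analysis. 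So it is not accurate that the positive-axis half ``carries all the conceptual content''; one still needs the degree bound $\deg d\le 4$ together with sign evaluations of $d$ at $0$, $\pm1$ and at infinity to conclude that there are exactly four real branch points positioned as stated. This is what \cite[Sec.~3.2]{Ra-12} supplies, and what the present paper implicitly relies on when it later inspects $a(x_1)$, $b(x_1)$, $c(x_1)$ in the proof of Lemma~\ref{Lem:curve_L_0_infty}.
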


Figure~\ref{fig:d} illustrates schematically the two cases $x_4>0$ and
$x_4<0$.

\begin{figure}
  \scalebox{0.5}{\input{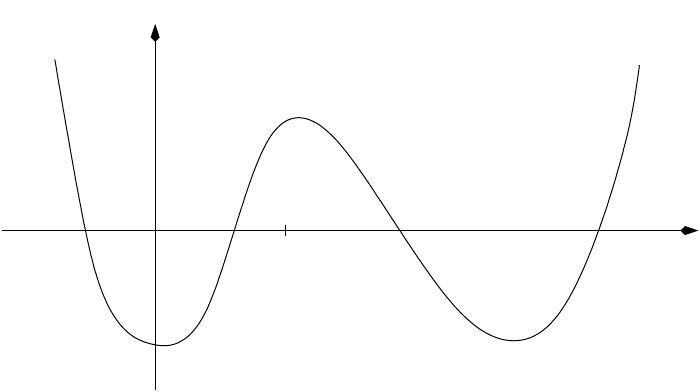_t}}\hskip 10mm \scalebox{0.5}{\input{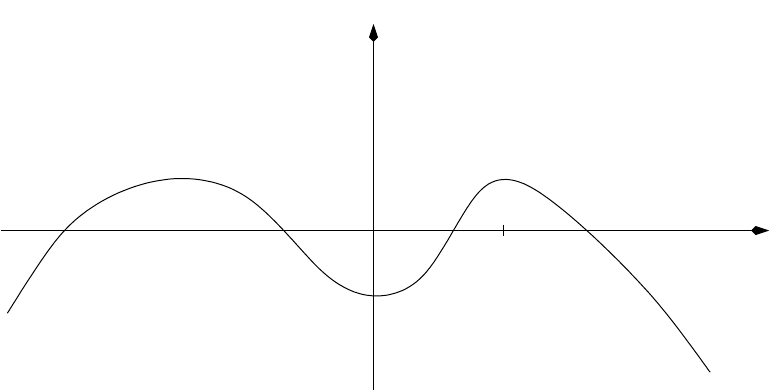_t}}
  \caption{Plot of $d(x)$ for $x$ real: the main two possibilities,
    depending on the sign of $x_4$. Note that $x_1$ may be
    non-negative, and $x_4$ may be $+\infty$.}
  \label{fig:d}
\end{figure}

The branches $Y_{0,1}$ are meromorphic on $\mathbb C\setminus
([x_1,x_2]\cup [x_3,x_4])$. 
On the cuts $[x_1,x_2]$ and  $[x_3,x_4]$,
the two branches $Y_{0,1}$ 
still exist and are complex conjugate (but possibly infinite at
$x_1=0$ as
discussed in the next lemma). {At the branch points $x_i$, we have
$Y_0(x_i)=Y_1(x_i)$ (when finite), and we denote this value by
$Y(x_i)$.} A key object
in our definition of weak invariants 
is the  curve~$\mathcal L$  (depending on~$t$)
defined by
\[ 
     \mathcal L =Y_0([x_1,x_2])\cup Y_1([x_1,x_2])=\{y\in \mathbb C:
     K(x,y)=0 \text{ and } x\in[x_1,x_2]\}. 
\] 
By construction, it is symmetric with respect to the real axis.

We denote by $\mathcal G_\mathcal L$ 
the domain delimited by $\mathcal L$ and avoiding  the real point at
$+\infty$.  See Figure~\ref{fig:curves} for examples. 

\begin{figure}[ht]
\vspace{-10mm}
  \includegraphics[width=7.2cm,height=7.2cm]{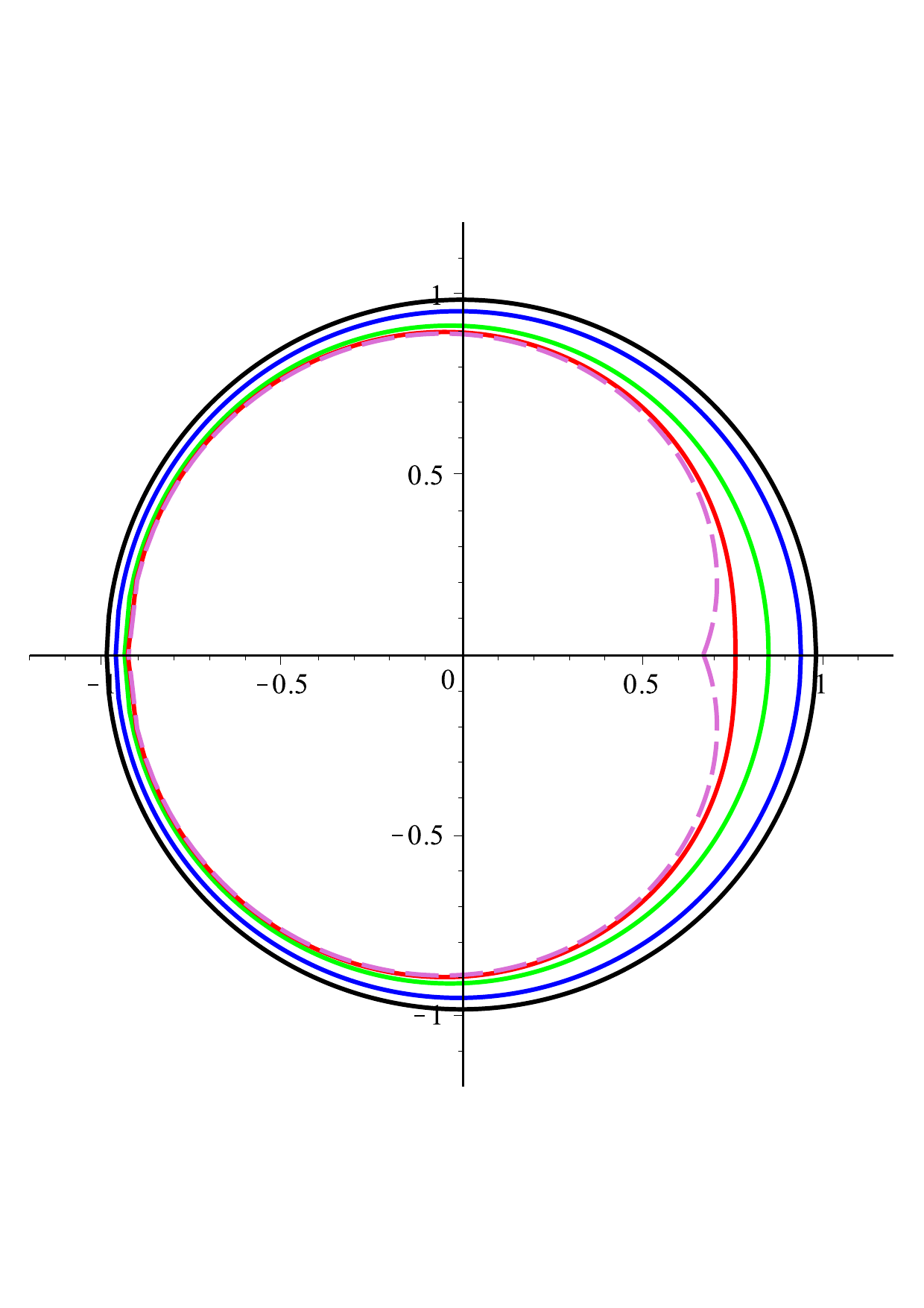} 
 \hspace{-1mm}
\includegraphics[width=7.2cm,height=7.2cm]{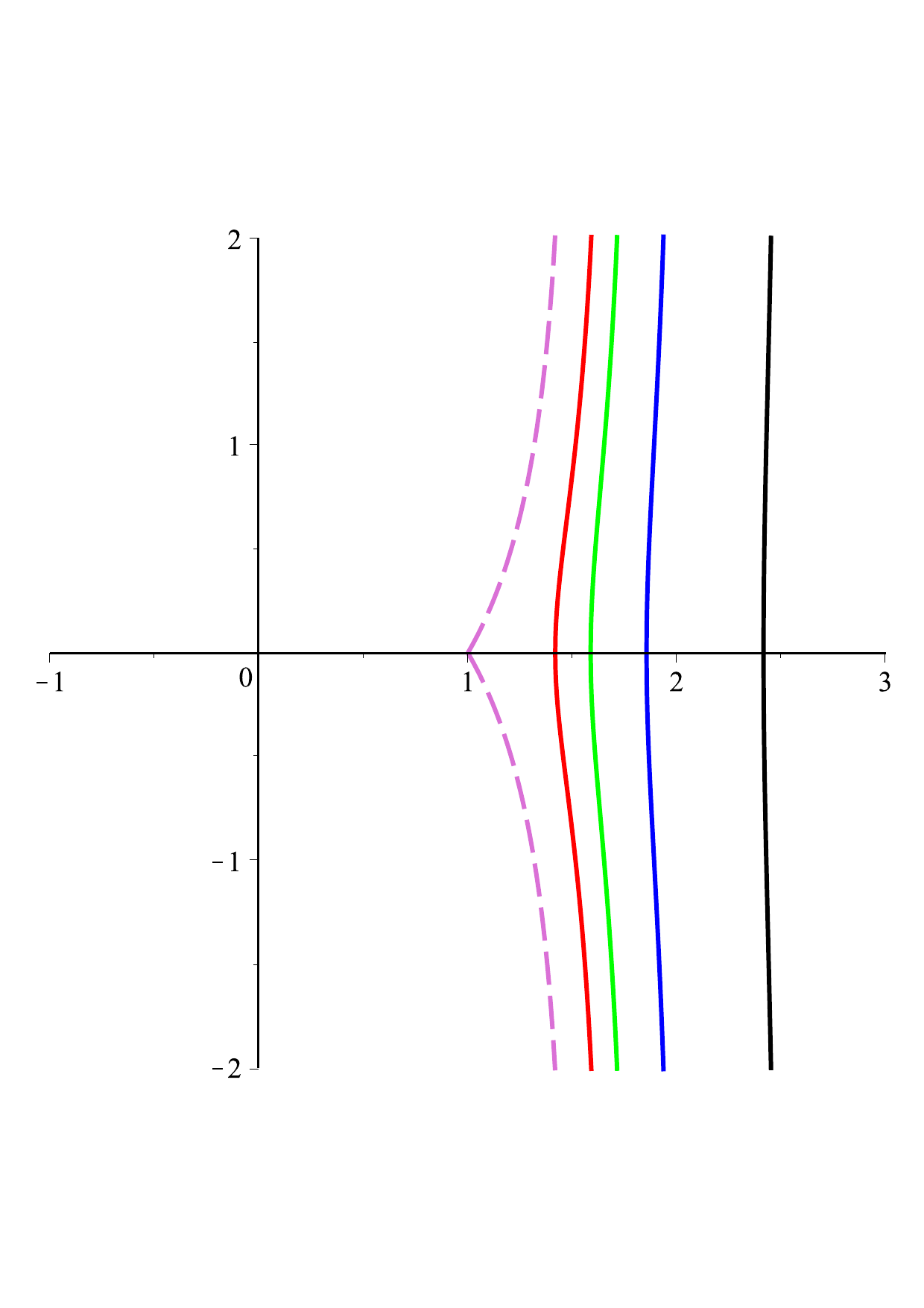}
\vspace{-10mm}
\caption{The curves $\mathcal L$ for model ${\#3}$
of Table~\ref{tab:decoupling_functions-infinite}
  (for $t=0.03$,
% (black), 
$0.1$,
% (blue), 
$0.2$, % (green) 
 $0.25=1/\vert\cS\vert$ and $0.263185\ldots$
% (red)
as one moves closer to the origin) and for the reverse Kreweras model
(second model in Table~\ref{tab:decoupling_functions-finite}; from  right to left,
$t=0.2$,
% (black), 
$0.25$,
% (blue), 
$0.28$,
% (green) 
$0.3$
% (red)
and $1/3=1/\vert\cS\vert$). The dashed curve is obtained for a value
$t_c\ge 1/\vert\cS\vert$, where $\cL$ stops being smooth,
but here we only consider values of $t$ less than $1/\vert\cS\vert$.
}
\label{fig:curves}
\end{figure}

The following lemma is proved in~\cite{FIM-99} in the probabilistic
  framework, that is,  when $t=1/\vert \mathcal S\vert$ (see in
  particular Thm.~5.3.3 and its proof). We show that it %actually
  holds for all $t\in(0,1/\vert \mathcal S\vert)$ as well.

\begin{Lemma}[\bf{Properties of  the curve $\cL$}]
\label{Lem:curve_L_0_infty}
 The curve $\cL$ is symmetric with respect to %in
 the real axis. It intersects this
 axis at $Y(x_2)>0$. 

If $\cL$ is unbounded, $Y(x_2)$ is the only
 intersection point. This occurs if  and only if neither  $(-1,1)$ nor $(-1,0)$ belong to $\cS$. In this case,
$x_1=0$ and the only point of $[x_1, x_2]$ {where at least
one branch $Y_i(x)$ is infinite is $x_1$ (and then both branches are
infinite there). }

Otherwise, the curve $\cL$ goes through a second real point, namely
$Y(x_1) \leq 0$. The limit case  $Y(x_1)=0$ occurs if and only if neither
$(-1,-1)$ nor $(-1,0)$ belong to  $\cS$. In this case,
$x_1=0$. 

Consequently, the point $0$ is either in the domain  $\mathcal G_\cL$ or on the curve $ \cL$.  The domain  $\mathcal G_\cL$ also
contains the (real)
branch points $y_1$ and $y_2$, of modulus less than $1$. The other two
branch points, $y_3$ and $y_4$, are in the complement of $\cG_\cL \cup
\cL$. The domain  $\mathcal G_\cL$ coincides with the region denoted $\mathcal G
Y([x_1(t),  x_2(t)];t)$ in~\cite[Lem.~2]{Ra-12}.
\end{Lemma}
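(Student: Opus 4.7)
The plan is to chain four elementary ingredients: the sign of the kernel discriminant on $[x_1,x_2]$, the quadratic identities $Y_0+Y_1=-b/a$ and $Y_0 Y_1=c/a$, the non-negativity of the polynomial coefficients of $a,b,c$ (inherited from $\cS\subset\{-1,0,1\}^2$), and continuity in $t$. Lemma~\ref{lem:branch_points} gives $d(x)\leq 0$ on $[x_1,x_2]$, with equality only at the endpoints, so the branches $Y_0,Y_1$ are complex conjugate there. This immediately produces the reflection symmetry of $\cL$ and identifies its finite real points as a subset of $\{Y(x_1),Y(x_2)\}$. From $Y(x_i)^2=c(x_i)/a(x_i)$ and the positivity of $a,c$ at $x_2>0$, we get $Y(x_2)\neq 0$, and the sign is pinned down by matching with the small branch $Y_0(x)=O(t)$, which is positive for small positive $x,t$; a continuity argument in $t$ then upgrades this to $Y(x_2)>0$ on the whole range $(0,1/|\cS|)$.

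For boundedness, $\cL$ is unbounded iff a branch blows up on $[x_1,x_2]$, iff $a$ has a real zero there. Since $a(x)=t\sum_{(i,1)\in\cS}x^{i+1}$ has non-negative coefficients and degree at most $2$, its only real root that can land in $[x_1,x_2]\subset(-1,1)$ is $0$, which occurs exactly when $(-1,1)\notin\cS$. Moreover, $0\in[x_1,x_2]$ requires $d(0)\leq 0$, and with $a(0)=0$ this reads $b(0)=0$, i.e.\ $(-1,0)\notin\cS$. Conversely, when both steps are absent, $d(0)=0$ makes $0$ a branch point in the open unit disc, so $x_2>0$ forces $x_1=0$; then $|Y_{0,1}(x)|^2=c(x)/a(x)\to\infty$ as $x\to 0^+$ (using $c(0)>0$, which holds as long as a south-west step is present) shows $\cL$ unbounded. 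In the bounded case, $Y(x_1)=0$ reduces through the same quadratic identities to $b(x_1)=c(x_1)=0$, which forces $x_1=0$ and is equivalent to $(-1,0),(-1,-1)\notin\cS$. The general inequality $Y(x_1)\leq 0$ comes from a continuity-in-$t$ argument: the leading Puiseux expansion at $t=0$ places $Y(x_1)$ on the negative real axis (for instance $\sim-1/\sqrt{t}$ for Kreweras, with analogous expansions in the other models), and a sign change over $(0,1/|\cS|)$ would have to pass through the just-identified limit case $Y(x_1)=0$.

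The location of the branch points $y_i$ follows by running the symmetric analysis for $X_{0,1}(y)$: $y_1\leq 0<y_2$ lie in the open unit disc, while $y_3,y_4$ are outside the closed unit disc. The real trace of $\GL$ is an interval bounded above by $Y(x_2)$ and below by $Y(x_1)$ (or $-\infty$ if $\cL$ is unbounded), so it contains the origin. Since $[y_1,y_2]\subset[Y(x_1),Y(x_2)]$, because a point $y$ in the former has $X_{0,1}(y)$ complex conjugate with modulus less than $1$ and therefore lies in the closure of $\GL$, we get $y_1,y_2\in\GL$. Conversely, $y_3,y_4$ sit on the real axis outside $[Y(x_1),Y(x_2)]$, hence in the unbounded component of $\cs\setminus\cL$, which is excluded from $\GL$. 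The identification of $\GL$ with the region of~\cite[Lem.~2]{Ra-12} is immediate from the two definitions.

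I expect the main obstacle to be the sign of $Y(x_1)$: unlike $Y(x_2)$, it cannot be read off directly from the power-series expansion of the small branch, and its determination rests both on the leading-order Puiseux expansion at $t=0$ and on the verification that the sign does not change as $t$ varies in $(0,1/|\cS|)$, except at the pinching value singled out above.
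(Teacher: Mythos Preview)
Your outline captures the right skeleton, but two of the load-bearing steps are not actually carried out, and the paper's proof handles them by a quite different (and simpler) device that you miss.

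\medskip
\textbf{The sign of $Y(x_1)$.} You are right that this is the crux, but your continuity-in-$t$ scheme is incomplete. You would need, for every non-singular model with a bounded $\cL$, the leading Puiseux expansion of $Y(x_1)$ at $t=0$, and you only exhibit one. The paper avoids this entirely by a direct algebraic argument on the polynomial $b(x)=tx\sum_{(i,0)\in\cS}x^i-x$: since $b(0)\ge 0$ and $b(1)<0$, $b$ has a unique root $x_b$ in $[0,1)$; moreover $d(x_b)=-4a(x_b)c(x_b)\le 0$, so $x_b\in[x_1,x_2)$. This single observation yields both $b(x_2)<0$ (hence $Y(x_2)=-b(x_2)/(2a(x_2))>0$) and $b(x_1)\ge 0$ (hence $Y(x_1)\le 0$ whenever $a(x_1)>0$). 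No small-$t$ analysis, no continuity in $t$, and the argument is uniform over all models. Your approach for $Y(x_2)>0$ via $Y(x_2)^2=c(x_2)/a(x_2)$ plus continuity does work, but is strictly more laborious than simply reading off the sign of $b(x_2)$.

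\medskip
\textbf{The location of $y_1,y_2,y_3,y_4$.} Your sentence ``a point $y$ in $[y_1,y_2]$ has $X_{0,1}(y)$ complex conjugate with modulus less than~$1$ and therefore lies in the closure of $\GL$'' does not hold up. Nothing established so far gives $|X_i(y)|<1$ on $[y_1,y_2]$ (this fails for several models), and even if it did, it is not clear how that would place $y$ inside $\GL$, whose definition refers to $Y([x_1,x_2])$, not to any sublevel set of $|X_0|$. The paper's argument is different and clean: since $K(x_i,Y(x_i))=0$ with $x_i$ real, one has $\widetilde d(Y(x_i))\ge 0$, so $Y(x_i)\notin(y_1,y_2)\cup(y_3,y_4)$. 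Combining this with $Y(x_1)\le 0<Y(x_2)$ forces $Y(x_1)\le y_1$ and leaves only the cases $Y(x_2)\in(0,y_1]$ or (if $y_4>0$) $y_4\le Y(x_2)$ to exclude, which the paper does by a short continuity argument on a branch $X_i$. Finally, the strict inequalities $Y(x_\ell)\ne y_i$ (which you do not address) are settled by checking that the system $K(x,y)=d(x)=\widetilde d(y)=0$ has no solution for non-singular models --- a resultant computation.

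\medskip
In short: replace your Puiseux/continuity argument for the sign of $Y(x_1)$ by the location of the root $x_b$ of $b$ inside $[x_1,x_2)$, and replace your $|X_i|<1$ claim by the observation $\widetilde d(Y(x_i))\ge 0$; both are one-liners and make the whole proof model-independent.
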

\begin{proof}
Since $d(x)<0$ in $(x_1, x_2)$, the curve $\cL$ intersects the real
axis at two points at most, namely $Y(x_1)$ and $Y(x_2)$.
 Recall from~\eqref{K-abc} that
\[ 
a(x)=tx\sum_{(i,1)\in \cS}x^i, \qquad b(x)=tx\sum_{(i,0)\in \cS}x^i-x \qquad\hbox{while} \qquad
c(x)=tx\sum_{(i,-1)\in \cS}x^i.
\] 
We begin with the polynomial $a(x)$, which is (at most) quadratic.  If
$a(x)=0$ for some real $x$, then $d(x)=b(x)^2\ge 0$, hence the sign of $a(x)$ is
constant on  the interval $(x_1,x_2)$. Since $a(x_2)>0$ (because
$x_2>0$,  see Lemma~\ref{lem:branch_points}), we also have $a(x_1)\ge
0$.

Now consider the polynomial $b(x)$, which is also quadratic at most. We
have $b(0)\ge 0$ and $b(1)<0$ (by our choice of $t$), hence $b(x)$ has 
one root $x_b$ in $[0,1)$: exactly one, since if $b(x)$ is quadratic,
  it must have a root larger than $1$ because $t>0$. Moreover,
  $d(x_b)=-4a(x_b)c(x_b)\leq 0$, hence $x_b\in [x_1, x_2]$. In fact $x_b\in[x_1,x_2)$
since $x_2$ is positive and thus satisfies
$0<4a(x_2)c(x_2)=b(x_2)^2$.
%=b(x_2)^2$. 
Since $x_b< x_2<1$, we have $b(x_2)<0$ hence
$Y(x_2)=-b(x_2)/(2a(x_2))>0$.
Similarly, since $x_1\leq x_b$, we have $b(x_1)\ge 0$. 

If $a(x_1)=0$, the condition $d(x_1)=0$ implies that $b(x_1)=0$ as
well. Hence $x_1$ coincides with $x_b$, which is non-negative; but
$a(x_b)=0$ then forces $x_b=0$. Thus $b(0)=0=a(0)$, which is
equivalent to saying that neither $(-1,0)$ nor $(-1,1)$ belong to
$\cS$. It is readily checked that in this case each $Y_i(x)$ tends to
infinity as $x\rightarrow 0^+$.

Now assume $a(x_1)>0$. Then $Y(x_1)=-b(x_1)/(2a(x_1))\leq 0$. The limit
case $Y(x_1)=0$ occurs when $b(x_1)=0$ and $c(x_1)=0$ (since
$d(x_1)=0$). Hence $x_1$ coincides again with $x_b$, which is
non-negative, and the condition $c(x_b)=0$ forces $x_b=0$. Thus
$b(0)=0=c(0)$,  which is
equivalent to saying that neither $(-1,0)$ nor $(-1,-1)$ belong to
$\cS$. It is readily checked that in this case $Y(0)=0$ indeed.

\medskip
It follows from the results established so far that the intersection
of the domain $\cG_\cL$ with the real axis is $(Y(x_1),Y(x_2))$, where by
convention $Y(x_1)=-\infty$ if $\cL$ is unbounded. Moreover, either
$Y(x_1)=0$ and thus $0\in\cL$, or $0\in (Y(x_1),Y(x_2))$. We now want to prove
that $Y(x_1)<y_1<y_2<Y(x_2)<y_3$, and $y_4<Y(x_1)$ if $y_4<0$. Let us begin with $Y(x_1)<y_1$, assuming
$Y(x_1)$ is finite (otherwise there is nothing to prove). Define $\widetilde d(y)$ as the counterpart for the
variable~$y$ of the discriminant $d(x)$, that is, $\widetilde d(y)= \widetilde b(y)^2 -4 \widetilde a(y) \widetilde c(y).$  We observe
that $\widetilde d(Y(x_1))\ge 0$: otherwise, the roots of $K(x,Y(x_1))$
would be complex conjugate or infinite, while one of them is $x=x_1$. Hence
$Y(x_1)$ cannot be in any of the intervals $(y_1, y_2)$ or $
(y_3,y_4)$. Since it is non-positive, as proved above, it is
necessarily less than or equal to $y_1$, and larger than or equal to $y_4$ if
$y_4<0$.

Similarly, $Y(x_2)$ cannot be in any of the intervals $(y_1, y_2)$ or $
(y_3,y_4)$. Since it is positive (as proved above), it is either
larger than or equal to $y_2$, or in $(0, y_1]$. It remains to exclude
the two cases $0<Y(x_2)\leq y_1$ and $0<y_4\leq Y(x_2)$. 

If $0<Y(x_2)\leq y_1$ then each function $X_i$ is continuous on the
interval $[Y(x_2), y_1]$. Let $X_i$ be the branch of $X$ satisfying
$X_i(Y(x_2))=x_2>0$. Since $X_i(y_1)\leq 0$, there exists  a real
number $y\in (Y(x_2),y_1]$, hence necessarily positive, such that
$X_i(y)=0$. That is, $K(0,y)=0=\widetilde c(y)$, which is impossible for $y$
positive.

The argument excluding the case $0<y_4\leq Y(x_2)$ is similar: in fact, replacing
the step set $\cS$ by $\overline \cS:=\{(i,-j): (i,j)\in \cS\}$ leaves
the $x_\ell$'s unchanged, replaces the set $\{y_\ell: 1\leq i \leq 4\}$ by
$\{1/y_\ell: 1\leq i \leq 4\}$, and finally replaces $Y_i$ by $1/Y_i$. With
these remarks at hand, one  realizes that if $0<y_4\leq Y(x_2)$ for
one model, then $0<Y(x_2)\leq y_1$ for the reflected one.

We still have to exclude the limit cases where $Y(x_\ell)$ would be one of the branch
points $y_i$. This would mean that the system 
$K(x,y)=d(x)=\widetilde d(y)=0$
%$K(x,y)=d(x)=d(y)=0$ 
has a solution. Writing $K(x,y)$ as in~\eqref{K-w}, and 
eliminating $x$ and $y$ between these three
equations, 
we obtain a polynomial in~$t$ and the weights $w_{i,j}$ that
must vanish. One can check that among the $79$ unweighted models
($w_{i,j}\in \{0,1\}$), those
that cancel this polynomial are exactly the $5$ singular models.

Finally, since $\cG_\cL$ contains $y_1$, it must coincide with the
component of $\cs\setminus \cL$ denoted $\mathcal G
Y([x_1(t),  \break  x_2(t)];t)$ in~\cite[Lem.~2]{Ra-12}.
\end{proof}

Among the models having  decoupling functions (Tables~\ref{tab:decoupling_functions-finite}
and~\ref{tab:decoupling_functions-infinite}), the only one for which $\cL$ goes through the point $0$
is model $\# 9$ in Table~\ref{tab:decoupling_functions-infinite}. The
only one for which  $\cL$ is unbounded is the reverse Kreweras model
(second model in Table~\ref{tab:decoupling_functions-finite}). In
fact, the method that we are going to present in this section to solve
models having a decoupling function is more elegant  when $\cL$ is
bounded: this is why three models in
Table~\ref{tab:decoupling_functions-infinite} differ from  the
original  classification of~\cite{BMM-10} by an $x/y$-symmetry (Figure~\ref{fig:switch_models}). We will still
illustrate {by} 
%on
the case of reverse Kreweras walks what can be done when
$\cL$ is unbounded. Note that the  condition for unboundedness is that
$K(0,y)$ has no root (and then it equals $t$).
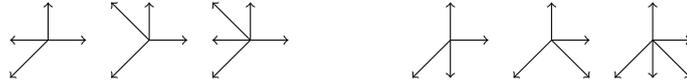
\begin{figure}[ht!]
\begin{center}
\begin{tikzpicture}[scale=.5] % q6654
    \draw[->] (0,0) -- (-1,0);
    \draw[->] (0,0) -- (0,1);
    \draw[->] (0,0) -- (1,0);
    \draw[->] (0,0) -- (-1,-1);
  \end{tikzpicture}\hspace{2mm}
 \begin{tikzpicture}[scale=.5] % q6654
    \draw[->] (0,0) -- (-1,1);
    \draw[->] (0,0) -- (-1,-1);
    \draw[->] (0,0) -- (1,0);
    \draw[->] (0,0) -- (0,1);
  \end{tikzpicture}\hspace{2mm}
  \begin{tikzpicture}[scale=.5] % q6654
    \draw[->] (0,0) -- (1,0);
    \draw[->] (0,0) -- (0,1);
    \draw[->] (0,0) -- (-1,1);
    \draw[->] (0,0) -- (-1,0);
    \draw[->] (0,0) -- (-1,-1);
  \end{tikzpicture}\hspace{15mm}
 \begin{tikzpicture}[scale=.5] % q6654
    \draw[->] (0,0) -- (0,-1);
    \draw[->] (0,0) -- (0,1);
    \draw[->] (0,0) -- (1,0);
    \draw[->] (0,0) -- (-1,-1);
  \end{tikzpicture}\hspace{2mm}
 \begin{tikzpicture}[scale=.5] % q6654
    \draw[->] (0,0) -- (1,-1);
    \draw[->] (0,0) -- (-1,-1);
    \draw[->] (0,0) -- (1,0);
    \draw[->] (0,0) -- (0,1);
  \end{tikzpicture}\hspace{2mm}
  \begin{tikzpicture}[scale=.5] % q6654
    \draw[->] (0,0) -- (1,0);
    \draw[->] (0,0) -- (0,1);
    \draw[->] (0,0) -- (1,-1);
    \draw[->] (0,0) -- (0,-1);
    \draw[->] (0,0) -- (-1,-1);
  \end{tikzpicture}  \end{center}
 \vskip -2mm  
\caption{Models \#1, \#2 and \#7 (left) are symmetric versions of
  models found in the original classification of~\cite{BMM-10} (right).}
  \label{fig:switch_models}
\end{figure}

We now turn to the properties of the function $S(y)=K(0,y)Q(0,y)$. It
is originally defined around $y=0$, and analytic (at least) in the
unit disc $\cD$. This disc contains the points $y_1$ and~$y_2$, and thus intersects
the domain $\mathcal G_\cL$ by Lemma~\ref{Lem:curve_L_0_infty}.

\begin{Proposition}[\bf{The function $\boldsymbol{S(y)}$}]
\label{prop:S-properties}
  The function $S(y)=K(0,y)Q(0,y)$ has an analytic continuation 
in $\cD \cup \mathcal G_\mathcal L$, with finite limits on
  $\cL$. Moreover,  for $x \in [x_1, x_2]\subset(-1,1)$ and $i\in \{0,1\}$, 
\beq
\label{eqker:xYi}
R(x)+S(Y_i)=xY_i+R(0).
\eeq
The function $S(y)$ is bounded on $\mathcal G_\mathcal L\cup\mathcal L$. 
\end{Proposition}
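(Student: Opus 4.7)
The starting point is that, for fixed $t\in(0,1/\vert\cS\vert)$, the series $Q(x,y;t)$ converges absolutely on the polydisc $\{\vert x\vert\le 1,\vert y\vert\le 1\}$, hence $S(y)=K(0,y)Q(0,y)$ is analytic in $\cD$ and continuous on $\overline{\cD}$. The plan is to analytically continue $S$ across $\cL$ into $\cG_\cL$, using the kernel identity $K(x,Y_i(x))=0$ as the gluing device, and then to derive continuity on $\cL$ together with boundedness on $\cG_\cL\cup\cL$.

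First I would establish~\eqref{eqker:xYi}. For $x\in[x_1,x_2]\subset(-1,1)$ the branches $Y_0(x),Y_1(x)$ are complex conjugate and, by Lemma~\ref{Lem:curve_L_0_infty}, parameterize $\cL$; moreover $\cG_\cL\cap\cD$ is non-empty since it contains the real branch points $y_1,y_2$. Plugging $y=Y_i(x)$ into the basic functional equation~\eqref{eqfunc} and using $K(x,Y_i(x))=0$ yields~\eqref{eqker:xYi} immediately for those $x\in[x_1,x_2]$ with $Y_i(x)\in\overline{\cD}$. The identity will then propagate by analytic continuation to the whole of $[x_1,x_2]$ once $S$ has been extended to a neighborhood of $\cL$.

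For the continuation itself, define on $\cG_\cL$
\[
\widetilde S(y):=R(0)-R(X_0(y))+X_0(y)\,y,
\]
where $X_0$ is the branch of $X$ satisfying $\vert X_0(y)\vert\le 1$ on $\cG_\cL$; this is classical for the region identified with $\cG_\cL$ in~\cite[Lem.~2]{Ra-12} and~\cite{FIM-99}. Since $R(x)=K(x,0)Q(x,0)$ is analytic on $\cD$, $\widetilde S$ is analytic on $\cG_\cL$. On the overlap $\cG_\cL\cap\cD$, the functional equation~\eqref{eqfunc} applied at $(X_0(y),y)$ together with $K(X_0(y),y)=0$ gives $\widetilde S(y)=S(y)$, so $\widetilde S$ and $S$ glue into an analytic function on $\cD\cup\cG_\cL$. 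As $y$ approaches $\cL$ from inside $\cG_\cL$, $X_0(y)$ tends to some real $x\in[x_1,x_2]\subset(-1,1)$, where $R$ is finite; hence $\widetilde S$ extends continuously to $\cL$, and at the same time~\eqref{eqker:xYi} follows for every $x\in[x_1,x_2]$.

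Boundedness on $\cG_\cL\cup\cL$ is immediate by compactness when $\cL$ is bounded. The main obstacle is the unbounded case, which by Lemma~\ref{Lem:curve_L_0_infty} occurs exactly when neither $(-1,0)$ nor $(-1,1)$ belongs to $\cS$; among the decoupled models of interest this is only reverse Kreweras. There a short asymptotic analysis of the branches at infinity, using $X_0X_1=\widetilde c(y)/\widetilde a(y)$ and $X_0+X_1=-\widetilde b(y)/\widetilde a(y)$, shows that the small branch satisfies $X_0(y)\to 0$ while $X_0(y)\,y$ remains bounded as $\vert y\vert\to\infty$ in $\cG_\cL$; combined with $R(X_0(y))\to R(0)$, this yields the boundedness of $\widetilde S$ at infinity and completes the proof.
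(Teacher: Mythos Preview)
Your proposal is correct and follows essentially the same route as the paper: define the continuation on $\cG_\cL$ by $\widetilde S(y)=R(0)-R(X_0(y))+X_0(y)y$ using the small branch $X_0$ (which stays in $\overline{\cD}$), glue with $S$ on $\cG_\cL\cap\cD$ via the functional equation, read off finite boundary values from the continuity of $R$ on $[x_1,x_2]$, and in the unbounded case control $\widetilde S$ at infinity via the asymptotics of $X_0$. The paper packages the analyticity step as a citation to~\cite[Thm.~5]{Ra-12} and is slightly sharper at infinity (it shows $X_0(y)\sim -1/y^2$, hence $S(y)\to 0$, not just bounded), but your weaker claim suffices for the statement.
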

Note that it follows from~\eqref{eqker:xYi}  that for those values of $x$,
 \beq\label{eqker:Y01}
%R(X_0)-yX_0= R(X_1)-yX_1.
S(Y_0)-xY_0=S(Y_1)-xY_1,
\eeq
an identity that will be combined {with} the properties of decoupling functions.
Observe that~\eqref{eqker:xYi} and~\eqref{eqker:Y01} are analytic versions of~\eqref{eq:func_spec} and~\eqref{SYi}, respectively. They hold for any model, while their formal counterparts~\eqref{eq:func_spec} and~\eqref{SYi} require formal convergence properties.

\begin{proof}
   The first point (analyticity) is  Theorem~5
  in~\cite{Ra-12}. In order to prove the other statements, we need a
  more complete picture of the properties of $R$ and $S$, which can be
  found in~\cite{Ra-12}.

Let us define the curve $\cM$ as the counterpart of $\cL$ for the branches
$X_i$: that is, $\cM=X_0([y_1,y_2]) \cup
X_1([y_1,y_2])$. Define the domain $\cG_\cM$ as the counterpart of
$\cG_\cL$. Let $X_0$ be the branch of $X$ satisfying $\vert X_0(y)\vert\leq \vert
X_1(y)\vert$ for all $y\in \cs$ (see~\cite[Lem.~1]{Ra-12}), and
define $Y_0(x)$ analogously. Then  $X_0$ is a conformal map from $\cG_\cL
\setminus[y_1,y_2]$ to $\cG_\cM \setminus[x_1,x_2]$, with inverse
$Y_0$ (see~\cite[Lem.~3(ii)]{Ra-12}).

Moreover,   it is shown in the proof of~\cite[Thm.~5]{Ra-12} that 
\begin{itemize}
\item $R$ has an analytic continuation on the domain $\cD \cup
  \cG_\cM$, which is included in $\cD \cup \{ x: |Y_0(x)| <1\}$, 
\item analogously, $S$ has an analytic continuation on the domain $\cD \cup \cG_\cL$,
which is included in $\cD \cup \{ y: |X_0(y)| <1\}$, 
\item  with these continuations, the following identity holds on $y\in \cD \cup \cG_\cL$:
\beq\label{e2}
R(X_0)+S(y)=X_0y+R(0).
\eeq
\end{itemize}

With these results at hand, let us now prove that $S$ has finite
limits on $\cL$. Take $y_0\in \cL$. Then $y_0=Y_i(x_0)$ for some $i
\in \{0,1\}$ and $x_0\in
[x_1, x_2]$.  Let $y$ tend to $y_0$ in
$\cG_\cL$. We can write $y=Y_0(x)$, where $x\in \cG_\cM$ tends to $x_0$. Given that $X_0$ and $Y_0$ are inverse maps,~\eqref{e2} reads
\[ 
S(y)=xy+R(0)-R(x),
\] 
so that, as $x$ tends to $x_0$ and $y$ to $y_0$,
\[ 
S(y) \rightarrow x_0 y_0+R(0)-R(x_0),
\] 
by continuity of $R(x)$ in $\cD$. Hence $S$ has finite limits on
$\cL$. Denoting the right-hand side by $S(y_0)$, 
this also establishes~\eqref{eqker:xYi}, since we can take for $y_0$ any
$Y_i(x_0)$ with $x_0\in [x_1, x_2]$.

It remains to prove that $S$ is bounded on $\cG_\cL \cup \cL$. If
$\cG_\cL$ is bounded, there is nothing more to prove. Otherwise, we
know from Lemma~\ref{Lem:curve_L_0_infty} that neither $(-1,1)$ nor $(-1,0)$ are in
$\cS$. Then $(-1,-1)$ and $(0,1)$ must be in $\cS$, and it is easy to check that one of the branches $X(y)$ is
asymptotic to $-1/y^2$ as $y\rightarrow -\infty$, while the other
 tends either to a non-zero constant, or to infinity. Since $X_0$ is
defined to be the ``small'' branch, we conclude that $X_0(y)\sim -1/y^2$ at
infinity. Returning to~\eqref{e2}, this implies that $S(y)$ tends to
$0$ as $y$ tends to infinity in $\cG_\cL$, and completes the proof of
the proposition.
\end{proof}

%==============================================
\subsection{Weak invariants}
\label{subsec:weak_invariants}
%==============================================

\begin{Definition}
\label{def:weak-inv} 
 A function $I(y)$ % \equiv I(y;t)$ 
is a \emph{weak invariant} of a quadrant model
$\cS$ if: 
\begin{itemize}
\item it is meromorphic in the domain $\mathcal G_\mathcal L$,
and  admits finite limit values on the curve $\mathcal L$,
\item for any $y\in\mathcal L$, we have $I(y)=I(\overline{y})$,
\end{itemize}
where now the bar denotes the complex conjugate.
\end{Definition}

The second condition also reads $I(Y_0)=I(Y_1)$ for
$x\in[x_1,x_2]$,  because two conjugate points $y$ and $\overline{y}$ of the
curve $\mathcal L$ are the (complex conjugate) roots of $K(x,y)=0$, for
some $x\in[x_1,x_2]$. This condition is thus indeed a weak form of the invariant
condition of 
Lemma~\ref{lem:xy-inv}. Hence, if the model admits a rational invariant $I(y)$ in the
sense of Lemma~\ref{lem:xy-inv}, having no pole on $\cL$, then $I(y)$ is also a weak
invariant.
However, the above definition is less demanding,  and it
turns out that every non-singular quadrant model admits a 
(non-trivial)
weak invariant, which we now describe.

This invariant, traditionally denoted $w(y)$ 
%(or even $\widetilde w (y)$) 
%
in the analytic approach
to quadrant problems~\cite{FIM-99,Ra-12}, is in addition injective in  $\mathcal
G_\mathcal L$. In analytic terms, this third condition makes it a \emm
conformal gluing function, for the domain $\mathcal G_\mathcal
L$. Explicit expressions for
% of
conformal gluing functions are known in a
number of cases (when the domain is an ellipse, a polygon, etc.).  In
our case the bounding curve $\mathcal L$ is a quartic
curve,
%~\cite[Thm.~5.3.3 (i)]{FIM-99}, 
and  $w$ can be expressed in terms of Weierstrass' elliptic
functions~(see~\cite[Sec.~5.5.2.1]{FIM-99} or~\cite[Thm.~6]{Ra-12};
note that in our paper we exchange the roles played by $x$ and $y$ in
these two references): 
\beq
\label{eq:expression_gluing}
     w(y;t)\equiv
     w(y)=\wp_{1,3}\Big(-\frac{\omega_1+\omega_2}{2}+\wp_{1,2}^{-1}(f(y))\Big), 
\eeq
where the various ingredients of this expression
are as follows. First, $f(y)$ is a
simple rational function of $y$ whose
coefficients are algebraic functions of  $t$:
\beq\label{eq:def_f}
     f(y) = \left\{\begin{array}{ll}
     \displaystyle \frac{\widetilde d''(y_4)}{6}+\frac{\widetilde d'(y_4)}{y-y_4} & \text{if } y_4\neq \infty,\medskip\\
     \displaystyle\frac{\widetilde d''(0)}{6}+\frac{\widetilde d'''(0)y}{6}& \text{if } y_4=\infty,
     \end{array}\right.
\eeq 
where the $y_\ell$'s are the branch points of the functions
  $X_{0,1}$, and $\widetilde
d(y)= \widetilde b(y)^2 -4 \widetilde a(y) \widetilde c(y)$ as
before.

The next ingredient is  Weierstrass' elliptic function $\wp$,
with  periods $\omega_1$ and $\omega_2$:
\beq\label{w-def}
\wp(z)\equiv \wp(z, \om_1, \om_2) = \frac 1 {z^2} +\sum_{(i,j) \in \zs^2
  \setminus\{(0,0)\}} \left( \frac 1
  {(z-i\om_1-j\om_2)^2}-\frac1{(i\om_1+j\om_2)^2}\right).
\eeq
Then $\wp_{1,2}(z)$ (resp.\ $\wp_{1,3}(z)$) is the Weierstrass function
with periods $\om_1$ and $\om_2$ (resp.\ $\om_1$ and $\om_3$) defined by:
\beq
\label{eq:expression_periods}
   \omega_1 = i\int_{y_1}^{y_2} \frac{\text{d} y}{\sqrt{- \widetilde d(y)}},\qquad
     \omega_2 = \int_{y_2}^{y_3} \frac{\text{d} y}{\sqrt{ \widetilde d(y)}},\qquad
     \omega_3 = \int_{Y(x_1)}^{y_1} \frac{\text{d} y}{\sqrt{ \widetilde d(y)}}.
\eeq
These definitions make sense due to
the properties  of the $y_i$'s and $Y(x_i)$'s  (see
Lemmas~\ref{lem:branch_points} and~\ref{Lem:curve_L_0_infty}). 
If $Y(x_1)$ is infinite (which happens if
  and only if neither $(-1,0)$ nor $(-1,1)$ are in $\cS$), the
  integral defining $\om_3$ starts at $-\infty$.
Note that $\omega_1\in i\mathbb R_+$ and $\omega_2,\omega_3\in \mathbb R_+$.

 Finally, as the Weierstrass function is not
injective on $\mathbb C$, we need to clarify our definition of
$\wp_{1,2}^{-1}$ in~\eqref{eq:expression_gluing}. The function
$\wp_{1,2}$ is two-to-one on the fundamental parallelogram
$[0,\omega_1)+[0,\omega_2)$ (because $\wp(z)=\wp(-z+\om_1+\om_2)$),
but is one-to-one when restricted to a half-parallelogram ---
more precisely,  when restricted to the open
  rectangle $(0,\om_1)+(0,
  \om_2/2)$ together with the three boundary segments  $[0, \om_1/2]$,
   $[0, \om_2/2]$ and $\om_2/2+[0, \om_1/2]$. 
  We choose the
determination of $\wp_{1,2}^{-1}$ in this set.

\begin{Proposition}[\bf{The function $\boldsymbol{w(y)}$}]
\label{Prop:properties_w}
The function $w$ defined by~\eqref{eq:expression_gluing} is
a weak invariant, in the sense of Definition~\ref{def:weak-inv}. It is moreover
injective on $\mathcal G_\mathcal L$, and has in this domain  a unique
(and simple) pole, located at $y_2$.
The function $w$ admits a meromorphic continuation on $\mathbb
C\setminus [y_3,y_4]$.
\end{Proposition}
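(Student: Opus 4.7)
My plan is to verify the four claims by unwinding the compositional structure of $w$ in \eqref{eq:expression_gluing}: first the rational map $f$, then $\wp_{1,2}^{-1}$ (with the specified determination on the half-rectangle), then the translation by $-(\om_1+\om_2)/2$, and finally $\wp_{1,3}$. Each stage will contribute one of the announced properties, and I intend to follow the uniformization viewpoint developed in \cite[Ch.~5]{FIM-99} and adapted to our combinatorial setting in \cite{Ra-12}.

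The core technical step, and the main obstacle, is to show that $\wp_{1,2}^{-1}\circ f$ realizes a conformal bijection from $\cG_\cL$ onto the half-rectangle $\Pi := (0,\om_1)+(0,\om_2/2)$ (together with the three boundary segments listed before the statement). I will use the fact that $f$ is designed precisely so that the elliptic curve $Z^2=\tilde d(y)$ is transformed, via the substitution $Z'=f(y)$, into Weierstrass form $(Z')^2=4f(y)^3-g_2 f(y)-g_3$ for certain invariants $g_2,g_3$; the periods $\om_1,\om_2$ of \eqref{eq:expression_periods} are then the periods of the corresponding $\wp$-function, which is $\wp_{1,2}$. Tracking the uniformization, the four branch points $y_1,\ldots,y_4$ should go to the four half-periods, the cut $[y_1,y_2]$ to $[0,\om_1]$, the cut $[y_3,y_4]$ to $\om_2/2+[0,\om_1]$, and the curve $\cL$ to the midsegment $\om_1/2+[0,\om_2/2]$, traversed twice (once for each branch $Y_0,Y_1$). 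The delicate part will be to check that the orientations and sheet choices line up so that $\cG_\cL$ actually maps into $\Pi$ rather than into its reflection, which I plan to handle by inspecting the behaviour of $\wp_{1,2}$ and $f$ near the real branch points.

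Once this bijection is established, the remaining properties will follow cleanly. Injectivity of $w$ on $\cG_\cL$ is immediate, because $\wp_{1,3}$ is one-to-one on any half-parallelogram for its lattice $\om_1\zs+\om_3\zs$, and the shifted image $\Pi-(\om_1+\om_2)/2$ sits inside such a half-parallelogram. For the pole, I will track $y_2$: it is a branch point of $\wp_{1,2}^{-1}\circ f$ with $\wp_{1,2}^{-1}(f(y_2))=(\om_1+\om_2)/2$ (half-period value of $\wp_{1,2}$), so after the shift the argument of $\wp_{1,3}$ vanishes at $y_2$ like $\sqrt{y-y_2}$, and composing with the double pole of $\wp_{1,3}$ at $0$ yields a simple pole of $w$ at $y_2$; uniqueness of this pole in $\cG_\cL$ is then forced by injectivity. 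For the invariant property, I observe that two conjugate points $y,\bar y\in\cL$ come from $Y_0(x),Y_1(x)$ for some $x\in[x_1,x_2]$, and since $\wp_{1,2}$ is even and $2$-to-$1$ modulo its lattice, their preimages under $\wp_{1,2}^{-1}\circ f$ take the form $z$ and $\om_1+\om_2-z$; the shift turns these into opposite points $\pm z'$, and evenness of $\wp_{1,3}$ then gives $w(y)=w(\bar y)$.

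The final step is the meromorphic extension to $\cs\setminus[y_3,y_4]$. Here I plan to use Schwarz reflection: $\wp_{1,2}^{-1}$ admits an analytic continuation across $[0,\om_1]$ (the image of $[y_1,y_2]$) into the reflected half-rectangle, yielding a meromorphic continuation of $\wp_{1,2}^{-1}\circ f$ across the cut $[y_1,y_2]$ and hence of the whole composition to $\cs\setminus[y_3,y_4]$. The remaining cut $[y_3,y_4]$ is a genuine obstruction because crossing it forces a further sheet change on the Riemann surface of $\sqrt{\tilde d(y)}$ that cannot be undone inside a single-valued composition.
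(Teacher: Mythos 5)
The paper does not actually prove Proposition~\ref{Prop:properties_w}: it cites \cite[Thm.~6, Rem.~7]{Ra-12}. The uniformization picture you are reconstructing is indeed the right framework, and some pieces are sound, e.g.\ the pole at $y_2$ (square-root branch point of $\cZ$ composed with the double pole of $\wp_{1,3}$ gives a simple pole) and the meromorphic continuation across $[y_1,y_2]$ by Schwarz reflection. However, the step you yourself flag as the ``core technical step'' contains a genuine error that propagates to the injectivity and the invariance claims.

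You assert that $\wp_{1,2}^{-1}\circ f$ is a conformal bijection from $\cG_\cL$ \emph{onto} $\Pi=(0,\om_1)+(0,\om_2/2)$. This cannot be: $f$ is a M\"obius transformation and $\wp_{1,2}^{-1}$, with the stated determination, is by construction a bijection from $\cs\cup\{\infty\}$ onto $\overline\Pi$, so $\wp_{1,2}^{-1}\circ f$ already sends the \emph{entire} $y$-sphere cut along $[y_1,y_2]\cup[y_3,y_4]$ onto $\Pi$. The bounded proper subdomain $\cG_\cL$ must therefore land on a proper sub-region. Concretely, writing
$$\cZ(y)=-\tfrac12\int_{y_2}^{y}\frac{\text{d}u}{\sqrt{\widetilde d(u)}},$$
one has $\cZ(y_2)=0$, $\cZ(y_1)=-\om_1/2$, and $\Re\cZ\big(Y(x_1)\big)=\Re\cZ\big(Y(x_2)\big)=-\om_3/2$, so the curve $\cL$ is sent to the vertical segment $\{\Re\cZ=-\om_3/2\}$, which lies \emph{strictly inside} the shifted rectangle $\Pi-\tfrac{\om_1+\om_2}{2}$ because $\om_3<\om_2$; the image of $\cG_\cL$ is a rectangle of horizontal width $\om_3/2$, not $\om_2/2$.

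This is exactly the point where $\om_3$ --- the new period --- must enter, and its absence breaks your argument in two places. For injectivity, the rectangle of width $\om_3/2$ is precisely a half-parallelogram for the lattice $\om_1\zs+\om_3\zs$ of $\wp_{1,3}$, so $\wp_{1,3}$ is one-to-one on it; but $\Pi-\tfrac{\om_1+\om_2}{2}$ has width $\om_2/2>\om_3/2$ and does \emph{not} sit inside a $\wp_{1,3}$-half-parallelogram when $\om_2\neq\om_3$ (which is the generic and, in the infinite-group case, the only situation of interest, $\om_2/\om_3$ being then irrational). For the invariance, the involution relating $y$ and $\overline{y}$ on $\cL$ is the one swapping $Y_0\leftrightarrow Y_1$ over a common $x\in[x_1,x_2]$, whose expression in $z=\wp_{1,2}^{-1}(f(y))$ is $z\mapsto(\om_2+\om_3)-z$; it is \emph{not} $z\mapsto\om_1+\om_2-z$, which swaps $X_0\leftrightarrow X_1$ over a common $y$ and is irrelevant here. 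After the shift one gets $\cZ(\overline y)=(\om_3-\om_1)-\cZ(y)$, so $w(\overline y)=w(y)$ uses both the $\om_1$- and $\om_3$-periodicity of $\wp_{1,3}$ \emph{and} its evenness, not evenness alone. Without the correct identification of the $\cZ$-image of $\cG_\cL$ and of the relevant involution, none of injectivity, uniqueness of the pole, or the invariance property actually follows.
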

\begin{proof} See Theorem~6 and Remark~7 in~\cite{Ra-12}.
\end{proof}

In fact, $w(y)$ is a rational function of $y$ if $\cS$ is one of the $23$ models with a
finite group, except for the $4$ algebraic models (Figure~\ref{fig:alg_models},
left), where it is algebraic (see~\cite[Thm.~2
and Thm.~3]{Ra-12}). We refer to Section~\ref{sec:rational_weak} for a further discussion of
the connection between the weak invariant $w$ and the rational
invariant $J$ in the finite group case. 
In the infinite group case, $w(y)$ is not algebraic, nor even
D-finite w.r.t.\ to $y$, see~\cite[Thm.~2]{Ra-12}. However, we will
prove in Theorem~\ref{prop:w-DA} 
that it is D-algebraic  in $y$, and also in  $t$.

%==============================================
\subsection{The analytic  invariant lemma --- Application to quadrant walks}
\label{subsec:analinv}
%==============================================
We now state an  analytic counterpart of Lemma~\ref{lem:inv-gessel}, which applies to the weak invariants of Definition~\ref{def:weak-inv}.
\begin{Lemma}[\bf{The analytic invariant lemma}]\label{lem:inv-analytic}
  Let $\cS$ be a non-singular quadrant model and $A(y)$ a weak
  invariant for this model. If $A$ has no pole in $\mathcal G_\mathcal
  L$ (and, in the case of a non-bounded curve $\mathcal L$, if $A$ is
  in addition bounded at $\infty$), then it is independent of $y$. 
\end{Lemma}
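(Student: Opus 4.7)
The plan is to use the conformal gluing function $w$ from Proposition~\ref{Prop:properties_w} to push $A$ forward to a holomorphic function on the Riemann sphere, and then conclude by Liouville's theorem.

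First, I would establish that $w(\mathcal L)\subset \mathbb R$. The data appearing in the expression~\eqref{eq:expression_gluing} of $w$ (the branch points $y_\ell$, the discriminant $\widetilde d$, and the periods $\omega_1,\omega_2,\omega_3$ coming from~\eqref{eq:expression_periods}) are all real, by Lemma~\ref{lem:branch_points} and standard properties of the Weierstrass function with real invariants. Consequently $w$ satisfies the Schwarz symmetry $\overline{w(y)}=w(\overline y)$. Combined with the gluing identity $w(y)=w(\overline y)$ for $y\in\mathcal L$, this yields $w(y)\in\mathbb R$ on $\mathcal L$. Since $w$ is injective on $\mathcal G_\mathcal L$ with a single simple pole at $y_2$ and no other poles, it maps $\mathcal G_\mathcal L$ biholomorphically onto $\widehat{\mathbb C}\setminus I$, where $I:=w(\mathcal L)$ is a (connected) compact real interval, possibly reaching $\infty$ in the unbounded case.

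Next I would form $\widetilde A:=A\circ w^{-1}$. This function is holomorphic on $\widehat{\mathbb C}\setminus I$: in particular at $\infty=w(y_2)$ we have $\widetilde A(\infty)=A(y_2)$, which is finite because $A$ has no pole in $\mathcal G_\mathcal L$. For each interior point $u$ of $I$, the two preimages of $u$ under $w$ lying on $\mathcal L$ are complex conjugate; hence the boundary values of $\widetilde A$ at $u$, taken from the two sides of $I$, equal $A(y)$ and $A(\overline y)$ for the corresponding $y\in\mathcal L$. By the weak-invariant property these coincide, and they depend continuously on $u$ because $A$ has continuous finite limits on $\mathcal L$. Morera's theorem then extends $\widetilde A$ analytically across the interior of $I$, giving a function holomorphic on $\widehat{\mathbb C}$ except possibly at the finitely many endpoints of $I$ (corresponding to $Y(x_1)$, $Y(x_2)$, or $\infty$ in the unbounded case). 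At these isolated points $A$ is bounded in a neighbourhood of the corresponding preimage: by continuity of $A$ on the compact set $\overline{\mathcal G_\mathcal L}$ when $\mathcal L$ is bounded, and by the explicit boundedness-at-$\infty$ hypothesis when $\mathcal L$ is unbounded. Hence the endpoint singularities of $\widetilde A$ are removable.

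Therefore $\widetilde A$ extends to a holomorphic function on the compact Riemann sphere $\widehat{\mathbb C}$, and must be constant. Consequently $A=\widetilde A\circ w$ is constant on $\mathcal G_\mathcal L$, as claimed. The delicate step in this plan is the two-sided boundary-value identification along $I$, together with the handling of its endpoints where the involution $y\mapsto\overline y$ degenerates; this is precisely where the additional boundedness-at-$\infty$ hypothesis intervenes in the unbounded case, to guarantee removability of the endpoint singularity of $\widetilde A$ at $\infty$.
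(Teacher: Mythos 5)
The paper's own ``proof'' is simply a pointer to Lemmas~1 and~2 in Chapter~3 of Litvinchuk's book~\cite{Li-00}, which treat homogeneous Riemann boundary value problems with a Carleman shift (here the shift is $y\mapsto\overline y$ on $\cL$). Your argument unpacks the mechanism behind that citation into a self-contained proof: you use the conformal gluing function $w$ of Proposition~\ref{Prop:properties_w} to flatten the boundary identification onto a slit $I\subset\widehat{\cs}$, extend $A\circ w^{-1}$ across the interior of $I$ because the one-sided boundary values agree, remove the endpoint singularities by boundedness, and finish with Liouville. This is mathematically the same idea that underlies Litvinchuk's lemmas, so it is not a fundamentally new route, but making it explicit is a genuine gain in transparency, and it ties the lemma directly to the object $w$ that the paper has just constructed. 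Two points you gloss over deserve a sentence each in a final write-up. First, the Schwarz symmetry $\overline{w(y)}=w(\overline y)$ (and hence $w(\cL)\subset\rs$) is plausible but not immediate from~\eqref{eq:expression_gluing}: the cleanest way to see it is to observe that $\cZ(y)$ is real on $\cG_\cL\cap[y_2,y_3]$ (as shown in the proof of Lemma~\ref{lem:Z-DF}) and $\wp_{1,3}$ has real invariants, so $w$ is real on a real segment and the symmetry follows by the reflection principle. Second, for the Morera extension and the removability at the endpoints you implicitly need the boundary values of $A$ on $\cL$ to be \emph{continuous} (not merely finite pointwise limits) and $w$ to extend continuously to $\cL$ so that $I$ is a Jordan arc and $w^{-1}$ is proper near the endpoints; both are true in the setting of the paper (indeed $w$ extends meromorphically past $\cL$ by Proposition~\ref{Prop:properties_w}, and Proposition~\ref{prop:S-properties} shows the boundary values of $S$ are continuous), but they should be invoked explicitly since Definition~\ref{def:weak-inv} only asks for finite limit values.
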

\begin{proof}
 This is proved in~\cite[Ch.~3]{Li-00},
in Lemma~1 (resp.~Lemma~2) for
 the bounded (resp.\ unbounded) case.
\end{proof}

Our main result tells that, for each decoupled model with an infinite
group (Table~\ref{tab:decoupling_functions-infinite}), the series $Q(0,y)$ has a rational expression in terms of $t,
y$, the function $w(y)$ and some of its specializations. Moreover,
this expression is uniform for the first $8$ models of the table. The $9$th one stands
apart, and this is related to the fact, noted after Lemma~\ref{Lem:curve_L_0_infty}, that
the curve $\cL$ contains the point $0$ in this case; equivalently, $K(0,y)=ty^2$.

\renewcommand\Tstrut{\rule{0pt}{2.5ex}}         % = `top' strut

\renewcommand\Bstrut{\rule[-1.4ex]{0pt}{2.0ex}}
\begin{table}[h]
   \begin{tabular}{c|ccccccccc}
model     &\#1&\#2&\#3&\#4&\#5&\#6&\#7&\#8&\#9\\
%\hline
%steps
          &\begin{tikzpicture}[scale=.45] % q6654
    \draw[->] (0,0) -- (0,1);
    \draw[->] (0,0) -- (1,0);
    \draw[->] (0,0) -- (-1,0);
    \draw[->] (0,0) -- (-1,-1);
  \end{tikzpicture}& \begin{tikzpicture}[scale=.45] % q6654
    \draw[->] (0,0) -- (0,1);
    \draw[->] (0,0) -- (1,0);
    \draw[->] (0,0) -- (-1,1);
    \draw[->] (0,0) -- (-1,-1);
  \end{tikzpicture} &     \begin{tikzpicture}[scale=.45] % q6654
    \draw[->] (0,0) -- (0,1);
    \draw[->] (0,0) -- (1,1);
    \draw[->] (0,0) -- (0,-1);
    \draw[->] (0,0) -- (-1,0);
  \end{tikzpicture}&\begin{tikzpicture}[scale=.45] % q6654
    \draw[->] (0,0) -- (0,1);
    \draw[->] (0,0) -- (1,0);
    \draw[->] (0,0) -- (1,-1);
    \draw[->] (0,0) -- (-1,0);
  \end{tikzpicture}&\begin{tikzpicture}[scale=.45] % q6654
    \draw[->] (0,0) -- (0,1);
    \draw[->] (0,0) -- (1,0);
    \draw[->] (0,0) -- (1,1);
    \draw[->] (0,0) -- (-1,-1);
    \draw[->] (0,0) -- (-1,0);
  \end{tikzpicture}&\begin{tikzpicture}[scale=.45] % q6654
    \draw[->] (0,0) -- (0,1);
    \draw[->] (0,0) -- (0,-1);
    \draw[->] (0,0) -- (1,1);
    \draw[->] (0,0) -- (-1,-1);
    \draw[->] (0,0) -- (-1,0);
  \end{tikzpicture} & \begin{tikzpicture}[scale=.45] % q6654
    \draw[->] (0,0) -- (-1,1);
    \draw[->] (0,0) -- (-1,0);
    \draw[->] (0,0) -- (1,0);
    \draw[->] (0,0) -- (-1,-1);
    \draw[->] (0,0) -- (0,1);
  \end{tikzpicture}& \begin{tikzpicture}[scale=.45] % q6654
    \draw[->] (0,0) -- (1,1);
    \draw[->] (0,0) -- (0,-1);
    \draw[->] (0,0) -- (1,0);
    \draw[->] (0,0) -- (-1,0);
    \draw[->] (0,0) -- (0,1);
  \end{tikzpicture}& \begin{tikzpicture}[scale=.45] % q6654
    \draw[->] (0,0) -- (1,0);
    \draw[->] (0,0) -- (0,-1);
    \draw[->] (0,0) -- (0,1);
    \draw[->] (0,0) -- (-1,1);
    \draw[->] (0,0) -- (1,-1);
  \end{tikzpicture} \\
\hline
$p$& 0&0&0&0& $-1$& 0&0&0&0 \Tstrut \Bstrut\\
\hline
$r$& $-1$ & $-1$ &$-1$ &1 &$-\frac{1+t}t$ &$-1$ &$-1$ &$-1$ & $1/t$
\Tstrut\Bstrut
\\
\hline
$\alpha$ & $-1$ & $\pm i$ & 0 &0& $-1$ & $-1$ & $j, j^2$ & 0 & 0
\Tstrut\Bstrut\\
\hline
$g_0$ & 1& 0&0&0& 1&1&1&0&0 \Tstrut
  \end{tabular}
\vskip 3mm
 \caption{Values of $p$, $r$, $\alpha$ and $g_0$ in
   Theorem~\ref{thm:9models}. We denote  $j= e^{2i \pi/3}$.}
\label{tab:prg}
\end{table}

\begin{Theorem}
\label{thm:9models}
Let $\cS$ be one of the first $8$ models of
Table~\ref{tab:decoupling_functions-infinite}, with
associated decoupling function $G(y)$.
Let $p$ be the unique pole of $G$, and let $r$ be the residue of $G(y)$ at
$y=p$. Finally, let $\alpha$ be any root of $K(0,y)$, and let $g_0$ be
the constant term of $G(y)$ in its expansion around $y=\alpha$. 
Then the pole $p$ belongs to the domain $\mathcal G_\cL$, the point $\alpha$ belongs to
$\cG_\cL\cup \cL$ and for $y \in \mathcal G_\cL$,  the series $S(y)=K(0,y)Q(0,y)$ is
given by:
\beq\label{S-uniform}
S(y)=G(y) -g_0-\frac{r w'(p) }{w(y)-w(p)} +
\begin{cases}
\displaystyle  \frac{rw'(p)}{w(\alpha)-w(p)}& \hbox{if } \alpha\not=p 
\quad\hspace{2.2mm} (\hbox{models } 1, 2, 6, 7),
\\
\displaystyle - \frac r 2 \frac{w''(p)}{w'(p)} & \hbox{otherwise}\quad
(\hbox{models }
    3, 4, 5, 8),\\
\end{cases}
\eeq
where $w(y)$ is the weak invariant defined by~\eqref{eq:expression_gluing}.
The values of $p$, $r$, $\alpha$  and $g_0$ are made explicit in Table~\ref{tab:prg}.
For instance, for models  \#1 and \#6, which have decoupling function
$G(y)=-1/y$, 
\[ 
  S(y)=-\frac{1}{y}-1+ \frac{w'(0)}{w(y)-w(0)}-\frac{w'(0)}{w(-1)-w(0)},
\] 
while for models \#3 and \#8, with decoupling
function $G(y)=-y-1/y$, one has:
\[ 
     S(y)=-\frac{1}{y}-y+ \frac{w'(0)}{w(y)-w(0)}+\frac{1}{2}\frac{w''(0)}{w'(0)}.
\] 

For the 9th model of Table~\ref{tab:decoupling_functions-infinite},
\beq\label{ninth}
   S(y)=G(y)+ \frac{1}{2}\frac{w''(0)}{w(y)-w(0)}
   +\frac{1}{12}\frac{w^{(4)}(0)}{w''(0)}-\frac{1}{t^2},
\eeq
where $G(y)$ is given in Table~\ref{tab:decoupling_functions-infinite}, and $w(y)$ is defined by~\eqref{eq:expression_gluing}.
\end{Theorem}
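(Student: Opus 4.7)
The starting point is that $S - G$ takes equal values at conjugate points of $\mathcal{L}$. Indeed, applying the decoupling relation $xy = F(x) + G(y)$ to $(x, Y_0(x))$ and $(x, Y_1(x))$ for $x \in [x_1, x_2]$ yields $xY_0 - xY_1 = G(Y_0) - G(Y_1)$; combined with $S(Y_0) - xY_0 = S(Y_1) - xY_1$ from~\eqref{eqker:Y01}, this gives $(S - G)(Y_0) = (S - G)(Y_1)$, i.e. $(S-G)(y) = (S-G)(\bar y)$ for $y \in \mathcal{L}$. Since any rational expression in the conformal gluing function $w$ is automatically a weak invariant by Proposition~\ref{Prop:properties_w}, the plan is to add to $S-G$ a carefully chosen rational function of $w$ so that the result becomes a bona fide weak invariant $I$ (Definition~\ref{def:weak-inv}) that is analytic throughout $\mathcal{G}_\mathcal{L}$. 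Each of the nine models here has a bounded curve $\mathcal{L}$ (each step set contains $(-1,0)$ or $(-1,1)$, so Lemma~\ref{Lem:curve_L_0_infty} applies), hence the analytic invariant lemma (Lemma~\ref{lem:inv-analytic}) will force $I \equiv D$ for some constant $D$. The constant is then pinned down by evaluating at $y = \alpha$, where $S(\alpha) = K(0,\alpha) Q(0,\alpha) = 0$.

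\textbf{Models $\#1$--$\#8$.} From Tables~\ref{tab:decoupling_functions-infinite} and~\ref{tab:prg}, $G$ has a unique pole at $y = p \in \mathcal{G}_\mathcal{L}$, simple with residue $r$; moreover $\alpha \in \mathcal{G}_\mathcal{L} \cup \mathcal{L}$ (case-by-case check using Lemma~\ref{Lem:curve_L_0_infty}). By Proposition~\ref{Prop:properties_w}, the function $\phi(y) := rw'(p)/(w(y) - w(p))$ is meromorphic on $\mathcal{G}_\mathcal{L}$ with a single simple pole at $p$ of principal part $r/(y - p)$ (there is no pole at $y_2$ since $\phi(y) \to 0$ as $w(y) \to \infty$). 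The function
\[
I(y) := S(y) - G(y) + g_0 + \phi(y)
\]
is therefore a weak invariant, analytic on $\mathcal{G}_\mathcal{L}$ (the principal parts of $-G$ and $\phi$ at $p$ cancel, and $S$ is analytic on $\mathcal{G}_\mathcal{L}$ by Proposition~\ref{prop:S-properties}), with finite limits on $\mathcal{L}$. Lemma~\ref{lem:inv-analytic} yields $I \equiv D$. For $\alpha \neq p$ (models $\#1, \#2, \#6, \#7$), one has $g_0 = G(\alpha)$, so $I(\alpha) = \phi(\alpha)$ gives $D = rw'(p)/(w(\alpha) - w(p))$. For $\alpha = p$ (models $\#3, \#4, \#5, \#8$), writing $G(y) = r/(y-p) + g_0 + O(y-p)$ and $\phi(y) = r/(y-p) - rw''(p)/(2w'(p)) + O(y-p)$, and using $S(p)=0$, gives $D = -rw''(p)/(2w'(p))$. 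This matches~\eqref{S-uniform}.

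\textbf{Model $\#9$.} Here $K(0,y) = ty^2$, so $S(y) = ty^2 Q(0,y)$ vanishes to order two at $y = 0$, and $G(y) = -1/y^2 + 1/(ty) + O(y)$ has a double pole at $p = 0$. Lemma~\ref{Lem:curve_L_0_infty} identifies this as the only decoupled infinite-group model for which $\mathcal{L}$ passes through $0$; concretely $x_1 = 0$, so $y = 0$ is the value of $Y$ at a branch point of the kernel. This causes $\mathcal{L}$ to meet the real axis tangentially at $0$ and forces $w'(0) = 0$. From~\eqref{eq:expression_gluing} and~\eqref{eq:def_f} specialized to this model, one further extracts the identity $w'''(0) = 3\, w''(0)/t$. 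Using these, the expansion
\[
\tfrac{1}{2}\,\frac{w''(0)}{w(y)-w(0)} = \frac{1}{y^2} - \frac{w'''(0)}{3w''(0)\, y} + \left(\frac{w'''(0)^2}{9w''(0)^2} - \frac{w^{(4)}(0)}{12w''(0)}\right) + O(y)
\]
matches the principal part of $-G$ at $0$ (both the $1/y^2$ and $1/y$ terms), so the function
\[
I(y) := S(y) - G(y) - \tfrac{1}{2}\, w''(0)/(w(y)-w(0))
\]
is a weak invariant, analytic on $\mathcal{G}_\mathcal{L}$ with finite limits on $\mathcal{L}$, and hence constant by Lemma~\ref{lem:inv-analytic}. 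Reading off its value from the constant term of the Taylor expansion at $y=0$, and using $S(0)=0$ together with $w'''(0)^2/(9w''(0)^2) = 1/t^2$, yields $D = w^{(4)}(0)/(12w''(0)) - 1/t^2$, giving~\eqref{ninth}.

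\textbf{Main obstacle.} The abstract mechanism---a weak invariant built from $S - G$ plus a $w$-correction, finalized by the analytic invariant lemma---is uniform across the nine models; the case-by-case work mostly consists of locating the poles of $G$ inside $\mathcal{G}_\mathcal{L}$ and picking the rational function of $w$ that neutralizes them. The real difficulty is model $\#9$, where $G$ has a double pole at a \emph{boundary} point of $\mathcal{G}_\mathcal{L}$. Cancelling both the $1/y^2$ and $1/y$ singularities simultaneously hinges on the two identities $w'(0) = 0$ and $w'''(0) = 3 w''(0)/t$ for the conformal gluing $w$. Deriving them cleanly from~\eqref{eq:expression_gluing}, via the local expansion of the Weierstrass functions $\wp_{1,2}$ and $\wp_{1,3}$ near $y = 0$, is the main technical step.
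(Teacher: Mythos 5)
Your strategy coincides with the paper's: turn $S-G$ into a genuine weak invariant by adding a suitable rational function of $w$ that neutralizes the pole(s) of $G$ in $\cG_\cL$, invoke Lemma~\ref{lem:inv-analytic} to deduce constancy, and pin down the constant via $S(\alpha)=0$. Your computations of the constant for models \#1--\#8 in both cases $\alpha\neq p$ and $\alpha=p$ are correct. Two points need more care, though. The location statements $p\in\cG_\cL$ and $\alpha\in\cG_\cL\cup\cL$ are not a routine verification: for $p=-1$ (model \#5) one must show $Y(x_1)<-1$, which reduces to proving $x_1(1+x_1)<1$ for all $t\in(0,1/5)$ and requires a genuine argument (the paper devotes a separate lemma to this). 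You should also record that $w'(p)\neq 0$, which holds because $w$ is injective on $\cG_\cL$.

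For model \#9 you have correctly identified that everything hinges on $w'(0)=0$ and $w'''(0)=3w''(0)/t$, but the intuition that ``$\cL$ meets the real axis tangentially'' is not an argument, and the route you propose through the explicit parametrization~\eqref{eq:expression_gluing} is unnecessarily heavy. The efficient derivation uses the invariance $w(Y_0(x))=w(Y_1(x))$ for $x\in[0,x_2]$ directly: since $x_1=0$, one has $Y_{0,1}(x)=\pm i\sqrt x+\frac{x}{2t}\mp i\frac{x^{3/2}}{8t^2}+O(x^2)$ near $x=0$; writing $w(y)=\sum_{k\ge 0}w_k y^k$, the vanishing of $w(Y_0)-w(Y_1)$ at order $\sqrt x$ gives $w_1=0$ and at order $x^{3/2}$ gives $w_3=w_2/t$, which are exactly your two identities in derivative form. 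You also need $w''(0)\neq 0$ to make sense of~\eqref{ninth}; the paper obtains this from the Weierstrass parametrization $w(y)=\wp(\cZ(y))$: since $\cZ'(0)\neq0$ (a consequence of~\eqref{DE-Z}), having $w'(0)=w''(0)=0$ would force $\wp'$ to vanish to order two at $\cZ(0)$, which never happens for the Weierstrass $\wp$-function.
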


\noindent{\bf Remarks}\\
{\bf 1.} The expression of
%{for} 
$S(y)$ in terms of $w(y)$ is the same in cases \#1 and \#6, and in
cases \#3 and \#8 as well. But of course the value of $w(y)$ (given by~\eqref{eq:expression_gluing}) depends
on the details of the model.
\\
{\bf 2.} For models \#2 and \#7, Table~\ref{tab:prg} shows that we have a choice for the
value of $\alpha$: for the first model $\alpha=\pm i$, for the second
one $\alpha=j$ or $j^2$.  But one easily checks that $g_0$ is the same
(namely~$0$, resp.\ $1$) for both choices of $\alpha$, and moreover $w$ takes the
same value at both points~$\alpha$. This comes from the fact that the
two possible values of $\alpha$ are the (complex conjugate) values $Y_0(0)$ and $Y_1(0)$, and that $w(y)$ is an invariant. Hence
both choices of $\alpha$ give the same expression {for} %of
$S(y)$.
\\
{\bf 3.} The theorem states that our expressions {for}
% of
$S(y)$ are valid in
$\cG_\cL$. But combined with Proposition~\ref{Prop:properties_w}, they imply
that $S$ can be meromorphically continued to the whole of $\mathbb
C\setminus [y_3,y_4]$.\\
{\bf 4.} The above expressions for %of
$S(y)$ differ  from those
obtained in the past using the complex analytic approach of~\cite{Ra-12} by the fact
that they do not involve any integration. This opens the way to
D-algebraicity, as proved in the next section. The connection between
our expressions and the earlier ones is discussed  further in Section~\ref{sec:integrals}.

\medskip
We begin with a separate lemma dealing with the location of
$\alpha$. The case $\alpha=0$ being already addressed in Lemma~\ref{Lem:curve_L_0_infty},
we focus here on the other cases.

\begin{Lemma}
\label{Lem:location_particular_points_L}
For models \#1, \#5 and \#6, the point $\alpha=-1$ belongs to the domain
$\mathcal G_\mathcal L$. For model \#2 (resp.\ \#7), the points
$\alpha=\pm i$
(resp.\ $\alpha=j,j^2$) are located on the curve $\mathcal L$,
and equal to $Y_{0,1}(0)$.
\end{Lemma}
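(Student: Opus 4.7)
The lemma has two parts requiring different arguments. For models \#2 and \#7 I show directly that $0\in(x_1,x_2)$, so that the roots of $K(0,y)$ are values of $Y_{0,1}$ at a point of the branch interval, hence lie on $\cL$ by the very definition of $\cL$. For models \#1, \#5 and \#6, pinpointing $\GL\cap\mathbb{R}=(Y(x_1),Y(x_2))$ on the nose would require an unpleasant computation of branch points, so instead I transport the question to the $x$-side via the conformal bijection $Y_0\colon \mathcal{G}_{\cM}\setminus[x_1,x_2]\to\GL\setminus[y_1,y_2]$ recalled in the proof of Proposition~\ref{prop:S-properties}.

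\textbf{Models \#2 and \#7.} A short computation of the kernel yields $a(x)=t(x+1)$, $b(x)=x(tx-1)$, $c(x)=t$ for model~\#2, and $a(x)=t(x+1)$, $b(x)=tx^2+t-x$, $c(x)=t$ for model~\#7. Evaluating the discriminant $d(x)=b(x)^2-4a(x)c(x)$ at $x=0$ gives $-4t^2$ and $-3t^2$ respectively, both strictly negative. Since $d<0$ on $(x_1,x_2)$ by Lemma~\ref{lem:branch_points}, we have $0\in(x_1,x_2)$, so both $Y_0(0)$ and $Y_1(0)$ are finite (as $a(0)=t\neq 0$) and lie on $\cL$. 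They are the two roots of $K(0,y)=t(y^2+1)$ (resp.\ $t(y^2+y+1)$), namely $\pm i$ (resp.\ $j,j^2$), as claimed.

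\textbf{Models \#1, \#5, \#6.} Evaluating $K(x,-1)$ gives $x(1+t-tx)$, $(1+t)x$, and $x(tx+2t+1)$ respectively; in each case $x=0$ is the root of smallest modulus (the other being $(1+t)/t>1$, $\infty$, or $-(2t+1)/t$), so $X_0(-1)=0$. I claim $0\in\mathcal{G}_{\cM}\setminus[x_1,x_2]$. First, $d(0)=t^2>0$ for all three models, so $0$ is not a branch point and $0\notin[x_1,x_2]$. Second, the symmetric counterpart of Lemma~\ref{Lem:curve_L_0_infty} (obtained by exchanging $x$ and $y$) yields $0\in\mathcal{G}_{\cM}\cup\cM$, with $0\in\cM$ forcing neither $(0,-1)$ nor $(-1,-1)$ to belong to $\cS$; since $(-1,-1)\in\cS$ in each of our three models, the limit case is ruled out and $0\in\mathcal{G}_{\cM}$ strictly. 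Inverting the conformal bijection gives $Y_0(0)\in\GL\setminus[y_1,y_2]$, and since $K(0,y)=t(y+1)$ has unique finite root $-1$, continuity and the smaller-modulus convention for $Y_0$ force $Y_0(0)=-1$. Hence $-1\in\GL$.

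\textbf{Main obstacle.} The only delicate step is interpreting $Y_0(0)$ when $a(0)=0$, as happens for models \#1, \#5, \#6: the explicit quadratic formula for the branches degenerates at $x=0$. The conformal bijection supplies the required analytic extension of $Y_0$ across $x=0$, and the smaller-modulus convention selects the finite branch; a direct Puiseux expansion around $x=0$ confirms $Y_0(x)=-1+O(x)$, so no separate limiting argument is needed.
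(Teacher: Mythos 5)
Your proof is correct, and for the harder half it follows a genuinely different route than the paper. For models \#2 and \#7 both arguments are the same: check $d(0)<0$, deduce $0\in(x_1,x_2)$, read off $Y_{0,1}(0)$ from $K(0,y)$. For models \#1, \#5, \#6 the paper computes $Y(x_1)=-b(x_1)/(2a(x_1))$ directly: it observes that $a(x)=xc(x)$ (or $x(1+x)c(x)$ for \#5) together with $d(x_1)=0$ forces $x_1>0$ and gives $Y(x_1)=-1/\sqrt{x_1}$ (resp.\ $-1/\sqrt{x_1(1+x_1)}$), concluding from $x_1<1$ for \#1 and \#6 and from a separate, non-elementary analysis of the branch $x_1(1+x_1)$ for \#5 (Figure~\ref{fig:z}). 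You instead transport the question to the $x$-side: you verify $d(0)=t^2>0$ so $0\notin[x_1,x_2]$, use the $x/y$-symmetric counterpart of Lemma~\ref{Lem:curve_L_0_infty} together with $(-1,-1)\in\cS$ to place $0$ in $\cG_\cM$ strictly, then push $0$ through the conformal bijection $Y_0$ from the proof of Proposition~\ref{prop:S-properties}, computing $Y_0(0)=-1$ from $K(0,y)=t(y+1)$. This buys a uniform, calculation-light treatment of all three models (model~\#5 in particular, where the paper's inequality $x_1(1+x_1)<1$ requires a dedicated and somewhat ad hoc argument) at the cost of invoking the conformal gluing machinery. The one delicate point you flag --- that $Y_0$ is analytic at $x=0$ despite $a(0)=0$ --- is handled correctly: $d(0)=b(0)^2>0$ shows $0$ is not a branch point, so one branch stays finite (and equals $-1$) while the other blows up, and the smaller-modulus convention selects the finite one.
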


\begin{proof}
We begin with  models \#2 and~\#7.  We note that $Y_{0,1}(0)=\pm i$ 
 (resp.\ $Y_{0,1}(0)=j,j^2$), while the discriminant $d(x)$ is negative at
$x=0$ in both cases. {Due to}
%Thanks to 
Lemma~\ref{lem:branch_points}, this implies that $0\in
(x_1, x_2)$, so that  $\pm i$ (resp.\ $j,j^2$) indeed belong to the
curve $\mathcal L=Y([x_1,x_2])$.

Now consider the remaining three models, with $\alpha=-1$. Recall that the curve $\cL$ is bounded (Lemma~\ref{Lem:curve_L_0_infty}), symmetric
with respect to the real axis, and
intersects this axis at exactly two points, namely $Y(x_1)=-b(x_1)/(2a(x_1))$ and
$Y(x_2)=-b(x_2)/(2a(x_2))>0$. Hence we want to
prove that, for models \#1, \#5 and \#6, we have $Y(x_1)<-1$.
Recall that we have shown in the proof of
Lemma~\ref{Lem:curve_L_0_infty} that  $b(x_1)\ge 0$. 

We proceed with a  case-by-case analysis. For models  \#1 and \#6, one
has $a(x)=xc(x)$, hence $d(x_1)=b(x_1)^2-4x_1c(x_1)^2=0$ implies that $x_1\ge
0$, and in fact $x_1>0$ since $b(0)\not =0$ for these models. In particular,
$a(x_1)>0$, $c(x_1)>0$ and 
\begin{equation*}
     Y(x_1)=-\frac{b(x_1)}{2a(x_1)}=-\frac 1 {\sqrt{x_1}}.
\end{equation*}
This is indeed less than $-1$ as $x_1 <1$ (see Lemma~\ref{lem:branch_points}).

For model \#5, one has $a(x)=x(1+x)c(x)$, hence $d(x_1)=b(x_1)^2-4x_1(1+x_1)c(x_1)^2=0$,
which implies similarly that $x_1>0$ (recall that $x_1>-1$).  Now
\[ 
 Y(x_1)=-\frac{b(x_1)}{2a(x_1)}=-\frac 1 {\sqrt {x_1(1+x_1)}}.
\] 
Hence we need to prove that $z:=x_1(1+x_1)<1$. The function $z\equiv z(t)$ is
quartic over $\qs(t)$, and its four branches at $t=0$ are
\begin{align*}
x_1(1+x_1)&=t-2t^{3/2}+3t^2+ O(t^{5/2}),\\
x_2(1+x_2)&=t+2t^{3/2}+3t^2+ O(t^{5/2}), \\
x_3(1+x_3)&=\frac 1{t^2}-\frac 3 t-2+O(\sqrt t),\\
x_4(1+x_4)&=\frac 1{t^2}+\frac 3 t-2+O(\sqrt t).
\end{align*}
A careful study of $z(t)$ shows that it increases between $t=0$ and
$t=1/|\cS|=1/5$, with maximum value $z\simeq 0.09$ at $t=1/5$. In
particular, $z<1$. We omit
the details, but illustrate these facts by the plot of the two small
branches of $z(t)$ in Figure~\ref{fig:z}.
\begin{figure}[htb]
  \centering
  \includegraphics[height=40mm]{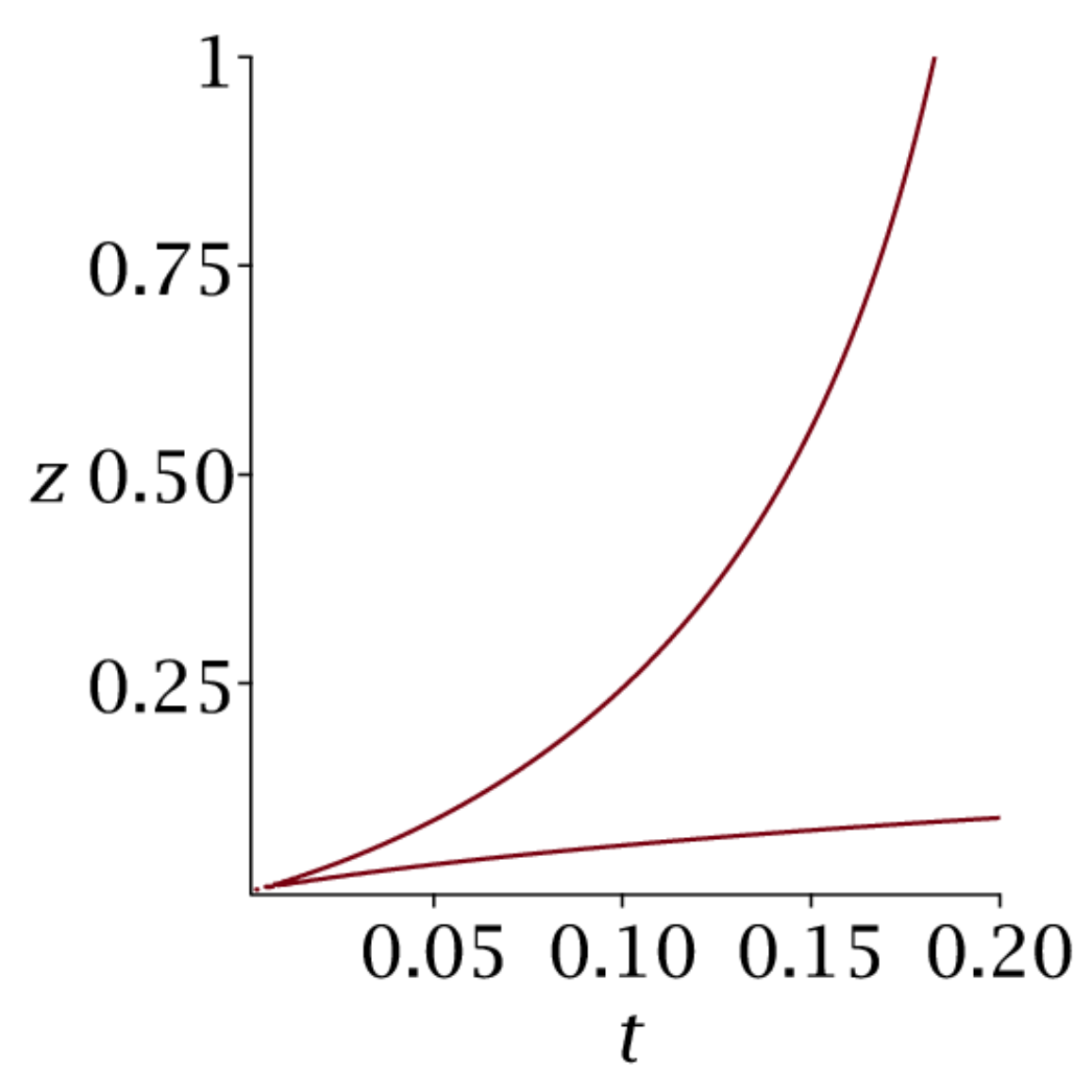}
  \caption{A plot of $x_1(1+x_1)$ (bottom) and $x_2(1+x_2)$ (top) for
    $t\in [0, 1/5]$ in model \#5.}
  \label{fig:z}
\end{figure}
\end{proof}

\begin{proof}[Proof of Theorem~\ref{thm:9models}]
Let $\cS$ be one of the first eight models of
Table~\ref{tab:decoupling_functions-infinite}, and $G(y)$ the
associated decoupling function. By Lemma~\ref{lem:dec-weak}, the identity
\begin{equation*}
xY_0-xY_1=G(Y_0)-G(Y_1)
\end{equation*}
holds at the level of \fps\ (in $t$, with rational coefficients in $x$).
Returning to the analytic framework where $t$ is fixed in $(0,
1/|\cS|)$,  this identity 
holds for any $x\in[x_1, x_2]$ (recall from
Lemma~\ref{Lem:curve_L_0_infty} that the curve $\cL=Y([x_1,x_2])$ is bounded). By~\eqref{eqker:Y01}, any such $x$
thus satisfies $L(Y_0)=L(Y_1)$, where $L(y):=S(y)-G(y)$. Is $L(y)$ a
weak invariant, in the sense of Definition~\ref{def:weak-inv}? 
By Proposition~\ref{prop:S-properties}, this holds if and only if $p\not \in \cL$.
 But this is true
for $p=0$ by  Lemma~\ref{Lem:curve_L_0_infty}, and  for
$p=-1$ (and model \#5) by
Lemma~\ref{Lem:location_particular_points_L}. In both cases, $p$ is in
fact in $\cG_\cL$, and  is the only pole
of $L(y)$ in this domain. Since it is simple in $G$, it is also simple
in $L$. Moreover, it is distinct from the pole $y_2$ of $w$, since $y_2>0$
(Lemma~\ref{lem:branch_points}) while $p\in \{0,-1\}$.

Consider now the function $-\frac{r w'(p)}{w(y)-w(p)}$, where $r$ is
the residue of $G$ at $p$. By Proposition~\ref{Prop:properties_w}, this is also a weak
invariant, with a single pole in $\mathcal G_\cL$, found at $y=p$
(note that $w'(p)$ cannot vanish since $w$ is injective on $\cG_\cL$). Its
residue at $p$ is $-r$. Then the invariant lemma
(Lemma~\ref{lem:inv-analytic}) implies that this function differs from
$L(y)$ by a constant~$c$:
\beq\label{SGw}
S(y)-G(y) =c- \frac{r w'(p)}{w(y)-w(p)}.
\eeq
Since both functions have finite limits  {on}
$\cL$, this identity holds
on $\cL$ as well.  To conclude the proof of~\eqref{S-uniform}, it suffices
to determine the constant $c$. Let $\alpha$ be any root of $K(0,y)$
(see Table~\ref{tab:prg}). Since $\vert\alpha\vert\leq 1$, $Q(0,y)$ and $S(y)=K(0,y)Q(0,y)$ are
analytic in a neighborhood of $\alpha$, as explained at the beginning
of this section, and $S$ vanishes at this point. It remains to  expand the above identity 
at $y=\alpha$, up to the order $O(y-\alpha)$, to determine the value of $c$
as given in the theorem.

\medskip

We now examine the ninth model, which differs from the
first eight  for
two (related) reasons. First, its decoupling function $G$ still has a unique
pole (at $p=0$) but this pole has order two (see Table
\ref{tab:decoupling_functions-infinite}). Moreover, the curve
$\mathcal L$ goes through $0$ (Lemma
\ref{Lem:curve_L_0_infty}). However, the idea of the proof is the same
as for the first eight models: we will prove that $1/(w(y)-w(p))$ also
has a double pole at $0$, and that subtracting a multiple of this
function from $S(y)-G(y)$ yields a pole-free invariant --- which is
unexpected as there might remain a pole of order $1$. 

So let us examine the function $w(y)$ near $y=0$.
Proposition~\ref{Prop:properties_w} implies  that $w$ is analytic in a
neighborhood of $0\in \cL$.  Let us  write $w(y)=\sum_{k\geq0}w_{k}y^{k}$ in this
neighborhood. Solving $K(x,y)=0$ in the neighborhood of $x=0$ gives
for $Y_0$ and $Y_1$ the following expansions, valid in $[0,x_2]$
(recall that $x_1=0$):
\[  
     Y_{0,1}(x)=\pm i\sqrt{x}+\frac{x}{2t}\mp i\frac{x^{3/2}}{8t^2}
+ O(x^2).
\] 
Since $w$ is an invariant, we have $w(Y_0)=w(Y_1)$ for $x\in [0,x_2]$,
which forces $w_1=0$, $w_3=w_2/t$,  and further identities relating the coefficients
$w_k$. Moreover, $w_2\not = 0$: given the form $w(y)=\wp(\cZ(y))$
  of~\eqref{eq:expression_gluing}, and the fact that $\cZ'(0)$ cannot vanish (this
  follows from the identity~\eqref{DE-Z} proved below in Section~\ref{sec:DA}), having $w_1=w_2=0$ would mean that $\wp'$ has a
  multiple root, namely $\cZ(0)$, which is never true for a Weierstrass
  function. Hence, around $y=0$ we have: 
\beq\label{model9-w}
\frac {w_2}{w(y)-w(0)}=\frac1{y^2 }-\frac 1{t
  y}-\frac{w_4}{w_2}+\frac1{t^2}+ O(y).
\eeq
Let us now compare this to the expansion of $S(y)-G(y)$ near $y=0$,
recalling that $S(0)=0$:
\[ 
S(y)-G(y)=\frac 1 {y^2} -\frac1{ty} +O(y).
\] 
This shows that $S(y)-G(y)- w_2/(w(y)-w(0))$ is an invariant with no
pole on $\cG_\cL$. Applying Lemma~\ref{lem:inv-analytic} implies that this function is
constant in $\cG_\cL$, and the above expansions give this value as
$w_4/w_2-1/t^2$, as stated in~\eqref{ninth}.
\end{proof}

%====================================================
\subsection{The finite group case}
\label{sec:weak-finite}
%====================================================

Our approach using weak invariants is robust, and applies
to the four (unweighted) decoupled models with a finite group
already solved in Section~\ref{sec:extensions} via an algebraic approach.

The method is exactly the same as in Section~\ref{subsec:analinv},
as long as the polynomial $K(0,y)$ has at least one root $\alpha$.
Recall indeed that we %have
used $\alpha$  to identify the constant $c$ in~\eqref{SGw}. As explained below
the proof of Lemma~\ref{Lem:curve_L_0_infty}, the existence of~$\alpha$ is equivalent to \emph{the boundedness of the curve~$\cL$,}
which holds for Kreweras' model, the double
Kreweras' model and Gessel's model (Figure~\ref{fig:alg_models}). In all three cases,
the function $S(y)$ is still given by~\eqref{S-uniform}, where the
decoupling function $G(y)$ is given in
Table~\ref{tab:decoupling_functions-finite}, the weak invariant $w(y)$
   by~\eqref{eq:expression_gluing}, and the various constants
by Table~\ref{tab:prg-finite}. 
The connection  between the weak invariant $w(y)$ and the rational invariant $J(y)$ of Table~\ref{tab:ratinv} will be made explicit in Section~\ref{sec:rational_weak}.

For the above mentioned three models, an alternative is to proceed as in Sections~\ref{sec:gessel} and~\ref{sec:extensions}, up to the point where we construct a series $A(y)$ satisfying the conditions of Lemma~\ref{lem:inv} (for instance $(L(y)-L(0))J(y)-C_1L(y)^3-C_2L(y)^2-C_3L(y)$ in Section~\ref{sec:gessel}), and then apply the analytic invariant lemma (Lemma~\ref{lem:inv-analytic}) rather than the formal one  (Lemma~\ref{lem:inv}) to conclude that this series depends on $t$ only.

\begin{table}[h]
   \begin{tabular}{c|ccc|}
model     & \begin{tikzpicture}[scale=.2] % q6654
    \draw[->] (0,0) -- (1,1);
    \draw[->] (0,0) -- (-1,0);
    \draw[->] (0,0) -- (0,-1);
        \draw[-] (0,-1) -- (0,-1) %node[below] {\phantom{$\scriptstyle
                                %1$}}
;
  \end{tikzpicture} &     \begin{tikzpicture}[scale=.2] % q6654
      \draw[->] (0,0) -- (0,-1);
    \draw[->] (0,0) -- (1,1);
    \draw[->] (0,0) -- (-1,0);
    \draw[->] (0,0) -- (0,1);
    \draw[->] (0,0) -- (-1,-1);
    \draw[->] (0,0) -- (1,0);
    \draw[-] (0,-1) -- (0,-1) %node[below] {\phantom{$\scriptstyle
                              %1$}}
;
  \end{tikzpicture} &   \begin{tikzpicture}[scale=.2] % q6654
      \draw[->] (0,0) -- (-1,-1);
    \draw[->] (0,0) -- (1,1);
    \draw[->] (0,0) -- (-1,0);
    \draw[->] (0,0) -- (1,1);
    \draw[->] (0,0) -- (1,0);
    \draw[-] (0,-1) -- (0,-1)% node[below] {\phantom{$\scriptstyle
                             % 1$}}
;
  \end{tikzpicture} \\
\hline
$p$& 0&0& $-1$\\
\hline
$r$& $-1$ & $-1$ &$-1/t$\\
\hline
$\alpha$ & $0$ &$-1$ &  $-1$\\
\hline
$g_0$ & 0&1&0
  \end{tabular}

\vskip 3mm
 \caption{Values of $p$, $r$, $\alpha$ and $g_0$ for three
   algebraic models.}
\label{tab:prg-finite}
\end{table}

Let us now examine what happens in a model for which $K(0,y)=t$, 
and solve the reverse Kreweras model (Figure~\ref{fig:curves}, right). We follow the proof of
Theorem~\ref{thm:9models}. By Proposition~\ref{prop:S-properties}, the function $L(y)=S(y)-G(y)=tQ(0,y)+1/y$
is meromorphic in $\cG_\cL$, with a unique pole at $0$, and is bounded
at infinity. It is thus an invariant.  The analytic invariant
lemma tells us that  
\beq\label{RK-weak}
tQ(0,y)=S(y)=G(y) +\frac{w'(0)}{w(y)-w(0)} +c
\eeq
for some constant $c$. 
Since $K(0,y)=t$ has no
root (in $y$), we cannot use the same trick as in the proof of
Theorem~\ref{thm:9models} to determine $c$.
But we can expand the above identity, first at $y=0$, which
gives
\[ 
  % tQ(0,0)=
  S(0)=-\frac{w''(0)}{2w'(0)}+c,
\] 
and then at the unique point $y_c\in (0,1)$ such that $K(y,y)=0$
(Figure~\ref{fig:Kyy}; this point is always in~$\cG_\cL$ since
$Y(x_2)= 1/\sqrt{x_2}>1$), which gives
\[ 
S(y_c)= G(y_c) +\frac{w'(0)}{w(y_c)-w(0)} +c.
\] 
 Now applying~\eqref{eqker:xYi} with $x=y_c$, and using the $x/y$
 symmetry of the model, we find that
 $2S(y_c)=y_c^2+S(0)$. We  now use the above expressions of $S(0)$
   and $S(y_c)$ to determine $c$ in terms of $w$.
Returning to~\eqref{RK-weak}, and using $G(y)=-1/y$, this finally gives 
\[
  S(y)=- \frac 1 y
  % G(y)$
  +\frac{w'(0)}{w(y)-w(0)} +y_c^2+\frac 2
  {y_c}-\frac{w''(0)}{2w'(0)}-\frac{2w'(0)}{w(y_c)-w(0)}
\] 
for the reverse Kreweras model.

\begin{figure}[htb]
  \centering
  \includegraphics[height=40mm]{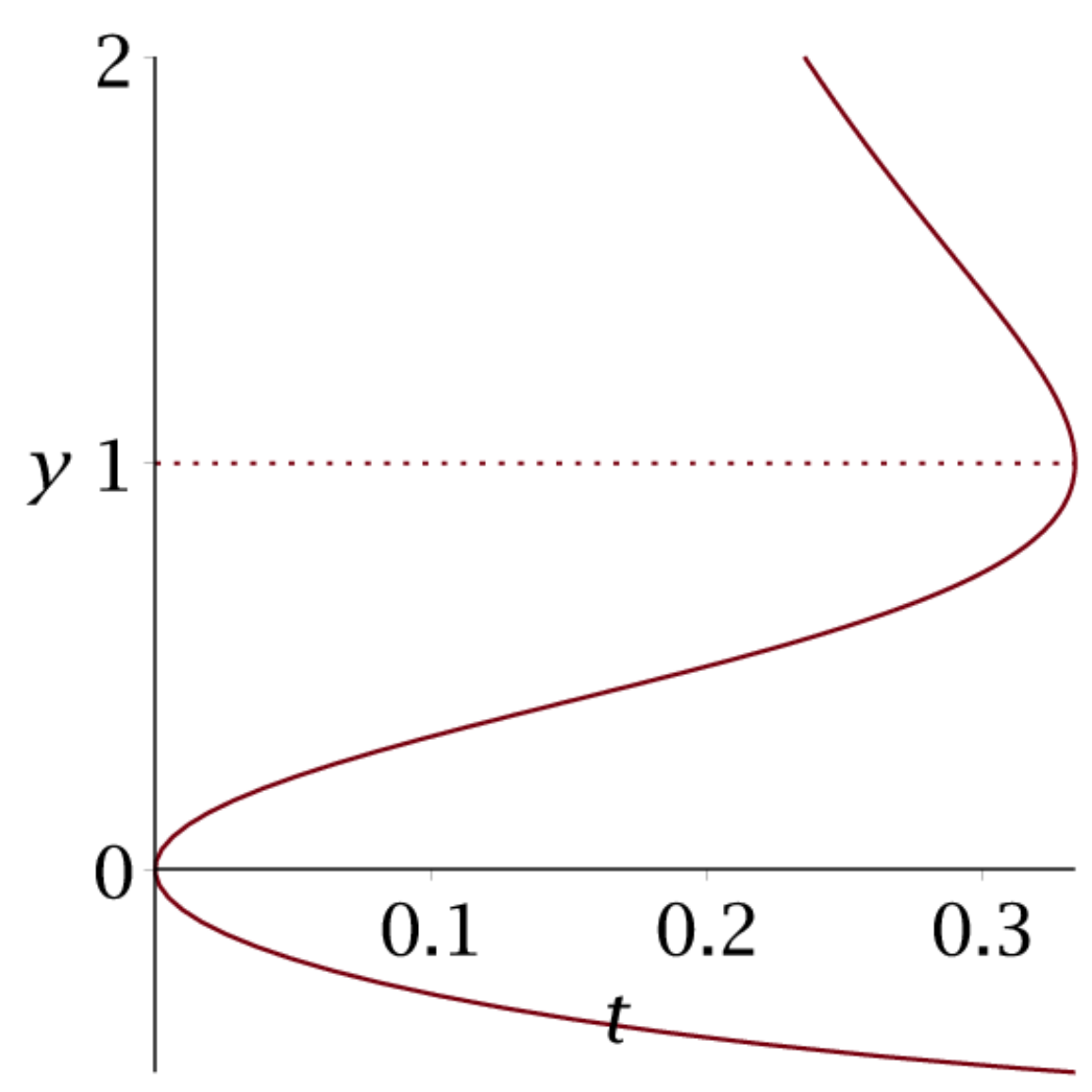}
  \caption{A plot of the three branches of $K(y,y;t)=0$ against $t$ for $t\in
    [0,1/3]$.}
  \label{fig:Kyy}
\end{figure}

 We expect our analytic method to be also applicable to
  the four weighted algebraic models of Figure~\ref{fig:alg_models}, right, provided one develops the counterpart of~\cite{Ra-12} for steps with (real and positive) weights.

%%%%%%%%%%%%%%%%%%%%%%%%%%%%%%%%%%%%%%%%%%%%%%%%%%%%%%%%%%%%%%%%%%%%%%%%%%%%%%
\section{Differential algebraicity}
\label{sec:DA}
%%%%%%%%%%%%%%%%%%%%%%%%%%%%%%%%%%%%%%%%%%%%%%%%%%%%%%%%%%%%%%%%%%%%%%%%%%%%%%
As recalled in the introduction,  quadrant walks have a D-finite \gf\ if and only if the
associated group is finite
--- we can now say, if and only if they admit  rational invariants
(Theorem~\ref{thm:finite-invariant}).  
Here we will show that the $9$ models with an infinite group that have
decoupling functions
still satisfy \emm polynomial, differential equations. 
This property will be derived from the new expression for the generating function of these 
models that we obtained in Section~\ref{sec:analysis} by 
the analytic invariant approach.

\begin{Theorem}
\label{thm:DA}
For any of the $9$  models of Table~\ref{tab:decoupling_functions-infinite}, 
the generating function $Q(x,y;t)$ is
\emm differentially algebraic, (or: \emm D-algebraic,) in
$x,y,t$. That is, it
satisfies \emm three, polynomial differential equations with coefficients in $\qs$: one in $x$,
one in $y$ and one in $t$. 
\end{Theorem}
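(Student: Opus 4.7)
The plan is to deduce D-algebraicity of $Q(x,y;t)$ from the explicit closed-form expressions for $S(y)=K(0,y)Q(0,y)$ given in Theorem~\ref{thm:9models}. I would begin by invoking the forthcoming Theorem~\ref{prop:w-DA}, which asserts that the weak invariant $w(y)\equiv w(y;t)$ defined by~\eqref{eq:expression_gluing} is D-algebraic both as a function of $y$ and as a function of $t$. Granting this, formulas~\eqref{S-uniform} and~\eqref{ninth} display $S(y;t)$ as a rational combination of $y$, $t$, the series $w(y;t)$, and the specializations $w(p;t)$, $w'(p;t)$, $w(\alpha;t)$ (or $w''(0;t)$, $w^{(4)}(0;t)$ for the ninth model). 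Since, as Table~\ref{tab:prg} shows, $p$ and $\alpha$ are algebraic in $t$ (in fact, mostly constants or simple rational functions), each such specialization is D-algebraic in $t$ by composition. Using the standard closure properties of D-algebraic series (closure under sum, product, quotient by a nonzero element, differentiation, and algebraic substitution), I conclude that $S(y;t)$ is D-algebraic in $y$ (at fixed $t$) and in $t$ (at fixed $y$), and hence so is $Q(0,y;t)=S(y;t)/K(0,y;t)$.

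Next I would transfer this to $R(x)=K(x,0)Q(x,0)$. The kernel identity~\eqref{eq:func_spec}, specialized at the root $Y_0(x)$ of the kernel that is a power series in $t$, gives
\[
R(x)=xY_0(x)-S(Y_0(x))+R(0),
\]
and since $Y_0(x)$ is algebraic over $\qs(x,t)$ and D-algebraicity is preserved under algebraic substitution, $R(x;t)$ is D-algebraic in $x$ and in $t$. The basic functional equation~\eqref{eqfunc} then reads
\[
Q(x,y;t)=\frac{R(x)+S(y)-R(0)-xy}{K(x,y)},
\]
which exhibits $Q(x,y;t)$ as a rational expression in series that are D-algebraic in each of $x$, $y$, $t$ separately, and the quotient inherits D-algebraicity in each of these three variables.

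The only substantial obstacle is thus Theorem~\ref{prop:w-DA} itself. D-algebraicity of $w$ in $y$ is the easy half: the Weierstrass function satisfies $(\wp')^{2}=4\wp^{3}-g_2\wp-g_3$, so $\wp_{1,2}^{-1}$ and $\wp_{1,3}$ are D-algebraic, the function $f$ in~\eqref{eq:def_f} is rational in $y$, and composition in~\eqref{eq:expression_gluing} produces a polynomial differential equation for $w(y)$. The genuinely delicate point is D-algebraicity in $t$: the half-periods $\omega_1(t),\omega_2(t),\omega_3(t)$ given by the elliptic integrals~\eqref{eq:expression_periods}, together with the Weierstrass invariants $g_2(t),g_3(t)$, depend on $t$ through the branch points $y_i(t)$ and $Y(x_1(t))$, which are algebraic in $t$. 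One has to check that $\omega_i(t)$ and $g_j(t)$ satisfy polynomial differential equations in $t$ (of Picard--Fuchs type), and then keep careful track of the chain rule through the composition~\eqref{eq:expression_gluing}. I expect this period/monodromy bookkeeping to be where the real work lies; the passage from $w$ to $Q$ described above is then essentially a formal consequence.
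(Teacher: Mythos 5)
Your argument follows the paper's proof of Theorem~\ref{thm:DA} in its entirety: it deduces D-algebraicity of $Q(x,y;t)$ from Theorem~\ref{prop:w-DA} via the closed-form expressions of Theorem~\ref{thm:9models}, the closure properties (Corollary~\ref{cor:closure} and Proposition~\ref{prop:closure-comp}), and the same transfer chain $S\to R\to Q$ through~\eqref{eq:func_spec} and~\eqref{eqfunc}. The one place where your sketch of Theorem~\ref{prop:w-DA} slightly underestimates the difficulty is the D-algebraicity in $t$ of the elliptic invariants $g_2,g_3$ for the lattice $(\omega_1,\omega_3)$: these are \emph{not} controlled algebraically by the branch points, and the paper's proof requires the D-algebraicity of Eisenstein series as modular forms in $\tau=\omega_1/\omega_2$ (Lemma~\ref{lem:g-DA}), a genuinely distinct ingredient beyond the Picard--Fuchs equations that handle the periods $\omega_i(t)$ and $\cZ(y;t)$.
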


As discussed in the introduction, this is the first
  D-algebraicity result for (some) non-D-finite quadrant models~\cite{BeBMRa-FPSAC-16}, and  the 47 
  other non-D-finite models have been proved to be
  hypertranscendental since then~\cite{DHRS-17,DHRS-sing,DH-t}.

%========================================
\subsection{Generalities}\label{sec:genDA}
%========================================
We consider an abstract differential field $\GK$ of characteristic $0$
{equipped with one or several} derivations. 
Typical examples occurring in this section are:
\begin{itemize}
\item the field of meromorphic functions in $k$ variables $x_1,
  \ldots, x_k$ over a complex domain $D$ of $\cs^k$, equipped with
 the  {derivations}  $\partial/\partial x_i$, 
\item the quotient field of the (integral) ring of formal power series
  in the variables $x_1, \ldots, x_k$ with coefficients in $\qs$,
  equipped with   the {derivations}   $\partial/\partial x_i$,
\item at the end of the section, the field of Laurent series in $t$
  with rational coefficients in $x$ and $y$, equipped with the three
   {derivations}    $\partial /\partial t$, $\partial /\partial x$ and
  $\partial /\partial y$.
\end{itemize}

\begin{Definition}\label{def:DA}
{Let $\GK$ be a differential    field, with     a derivation $\delta$.}
An element  $F$ of $\,\GK$ is
  \emph{$\delta$-algebraic}, if there exists a non-zero polynomial $P(x_0, x_1,
  \ldots, x_d)$ with coefficients in $\qs$ such that
\[ 
P(F, \delta F, \ldots, \delta^{(d)}F)=0.
\] 
\end{Definition}
When $F$ is a function or a series involving $k$ variables $x_1,
\ldots, x_k$, as in the above
 examples, we say that $F$ is \emph{differentially algebraic in $x_i$},
(or \emph{DA in $x_i$}) if it
is $\partial /\partial x_i$-algebraic. We say that~$F$ is \emph{globally DA}, (or \emph{DA}, for short)
if it is DA in each of its variables.

It may be surprising that we do not allow polynomial coefficients in
the definition of DA series or functions. In fact, this would not
enlarge the DA class: indeed, imagine for instance that the series (or function)
$F(x,y)$ satisfies a non-trivial equation
\[ 
P( x,y, F(x,y), \ldots, F^{(d)}(x,y))=0,
\] 
where  the derivatives are taken with respect to $x$, {and the
  variable $y$ actually occurs}. Then
differentiating with respect to $x$ gives another differential
equation (DE).
If it does not involve~$y$, then we have found a DE
  that is free from $y$. Otherwise, we can eliminate $y$ between this
  new equation and the above   one to obtain a DE free from $y$
(the resultant will involve $F^{(d+1)}(x,y)$, and thus cannot be trivially zero).
With one more differentiation, we can similarly construct a DE free from $x$ (and
$y$) and conclude that $F$ is DA in $x$. 

An important subclass of DA series (or functions) consists of \emm differentially
finite, series (or functions): We say that $F$ is
  \emm D-finite, in  $x_i$ (for short: DF in $x_i$) if there exist
   polynomials $P_j (x_1, \ldots, x_k) $ in $\qs[x_1, \ldots, x_k]$,
   for $0\leq j\leq d$,
   not all zero, such that
\[ 
P_d(x_1, \ldots, x_k)F^{(d)}+ \cdots+ P_1(x_1, \ldots, x_k)F' + P_0(x_1, \ldots, x_k)F =0,
\] 
where the derivatives are taken with respect to $x_i$. We say that $F$
is \emm globally differentially finite, (or
D-finite, or DF) if it is DF in each $x_i$. Finally, a simple subclass of DF
series (or functions) consists of \emm algebraic, elements, that is,
series or functions satisfying a non-trivial polynomial equation
\[ 
P(x_1, \ldots, x_k, F)=0
\] 
with coefficients in $\qs$.

The notions of D-finite and D-algebraic series/functions are standard~\cite{lipshitz-df,lipshitz-diag,ritt-vol2,stanley-vol2},
but D-finite series, having a lot of structure, seem to be discussed more
often than DA series, at least in the combinatorics literature. Note
that 
if a series is DF (resp.\ DA), the function that it defines in (say) its
polydisc of convergence is also DF (resp.\ DA).  Conversely, any
differential equation satisfied in the neighborhood of some point
$a=(a_1, \ldots, a_k)$ by a function~$F$ analytic around $a$ holds at the level of power series for the
series expansion of $F$ around $a$. 

We will use a number of  closure properties. Some of them can be
stated in the context of an abstract differential field, using the
following proposition.

\begin{Proposition}\label{prop:DA-equiv}
  Let $\GK$ be a differential field of characteristic $0$, with
  {a} derivation $\delta$. Let $F\in \GK$.  The following statements are equivalent:
  \begin{enumerate}[label={{\rm(\arabic{*})}},ref={{\rm(\arabic{*})}}]
  \item\label{it1:DA-equiv}$F$ is $\delta$-algebraic,

\item\label{it2:DA-equiv}there exists $d\in \ns$ such that all $\delta$-derivatives of $F$  belong to
  $\qs(F, \delta F, \ldots, \delta^{(d)}F)$,
 
\item\label{it3:DA-equiv}there exists a field extension $K$ of $\qs$ of finite
  transcendence degree  that contains $F$ and all its $\delta$-derivatives,
  
%\item there exists a field extension $L$ of $\qs$ of finite  transcendence degree that is stable under derivation and contains  $F$.
 \end{enumerate}
\end{Proposition}
\begin{proof}
  $\ref{it1:DA-equiv}\Rightarrow \ref{it2:DA-equiv}$. Take a DE for $F$ of minimal order, and minimal
  total degree among DEs of minimal order:
\[ 
P(F, \ldots, \delta^{(d)}F)=0.
\] 
Applying $\delta$ gives:
\[ 
\left(\delta^{(d+1)}F\right) P_1(F, \ldots, \delta^{(d)}F) + P_2(F, \ldots, \delta^{(d)}F)=0
\] 
for some polynomials $P_1$ and $P_2$. The total degree of $P_1$ is
less than the total degree of $P$, and thus by minimality of $P$,
$P_1(F, \ldots, \delta^{(d)}F)$ is non-zero. Property~\ref{it2:DA-equiv} then
follows by induction on the order of the derivative.

\medskip
$\ref{it2:DA-equiv}\Rightarrow \ref{it3:DA-equiv}$. The field $K=\qs(F, F', \ldots, \delta^{(d)}F)$
 contains all derivatives of $F$ and has transcendence
degree at most $d+1$  (recall that $\qs(x_1, \ldots, x_k)$ has
transcendence degree $k$).

\medskip 
$\ref{it3:DA-equiv} \Rightarrow \ref{it1:DA-equiv}$. If $K$ has transcendence degree $d$, then the $d+1$
functions $F, F',
\ldots, \delta^{(d)}F$ are algebraically dependent over $\qs$. 
\end{proof}

The following closure properties easily follow.
\begin{Corollary}\label{cor:closure}
  The set of $\delta$-algebraic elements of $\GK$ forms a field. This field
  is closed under $\delta$, and in fact under any %other
 derivation $\partial$ that
  commutes with $\delta$.
\end{Corollary}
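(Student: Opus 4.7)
The plan is to reduce every assertion to characterization $(3)$ of Proposition~\ref{prop:DA-equiv}: being $\delta$-algebraic amounts to lying, together with all one's $\delta$-derivatives, in a subfield of $\GK$ of finite transcendence degree over $\qs$. This description transports closure properties almost for free.

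First I would handle the field axioms. Given $F,G\in\GK$ that are $\delta$-algebraic, pick subfields $K_F,K_G\subset\GK$ of finite transcendence degree over $\qs$, each containing its generator together with all its $\delta$-derivatives. The compositum $K_F\cdot K_G$ still has finite transcendence degree, bounded by the sum of the two. The identity $\delta^{(n)}(F+G)=\delta^{(n)}F+\delta^{(n)}G$ and the Leibniz rule show that every $\delta$-derivative of $F\pm G$ and of $FG$ sits in $K_F\cdot K_G$; for $G\neq 0$, an induction based on $\delta(1/G)=-\delta G/G^2$ does the same for $1/G$. Applying $(3)$ in reverse yields that $F\pm G$, $FG$ and $F/G$ are $\delta$-algebraic, so the $\delta$-algebraic elements form a subfield of $\GK$.

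Closure under $\delta$ itself is immediate from characterization $(2)$: if $F,\delta F,\ldots,\delta^{(d)}F$ rationally generate every higher $\delta$-derivative of $F$, then the same finite set generates every $\delta^{(n)}(\delta F)=\delta^{(n+1)}F$, so $\delta F$ is $\delta$-algebraic.

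The main obstacle is closure under an arbitrary derivation $\partial$ commuting with $\delta$, where the plan is to differentiate a minimal algebraic relation for $F$. Pick a non-zero $P\in\qs[x_0,\ldots,x_d]$ of minimal order, and of minimal total degree among those of that order, annihilating the tuple $(F,\delta F,\ldots,\delta^{(d)}F)$. Exactly as in the proof of $(1)\Rightarrow(2)$, the coefficient $\partial P/\partial x_d$ does not vanish at this tuple. Applying $\partial$ to $P(F,\ldots,\delta^{(d)}F)=0$ and using $\partial\circ\delta^{(i)}=\delta^{(i)}\circ\partial$ yields the linear relation
\[
\sum_{i=0}^{d}\frac{\partial P}{\partial x_i}(F,\delta F,\ldots,\delta^{(d)}F)\,\delta^{(i)}(\partial F)=0,
\]
so that $\delta^{(d)}(\partial F)$ solves rationally in terms of $F,\delta F,\ldots,\delta^{(d)}F$ and $\partial F,\delta\partial F,\ldots,\delta^{(d-1)}(\partial F)$. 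Iterating $\delta$, and using again that all higher $\delta^{(n)}F$ already lie in $\qs(F,\ldots,\delta^{(d)}F)$, shows that every $\delta^{(n)}(\partial F)$ lies in the finitely generated field $\qs(F,\delta F,\ldots,\delta^{(d)}F,\partial F,\delta\partial F,\ldots,\delta^{(d-1)}(\partial F))$, which has finite transcendence degree over $\qs$. Characterization $(3)$ now applies to $\partial F$, completing the proof.
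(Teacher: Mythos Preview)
Your argument is correct. For the field axioms and closure under $\delta$ it matches the paper's proof, which likewise invokes characterizations (2) and (3) of Proposition~\ref{prop:DA-equiv}. For closure under a commuting derivation $\partial$, your route differs from (and improves on) the paper's: the paper simply asserts that ``$\partial F$ satisfies the same DE as $F$, and is thus $\delta$-algebraic.'' Taken literally this is false --- applying $\partial$ to $P(F,\delta F,\ldots,\delta^{(d)}F)=0$ does not yield $P(\partial F,\delta\partial F,\ldots,\delta^{(d)}\partial F)=0$, but only the linear relation you wrote down, whose coefficients still involve the $\delta^{(i)}F$. Your approach (differentiate a \emph{minimal} relation so that the top coefficient $\partial P/\partial x_d$ is nonzero, solve for $\delta^{(d)}(\partial F)$, and iterate to confine every $\delta^{(n)}(\partial F)$ to the finitely generated field $\qs(F,\ldots,\delta^{(d)}F,\partial F,\ldots,\delta^{(d-1)}\partial F)$) is a clean way to make this step rigorous, and is presumably what the paper's one-line sketch was meant to convey.
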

\begin{proof}
Assume that  $F$ and $G$ are $\delta$-algebraic. Say that all derivatives
of $F$ belong to $\qs( F, \ldots, \delta^{(d)}F)$, and all
derivatives of $G$ belong to $\qs( G, \ldots, \delta^{(e)}G)$. Then all
derivatives of $F+G$ and $FG$ belong to $\qs( F, \ldots,
\delta^{(d)}F,G, \ldots, \delta^{(e)}G)$, so that $F+G$ and $FG$ are
$\delta$-algebraic by Proposition~\ref{prop:DA-equiv}\ref{it3:DA-equiv}.  Similarly, all derivatives
of $1/F$ belong to $\qs( F, \ldots, \delta^{(d)}F)$, so that $1/F$ is
$\delta$-algebraic. The closure under $\delta$ of the field of
$\delta$-algebraic elements is obvious by Proposition~\ref{prop:DA-equiv}\ref{it2:DA-equiv}. Finally, if~$\partial$ is
another derivation commuting with $\delta$, then $\partial F$
  satisfies the same DE as $F$, and is thus $\delta$-algebraic.
\end{proof}
Specialized to series or  functions in $k$ variables $x_1,
\ldots, x_k$, the above
corollary implies that
$F+G$, $FG$, $1/F$, $\partial F/\partial x_i$ are DA as soon as $F$
and $G$ are DA. We will need one final closure property, involving composition.

\begin{Proposition}\label{prop:closure-comp}
If $F(y_1, \ldots, y_r)$ is a DA series (or   function) 
of $r$ variables, $G_1(x_1,\ldots, x_k), \ldots$,
$G_r(x_1, \ldots, x_k)$ are DA in all $x_i$'s, and the composition
  $H:=F(G_1, \ldots, G_r)$ is well defined, then $H$ is DA in the $x_i$'s.  
\end{Proposition}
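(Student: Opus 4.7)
My plan is to use the equivalence $(1)\Leftrightarrow(3)$ of Proposition~\ref{prop:DA-equiv}. Fix $i\in\{1,\ldots,k\}$, write $\partial=\partial/\partial x_i$ and $\partial_j=\partial/\partial y_j$. To show that $H$ is $\partial$-algebraic, it suffices to produce a subfield of finite transcendence degree over $\mathbb Q$ that contains $H$ together with every iterated derivative $\partial^n H$. By iterating the chain rule, each $\partial^n H$ is a polynomial, with coefficients in $\mathbb Q$, in the two families $\{\partial^m G_j : 1\le j\le r,\ m\ge 0\}$ and $\{(\partial^\alpha F)(G_1,\ldots,G_r):\alpha\in\mathbb N^r\}$, so it is enough to confine each family to a finite-transcendence-degree field.

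For the first family, Proposition~\ref{prop:DA-equiv}(3) applied to each $G_j$ (which is $\partial$-algebraic by hypothesis), together with the fact that the compositum of finitely many finite-transcendence-degree extensions is again finite, shows that the $\partial^m G_j$ generate a subfield $K_G$ of finite transcendence degree over $\mathbb Q$. The second family requires the key lemma that all mixed partials $\{\partial^\alpha F:\alpha\in\mathbb N^r\}$ themselves lie in a single subfield of finite transcendence degree over $\mathbb Q$. I would prove this by combining the one-variable hypotheses as follows: for each $j$, Proposition~\ref{prop:DA-equiv}(2) applied to $F$ viewed as a $\partial_j$-algebraic element yields an integer $d_j$ and a rational function $R_j$ over $\mathbb Q$ such that $\partial_j^{a}F=R_j(F,\partial_j F,\ldots,\partial_j^{d_j}F)$ for every $a>d_j$. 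Differentiating this identity with respect to any other variable $y_i$ (which commutes with $\partial_j$) expresses $\partial_i^{b}\partial_j^{a}F$ as a rational function of $\{\partial_i^{b'}\partial_j^{k}F:b'\le b,\ k\le d_j\}$. Iterating this reduction successively in $j=1,\ldots,r$ confines every mixed partial to the field generated over $\mathbb Q$ by the finite set $T=\{\partial^\beta F:\beta_j\le d_j\text{ for all }j\}$.

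Finally, evaluation at $(y_1,\ldots,y_r)=(G_1,\ldots,G_r)$ is a ring homomorphism into the ambient differential field of $H$, so the polynomial relations over $\mathbb Q$ satisfied by the elements of $T$ transfer to relations among their evaluations, placing the whole second family in a subfield $K_F$ of finite transcendence degree. Thus $H$ and all its $\partial$-iterates lie in the compositum $K_G K_F$, which still has finite transcendence degree over $\mathbb Q$, and Proposition~\ref{prop:DA-equiv}(3) then yields that $H$ is $\partial$-algebraic; since $i$ was arbitrary, $H$ is globally DA. The main obstacle is the key lemma: single-variable DA hypotheses on $F$ a priori say nothing about mixed partials, and one must exploit commutation of $\partial_i$ and $\partial_j$ together with the rational reduction of pure $\partial_j$-iterates in order to bound the transcendence degree of the entire multi-indexed family of mixed partials.
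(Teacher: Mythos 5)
Your proposal is correct and takes essentially the same approach as the paper's proof: both reduce to Proposition~\ref{prop:DA-equiv}(2)/(3) by bounding the $x_i$-derivatives of $H$ in terms of finitely many derivatives $\partial^m G_j$ ($m<e_j$) and finitely many mixed partials $(\partial^\alpha F)(G_1,\ldots,G_r)$ ($\alpha_j<d_j$). You spell out in more detail the ``key lemma'' — that commuting the $\partial_{y_j}$ and applying the univariate reductions for each $j$ successively confines all mixed partials of $F$ to a finite-transcendence-degree field — a step the paper uses implicitly without elaboration.
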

\begin{proof}
Let us prove that $H$ is DA in $x_1$. If, for $1\leq i \leq r$,  all $y_i$-derivatives of $F(y_1, \ldots, y_r)$ can be expressed
rationally in terms of the first $d_i$ derivatives, and all
$x_1$-derivatives of $G_j$ in terms of the first $e_j$ ones, then all
$x_1$-derivatives of $H$ can be expressed rationally in terms of 
\begin{itemize}
\item the functions $\partial^{a} G_j/\partial x_1^{a}$, for $1\leq j \leq r$ and $0\leq a <  e_j$, 
\item and the functions 
\[ 
\frac{\partial^{c_1+ \cdots + c_k} F}{\partial y_1^{c_1} \cdots \partial y_r^{c_r}}(G_1,
\ldots, G_k)
\] 
for $0\leq c_i < d_i$, $1\leq i \leq r$. We then apply Proposition~\ref{prop:DA-equiv}\ref{it3:DA-equiv}.\qedhere
\end{itemize}
\end{proof}
D-finiteness is not preserved in general by composition,  but we still
have the following result  (see~\cite{lipshitz-df} for a
proof in the series setting).
\begin{Proposition}\label{DF-alg}
If $F(y_1, \ldots, y_r)$ is a D-finite series (or   function) 
of $r$ variables, $G_1(x_1,\ldots, x_k)$, $\ldots,
G_r(x_1, \ldots, x_k)$ are algebraic in the $x_i$'s, and the composition
  $H:=F(G_1, \ldots, G_r)$ is well defined, then $H$ is D-finite in the $x_i$'s.
\end{Proposition}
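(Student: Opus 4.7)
I would give the standard vector-space proof of this closure property. The first step is to invoke the equivalence (essentially due to Lipshitz~\cite{lipshitz-df}) between multivariate D-finiteness as defined in the paper (separate D-finiteness in each variable) and the finite-dimensionality, over $\qs(y_1,\ldots,y_r)$, of the vector space $V$ spanned by \emph{all} iterated partial derivatives of $F$. I would then fix a basis $e_1=F,e_2,\ldots,e_d$ of $V$, each $e_\ell$ being a partial derivative of $F$, and let $m$ be the degree of the finite algebraic extension $L:=\qs(x_1,\ldots,x_k)(G_1,\ldots,G_r)$ of $\qs(x_1,\ldots,x_k)$. Differentiating the algebraic equations satisfied by the $G_j$'s shows that each $\partial G_j/\partial x_i$ lies in $L$.

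Next, I would consider the $L$-module $W$ generated by the compositions $e_1(G),\ldots,e_d(G)$, where $G=(G_1,\ldots,G_r)$ (the substitution makes sense since the composition is assumed to be well defined). Applying the chain rule gives
$$\frac{\partial (e_\ell(G))}{\partial x_i}=\sum_{j=1}^r (\partial_{y_j} e_\ell)(G)\cdot \frac{\partial G_j}{\partial x_i}.$$
Since $\partial_{y_j}e_\ell$ lies in $V$, it expands as a $\qs(y_1,\ldots,y_r)$-linear combination of the $e_\mu$'s; substituting $G$ for $y$ shows that $\partial(e_\ell(G))/\partial x_i$ is an $L$-linear combination of the $e_\mu(G)$'s. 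Hence $W$ is stable under each derivation $\partial/\partial x_i$, and since $H=F(G)=e_1(G)\in W$, every iterated $x_i$-derivative of $H$ belongs to $W$.

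Finally, $W$ has dimension at most $d$ over $L$, hence at most $dm$ over $\qs(x_1,\ldots,x_k)$. Therefore, for each $i$, the sequence $H,\partial H/\partial x_i,\partial^2 H/\partial x_i^2,\ldots$ is linearly dependent over $\qs(x_1,\ldots,x_k)$, and clearing denominators yields a non-trivial linear ODE in $x_i$ with polynomial coefficients. This proves that $H$ is D-finite in each of its variables. The only nontrivial ingredient is the vector-space characterization of multivariate D-finiteness recalled at the outset; the rest is formal manipulation of the chain rule combined with the algebraicity of the $G_j$'s.
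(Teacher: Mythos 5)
The paper does not give its own proof of this proposition: it simply refers the reader to Lipshitz~\cite{lipshitz-df} for the formal power-series setting, so there is no in-paper argument to compare yours against. Your module argument is the standard one, and it is sound in its essentials; but a brief caveat is in order about one step, and about attribution.

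The one place your sketch glosses over a genuine subtlety is the substitution of $G=(G_1,\ldots,G_r)$ into the rational coefficients $c_\mu(y)\in\qs(y_1,\ldots,y_r)$ appearing in the expansion $\partial_{y_j}e_\ell=\sum_\mu c_\mu(y)\,e_\mu$. For $c_\mu(G)$ to make sense you need the denominator of $c_\mu$ not to vanish after substitution, and this can fail when the $G_j$'s satisfy algebraic relations over $\qs(x_1,\ldots,x_k)$. In the analytic (function) setting this is harmless: one runs the argument on the dense open set where all the finitely many relevant denominators are nonzero and then invokes analytic continuation. In the formal power-series setting it is a real issue, and it is precisely one of the reasons Lipshitz's own proof of algebraic substitution does not take the route you describe: he instead writes $F(G_1,\ldots,G_r)$ as a diagonal of a D-finite series in more variables (using the residue/contour representation of algebraic series) and invokes his theorem that diagonals of D-finite series are D-finite. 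So while your chain-rule/finite-module argument is the standard proof for D-finite \emph{functions} and is correct there, you should not present it as "the" Lipshitz proof for series; and if you want it to cover the series case you should explain how to handle the vanishing denominators (e.g.\ by localizing at the prime ideal of relations among the $G_j$'s, and verifying that the resulting local ring still gives a finite-rank connection). The ingredient you correctly attribute to Lipshitz is the equivalence between coordinate-wise D-finiteness and finite-dimensionality of the module of all mixed partials, which you are right to flag as the only nonformal input to your argument.
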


%========================================================
\subsection{The Weierstrass elliptic function}
%========================================================
It is well known that the Weierstrass function $\wp(z, \om_1, \om_2)$
defined by~\eqref{w-def} is DA in~$z$. What may be less known is
that it is DA in its periods $\om_1$ and $\om_2$ as well.   Since we
could not find any reference in the literature, we will sketch a
proof. 
\begin{Proposition}\label{prop:P-DA}
   The function   $\wp(z, \om_1, \om_2)$ is DA in $z$, $\om_1$ and $\om_2$.
\end{Proposition}
\begin{proof}
 We refer to~\cite{jones-singerman,ww} for generalities on the Weierstrass function
 (but we draw the attention of the reader on the fact that the
 periods are $2\om_1$ and $2\om_2$ in~\cite{ww},
instead of $\om_1$ and $\om_2$ (or $\om_3$) in our paper).
The following differential equation is well known to hold:
\[ 
 \wp _z(z, \om_1,\om_2)^2=4 \wp(z,\om_1,\om_2)^3-g_2(\om_1, \om_2)
\wp(z,\om_1,\om_2) -g_3(\om_1, \om_2),
\] 
which we shorten as
\beq\label{diff-p}
\wp _z^2=4 \wp^3-g_2\wp -g_3,
\eeq
where $g_2$ and $g_3$ (also called \emm invariants, in the
elliptic terminology!) depend on the periods only.

We now use the connection between the Weierstrass function and
Jacobi's theta function:
    \[
          \theta(z;\tau)=\sum_{n\in \zs} e^{i(2n+1)z +i\pi \tau (n+1/2)^2}.
        \]
 Indeed, %it is known that
    \beq\label{P-theta}
      \wp(z, \pi, \pi \tau)= - \frac {\rm{d}}{ {\rm d} z} \left( \frac{\theta_z(z;
          \tau)}{\theta(z;\tau)}\right) + \frac{\theta_{zzz}(0;
          \tau)}{3 \theta_z(0;\tau)}.
      \eeq
This can be easily proved by observing that the right-hand side of~\eqref{P-theta} has
periods $\pi$ and $\pi\tau$, and behaves like $1/z^2+O(z^2)$ around
zero --- two properties that characterize $\wp(z, \pi, \pi \tau)$.
    
        It follows from this and~\eqref{diff-p} that $\theta(z;\tau)$
        is DA in $z$. Hence $\theta(z;\tau)$ and its $z$-derivatives
        span a field of finite transcendence degree over $\qs$. By
        the heat equation~\cite[Sec.~21.4]{ww},
        \[
          \theta_\tau(z;\tau)= -\frac {i\pi}{4} \theta_{zz}(z;\tau),
        \]
the $\tau$-derivatives of $\theta$ are also contained in a field of
finite transcendence degree, and $\theta$ is thus DA in~$\tau$. The same
holds for any $z$-derivative of $\theta$, upon differentiating the heat equation.

It now follows from~\eqref{P-theta} and Corollary~\ref{cor:closure}
that $\wp(z, \pi, \pi \tau)$ is DA in $\tau$ as
well. Finally, since
\[
  \wp(z, \om_1, \om _2)= \frac {\pi^2} {\om_1^2}\,  \wp\!\left( \frac {\pi z}{\om_1}, \pi,
    \pi \frac{\om_2}{\om_1}\right),
\]
we conclude by Proposition~\ref{prop:closure-comp} that $\wp$ is DA in $\om_1$ and $\om_2$ as well.        
\end{proof}

%=====================================================
\subsection{The weak invariant $\boldsymbol w$}
%==============================================
We now consider a non-singular, unweighted quadrant
model. Recall the expression~\eqref{eq:expression_gluing} of the weak invariant $w(y;t)$,
valid for $t\in (0,1/\vert\cS\vert)$ and $y$ in the complex domain $\GL$,
which depends on $t$
(Proposition~\ref{Prop:properties_w}). From now on we will
  often insist on the dependence   in $t$ {in the notation} of our functions, 
writing for %denoting for
instance $\om_1(t)$ rather than   $\om_1$.

\begin{Theorem}\label{prop:w-DA}
  For any non-singular model, the weak invariant $w(y;t)$
  defined by~{\rm\eqref{eq:expression_gluing}} can be extended
  analytically to a domain of $\cs^2$ where it is
   D-algebraic in $y$ and~$t$.
\end{Theorem}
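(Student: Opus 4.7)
\medskip
\noindent \textbf{Proof plan.} My strategy is to parse the formula~\eqref{eq:expression_gluing} for $w$ into a composition of simpler pieces and verify D-algebraicity piece by piece, using Proposition~\ref{prop:P-DA} (the Weierstrass function is DA in all three of its arguments), Corollary~\ref{cor:closure} (DA-ness is preserved by field operations and by any commuting derivation) and Proposition~\ref{prop:closure-comp} (DA-ness is preserved under composition). The four pieces to handle are: the three periods $\omega_i(t)$, the rational function $f(y;t)$, the inverse $\wp_{1,2}^{-1}$, and finally the outer evaluation $\wp_{1,3}(\cdot)$.

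First I would show that the periods $\omega_1(t),\omega_2(t),\omega_3(t)$ defined by~\eqref{eq:expression_periods} are DA (in fact D-finite) in $t$. The branch points $y_\ell(t)$ are algebraic over $\qs(t)$, and each $\omega_i$ is a definite integral of $1/\sqrt{\widetilde d(y;t)}$ over an interval with such endpoints, where $\widetilde d$ is polynomial in $y$ of degree at most $4$ with polynomial coefficients in~$t$. Classical Picard--Fuchs theory produces an explicit linear ODE in $t$ with polynomial coefficients for such complete elliptic period integrals, which gives D-finiteness and hence DA-ness in $t$. The rational function $f(y;t)$ of~\eqref{eq:def_f} has coefficients algebraic in $t$ (through the $y_\ell$ and $\widetilde d$), so $f$ is algebraic, hence DA, in both $y$ and $t$.

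The technical heart of the proof is the inversion step. Set $u(y;t) := \wp_{1,2}^{-1}(f(y;t))$, with the branch fixed as in the paragraph preceding Proposition~\ref{Prop:properties_w}; so by definition $\wp(u(y;t);\omega_1(t),\omega_2(t)) = f(y;t)$. Differentiating this identity in $y$ and using $\wp_z^{\,2} = 4\wp^{3} - g_2 \wp - g_3$ evaluated at $z=u$ (with $\wp(u)=f$) shows
\[
(u_y)^2 \bigl( 4 f^3 - g_2 f - g_3 \bigr) = f_y^{\,2},
\]
so $u_y$ is algebraic over the DA field $\GK$ generated over $\qs$ by $t,y,\omega_1,\omega_2,\omega_3,f$ and their partial derivatives (we already know this field has finite transcendence degree over $\qs$). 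Differentiating the defining identity in $t$ and using the DE~\eqref{diff-p} together with the ODEs for $\wp$ with respect to $\omega_1,\omega_2$ established in the proof of Proposition~\ref{prop:P-DA} expresses $u_t$ in the same way. Iterating these relations, every partial derivative of $u$ in $y$ and $t$ lies in the algebraic closure of $\GK(u)$, which has finite transcendence degree over $\qs$. By Proposition~\ref{prop:DA-equiv}, $u$ is DA in both $y$ and $t$.

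Finally, $Z(y;t) := -\tfrac{1}{2}(\omega_1+\omega_2) + u(y;t)$ is DA in both variables by Corollary~\ref{cor:closure}, and then
\[
w(y;t) = \wp_{1,3}(Z(y;t)) = \fp\bigl( Z(y;t),\, g_2(\omega_1(t),\omega_3(t)),\, g_3(\omega_1(t),\omega_3(t)) \bigr)
\]
is a composition of the DA function $\fp$ (Proposition~\ref{prop:P-DA}) with the DA functions $Z,\omega_1,\omega_3$ and the polynomials $g_2,g_3$ of Lemma~\ref{lem:g-DA}; Proposition~\ref{prop:closure-comp} then yields DA-ness of $w$ in $y$ and $t$. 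For the analytic-continuation statement, I would invoke the implicit function theorem at any interior base point $(y_0,t_0)$ of the real domain of Proposition~\ref{Prop:properties_w} where $\wp_z(u(y_0;t_0);\omega_1,\omega_2)\neq 0$, and extend $u$ and hence $w$ analytically to a bidisc in $\cs^2$ around this point. The main obstacle is precisely this inversion step: $\wp_{1,2}^{-1}$ is multivalued, so one must ensure that the fixed branch (restricted to a half-parallelogram) can be differentiated algebraically and that the field-of-definition argument does not inadvertently leave $\GK^{\mathrm{alg}}(u)$; this is why the non-vanishing of $\wp_z$ is essential, and why injectivity of $\wp_{1,2}$ on the chosen half-parallelogram must be used.
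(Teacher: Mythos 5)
Your proposal follows the same overall architecture as the paper: decompose $w$ into $\wp_{1,3}$ applied to the periods $\omega_i(t)$ and the inner function $\mathcal Z(y;t)$ (your $u$ up to the $\omega$-shift), prove each piece DA, and conclude by the composition and closure lemmas. Your treatment of the periods by Picard--Fuchs theory is a reasonable alternative to the paper's explicit reduction to Legendre-form elliptic integrals, and your $y$-derivative identity $(u_y)^2(4f^3-g_2f-g_3)=f_y^2$ is equivalent to the paper's relation $(\mathcal Z'(y))^2=1/(4\widetilde d(y))$.

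The gap is in the $t$-derivative. Differentiating $\wp(u;\omega_1,\omega_2)=f$ in $t$ yields
$$
\wp_z(u)\,u_t+\wp_{\omega_1}(u)\,\omega_1'(t)+\wp_{\omega_2}(u)\,\omega_2'(t)=f_t,
$$
and the derivatives $\wp_{\omega_j}(u)$ (equivalently $\mathfrak P_{g_2}$, $\mathfrak P_{g_3}$ evaluated at $z=u$, via~\eqref{edw-g2}) bring in the Weierstrass $\zeta$-function $\zeta(u)$. This is genuinely a new transcendental: it is \emph{not} in $\GK^{\mathrm{alg}}(u)$, so your claim that ``every partial derivative of $u$ in $y$ and $t$ lies in the algebraic closure of $\GK(u)$'' is not justified. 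The fix is to enlarge the generating set to include $\zeta(u)$ and check (using $\zeta_z=-\wp$ and~\eqref{edzeta-g2}) that its $y$- and $t$-derivatives stay in the resulting finitely generated field; you did not do this. The paper avoids the issue altogether by proving the stronger statement that $\mathcal Z$ is D-\emph{finite} in $y$ and $t$ (Lemma~\ref{lem:Z-DF}): it integrates the $y$-DE from the algebraic base point $y_2(t)$ where $\mathcal Z$ vanishes, recognizes $\mathcal Z$ as an incomplete elliptic integral $F(v,k)$ with algebraic arguments, and applies Proposition~\ref{DF-alg}. That route never needs to compute $\mathcal Z_t$ or touch $\zeta$. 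Your approach can likely be repaired along the lines indicated, but as written the $t$-direction is not complete.
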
                

Recall that
\[ 
w(y;t)= \wp\left( \cZ(y;t), \om_1(t), \om_3(t)\right),
\] 
where the periods $\om_1$ and $\om_3$ are given
by~\eqref{eq:expression_periods} and the first argument  of $\wp$ is
\beq\label{Z-def}
\cZ(y;t)=-\frac{\omega_1(t)+\omega_2(t)}{2}+\wp_{1,2}^{-1}(f(y;t)).
\eeq
We will argue by composition of DA functions. We have already proved
in the previous subsection that $\wp$ is DA in its three
arguments. Our next objective will be to prove that $\om_1$ and
$\om_3$ are DA in $t$ (and in fact D-finite, see Lemma~\ref{lem:omega-DF}). We will then proceed
with the bivariate function $\cZ$, which is also D-finite (Lemma~\ref{lem:Z-DF}).

As a very first step, we consider the branch points $x_\ell$ of $Y_{0,1}$, and the branch points $y_\ell$ of $X_{0,1}$.
 \begin{Lemma}\label{lem:xi-holom}
The functions $x_1, x_2, x_3$ are algebraic functions of $t$, and so is $x_4$ when it is finite.
They are analytic and distinct in  a neighborhood of the interval
$(0,1/\vert\cS\vert)$.

The same holds for the branch points $y_\ell$.
 \end{Lemma}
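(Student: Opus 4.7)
My plan is to proceed in three short steps: first exhibit the branch points as algebraic functions of $t$ by writing down a polynomial they satisfy; then use the analytic implicit function theorem on the real interval $(0,1/\vert\cS\vert)$ to get analyticity; and finally invoke Lemma~\ref{lem:branch_points} to get distinctness. The only delicate point is to handle the possibility that $x_4=\infty$, but the statement already carves this out with the parenthetical ``(when it is finite)''.

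First, recall the factorization $K(x,y)=a(x)y^2+b(x)y+c(x)$ from~\eqref{K-abc}. Here $a,b,c\in\mathbb Z[t][x]$ are polynomials of degree at most $2$ in $x$ with coefficients that are themselves polynomials in $t$ (in fact, of degree at most $1$ in $t$, since the kernel~\eqref{K-w} is linear in $t$). Therefore the discriminant $d(x;t):=b(x)^2-4a(x)c(x)$ belongs to $\mathbb Z[x,t]$ and has degree at most~$4$ in $x$. The branch points $x_1,x_2,x_3,x_4$ are by definition the roots (in $x$) of $d(x;t)=0$, so they are algebraic functions of $t$. (If the leading coefficient $[x^4]d(x;t)$ vanishes identically, or at some particular value of $t$, the missing root is interpreted as $x_4=\infty$; note that the leading coefficient is a polynomial in~$t$, hence vanishes either identically or on a finite set.)

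Second, fix $t_0\in(0,1/\vert\cS\vert)$ and consider any $\ell\in\{1,2,3,4\}$ with $x_\ell(t_0)$ finite. By Lemma~\ref{lem:branch_points} the four values $x_1(t_0),\dots,x_4(t_0)$ (the finite ones) are pairwise distinct, so $x_\ell(t_0)$ is a \emph{simple} root of the univariate polynomial $d(\,\cdot\,;t_0)$, i.e.\ $\partial_x d(x_\ell(t_0);t_0)\neq 0$. The analytic implicit function theorem, applied to the relation $d(x;t)=0$ at $(x_\ell(t_0),t_0)$, then produces an analytic function $t\mapsto x_\ell(t)$ defined in an open complex neighborhood $U_{\ell,t_0}$ of $t_0$. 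Taking the union $U_\ell:=\bigcup_{t_0\in(0,1/\vert\cS\vert)}U_{\ell,t_0}$ yields an open complex neighborhood of the interval $(0,1/\vert\cS\vert)$ in which $x_\ell$ is analytic. For the case $x_4(t_0)=\infty$ one makes the substitution $u=1/x$, applies the same argument to the homogenized polynomial $x^{\deg d}d(1/u;t)$ at $u=0$ (a simple root again thanks to the degree drop), and obtains analyticity of $1/x_4$ at such~$t_0$; this covers the (at most finitely many) real values of $t$ where $x_4$ is infinite.

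Finally, distinctness of the $x_\ell$ on the real interval follows from Lemma~\ref{lem:branch_points}, and then extends to a (possibly smaller) complex neighborhood of $(0,1/\vert\cS\vert)$ by continuity of the $x_\ell$ and a standard compactness-free argument: at each $t_0\in(0,1/\vert\cS\vert)$, since $x_\ell(t_0)\neq x_{\ell'}(t_0)$ for $\ell\neq\ell'$, the function $t\mapsto x_\ell(t)-x_{\ell'}(t)$ is non-vanishing on some open neighborhood of~$t_0$, and intersecting with the $U_\ell$'s gives a common neighborhood where all branch points are analytic and pairwise distinct. The same reasoning applied to the variable $y$ and the discriminant $\widetilde d(y;t)$ of $K$ viewed as a polynomial in $x$ yields the analogous statement for $y_1,\dots,y_4$, which completes the proof.
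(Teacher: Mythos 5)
Your proof is correct and takes essentially the same approach as the paper's: identify the branch points as roots of $d(x;t)\in\qs[t][x]$ (hence algebraic in $t$), use Lemma~\ref{lem:branch_points} to see that each finite branch point is a simple root on the real interval, and pass to a complex neighborhood. The only packaging difference is that you invoke the implicit function theorem pointwise, whereas the paper cites the standard fact that the singularities of an algebraic function lie among the zeros of the discriminant and of the leading coefficient, observes that the discriminant of $d$ in $x$ is nonzero on $(0,1/\vert\cS\vert)$ by Lemma~\ref{lem:branch_points} and that the leading coefficient is $c\,t^2$ with $c\neq 0$, and concludes via the fact that such singularities are isolated; these are equivalent arguments. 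One remark: the observation that the leading coefficient is $c\,t^2$ with $c\neq 0$ shows that, for a fixed non-singular model, $\deg_x d(x;t)$ is constant over $(0,1/\vert\cS\vert)$, so $x_4$ is either finite for all $t$ in the interval or $\infty$ for all such $t$; the scenario you guard against (``$x_4$ infinite at some particular $t_0$'' inside the interval) never occurs, and the $u=1/x$ escape hatch is unnecessary here.
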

  \begin{proof}
The $x_i$'s are the roots of $d(x)=b(x)^2-4a(x)c(x)$. This is a cubic
or quartic polynomial in $x$, with coefficients in $\qs[t]$. Its discriminant does not vanish in
$(0,1/\vert\cS\vert)$ since the $x_i$'s are distinct on this interval (see Lemma~\ref{lem:branch_points}). Its dominant coefficient is easily checked
to be $c\,t^2$, for some $c\not=0$. Since the singularities of algebraic
functions are found among the roots of the discriminant and of the
dominant coefficient, we conclude that the $x_i$'s are non-singular on
$(0,1/\vert\cS\vert)$, and thus (since their singularities are isolated) in a
complex neighborhood of this segment.
\end{proof}

 \begin{Lemma}\label{lem:omega-DF}
   The functions $\om_1/i, \om_2$ and $\om_3$, defined by~\eqref{eq:expression_periods} for $t \in (0,
    1/\vert\cS\vert)$, are real and positive. They can be
    extended  analytically in a complex neighborhood of $(0,
    1/\vert\cS\vert)$, where  they are D-finite.
  \end{Lemma}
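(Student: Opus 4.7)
The plan is to handle the three assertions---positivity on the real axis, analytic continuation, and D-finiteness in $t$---in that order, relying on classical facts about families of elliptic integrals.

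Positivity is the easy part. From (the proof of) Lemma~\ref{lem:branch_points}, $\widetilde d$ is negative on $(y_1,y_2)$, positive on $(y_2,y_3)$, and positive on a left-neighborhood of $y_1$; since Lemma~\ref{Lem:curve_L_0_infty} locates $Y(x_1)$ between $y_1$ and the next branch point of $\widetilde d$ (namely $y_4$, or $-\infty$), the interval $(Y(x_1),y_1)$ lies in a connected region where $\widetilde d\ge 0$. The three integrands are therefore real and positive on their integration segments, giving $\omega_1/i,\omega_2,\omega_3\in\mathbb{R}_{>0}$.

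For the analytic continuation, I would use that the $y_\ell$ (and $Y(x_1)=-b(x_1)/(2a(x_1))$, with $a(x_1)\neq 0$ by the argument in the proof of Lemma~\ref{Lem:curve_L_0_infty}) are algebraic functions of $t$ that extend analytically, and remain pairwise distinct, on a sufficiently small complex neighborhood $U$ of $(0,1/|\mathcal S|)$. On $U$, each period can be rewritten as a contour integral $\oint_\gamma dy/\sqrt{\widetilde d(y;t)}$ over a small cycle on the elliptic Riemann surface $E_t:w^2=\widetilde d(y;t)$ enclosing the two relevant endpoints; such a cycle can be moved continuously with $t$ to stay away from all other branch points, so the periods depend analytically on $t\in U$.

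The substantive step is D-finiteness. Here I would invoke Picard--Fuchs theory: the form $\eta=dy/\sqrt{\widetilde d(y;t)}$ is a holomorphic differential on $E_t$, and the algebraic de Rham cohomology $H^1_{\mathrm{dR}}(E_t)$ is a $2$-dimensional $\mathbb{Q}(t)$-vector space. Writing $\eta,\partial_t\eta,\partial_t^2\eta$ in a fixed basis modulo exact forms produces a $\mathbb{Q}(t)$-linear relation which, integrated over $\gamma$, yields a linear ODE in $t$ of order at most two with polynomial coefficients satisfied by each period. Equivalently, after an algebraic change of variable bringing $\widetilde d$ to Legendre form, each $\omega_j(t)$ becomes a complete elliptic integral $K$ of the first kind evaluated at a modulus $k(t)$ algebraic in $t$; since $K(k)$ satisfies the hypergeometric equation in $k^2$, D-finiteness in $t$ then follows from closure of the D-finite class under algebraic substitution (Proposition~\ref{DF-alg}).

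The main technical nuisance I foresee is the degenerate case $y_4=\infty$ (cubic $\widetilde d$) together with the possibly infinite endpoint $Y(x_1)$ in the integral defining $\omega_3$: one has to check that the compactified $E_t$ is still a smooth elliptic curve over $U$ and that the relevant cycles can be chosen uniformly in $t$. Once this bookkeeping is done, the rest of the argument is purely classical.
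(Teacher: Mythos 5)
Your argument works for $\omega_1$ and $\omega_2$, and in the Legendre route you end up on essentially the same track the paper takes. But there is a genuine gap in the treatment of $\omega_3$, which is structurally different from the other two periods in ways your sketch glosses over.

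First, $\omega_3=\int_{Y(x_1)}^{y_1}dy/\sqrt{\widetilde d(y)}$ is \emph{not} (a half of) a period of the curve $E_t:\,w^2=\widetilde d(y;t)$: the endpoint $Y(x_1)$ is not a ramification point. Consequently there is no closed cycle $\gamma$ ``enclosing the two relevant endpoints'' on which to run the plain Picard--Fuchs argument, and the two-dimensionality of $H^1_{\mathrm{dR}}(E_t)$ does not directly give an ODE of order $\le 2$ for $\omega_3$. The same issue infects the Legendre version of your argument: after reduction, $\omega_3$ is an \emph{incomplete} elliptic integral $F(v(t),k(t))$, not a value of the complete integral $K$ at an algebraic modulus. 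The paper deals with this by treating $F(v,k)$ as a D-finite \emph{bivariate} function (it satisfies a second-order ODE in $v$ and a third-order one in $k$) and then composing with the algebraic functions $v(t)$, $k(t)$ via Proposition~\ref{DF-alg}. Your claim ``each $\omega_j(t)$ becomes a complete elliptic integral $K$'' is therefore false for $j=3$, and a Picard--Fuchs proof would need to work with the relative cohomology of the pair $(E_t,\{\text{endpoints}\})$, not with $H^1(E_t)$.

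Second, your parenthetical ``$a(x_1)\neq 0$ by the argument in the proof of Lemma~\ref{Lem:curve_L_0_infty}'' misreads that proof: $a(x_1)=0$ is precisely the case where neither $(-1,0)$ nor $(-1,1)$ belongs to $\cS$, and then $Y(x_1)=-\infty$. The paper explicitly allows this (``the integral defining $\omega_3$ starts at $-\infty$''). Your contour-deformation argument for analytic continuation in $t$ needs a separate word for this improper endpoint; as written it silently assumes $Y(x_1)$ is finite. Modulo these two points about $\omega_3$, the positivity argument is fine (it is what the paper does, expressed as an inequality $0<\text{ratio}<1$ after Legendre reduction) and the $\omega_1,\omega_2$ part of your D-finiteness argument is correct and matches the paper.
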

  \begin{proof}
 The periods $\om_i$ are  expressed in~\eqref{eq:expression_periods} as
elliptic integrals. Using the classical reduction to Legendre
forms~\cite[Sec.~22.7]{ww}, we can express them in terms of complete
and incomplete elliptic integrals of the first kind, defined
respectively, for $k\in (-1,1)$ and $v\in (-1,1)$, by
\[ %\beq\label{cei}
     K(k)=\int_0^1\frac{\text{d}y}{\sqrt{1-y^2}\sqrt{1-k^2y^2}}
\] %\eeq
and
\beq\label{def:F}
     F(v,k)=\int_0^v\frac{\text{d}y}{\sqrt{1-y^2}\sqrt{1-k^2y^2}}.
\eeq
Then we claim that:
\beq
\label{after_change_1}
     \om_1 = i \alpha \ K\left(\sqrt{\frac{(y_2-y_1)(y_4-y_3)}{(y_3-y_1)(y_4-y_2)}}\right),
\qquad \om_2 = \alpha\
K\left(\sqrt{\frac{(y_3-y_2)(y_4-y_1)}{(y_3-y_1)(y_4-y_2)}}\right)
\eeq
and
\beq\label{eq:new_o_3_F}
     \om_3 = \alpha\ 
     F\left(\sqrt{\frac{(y_1-Y(x_1))(y_4-y_2)}{(y_2-Y(x_1))(y_4-y_1)}},\sqrt{\frac{(y_3-y_2)(y_4-y_1)}{(y_3-y_1)(y_4-y_2)}}\right),
\eeq
where the prefactor $\alpha$  is an algebraic function of $t$, which
depends  on the degree ($3$
or $4$) of $\widetilde d(y)$, and of
its dominant coefficient $\widetilde d_3$ or $\widetilde d_4$:
\beq
\label{eq:def_alpha}
\alpha=\begin{cases}
 \displaystyle  \frac 2 {\sqrt{\widetilde d_3(y_1-y_3)}}& \hbox{if } \widetilde d_4=0,\\
 \displaystyle \frac 2 {\sqrt{\widetilde d_4(y_3-y_1)(y_4-y_2)}}& \hbox{otherwise}.
\end{cases}
\eeq
The dominant  coefficient  $\widetilde d_3$ or $\widetilde d_4$ is always of the
form $\varepsilon c t^2$, with $\vareps=\pm1$  and $c\in
\{1,3,4\}$. 
The sign $\vareps$ equals $+1$ if and only if $y_4$ is finite and
positive, so that $\alpha$ is always real and positive (see
Lemma~\ref{lem:branch_points} for the properties of the $y_i$'s).

To obtain the above expressions for the periods, one starts from their
original expressions in terms of $\widetilde d(y)$ (see~\eqref{eq:expression_periods}) and performs
the following change of variable in the integrand (for
$\om_1$, $\om_2$ and $\om_3$ respectively):
\[ 
z= \sqrt\frac{(y-y_1)(y_2-y_4)}{(y-y_4)(y_2-y_1)},\qquad
z= \sqrt\frac{(y-y_2)(y_3-y_1)}{(y-y_1)(y_3-y_2)},\qquad
z= \sqrt\frac{(y-y_1)(y_4-y_2)}{(y-y_2)(y_4-y_1)}.
\] 
The calculation is then straightforward.

If $\widetilde d_4=0$, that is, $y_4=\infty$, then the argument of $K$ in~\eqref{after_change_1} reduces to
$\sqrt{(y_2-y_1)/(y_3-y_1)}$ (resp.\ $\sqrt{(y_3-y_2)/(y_3-y_1)}$) in
the expression of %for
$\om_1$ (resp.\ $\om_2$). Similarly, the arguments of
$F$ in the expression~\eqref{eq:new_o_3_F} for
% of
$\om_3$ are replaced by their limits
as $y_4\rightarrow \infty$.
Observe that Lemma~\ref{lem:branch_points} implies that the ratios
\[ 
\frac{(y_2-y_1)(y_4-y_3)}{(y_3-y_1)(y_4-y_2)}\qquad \hbox{and} \qquad \frac{(y_3-y_2)(y_4-y_1)}{(y_3-y_1)(y_4-y_2)}
\] 
are positive. Since they sum to $1$, they both belong to $(0,1)$, so
that the values of $K$ are well defined in~\eqref{after_change_1}. A
similar argument, relying on Lemma~\ref{Lem:curve_L_0_infty}, proves that the first argument of $F$ in~\eqref{eq:new_o_3_F} lies
in $(0,1)$. The second argument already appears in~\eqref{after_change_1}.

\medskip
The function $K(k)$ has a convergent expansion of radius $1$ around
$k=0$:
\[ 
K(k)= \frac \pi 2 \sum_{n\ge 0} {2n\choose n}^2 \left( \frac k
    4\right)^{2n}.
\] 
It is D-finite, with differential equation
\[ {\displaystyle {\frac {\mathrm {d} }{\mathrm {d}
      k}}\left[k(1-k^{2}){\frac {\mathrm {d} K(k)}{\mathrm {d}
        k}}\right]=kK(k)}
.\] 
Its only singularities are at $\pm1$, and it has an analytic continuation {on}  
$\cs\setminus((-\infty, -1) \cup (1,
+\infty))$. By Lemma~\ref{lem:xi-holom}, the  arguments
involved in the expressions~\eqref{after_change_1} of %for
$\om_1$ and $\om_2$ still have modulus
less than 1 in some neighborhood  of $(0,1/\vert\cS\vert)$, where the
$\om_i$ are thus analytic. By Proposition~\ref{DF-alg}, these two
periods are also D-finite in $t$. 

Let us now return to the expression~\eqref{eq:new_o_3_F} for
% of
$\om_3$.
The function $F(v,k)$ has an expansion around $(0,0)$ that converges
absolutely for $\vert v\vert<1$ and $\vert k\vert<1$:
\[ 
F(v,k)=\sum_{m,n\ge0} {2m\choose m}{2n\choose n}
\frac{k^{2n}}{4^{m+n}}\cdot \frac{v^{2m+2n+1}}{2m+2n+1}.
\] 
It is D-finite in each of its two variables (as a bivariate series and
thus as a function). Indeed,
\[ 
(1-v^2)(1-k^2v^2) \frac{\partial ^2 F}{\partial v^2} 
= v(1+k^2-2k^ 2v^2) \frac{\partial  F}{\partial v} 
\] 
and
\[ 
3k^3v^2 F+ (13 k^4 v^2-2 k^2 v^2-4 k^2-1) \frac{\partial  F}{\partial k}  + k
(8 k^4 v^2-4 k^2 v^2-5 k^2+1) \frac{\partial ^2 F}{\partial k^2}  +k^2  (1-k^2)(1-k^2v^2)  \frac{\partial ^3 F}{\partial k^3} =0.
\] 
  Again, we conclude that $\om_3$ is D-finite in a
neighborhood of $(0, 1/\vert\cS\vert)$ by composition with algebraic functions.
  \end{proof}

 \begin{Lemma}\label{lem:Z-DF}
   The function $\cZ (y;t)$ defined by~\eqref{Z-def}
 for $t\in (0, 1/\vert\cS\vert)$ and $y \in \mathcal G_\mathcal
L$ can be analytically continued to a domain of $\cs^2$ in which it  is D-finite in $t$ and $y$. 
  \end{Lemma}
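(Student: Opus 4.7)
The plan is to reduce $\cZ(y;t)$ to the incomplete elliptic integral of the first kind $F(v,k)$ that already appeared in the proof of Lemma~\ref{lem:omega-DF}. Starting from the standard representation
$$\wp_{1,2}^{-1}(u)=\int_{u}^{\infty}\frac{ds}{\sqrt{4s^{3}-g_{2}s-g_{3}}}$$
(with a determination chosen so as to match the one fixed in~\eqref{eq:expression_gluing}) and performing the change of variables $s=f(y';t)$, a direct but tedious calculation, using the explicit form~\eqref{eq:def_f} of $f$ together with the way $g_{2}$ and $g_{3}$ are built from the branch points $y_{\ell}$, yields the identity
$$4\,f(y')^{3}-g_{2}(t)\,f(y')-g_{3}(t)\;=\;\widetilde d(y')\,f'(y')^{2}.$$
Consequently,
$$\wp_{1,2}^{-1}(f(y;t))\;=\;\pm\int_{y_{\ast}}^{y}\frac{dy'}{\sqrt{\widetilde d(y')}}\;+\;\mathrm{cst}(t),$$
where $y_{\ast}\in\{y_{1},\dots,y_{4}\}$ is the branch point corresponding to the chosen determination.

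Next, applying the same Legendre-type reduction as for $\omega_3$ in Lemma~\ref{lem:omega-DF}, namely a substitution of the form $z=\sqrt{(y'-y_a)(y_b-y_c)/((y'-y_c)(y_b-y_a))}$ with indices analogous to those of~\eqref{eq:new_o_3_F}, this integral becomes
$$\alpha(t)\,F\bigl(v(y;t),k(t)\bigr),$$
where $\alpha$ and $k$ are the algebraic functions of $t$ appearing in~\eqref{eq:def_alpha} and~\eqref{after_change_1}, and $v(y;t)$ is algebraic in both $y$ and $t$. By Lemma~\ref{lem:xi-holom}, all these algebraic ingredients extend analytically to a complex neighborhood $U$ of $(0,1/|\cS|)$. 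Since $F(v,k)$ is globally D-finite (explicit DEs in $v$ and in $k$ are recalled in the proof of Lemma~\ref{lem:omega-DF}), Proposition~\ref{DF-alg} yields that $\alpha(t)\,F(v(y;t),k(t))$ is D-finite in $y$ and in $t$. Adding the shift $-(\omega_{1}(t)+\omega_{2}(t))/2$, which is D-finite in $t$ by Lemma~\ref{lem:omega-DF} and trivially in $y$, and invoking Corollary~\ref{cor:closure}, I conclude that $\cZ(y;t)$ is D-finite in both $y$ and $t$.

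Analytic continuation to a bidomain $V\times U\subset\cs^{2}$ then follows from the analyticity of the $y_{\ell}$ (Lemma~\ref{lem:xi-holom}) together with the fact that $|v(y;t)|<1$ and $|k(t)|<1$ persist on a complex neighborhood of the real picture described in Lemma~\ref{Lem:curve_L_0_infty}, while $F$ is analytic on the open bidisk, the periods $\omega_{1}, \omega_{2}$ being analytic on $U$ by Lemma~\ref{lem:omega-DF}. The principal obstacle is the cubic identity $4f^{3}-g_{2}f-g_{3}=f'^{2}\widetilde d$: it encodes the fact that $f$ is the normalized uniformizer exchanging the Weierstrass cubic $4s^{3}-g_{2}s-g_{3}$ and the quartic (or cubic) $\widetilde d(y)$, and verifying it requires separate elementary calculations in the two cases $y_{4}$ finite and $y_{4}=\infty$, using the defining relations~\eqref{gG}--\eqref{G-def} of $g_{2},g_{3}$ together with~\eqref{eq:def_f}.
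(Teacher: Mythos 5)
Your proof follows essentially the same strategy as the paper: reduce $\cZ$ to an incomplete elliptic integral $F(v,k)$ with algebraic arguments by exploiting the Weierstrass uniformization of the quartic $\widetilde d$, then conclude by D-finiteness of $F$ together with closure under algebraic composition. The only difference in route is cosmetic --- you substitute the cubic identity into the integral representation of $\wp_{1,2}^{-1}$, whereas the paper differentiates $\wp_{1,2}^{-1}\circ f$ to derive the ODE $(\cZ')^2=1/(4\widetilde d)$ directly from the parametrization identity~\eqref{dtilde} --- and these two computations are logically equivalent. One small slip: your cubic identity should read $4f^3-g_2f-g_3=4\,\widetilde d\,(f')^2$, with an extra factor of $4$ on the right (since $(\wp'_{1,2})^2=4\wp_{1,2}^3-g_2\wp_{1,2}-g_3$ and $\widetilde d(y)=\wp'_{1,2}(z)^2/(4f'(y)^2)$); this constant does not affect D-finiteness, and in fact the paper's displayed prefactor $1/4$ in the integral for $\cZ$ ought itself to be $1/2$.
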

  
  \begin{proof}
 In order to understand the nature of $\cZ$, we need to go back
   to the parametrization of the curve $K(x,y)=0$ by the function
   $\wp_{1,2}$. Let us first assume that $y_4$ is finite. Then $\wp_{1,2}$ has been constructed in such a
   way that, for any $z$, the pair $(x,y)$ defined by
\begin{align}
\label{eq:expression_unif}
y  &= y_4 + \dfrac{\widetilde d'(y_4)}{\wp_{1,2}(z) - \frac{1}{6} \widetilde d''(y_4)},
\\
2\widetilde a(y)x+\widetilde b(y)  &= \dfrac{\widetilde d'(y_4) \wp_{1,2}'(z)}{2(\wp_{1,2}(z) -
\frac{1}{6} \widetilde d''(y_4))^2}, \nonumber
\end{align} 
satisfies $K(x,y)=0$ (see~\cite[Lem.~3.3.1]{FIM-99} in the probabilistic setting). In other words, if $f(y)=\wp_{1,2}(z)$, with $f$
defined by~\eqref{eq:def_f}, then~\eqref{eq:expression_unif}
  holds and
\beq\label{dtilde}
\widetilde d(y) = \dfrac{(\widetilde d'(y_4) \wp_{1,2}'(z))^2}{4(\wp_{1,2}(z) -
\frac{1}{6} \widetilde d''(y_4))^4}.
\eeq
The identity $f(y)=\wp_{1,2}(z)$ holds in particular for $z=\wp_{1,2}^{-1}( f(y))=\cZ(y;t)+
(\om_1+\om_2)/2$.

Let us now differentiate $\cZ$ with respect to $y$:
\[ 
\cZ'(y) =\frac  {f'(y)}{ \wp_{1,2}'\circ \wp_{1,2}^{-1}( f(y))}=\frac  {f'(y)}{ \wp_{1,2}'(z)}.
\] 
 Upon squaring this identity, and using
first~\eqref{dtilde}, and then~\eqref{eq:expression_unif}, we obtain
\beq\label{DE-Z}
 \left(\cZ'(y)\right)^2= \frac
  {(f'(y))^2 }{4\widetilde d(y)}\dfrac{(\widetilde d'(y_4) )^2}{(\wp_{1,2}(z) -
\frac{1}{6} \widetilde d''(y_4))^4}= \frac
  {\left(f'(y)\right)^2 }{4\widetilde d(y)}\frac{(y-y_4)^4}{\widetilde
    d'(y_4)^2}=  \frac
 1{4\widetilde d(y)}
\eeq
by definition of $f$. If $y_4$ is infinite 
(that is, if  $\widetilde d_4=0$),
then the parametrization of $K(x,y)=0$ is
\begin{align*}
  y &= \frac{\wp_{1,2}(z)-\widetilde d''(0)/6}{\widetilde d'''(0)/6},
\\
2\widetilde a(y)x+\widetilde b(y)  &= - \frac{3 \wp'_{1,2}(z)}{\widetilde d'''(0)},
\end{align*}
and the identity  $\left(\cZ'(y)\right)^2=1/(4\widetilde d'(y))$ still holds.

 Another property of the parametrization of the kernel by $\wp_{1,2}$ is 
 that $f(y_2)= \wp_{1,2}(\frac{\om_1+\om_2}2)$ (see~\cite{Ra-12},
 below (18), recalling that we have swapped the roles of $x$ and $y$). Hence, given our
 convention in the definition of $\wp_{1,2}^{-1}$ in Section~\ref{subsec:weak_invariants}, we have 
\[ 
\cZ(y_2;t)=0.
\] 

Finally, recall that $\widetilde d(y)$ is real and positive for $y \in (y_2,
y_3)$ (Lemma~\ref{lem:branch_points}). Hence,  for $y \in \GL \cap
[y_2, y_3]$, it follows from~\eqref{DE-Z} that
\[ 
\cZ(y)=- \frac 1 4 \int_{y_2}^y \frac {\textnormal{d}u}{\sqrt{\widetilde d (u)}}.
\] 
(The minus sign comes again from the determination of $\wp_{1,2}^{-1}$ that
we have chosen, which has real part in $[0, \om_2/2]$. Hence the real
part of $\cZ(y;t)$ is non-positive.)
This integral can be expressed in terms of the incomplete elliptic
function $F(v,k)$ defined by~\eqref{def:F}, as we did for the period
$\om_3$ in the proof of Lemma~\ref{lem:omega-DF}:
\[ 
\cZ(y)= - \frac{1 }{\alpha} F\left(
  \sqrt{\frac{(y-y_2)(y_3-y_1)}{(y-y_1)(y_3-y_2)}},
  \sqrt{\frac{(y_3-y_2)(y_4-y_1)}{(y_3-y_1)(y_4-y_2)}}\right),
\] 
where the prefactor $\alpha$ is given by~\eqref{eq:def_alpha}
and the second argument of $F$ is
$\sqrt{\frac{y_3-y_2}{y_3-y_1}}$ if $y_4$ is infinite. Since $F$ is
D-finite, and its arguments are algebraic in $t$, we conclude
once again by a composition argument.
  \end{proof}

%===============================================
\subsection{The \gf\ $\boldsymbol{Q(x,y;t)}$ of decoupled quadrant walks}
%==================================================

We now return to  the $9$ models with an infinite group for which we
have obtained a rational expression {for} %of
$Q(0,y;t)$ in terms of the weak invariant $w(y;t)$ (Theorem~\ref{thm:9models}). We want to prove that the series
$Q(x,y;t)$ is D-algebraic (in $t, x$ and $y$)  for each of them, as
claimed in Theorem~\ref{thm:DA}.

Let us first prove that $Q(0,y;t)$ is DA. Theorem~\ref{thm:9models}
gives an expression {for} %of
$S(y;t):=K(0,y;t) Q(0,y;t)$ in terms of the
weak invariant $w(y;t)$, valid for $t \in (0, 1/\vert\cS\vert)$ and $y$ in $\GL$. By
Theorem~\ref{prop:w-DA}, the weak invariant has an analytic continuation on a
complex domain, where it is DA. The closure properties of
Propositions~\ref{cor:closure} and~\ref{prop:closure-comp}  
then imply that $S(y;t)$ is also DA, first as a
meromorphic function of $y$ and $t$, then as a series in these variables. The same
then  holds for $Q(0,y;t)$.

Let us now go back to $Q(x,0;t)$, using 
\[ 
R(x):=K(x,0;t) Q(x,0;t)  =xY_0(x;t)+S(0;t)-S(Y_0(x;t);t),
\] 
where $Y_0(x;t)$ is the root of the kernel that is a power
series in $t$ (with coefficients in $\qs[x, \bx]$).
Again, we conclude that $Q(x,0;t)$ is DA using the closure properties
of Propositions~\ref{cor:closure} and~\ref{prop:closure-comp} (since
$Y_0$ is a series in $t$ with coefficients in $\qs[x, \bx]$, this
is where we take $\qs(x)((t))$ as our differential field, as discussed
at the beginning of Section~\ref{sec:genDA}).

A final application of Proposition~\ref{cor:closure}, applied to the
main functional equation~\eqref{eq:functional_equation}, leads us to conclude that $Q(x,y;t)$
is DA as a three-variate series. \qed

%==============================================================
\subsection{Explicit differential equations in $\boldsymbol y$}
\label{sec:diff-explicit}
%==============================================================
We now explain how to construct, for the $9$ models of Table~\ref{tab:decoupling_functions-infinite}, 
an explicit DE in $y$ satisfied by the series $Q(0,y)\equiv
Q(0,y;t)$. This DE has polynomial coefficients in $t$ and
$y$. Depending on the model, the order of this DE ranges from $3$ to
$5$, {and the (total) degree in $Q(0,y)$ and its $y$-derivatives {ranges} from
$2$ to $5$}. We do not claim that it is minimal. The $9$ DEs thus obtained
have been checked numerically by expanding $Q(0,y)$ in $t$ up to order~$30$. The corresponding {\sc Maple} session is available on the 
authors' webpages. The construction of  explicit DEs in $t$ seems more
difficult, as discussed later in Section~\ref{sec:DE-t}.

We start from the expression {for} %of
$S(y)=K(0,y)Q(0,y)$ given by
Theorem~\ref{thm:9models}, which can be written as:
\beq\label{Qw}
K(0,y)Q(0,y)-G(y)= \frac{\alpha}{w(y)-\beta}+ \gamma,
\eeq
for $\alpha$, $\beta$ and $\gamma$ depending on $t$ only.
The weak invariant satisfies a first order DE, derived
  from~\eqref{diff-p} and~\eqref{DE-Z}:
\beq\label{ED-w}
4\, \widetilde d(y) \left( w'(y)\right)^2 = 4 w(y)^3 -g_2w(y) -g_3.
 \eeq
Here, $g_2\equiv g_2(\om_1,\om_3)$ and $g_3\equiv g_3(\om_1,\om_3)$ depend (only)
on $t$.

Upon solving~\eqref{Qw} for $w(y)$,~\eqref{ED-w}  gives a first order DE for
$Q(0,y)$, the coefficients of which are polynomials in $t, y, \alpha,
\beta, \gamma, g_2$ and $g_3$. By expanding this DE around $y=0$, we
obtain algebraic relations between the 5 unknown series $\alpha,
\beta, \gamma, g_2, g_3$ and  the  series
$Q_{0,i}:=\frac 1 {i!}\, \partial^
iQ/\partial y^i(0,0)$ that count walks ending at $(0,i)$, for $0 \leq i\leq m-1$ (where $m$ depends on
the model).   We then eliminate $\alpha,
\beta, \gamma, g_2, g_3$  to obtain a DE in $y$ that only involves $Q(0,y)$
and the $Q_{0,i}$, for $0 \leq i\leq m-1$. For instance, for model \#4, we
obtain a DE with coefficients in $\qs[t,y, Q_{0,0},
Q_{0,1}]$ (hence $m=2$), while for model  \#6, the first $4$ series
$Q_{0,i}$ are involved (hence $m=4$). Note that this DE
\emm is,
informative: expanding it further around $y=0$ allows one to relate the series
$Q_{0,i}$ for $i\ge m$ to those with smaller index. For instance, for
model \#4 we find:
\[ 
6t^2Q_{0,2}=-2{t}^{3}\left({ Q_{0,0}}\right)^{2}-4{t}^{2}\,{ Q_{0,1}}+3t\,{
  Q_{0,0}}+{ Q_{0,1}}-4t.
\] 
Two remarks are in order, regarding models \#5 and \#9. For model \#5,  the
decoupling function $G(y)$ is singular at $y=-1$ (rather than $y=0$ for
the other models), which leads us to write the equation in terms of the
$y$-derivatives of $Q(0,y)$ at $y=-1$ rather than $y=0$. 
For model~\#9, a simplification occurs, since
$Q_{0,0}=1+tQ_{0,1}$ (due to the choice of steps), and only two derivatives of $Q(0,y)$,
namely $Q_{0,1}$ and $Q_{0,2}$, occur in the equation.

At this stage, we can proceed as described below Definition~\ref{def:DA} to
eliminate from the equation the series $Q_{0,i}$ (or $\partial^
iQ/\partial y^i(0,-1)$ for model \#5). If $m$ of them actually occur, then the order
of the final DE (with coefficients in $\qs[y,t]$) will be $m+1$. For
model \#4 for instance, for which $m=2$, we find the following third order DE:
\begin{multline*}
  y ( {t}^{2}{y}^{3}-4{t}^{2}y-2t{y}^{2}-4{t}^{2}+y ) {
\frac {{\rm d}^{3}Q}{{\rm d}{y}^{3}}} ( 0,y  ) + ( 9{
t}^{2}{y}^{3}-24{t}^{2}y-15t{y}^{2}-18{t}^{2}+6y ) {
\frac {{\rm d}^{2}Q}{{\rm d}{y}^{2}}}  ( 0,y  ) \\
- 6\left( 2
{t}^{3}yQ  ( 0,y  ) -  ( ty+2t-1  )   ( t
y-2t-1  )  \right) {\frac {\rm d Q}{{\rm d}y}}  ( 0,y
  ) -12{t}^{3}  Q  ( 0,y  )  ^{2}-6t
 ( 5ty-3 ) Q  ( 0,y  ) =24t.
\end{multline*}
Needless to say, we have no combinatorial understanding of this identity. 
The orders and degrees of the DE obtained for the 9 decoupled models are
as follows:

\smallskip
\begin{center}
  \begin{tabular}{c|ccccccccc}
    model&    \#1&\#2&\#3&\#4&\#5&\#6&\#7&\#8&\#9\\
    \hline
    order & 4 & 4 & 4 & 3 & 4 & 5 & 4 & 4 & 3\\
    degree& 3 & 3 & 4 & 2 & 4 & 5 & 3 & 4 & 2
  \end{tabular}
\end{center}
\bigskip

%%%%%%%%%%%%%%%%%%%%%%%%%%%%%%%%%%%%%%%%%%%%%%%%%%%%%%%%%%%%%%%%%%%%%%%%%%%%%%
\section{Decoupling functions for other starting points}
\label{sec:start}
%%%%%%%%%%%%%%%%%%%%%%%%%%%%%%%%%%%%%%%%%%%%%%%%%%%%%%%%%%%%%%%%%%%%%%%%%%%%%%

We have proved in the previous sections that when  the function $xy$
is decoupled (in the sense of Definition~\ref{def:dec-gen}),  the
nature of the
series $Q(x,y;t)$ that counts quadrant walks starting at~$(0,0)$ tends
to be  simpler: algebraic when the group $\cG(\cS)$ is finite,
D-algebraic otherwise.  In this section, we explore the existence of decoupling
functions for other starting points. We expect similar implications in
terms of the nature of the associated \gf\ (but we have not worked
this out). 
Remarkably, we find that some infinite group models that are \emm  not,
decoupled for walks starting at $(0,0)$ are still decoupled for other
starting points --- and we thus expect the associated \gf\ to be D-algebraic.

\medskip
For a given model $\cS$, and  $a,b\in \ns$, we denote by
$q_{a,b}(i,j;n)$ the number of walks in $\ns^2$ with steps in $\cS$
starting at $(a,b)$ and ending at $(i,j)$. We define the \gf\ of walks
starting at $(a,b)$ by:
\[ 
     Q_{a,b}(x,y) \equiv Q^\cS_{a,b}(x,y;t)=\sum_{i,j,n\geq 0} q_{a,b}(i,j;n) x^iy^jt^n.
\] 
This series satisfies the following generalization of~\eqref{eq:functional_equation}:
\[  
     K(x,y) Q_{a,b}(x,y) = K(x,0) Q_{a,b}(x,0) + K(0,y)
     Q_{a,b}(0,y)-K(0,0) Q_{a,b}(0,0)- x^{a+1}y^{b+1}.
\] 
This leads us  to ask  for
which models $\cS$ and which values of $a$ and $b$ the function
$H(x,y):=x^{a+1}y^{b+1}$ is decoupled.

We first give a complete answer in  the finite group
case (Proposition~\ref{thm:starting-points-finite}). Then we give what we believe to be the complete list of
decoupled cases for infinite groups
(Proposition~\ref{thm:starting-points-infinite}). We conclude in
Proposition~\ref{thm:starting-points-weighted} with the 4 weighted
models of Figure~\ref{fig:alg_models}.

\medskip
\noindent {\bf{Remarks}}\\
{\bf 1.} Clearly, if a model $\cS$ with starting point $(a,b)$ is
decoupled, then the model obtained after reflection in the first
diagonal is decoupled for $(b,a)$. Hence the ``complete answer'' and
``complete list'' mentioned above are complete up to diagonal
symmetry.\\
{\bf 2.} If for some model $\cS$ the series $Q^\cS_{a,b}(x,y)$  is
  algebraic (resp.\ D-algebraic), then for all $(c,d)\in \ns^2$, the
  coefficient of $x^cy^d$ in this series is also  (D-)algebraic. This series counts quadrant walks with steps in
  $\cS$ going from $(a,b)$ to $(c,d)$, or, upon reversing steps,
  quadrant walks from $(c,d)$ to $(a,b)$ with steps in $\overline
  \cS:=\{(-i,-j): (i,j)\in \cS\}$. This means that the coefficient of
  $x^ay^b$ in  $Q^{\overline\cS}_{c,d}(x,y)$ is  (D-)algebraic for all $c,d$.  For instance,
 for each  model $\cS$ of Table~\ref{tab:decoupling_functions-finite} (resp.~\ref{tab:decoupling_functions-infinite}), and
 each starting point $(c,d)$, the series $Q^{\overline
   \cS}_{c,d}(0,0)$ is
 algebraic (resp.~D-algebraic). But what we have in mind in this
 section is the (D-)algebraicity of the three-variate series $Q^{\overline
   \cS}_{c,d}(x,y)$.

%===============================
\subsection{Models with a finite group}
%===============================
\begin{Proposition}\label{thm:starting-points-finite}
Let $\cS$ be one of the $23$ unweighted models with a finite group,
listed in~\cite[Tables~1--3]{BMM-10}. Let $H(x,y):=x^{a+1}y^{b+1}$, with
$(a,b)\in \ns^2$.
\begin{enumerate}[label={{\rm(\arabic{*})}},ref={{\rm(\arabic{*})}}]%{compactitem}
\item\label{it:1}If $\cS$ is none
of the models of
  Figure~\ref{fig:alg_models} (the Kreweras trilogy and
  Gessel's model), then  $H(x,y)$ is
  not decoupled.
\item\label{it:2}If  $\cS$ belongs to the Kreweras trilogy, then
  $H(x,y)$ is decoupled if and only if   $a=b$.
\item\label{it:3}If $\cS$ is Gessel's model, then
  $H(x,y)$ is decoupled if and only if  either $a=b$ or $a=2b+1$.
\end{enumerate}%compactitem}
\end{Proposition}

\begin{proof}
Recall from Theorem~\ref{Thm:decoupling_orbit-sum} that  $H(x,y)$ is
decoupled if and only if $H_\al(x,y)=0$, where $\al=\sum_{\gamma\in\cG(\cS)}\sign(\gamma)\gamma$. We refer to~\cite[Tables~1--3]{BMM-10}
for the explicit description of the group $\cG(\cS)$.
We will  use the following notation: for a Laurent polynomial $P$ in a
variable $z$, we denote by $[z^>]P$ (resp.\ $[z^<]P$) the sum of
monomials of positive (resp.\ negative) exponents in $z$. We call it
the \emm positive (resp.\ negative) part, of $P$ in $z$.
 
Let us first consider  Gessel's model. The group $\cG(\cS)$ has order 8 and 
\begin{multline*}
  H_\al(x,y)={H}(x,y)-{H}\left({\bx
    \by},y\right)+{H}\left({\bx\by},x^2y\right)-{H}\left({\bx},x^2y\right)\\
+H(\bx,\by)-H(xy, \by)+H(xy,\bx^2 \by)-H(x, \bx^2\by),
\end{multline*}

It is easy to check that if $a=b$ or $a=2b+1$, then $H_\al(x,y)=0$. 
 Conversely,
\begin{compactitem}
\item if  $a< b$, then $[x^>][y^<]H_\al(x,y)=-x^{a+1}y^{a-b}\neq 0$,
\item  if $b< a<2b+1$, then $[x^>][y^<]H_\al(x,y)=x^{2b-a+1}y^{b-a}\neq 0$,
\item  and $2b+1< a$, then
  $[x^>][y^<]H_\al(x,y)=-x^{a-2b-1}y^{-b-1}\neq 0$, 
\end{compactitem}
%so
hence $H(x,y)$ is not decoupled. This proves Claim~\ref{it:3}.

Claims~\ref{it:1} and~\ref{it:2} are proved in a similar fashion. For
  instance, for the $16$ models  having
a vertical symmetry, 
\[ 
H_\al(x,y)=H(x,y)-H\left({\bx},y\right)+H\left({\bx},\by\,
  \frac{c(x)}{a(x)}\right)-H\left(x,\by\, \frac{c(x)}{ a(x)}\right),
\] 
where as before $a(x)=[y^2]K(x,y)$ and $c(x)=[y^0]K(x,y)$.
Thus $H_\al(x,y)$ is a Laurent polynomial in $y$, with positive part
$H(x,y)-H\left({\bx},y\right)$, and finally,
\[ 
[x^>][y^>]H_\al(x,y) =x^{a+1}y^{b+1}\neq 0,
\]   
showing that $H(x,y)$ is never decoupled.

\medskip
One can also give explicit decoupling functions for the four algebraic
models: upon generalizing Lemma~\ref{lem:dec-weak} to the function
$H(x,y)=x^{a+1}y^{a+1}$, we can check that all four models  admit
$F(x)=-x^{-a-1}$ as $x$-decoupling function. 
Similarly,  for Gessel's model with starting point $(2b+1,b)$, a
$y$-decoupling function is $G(y)=-y^{-b-1}$. 
\end{proof}

\begin{table}[ht]
\begin{center}
% \small
\begin{tabular}{|c|c|c|c||c|c|c|c|}
\hline
\begin{tikzpicture}[scale=.4] % q6654
    \draw[->] (0,0) -- (1,1);
    \draw[->] (0,0) -- (-1,0);
    \draw[->] (0,0) -- (0,-1);
        \draw[-] (0,-1) -- (0,-1) node[below] {\phantom{$\scriptstyle 1$}};
  \end{tikzpicture} &  \begin{tikzpicture}[scale=.4] % q6654
    \draw[->] (0,0) -- (0,1);
    \draw[->] (0,0) -- (-1,-1);
    \draw[->] (0,0) -- (1,0);
    \draw[-] (0,-1) -- (0,-1) node[below] {\phantom{$\scriptstyle 1$}};
  \end{tikzpicture} &     \begin{tikzpicture}[scale=.4] % q6654
      \draw[->] (0,0) -- (0,-1);
    \draw[->] (0,0) -- (1,1);
    \draw[->] (0,0) -- (-1,0);
    \draw[->] (0,0) -- (0,1);
    \draw[->] (0,0) -- (-1,-1);
    \draw[->] (0,0) -- (1,0);
    \draw[-] (0,-1) -- (0,-1) node[below] {\phantom{$\scriptstyle 1$}};
  \end{tikzpicture} &   \begin{tikzpicture}[scale=.4] % q6654
      \draw[->] (0,0) -- (-1,-1);
    \draw[->] (0,0) -- (1,1);
    \draw[->] (0,0) -- (-1,0);
    \draw[->] (0,0) -- (1,1);
    \draw[->] (0,0) -- (1,0);
    \draw[-] (0,-1) -- (0,-1) node[below] {\phantom{$\scriptstyle 1$}};
  \end{tikzpicture} & \begin{tikzpicture}[scale=.3] % q6654
    \draw[->] (0,0) -- (-1,0) node[left] {$\scriptstyle 1$};
    \draw[->] (0,0) -- (-1,-1) node[left] {$\scriptstyle 1$};
    \draw[->] (0,0) -- (0,-1) node[below] {$\scriptstyle \lambda$};
    \draw[->] (0,0) -- (1,-1) node[right] {$\scriptstyle 1$};
    \draw[->] (0,0) -- (1,0) node[right] {$\scriptstyle 2$};
    \draw[->] (0,0) -- (1,1) node[right] {$\scriptstyle 1$};
  \end{tikzpicture} & \begin{tikzpicture}[scale=.3] % q6654
    \draw[->] (0,0) -- (-1,0) node[left] {$\scriptstyle 1$};
    \draw[->] (0,0) -- (-1,1) node[left] {$\scriptstyle 1$};
    \draw[->] (0,0) -- (0,1) node[above] {$\scriptstyle 2$};
    \draw[->] (0,0) -- (1,1) node[right] {$\scriptstyle 1$};
    \draw[->] (0,0) -- (1,0) node[right] {$\scriptstyle 2$};
    \draw[->] (0,0) -- (1,-1) node[right] {$\scriptstyle 1$};
    \draw[->] (0,0) -- (0,-1) node[below] {$\scriptstyle 1$};
   \end{tikzpicture}  &
  \begin{tikzpicture}[scale=.3] % q6654
    \draw[->] (0,0) -- (-1,0) node[left] {$\scriptstyle 2$};
    \draw[->] (0,0) -- (-1,1) node[left] {$\scriptstyle 1$};
    \draw[->] (0,0) -- (0,1) node[above] {$\scriptstyle 1$};
    \draw[->] (0,0) -- (-1,-1) node[left] {$\scriptstyle 1$};
    \draw[->] (0,0) -- (1,0) node[right] {$\scriptstyle 1$};
    \draw[->] (0,0) -- (1,-1) node[right] {$\scriptstyle 1$};
    \draw[->] (0,0) -- (0,-1) node[below] {$\scriptstyle 2$};
  \end{tikzpicture} & \begin{tikzpicture}[scale=.3] % q6654
    \draw[->] (0,0) -- (-1,0) node[left] {$\scriptstyle 2$};
    \draw[->] (0,0) -- (-1,1) node[left] {$\scriptstyle 1$};
    \draw[->] (0,0) -- (0,1) node[above] {$\scriptstyle 2$};
    \draw[->] (0,0) -- (1,1) node[right] {$\scriptstyle 1$};
    \draw[->] (0,0) -- (1,0) node[right] {$\scriptstyle 1$};
    \draw[->] (0,0) -- (0,-1) node[below] {$\scriptstyle 1$};
        \draw[->] (0,0) -- (-1,-1) node[left] {$\scriptstyle 1$};
  \end{tikzpicture}  \\
\hline
$(a,a)$ &$(a,a)$ &$(a,a)$ &$(a,a)$ & $(0,0)$ & $(a,a)$ &$(a,a)$ &$(0,0)$ \\
&&& $(2b+1,b) $ &&&& $(1,0)$\\
\hline
\end{tabular}
\end{center}
\medskip
  \caption{Exhaustive list of decoupled cases among models
    with a finite group (left) and among the 4 weighted models of
    Figure~\ref{fig:alg_models}.}
  \label{tab:dec-ab-finite}
\end{table}

\noindent{\bf Remarks}\\
{\bf 1.}  We recall from~\cite[Prop.~8]{BMM-10} that the $19$ models  that never decouple (case~\ref{it:1} above) can be
solved by extracting the positive part (in $x$ and $y$) in the
alternating sum $\widetilde Q_\alpha(x,y)$, where
$\widetilde Q(x,y)=xyQ(x,y)$. Indeed, this positive part turns out to be simply
$xyQ(x,y)$. This property is closely related to the above extraction
procedure, and to the non-existence of decoupling functions.
\\
{\bf 2.}  Given a step set $\cS$,  one can also ask whether a linear combination
 $\sum_{a,b} c_{a,b} Q_{a,b}(x,y)$  is (D-)algebraic. This makes sense for instance in a
probabilistic setting, where the $c_{a,b}$'s  describe  an initial
law for the starting point.
  Again, we expect
this to be equivalent to the polynomial $H(x,y):=\sum _{a,b}c_{a,b} x^{a+1}y
^{b+1}$ being decoupled.
We can extend the proof of
Proposition~\ref{thm:starting-points-finite} to study this
question.  If $\cS$ is one of the $19$ models listed
in~\ref{it:1}, then $H(x,y)$  is never decoupled.
If $\cS$ is one of the Kreweras-like models, then $H(x,y)$ is  decoupled if and only if $c_{a,b}=c_{b,a}$ for all
$(a,b)$.  For instance, we expect $Q_{0,1}+Q_{1,0}$ to be
algebraic. The condition is a bit more complex in Gessel's case.
\\
{\bf 3.} As discussed above, the existence of a decoupling function
  for a finite group model does not imply algebraicity in a completely
  automatic fashion, and further work is required to prove it.  We have done this for Kreweras' walks
    starting anywhere on the diagonal: the associated \gf, which
    involves one more variable recording the position of the starting
    point, is indeed still algebraic.

%====================================================
\subsection{Models with an infinite group}
%====================================================
We now address models with an infinite group, and exhibit decoupling
functions in a number of cases.  Remarkably, we find that three models
that are \emm not, decoupled for walks starting at~$(0,0)$ still admit
decoupling functions {for other starting points}. This contrasts with
the finite group case.

\begin{Proposition}\label{thm:starting-points-infinite}
Let $\cS$ be one of  the $12$ models with an infinite group shown in
Table~\ref{tab:dec-ab}. Then  the function $x^{a+1}y^{b+1}$ is
decoupled for the values of $(a,b)$ shown in the corresponding column.
\end{Proposition}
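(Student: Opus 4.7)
My plan is to prove the proposition by explicit construction, case by case. For each of the $12$ models $\cS$ in Table~\ref{tab:dec-ab} and each pair $(a,b)$ listed in the corresponding column, the goal is to exhibit rational functions $F(x)\in\qs(x,t)$ and $G(y)\in\qs(y,t)$ such that the numerator of $F(x)+G(y)-x^{a+1}y^{b+1}$, written as an irreducible fraction, contains the factor $K(x,y)$. By Definition~\ref{def:decoupled} (extended as in Definition~\ref{def:dec-gen}), this is exactly the required decoupling. Once the pair is proposed, the divisibility is a polynomial identity in $\qs[x,y,t]$ that can be verified mechanically; I would perform this check in a computer algebra system and record it in a companion Maple session, as the authors did for Theorem~\ref{thm:identification_decoupling}.

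To \emph{find} these pairs I would follow the experimental method of Section~\ref{subsec:decoupling_functions}. Starting from the decoupling pair $(F_0,G_0)$ of the base case $(a,b)=(0,0)$ recorded in Table~\ref{tab:decoupling_functions-infinite} (whenever that model belongs to the infinite-group list) or, for the three extra models that are not decoupled at $(0,0)$, starting from a freshly prescribed partial-fraction ansatz
\[
G(y)=\sum_{i=-(b+1)}^{d}a_i\,y^i+\sum_{i=1}^{m}\sum_{e=1}^{d_i}\frac{\alpha_{i,e}}{(y-r_i)^e},
\]
with candidate poles $r_i$ among the roots of $\widetilde a(y)$, $\widetilde c(y)$ and factors of the kernel at $x=0$, I would compute the symmetric counterpart $F(x)=\tfrac12(x^{a+1}Y_0^{b+1}-G(Y_0)+x^{a+1}Y_1^{b+1}-G(Y_1))$ (cf.~\eqref{F-G}) and impose divisibility of $F(x)+G(y)-x^{a+1}y^{b+1}$ by $K(x,y)$. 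This yields a linear system in the unknown coefficients $a_i, \alpha_{i,e}$, whose solvability is the decoupling criterion.

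For the families parametrized by an integer (such as the diagonal starting points $(a,a)$ appearing in several columns), I would aim for a uniform formula, proved by induction on $a$. The natural ansatz, motivated by the finite-group solution in Proposition~\ref{thm:starting-points-finite}, is
\[
F(x)=-x^{-a-1}+\text{(model-dependent correction)},\qquad G(y)=G_0(y)+\text{terms of degree up to }a,
\]
and the inductive step would combine the $(a,a)$-pair with the known $(0,0)$-pair by multiplying the decoupling identity $F_0(x)+G_0(y)-xy\equiv 0\pmod{K(x,y)}$ by $(xy)^a$ and rearranging so that the product is split into a function of $x$ plus a function of $y$ modulo $K$. The existence of a pole-propagation obstruction (Lemma~\ref{lem:finite-orbit}) would explain \emph{why} only diagonal or near-diagonal starting points survive: the $x$-orbit of the candidate pole $0$ of $G$ must remain finite, which forces the balance between the exponents $a$ and $b$.

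The main obstacle is that, in contrast to Theorem~\ref{Thm:decoupling_orbit-sum}, there is no orbit-sum criterion available in the infinite-group regime, so establishing \emph{existence} requires a genuine construction rather than the vanishing of an invariant. Finding the correct pole structure of $G$ for each $(a,b)$ is the delicate part, and may require enlarging the ansatz when the naive choice $G(y)=-y^{-b-1}$ fails; the corresponding corrections come from the extra decoupling functions arising in models $\#5$ and $\#9$ of Table~\ref{tab:decoupling_functions-infinite}, whose poles at $y=-1$ or $y=0$ must be accounted for with the right multiplicity. Once the pairs are produced, the proof itself is a finite verification.
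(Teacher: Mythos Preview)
Your overall strategy --- exhibit explicit rational $F(x),G(y)$ for each entry of the table and verify divisibility of $F(x)+G(y)-x^{a+1}y^{b+1}$ by $K(x,y)$ --- is exactly what the paper does, and for the sporadic starting points $(0,1),(1,0),(1,3),(1,1)$ this matches the paper's proof verbatim: one just writes down the correct $F$ (or $G$) and checks.

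Where your plan diverges is the treatment of the infinite families $(a,a)$ (models \#1, \#3, \#5, \#6). Your proposed inductive step --- multiply the $(0,0)$ identity $F_0(x)+G_0(y)\equiv xy\pmod K$ by $(xy)^a$ and then ``rearrange so that the product is split into a function of $x$ plus a function of $y$ modulo $K$'' --- is circular: that rearrangement \emph{is} the decoupling problem for $x^{a+1}y^{a+1}$, so the induction as written proves nothing. The paper avoids induction entirely by a one-line structural observation: for models \#3 and \#5 one has $X_0X_1=\widetilde c(y)/\widetilde a(y)=1/y$, whence
\[
H(X_0,y)-H(X_1,y)=(X_0^{a+1}-X_1^{a+1})y^{a+1}=\frac{X_0^{a+1}-X_1^{a+1}}{(X_0X_1)^{a+1}}=\frac{1}{X_1^{a+1}}-\frac{1}{X_0^{a+1}},
\]
so $F(x)=-x^{-a-1}$ is an $x$-decoupling function for every $a$ at once; the symmetric identity $Y_0Y_1=1/x$ handles models \#1 and \#6 with $G(y)=-y^{-a-1}$. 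Your ansatz $F(x)=-x^{-a-1}+\text{(correction)}$ is the right guess (with zero correction), but the mechanism that makes it work --- the product-of-roots formula --- is the missing ingredient in your sketch. Once you have that observation, no ansatz search, no correction terms, and no induction are needed for these four columns.
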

Based on an
  (incomplete) argument and a systematic search (for $a,b \le 10$), we believe
  these values of $(\cS,a,b)$ to be the only decoupled cases (for
  infinite groups).

\renewcommand\Tstrut{\rule{0pt}{9.0ex}}         % = `top' strut
 \begin{table}[h!]
% \small
\begin{center}
\begin{tabular}{|c|c|c|c|c|c|c|c|c||c|c|c|}
\hline
%&&&&&&\\
 \begin{tikzpicture}[scale=.4] % q6654
    \draw[->] (0,0) -- (0,1);
    \draw[->] (0,0) -- (1,0);
    \draw[->] (0,0) -- (-1,0);
    \draw[->] (0,0) -- (-1,-1);
    \draw[-] (0,-1) -- (0,-1) node[below] {$\scriptstyle \# 1$};
  \end{tikzpicture} &   \begin{tikzpicture}[scale=.4] % q6654
    \draw[->] (0,0) -- (0,1);
    \draw[->] (0,0) -- (1,0);
    \draw[->] (0,0) -- (-1,1);
    \draw[->] (0,0) -- (-1,-1);
    \draw[-] (0,-1) -- (0,-1) node[below] {$\scriptstyle \# 2$};
  \end{tikzpicture} &     \begin{tikzpicture}[scale=.4] % q6654
    \draw[->] (0,0) -- (0,1);
    \draw[->] (0,0) -- (1,1);
    \draw[->] (0,0) -- (0,-1);
    \draw[->] (0,0) -- (-1,0);
    \draw[-] (0,-1) -- (0,-1) node[below] {$\scriptstyle \# 3$};
  \end{tikzpicture} & \begin{tikzpicture}[scale=.4] % q6654
    \draw[->] (0,0) -- (0,1);
    \draw[->] (0,0) -- (1,0);
    \draw[->] (0,0) -- (1,-1);
    \draw[->] (0,0) -- (-1,0);
    \draw[-] (0,-1) -- (0,-1) node[below] {$\scriptstyle \# 4$};
  \end{tikzpicture} &     \begin{tikzpicture}[scale=.4] % q6654
    \draw[->] (0,0) -- (0,1);
    \draw[->] (0,0) -- (1,0);
    \draw[->] (0,0) -- (1,1);
    \draw[->] (0,0) -- (-1,-1);
    \draw[->] (0,0) -- (-1,0);
    \draw[-] (0,-1) -- (0,-1) node[below] {$\scriptstyle \# 5$};
  \end{tikzpicture} &       \begin{tikzpicture}[scale=.4] % q6654
    \draw[->] (0,0) -- (0,1);
    \draw[->] (0,0) -- (0,-1);
    \draw[->] (0,0) -- (1,1);
    \draw[->] (0,0) -- (-1,-1);
    \draw[->] (0,0) -- (-1,0);
    \draw[-] (0,-1) -- (0,-1) node[below] {$\scriptstyle \# 6$};
  \end{tikzpicture} &   \begin{tikzpicture}[scale=.4] % q6654
    \draw[->] (0,0) -- (-1,1);
    \draw[->] (0,0) -- (-1,0);
    \draw[->] (0,0) -- (1,0);
    \draw[->] (0,0) -- (-1,-1);
    \draw[->] (0,0) -- (0,1);
    \draw[-] (0,-1) -- (0,-1) node[below] {$\scriptstyle \# 7$};
  \end{tikzpicture} & \begin{tikzpicture}[scale=.4] % q6654
    \draw[->] (0,0) -- (1,1);
    \draw[->] (0,0) -- (0,-1);
    \draw[->] (0,0) -- (1,0);
    \draw[->] (0,0) -- (-1,0);
    \draw[->] (0,0) -- (0,1);
    \draw[-] (0,-1) -- (0,-1) node[below] {$\scriptstyle \# 8$};
  \end{tikzpicture} &    \begin{tikzpicture}[scale=.4] % q6654
    \draw[->] (0,0) -- (1,0);
    \draw[->] (0,0) -- (0,-1);
    \draw[->] (0,0) -- (0,1);
    \draw[->] (0,0) -- (-1,1);
    \draw[->] (0,0) -- (1,-1);
    \draw[-] (0,-1) -- (0,-1) node[below] {$\scriptstyle \# 9$};
  \end{tikzpicture} &
\begin{tikzpicture}[scale=.4] 
    \draw[->] (0,0) -- (1,1);
    \draw[->] (0,0) -- (-1,-1);
    \draw[->] (0,0) -- (0,1);
    \draw[->] (0,0) -- (1,-1);
    \draw[-] (0,-1) -- (0,-1) node[below] {$\scriptstyle \# 10$};
  \end{tikzpicture}
&\begin{tikzpicture}[scale=.4] 
    \draw[->] (0,0) -- (1,1);
    \draw[->] (0,0) -- (0,1);
    \draw[->] (0,0) -- (-1,0);
    \draw[->] (0,0) -- (1,-1);
    \draw[-] (0,-1) -- (0,-1) node[below] {$\scriptstyle \# 11$};
  \end{tikzpicture}
 &\begin{tikzpicture}[scale=.4]  
    \draw[->] (0,0) -- (1,1);
    \draw[->] (0,0) -- (-1,-1);
    \draw[->] (0,0) -- (0,1);
    \draw[->] (0,0) -- (0,-1);
    \draw[->] (0,0) -- (1,-1);
    \draw[-] (0,-1) -- (0,-1) node[below] {$\scriptstyle \# 12$};
  \end{tikzpicture} \Tstrut\\
\hline
$(a,a)$ & $(0,0)$ & $(a,a)$ & $(0,0)$  &$(a,a)$& $(a,a)$& $(0,0)$  &
                                                                     $(0,0)$
                    & $(0,0)$& & &  \\
& & $(0,1)$ & & $(0,1)$ &$(0,1)$ &&&& $(0,1)$ &$(0,1)$ & $(0,1)$\\
&&&&$(1,0)$ &&&& & &&\\
& & & & $(1,3)$ & & & &$(1,1)$ & &&\\
\hline
\end{tabular}
\normalsize
\end{center}
\medskip
  \caption{A (conjecturally exhaustive) list of decoupled cases among
    models  with an infinite group.}
  \label{tab:dec-ab}
\end{table}

\begin{proof}
Consider a model with kernel $K(x,y)$, and take a rational function $H(x,y)$. 
Lemma~\ref{lem:dec-weak}  can be readily extended to show that the following conditions are equivalent: 
\begin{enumerate}[label={{\rm(\alph{*})}},ref={{\rm(\alph{*})}}]
\item\label{it:a}$H(x,y)$ is decoupled, that is, there exist rational functions $F(x),G(y)$ such that $K(x,y)$ divides $H(x,y)-F(x)-G(y)$,
\item\label{it:b}there exists a rational function $F(x)$ such that
  $H(X_0,y)-F(X_0)=H(X_1,y)-F(X_1)$, where $X_0,X_1$ are the roots of
  $K(x,y)$ (when solved for $x$), 
\item\label{it:c}there exists a rational function $G(y)$ such that
  $H(x,Y_0)-G(Y_0)=H(x,Y_1)-G(Y_1)$, where $Y_0,Y_1$ are the roots of
  $K(x,y)$ (when solved for $y$). 
\end{enumerate}
We call \emph{$x$-decoupling function} (resp.\ \emph{$y$-decoupling function}) of $H(x,y)$ a rational function $F(x)$ (resp.\ $G(y)$) satisfying Condition~\ref{it:b} 
(resp.\ Condition~\ref{it:c}).

\smallskip
We begin with the starting point $(a,a)$. When $a=0$, we have proved
that the $9$ decoupled models with an infinite group are those numbered from \#1 to \#9.
Now let $a$ be arbitrary, and denote $H(x,y)=x^{a+1}y^{a+1}$. For models \#3 and \#5, we have $\frac{1}{X_0X_1}=y$. Hence, 
\[ H(X_0,y)-H(X_1,y)=\frac{X_0^{a+1}}{(X_0X_1)^{a+1}}-\frac{X_1^{a+1}}{(X_0X_1)^{a+1}}=\frac{1}{X_1^{a+1}}-\frac{1}{X_0^{a+1}}.\] 
This shows that $F(x)=-x^{-a-1}$ is an $x$-decoupling function for
$H(x,y)$. Similarly, for models \#1 and \#6, one has
$\frac{1}{Y_0Y_1}=x$, so that $G(y)=-y^{-a-1}$ is a $y$-decoupling
function for $H(x,y)$. Finally, for $a=1$ and model \#9, one easily
checks that the function 
\begin{multline*}
G(y)={y}^{4}-2{\frac {{y}^{3} ( 1+2t ) }{t}}+{\frac {{y}^{2} ( 5{t}^{2}+4t+1 ) }{{t}^{2}}}\\-2{\frac {y ( 1+2t)  ( 1+t) }{{t}^{2}}}
-2{\frac {(1+t)^2}{{t}^{2}y}}+{\frac {2{t}^{2}+2t+1}{{t}^{2}{y}^{2}}}-{\frac {2}{t{y}^{3}}}+\frac{1}{y^4}
\end{multline*}
is a $y$-decoupling function for $H(x,y)$.

\smallskip
Next we consider the starting point $(0,1)$, that is, $H(x,y)=xy^2$. For model \#3, one can take $F(x)=-x-\frac{1}{xt}+\frac{1}{x^2}$. For model \#5, one may take $F(x)={x}^{2}-{\frac {x}{t}}-{\frac {1+t}{tx}}+\frac{1}{x^2}$. For model \#6, one can take 
$ F(x)= x+ \frac{1+t}{t^2(1+x)} - \frac{(1+t)^2}{t^2(1+x)^2}$.
 For models \#10 and \#12, one can take 
$F(x)=-x-1/x$. For model \#11, one can take  $F(x)=-x-1/(tx)+1/x^2$.

\smallskip
Now we consider model \#5 and the starting point $(1,0)$, that is, $H(x,y)=x^2y$.
Then   one can take $F(x)=\frac{x}{t}-x^2+\frac{1}{x}$.

\smallskip
Finally, for $(a,b)=(1,3)$ and model \#5,  we can check that
\begin{multline*}
F(x)=-{x}^{4}+2{\frac {{x}^{3}}{t}}-{\frac {{x}^{2} ( 2{t}^{2}+2t+1 ) }{{t}^{2}}}+2{\frac {x ( 1+t ) ^{2}}{{t}^{2}}}\\
+2{\frac { ( 1+2t )  ( 1+t ) }{x{t}^{2}}}-{\frac {5{t}^{2}+4t+1}{{x}^{2}{t}^{2}}}+2{\frac {1+2t}{{x}^{3}t}}-\frac{1}{x^4}
\end{multline*}
is an $x$-decoupling function for  $H(x,y)=x^2y^4$.
\end{proof}

%==============================================
\subsection{Weighted models with a finite group}
%==============================================

\begin{Proposition}\label{thm:starting-points-weighted}
Consider the four weighted models  of
Figure~\ref{fig:alg_models}. The list of starting points $(a,b)$ for
which they decouple is given in Table~\ref{tab:dec-ab-finite}. Specializing $ \lambda$
to some complex value in the first model does not yield more decoupled
cases.
\end{Proposition}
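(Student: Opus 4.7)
The plan is to proceed exactly as in the proof of Proposition~\ref{thm:starting-points-finite}, namely to apply the orbit-sum criterion of Theorem~\ref{Thm:decoupling_orbit-sum}: for each of the four weighted models, the function $H(x,y)=x^{a+1}y^{b+1}$ is decoupled if and only if $H_\alpha(x,y)=0$, where $\alpha=\sum_{\gamma\in\cG(\cS)}\sign(\gamma)\gamma$ is the alternating sum over the (finite) group. The theorem is stated for general finite group models, so it applies verbatim to the four weighted models of Figure~\ref{fig:alg_models} (right), each of which has a finite group of order $4$ or $6$ (three of them have a vertical symmetry and thus the same group structure as in the proof of Proposition~\ref{thm:starting-points-finite}, while the last one has a group of order $6$, isomorphic to that of Gessel's model). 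One should also observe, as in the unweighted case, that if $H$ is decoupled then one may take $F(x)=-x^{-a-1}$ on the diagonal $a=b$ whenever the model satisfies $\frac{1}{Y_0 Y_1}=x$ or an analogous symmetry, so explicit decoupling functions can be produced in each positive case.

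The first step is to write the alternating orbit-sum explicitly for each model. For the second weighted model (which has a vertical symmetry), the four group elements act as in Proposition~\ref{thm:starting-points-finite}, so
\[
H_\alpha(x,y)=H(x,y)-H(\bx,y)+H\bigl(\bx,\by\,\tfrac{c(x)}{a(x)}\bigr)-H\bigl(x,\by\,\tfrac{c(x)}{a(x)}\bigr),
\]
where now $a(x)$ and $c(x)$ are the weighted coefficients defined from~\eqref{K-w}. The third weighted model is handled identically. The fourth is isomorphic (as a group action) to Gessel's, so its orbit-sum has $8$ terms, parallel to the one written in the proof of Proposition~\ref{thm:starting-points-finite}(3). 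The first weighted model has a vertical symmetry with the same shape of orbit-sum as the second.

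The second step is to identify the coefficient extraction that forces $(a,b)$ into the allowed set. Following the strategy of Proposition~\ref{thm:starting-points-finite}, one extracts the positive (or negative) part of $H_\alpha$ in $x$ and/or in $y$: for the models with a vertical symmetry, $[x^>][y^>]H_\alpha(x,y)=x^{a+1}y^{b+1}$ up to a nonzero weighted prefactor whenever $(a,b)\not=(0,0)$, forcing $a=b=0$ when the ``diagonal'' cancellation does not happen; when the model has the additional symmetry making $\frac{1}{Y_0Y_1}=x$ (as for the second and third weighted models), the diagonal $a=b$ survives, exactly as in the unweighted Kreweras-like case. For the fourth weighted model, one runs the Gessel-type analysis of Proposition~\ref{thm:starting-points-finite}(3) but notes that the weights break the $a=2b+1$ symmetry: only $(a,b)=(0,0)$ and $(1,0)$ are retained. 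The key technical check is that the \emph{coefficients} of the surviving monomials in $H_\alpha$ are rational functions of $t$ and of the step weights that are not identically zero; this is the main (though bounded) obstacle, and it is handled by computing these coefficients explicitly and verifying they are nonzero as elements of $\qs(t,\lambda)$ (for the first model) or $\qs(t)$ (for the other three).

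Finally, for the last statement about specializations of $\lambda$: the computation above produces, for each $(a,b)\notin\{(0,0)\}$ in the first weighted model, a rational function in $\lambda$ (with coefficients in $\qs(t)$) that governs the vanishing of the relevant monomial of $H_\alpha$. One checks by inspection of the finitely many coefficients arising at low degree that none of these functions is identically zero, and moreover that none of them vanishes at a common complex value of $\lambda$ simultaneously with the rest. This step is essentially a bounded, computer-algebra check, and can be carried out in a short {\sc Maple} session analogous to the one mentioned in Section~\ref{sec:infinite}. The main difficulty is not conceptual but bookkeeping: one must write out the orbit-sums and verify non-vanishing of finitely many polynomial coefficients for each of the four models.
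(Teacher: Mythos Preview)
Your proposal rests on an incorrect picture of the four weighted models. None of them has a vertical symmetry, and their groups do not have the orders you state: the first weighted model has a group of order $6$ (not $4$), while the last three have groups of order $10$ (not $4$ or $6$; and Gessel's group has order $8$, not $6$). Consequently the orbit-sum formulas you write down, borrowed from the vertically-symmetric and Gessel cases of Proposition~\ref{thm:starting-points-finite}, are simply not the correct $H_\alpha$ for these models. The positive/negative part extraction you propose relies on the group elements being monomial transformations like $x\mapsto\bar x$; here the group elements are genuinely rational in $x,y$ (they involve factors such as $x^2z+\lambda x+1$ or $xy+x+y$), so one cannot isolate a single surviving monomial by taking $[x^>][y^>]$.

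The paper's actual argument is different in technique. For each model it writes $H_\alpha$ explicitly (using the order-$6$ or order-$10$ group), then makes a judicious \emph{substitution} such as $y=z-1,\ z=x$ (first model) or $y=x^2$ (last three), and extracts the \emph{leading asymptotics} as $x\to\infty$ (or $x\to 0$). For instance, for the first model one finds $H_\alpha(x,x-1)=x^{a+b+2}-x^{3b-a+2}+o(\cdots)$, forcing $a=b$, and then $H_\alpha(x,x-1)=-(a+1)x^{2a+1}+x^{a+1}+o(\cdots)$, forcing $a=0$. For the second and third models the group structure gives $H_\alpha$ as a sum over $\widetilde H(x,y)=H(x,y)-H(y,x)$, so $a=b$ is both necessary (from the $y=x^2$ asymptotics) and sufficient (since $\widetilde H$ vanishes). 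For the fourth model the analogous expansion yields $a=b$ or $a=2b+1$, and a further term forces $b=0$. Finally, the $\lambda$-independence claim falls out for free: the leading coefficients above do not involve $\lambda$ at all, so no specialization of $\lambda$ can make them vanish---there is no ``rational function in $\lambda$'' to analyze, contrary to what you suggest.
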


\begin{proof}
We denote as before $H(x,y)=x^{a+1}y^{b+1}$. The first weighted model has a group of order~$6$. Writing $y=z-1$
makes its elements more compact,  and  we find:
\begin{multline*}
H_\al(x,z-1) = H(x,z-1)-H\left({\frac {1}{xz}},z-1\right)
+H\left({\frac {1}{xz}},{\frac {{x}^{2}{z}^{2}+\lambda\,xz+1}{z-1}}\right)\\
  -H\left({\frac {x \left( z-1 \right) }{{x}^{2}z+\lambda\,x+1}},{\frac {{x}^{2}{z}^{2}+\lambda\,xz+1}{z-1}}\right)
 +H\left({\frac {x\left( z-1 \right) }{{x}^{2}z+\lambda\,x+1}},{\frac {\lambda\,x+{x}^{2}+1}{{x}^{2} \left( z-1 \right) }}\right)
-H\left(x,{\frac {\lambda\,x+{x}^{2}+1}{{x}^{2} \left( z-1 \right) }}\right).
\end{multline*}
Setting $z=x$ in this expression and taking the limit $x\to \infty$, {we find} %gives
\[ H_\al(x,x-1)=x^{a+b+2}-x^{3b-a+2}+o(x^{a+b+2}+x^{3b-a+2}).\] 
For $H(x,y)$ to be decoupled, we need  $H_\al(x,x-1)=0$, which forces $a=b$. Under this assumption we further obtain
\[ 
H_\al(x,x-1)=-(a+1)x^{2a+1}+x^{a+1}+o(x^{2a+1}+x^{a+1}),
\] 
which now forces $a=b=0$. Conversely, if $(a,b)=(0,0)$
then $H(x,y)$ is decoupled as proved in Section~\ref{subsec:decoupling_functions}. 

\medskip 
The second weighted model has a group of order $10$, and 
\begin{multline*}H_\al(x,y)=\widetilde{H}(x,y)-\widetilde{H}\left({\frac {y}{x ( 1+y ) }},y\right)
+\widetilde{H}\left({\frac {y}{x ( 1+y ) }}, \frac {1}{xy+x+y}\right)\\
-\widetilde{H}\left({\frac {x}{y ( x+1 ) }}, \frac{1}{xy+x+y}\right)
+\widetilde{H}\left({\frac {x}{y ( x+1 ) }},x\right),
\end{multline*}
where $\widetilde{H}(x,y)=H(x,y)-H(y,x)$.
Setting $y=x^2$ and  taking the limit at $x\to \infty$, {we find} %gives
\[ H_\al(x,x^2)=x^{a+2b+3}-x^{2a+b+3}+o(x^{a+2b+3}-x^{2a+b+3}).\] 
Hence $H_\al(x,y)=0$ implies  $a=b$. Conversely, if $a=b$, then $\widetilde{H}(x,y)=0$, so $H_\al(x,y)=0$.
The proof for the third model is similar (except that it is easier to  expand of $H_\al(x,x^2)$ at $x=0$).

\medskip 
Lastly, the fourth model has a group of order 10, and
\begin{multline*}
H_\al(x,y)=\widehat{H}(x,y)
-\widehat{H}\left({\frac {1+y}{xy}},y\right)
+\widehat{H}\left({\frac {1+y}{xy}},{\frac {x}{xy+y+1}}\right)\\
 -\widehat{H}\left(y ( x+1 ) ,{\frac {x}{xy+y+1}}\right)
+\widehat{H}\left(y ( x+1 ) ,\frac{1}{x}\right),
\end{multline*}
where $\widehat{H}(x,y)=H(x,y)-H(\by,\bx)$. Setting $y=x^2$ and  taking the limit $x\to \infty$, {we find} %gives
\[ H_\al(x,x^2)=x^{a+2b+3}+x^{3a-b+2}-x^{2a+b+3}+o(x^{a+2b+3}+x^{3a-b+2}+x^{2a+b+3}).\] 
Hence $H_\al(x,y)=0$ implies either $a=b$ or $a=2b +1$. In both cases, expanding
further $H_\al(x,x^2)$ as $x\to \infty$ leads to $b=0$. Hence
either $(a,b)=0$ (and then we know that the model decouples),
or $(a,b)=(1,0)$. We conclude by  checking that indeed, $H_\al(x,y)=0$ for $H(x,y)=x^2y$.
\end{proof}

\noindent{\bf Remark.} As in the infinite group case, there exist
weighted models that do not decouple at $(0,0)$, but do decouple at
other starting points. For instance, the model obtained by reversing
all steps of the first weighted model decouples at $(1,0)$ when $\lambda=0$. This model
is of interest in the study of 3-dimensional walks confined to the
first octant~\cite[Sec.~8.2]{BoBoKaMe16}.

%%%%%%%%%%%%%%%%%%%%%%%%%%%%%%%%%%%%%%%%%%%%%%%%%%%%%%%%%%%%%%%%%%%%%%%%%%%%%%
\section{Final comments and questions}
\label{sec:final}
%%%%%%%%%%%%%%%%%%%%%%%%%%%%%%%%%%%%%%%%%%%%%%%%%%%%%%%%%%%%%%%%%%%%%%%%%%%%%%
We begin here with two comments on our results. In the  first
subsection, we relate weak and rational
invariants. In the second one, we discuss  the link between our new
expressions for % of
$Q(0,y)$ (Theorem~\ref{thm:9models}) and the integral expressions 
formerly obtained in~\cite{Ra-12}. We then go on with a list of open
questions and perspectives (Sections~\ref{sec:DE-t} to~\ref{sec:other}).

%====================================================
\subsection{Weak invariants vs.\ rational invariants in the finite
  group case}\label{sec:rational_weak}
%=====================================================

In the finite group case --- and in this case only --- we were able to
exhibit both a rational invariant $J(y)$ (Definition~\ref{def:rat-inv})
and a weak invariant $w(y)$ (Definition~\ref{def:weak-inv}).
The weak invariant $w(y)$ is more intrinsic than the rational
invariant $J(y)$: indeed, the analytic invariant lemma (Lemma~\ref{lem:inv-analytic}) together with Proposition~\ref{Prop:properties_w} implies that
many invariants, as $S(y)-L(y)$ in  Section~\ref{sec:analysis}, have a rational expression  in terms of $w(y)$.}
On the other hand, $w(y)$ depends on $t$
and $y$ in a more complex fashion than $J(y)$.
Indeed, $w(y)$ is known   to be algebraic in $y$ 
(and in fact   rational in  $19$  cases), see~\cite[Thm.~3]{Ra-12}; moreover, it is D-algebraic in $t$   by Theorem~\ref{prop:w-DA}. In fact it can even be proved to be  algebraic in $t$ (in the finite group case), but still not as simple as $J(y)$.
% as is already known for  groups of order~$4$ (see Eq.~(27) in~\cite{Ra-12}).

In this section, we show how to relate $J(y)$ and $w(y)$ using the
analytic invariant lemma.

\smallskip
Let us first consider one of the $16$ models with a horizontal symmetry,
for which a rational invariant is $J(y):=y+1/y$. One can check
that the curve $\cL$ is the unit circle (see~\cite[Thm.~5.3.3
(i)]{FIM-99} for the probabilistic case $t=1/\vert\cS\vert$) and in particular
the pole $y=0$ never lies on~$\cL$. Hence $J(y)$
is a weak invariant, in the sense of
Definition~\ref{def:weak-inv}. Applying the analytic invariant lemma
 shows that
\[ 
J(y)= \frac{w'(0)}{w(y)-w(0)}+ \frac{w''(0)}{2w'(0)}.
\] 
In particular, we have thus rederived the fact that $w(y)$ is rational in $y$.

\begin{figure}[hb!]
\begin{center}
\begin{tikzpicture}[scale=.45] % q6654
    \draw[->] (0,0) -- (0,1);
    \draw[->] (0,0) -- (-1,0);
    \draw[->] (0,0) -- (1,-1);
   % \draw[-] (0,-1) -- (0,-1) node[below] {\phantom{$\scriptstyle 1$}};
  \end{tikzpicture}\hspace{2mm}
  \begin{tikzpicture}[scale=.45] % q6654
    \draw[->] (0,0) -- (1,0);
    \draw[->] (0,0) -- (-1,0);
    \draw[->] (0,0) -- (1,-1);
    \draw[->] (0,0) -- (-1,1); %node[below] {\phantom{$\scriptstyle 1$}};
  \end{tikzpicture}\hspace{3mm}
    \begin{tikzpicture}[scale=.45] % q6654
      \draw[->] (0,0) -- (0,1);
    \draw[->] (0,0) -- (1,0);
    \draw[->] (0,0) -- (-1,1);
    \draw[->] (0,0) -- (1,-1);
    \draw[->] (0,0) -- (0,-1);
    \draw[->] (0,0) -- (-1,0);
    %\draw[-] (0,-1) -- (0,-1) node[below] {\phantom{$\scriptstyle 1$}};
  \end{tikzpicture}   
  \end{center}
 \vskip -2mm  
\caption{The three D-finite transcendental models that have no
  horizontal nor vertical symmetry.}
  \label{fig:transc-models}
\end{figure}
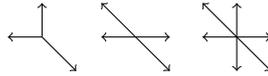

\smallskip
We now address the three models that have no horizontal symmetry and
have a transcendental \gf\ (Figure~\ref{fig:transc-models}). 
For the first one, $J(y)=ty^2-y-t/y$ has a single pole at $0$. The
curve $\cL$ is bounded and $\cG_\cL$ contains  $0$,
so that
\[ 
J(y)= -\frac{t\,w'(0)}{w(y)-w(0)}+ \gamma,
\] 
for some $\gamma$ that depends on $t$ only.

For the second model, $J(y)=\frac y{t(1+y)^2} +t \frac {(1+y)^2}y$
(as in Gessel's model)
has a simple pole at $0\in \cG_\cL$ and a double pole at $-1\not \in
\cL \cup \cG_\cL$. The invariant lemma gives 
\[ 
J(y)= \frac{t\,w'(0)}{w(y)-w(0)}+ \gamma.
\] 

Finally for the third model,  $J(y)=t/y-yt-(2t+1)/(y+1)$  has a
simple pole at $0\in \cG_\cL$ and another one at $-1\not \in
\cL \cup \cG_\cL$, and the  previous expression  $J$ in terms of $w$ holds as well.

\medskip
Let us now address the four algebraic models, for which invariants are
given in Table~\ref{tab:ratinv}. 
 For Kreweras' model, $J(y)$ has a
double pole at $0$, and the invariant lemma results in:
\[ 
J(y)=\frac t {y^2}-\frac 1 {y} -ty = \frac{t\, w'(0)^2}{(w(y)-w(0))^2 }+
\frac{\beta}{w(y)-w(0)}+ \gamma,
\] 
showing that $w(y)$ is quadratic in $y$. 

For reverse Kreweras walks, the curve
$\cL$ is not bounded, and the invariant $J(y)=ty^2-y-t/y$ is not
bounded at infinity. Hence we cannot apply directly Lemma~\ref{lem:inv-analytic}. However, 
it follows from Lemma~\ref{lem:behavior_gluing_infinity} (proved below)
that 
\beq\label{Jw-RK}
J(y)=\frac{\alpha}{w(y)-w(\infty)}-\frac{t\, w'(0)}{w(y)-w(0)}-\gamma.
\eeq
This shows that $w(y)$ is quadratic in $y$.

For the double Kreweras model, $J(y)$ has two poles, at $0$ and at
$-1$. Both belong to $\cG_\cL$, and the invariant lemma results in:
\[ 
J(y)= \frac t y -ty - \frac{1+2t}{1+y}=
\frac{\alpha}{w(y)-w(0)}+\frac{\beta}{w(y)-w(-1)}+\gamma,
\] 
showing that $w(y)$ is again quadratic in $y$.

Finally, for Gessel's model, $J(y)$ has poles at $0$ and $-1$, both
belonging to $\cG_\cL$, and respectively simple and double. The
invariant lemma gives
\[ 
J(y)= \frac y{t(1+y)^2}+t\by (1+y)^2= \frac{\alpha}{(w(y)-w(-1))^2}
+\frac \beta{w(y)-w(-1)}+ \frac{\gamma}{w(y)-w(0)}+\delta,
\] 
showing that $w(y)$ is (at most) cubic in $y$.

\medskip
We conclude this section with the lemma used above for reverse Kreweras
walks (see~\eqref{Jw-RK}).
\begin{Lemma}
\label{lem:behavior_gluing_infinity}
If the curve $\mathcal L$ is unbounded, then the weak invariant $w(y)$
is analytic at infinity, where the following expansion holds:
\[ 
 \frac{w_2}{w(y)-w(\infty)}=y^2-\frac{y}{t}+O(1)
\] 
for some $w_2 \not = 0$.
\end{Lemma}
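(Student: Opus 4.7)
The plan is to combine the weak-invariant property $w(Y_0(x))=w(Y_1(x))$ with the explicit Weierstrass parametrization $w(y)=\wp_{1,3}(\mathcal Z(y))$, where $\mathcal Z(y)=-\frac{\omega_1+\omega_2}{2}+\wp_{1,2}^{-1}(f(y))$ from \eqref{eq:expression_gluing}--\eqref{Z-def}. By Lemma \ref{Lem:curve_L_0_infty}, ``$\cL$ unbounded'' means neither $(-1,1)$ nor $(-1,0)$ lies in $\cS$, and the discriminant $\widetilde d(y)$ has degree $4$; in particular $y_4$ is finite, and $f(y)$ extends analytically to $y=\infty$ with value $f_\infty=\widetilde d''(y_4)/6$. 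Hence $\mathcal Z(y)$ itself extends analytically to $\infty$ with a well-defined finite value $z_\infty:=\mathcal Z(\infty)$, and $w(y)$ is at worst meromorphic at $\infty$; showing it is analytic will be part of Step 3 below.

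The first main step uses the invariance of $w$. Fixing $y=Y_0(x)$ large corresponds to choosing $x=X^{\text{small}}(y)$, the root of $K(\cdot,y)=0$ that tends to $0$ as $y\to\infty$. Vieta's formulas applied to the kernel viewed as a quadratic in $y$ give $\hat y:=Y_1(x)=-b(x)/a(x)-y$; expanding $x$ in $1/y$ (for reverse Kreweras, where $a(x)=tx$, $b(x)=tx^2-x$, $c(x)=t$, one finds $x=-1/y^2-1/(ty^3)+O(1/y^4)$) yields
\[
\hat y=-y+\tfrac1t+\tfrac1{y^2}+\tfrac1{ty^3}+O(1/y^4).
\]
Writing $w(y)=w_0+w_1/y+w_2/y^2+w_3/y^3+O(1/y^4)$ and imposing $w(y)=w(\hat y)$, the coefficient of $1/y$ yields $w_1=-w_1$, so $w_1=0$, and the coefficient of $1/y^3$ yields $w_3=w_2/t$.

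The second main step combines Step 1 with the Weierstrass structure to prove both analyticity at $\infty$ and $w_2\neq 0$. The ODE \eqref{DE-Z}, $(\mathcal Z'(y))^2=1/(4\widetilde d(y))$, together with $\widetilde d(y)\sim\widetilde d_4\,y^4$ as $y\to\infty$ (with $\widetilde d_4\neq 0$), gives $\mathcal Z(y)-z_\infty=\kappa/y+O(1/y^2)$ for a nonzero constant $\kappa$. Expanding $\wp_{1,3}$ at $z_\infty$ then produces
\[
w(y)-\wp_{1,3}(z_\infty)=\wp_{1,3}'(z_\infty)\,\tfrac{\kappa}{y}+\tfrac12\wp_{1,3}''(z_\infty)\,\tfrac{\kappa^2}{y^2}+O(1/y^3).
\]
Matching $w_1=0$ forces $\wp_{1,3}'(z_\infty)=0$: $z_\infty$ is a half-period of the lattice $\mathbb Z\omega_1+\mathbb Z\omega_3$. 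Half-periods are not lattice points, so $\wp_{1,3}$ is analytic at $z_\infty$; this shows $w(\infty)$ is finite, settling Step 1. Moreover, half-periods are \emph{simple} zeros of $\wp'$, hence $\wp_{1,3}''(z_\infty)\neq 0$, and $w_2=\tfrac12\wp_{1,3}''(z_\infty)\,\kappa^2\neq 0$.

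Combining the two main steps, $w(y)-w(\infty)=(w_2/y^2)\bigl(1+1/(ty)+O(1/y^2)\bigr)$, so inverting gives
\[
\frac{w_2}{w(y)-w(\infty)}=y^2-\frac{y}{t}+O(1),
\]
as claimed. The main obstacle is establishing $w_2\neq 0$: the invariant identity alone only delivers the relations $w_1=0$ and $w_3=w_2/t$, and does not rule out a vanishing leading coefficient. The extra information comes from the Weierstrass structure of $w$, which forces $z_\infty$ to be a half-period precisely because $w_1=0$; at half-periods the second derivative of $\wp$ is automatically nonzero, and this is what unlocks the non-degeneracy of the leading term and, simultaneously, the analyticity of $w$ at $\infty$.
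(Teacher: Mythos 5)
Your Step 1 (imposing $w(y)=w(\hat y)$ with $\hat y=-y+1/t+1/y^2+\cdots$ to get $w_1=0$ and $w_3=w_2/t$) and your Step 2 conclusion about $w_2\neq 0$ via the half-period structure are both essentially what the paper does. But the argument for analyticity at $\infty$ has a genuine circularity. You derive $w_1=0$ from the invariant relation, and this derivation \emph{already assumes} the Taylor expansion $w(y)=w_0+w_1/y+w_2/y^2+\cdots$, i.e.\ that $z_\infty$ is a regular point of $\wp_{1,3}$. You then ``deduce'' from $w_1=0$ that $\wp_{1,3}'(z_\infty)=0$, hence that $z_\infty$ is a half-period, hence not a pole. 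That chain never excludes the other scenario. If $z_\infty$ were a lattice point, $\wp_{1,3}$ would have a double pole there, and $w(y)=c_{-2}y^2+c_{-1}y+c_0+\cdots$ would itself have a double pole at $\infty$. In that case the invariant relation $w(y)=w(\hat y)$ does \emph{not} yield a contradiction: a short computation using your expansion of $\hat y$ gives $c_{-1}=-c_{-2}/t$ and then tautologies at lower orders, so $c_{-2}\neq 0$ is perfectly consistent. Thus the invariant relation alone cannot distinguish between ``$z_\infty$ regular, $w_1=0$'' and ``$z_\infty$ a pole, $c_{-1}=-c_{-2}/t$''; you need an independent input.

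The paper supplies that input by appealing directly to a design feature of the uniformization. The parametrization by $\wp_{1,2}$ is constructed so that $f(Y(x_1))=\wp_{1,2}((\om_2-\om_3)/2)$ (see \cite[Sec.~3.2]{KR-12}), and for an unbounded $\cL$ one has $Y(x_1)=\infty$, so $\cZ(\infty)=-(\om_1+\om_3)/2$ outright. This is a half-period of the $(\om_1,\om_3)$-lattice, hence \emph{not} a lattice point, so $\wp_{1,3}$ is analytic there and $\wp_{1,3}'(z_\infty)=0$, $\wp_{1,3}''(z_\infty)\neq0$. From there your coefficient calculation ($w_1=0$, $w_3=w_2/t$) and the conclusion $w_2=\frac12\wp_{1,3}''(z_\infty)\kappa^2\neq0$ go through exactly as you wrote. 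So to repair your proof, replace the derivation of ``half-period'' from $w_1=0$ by the direct identification of $\cZ(\infty)$ via the uniformization; the rest stands.

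One more small point: your explicit expansions of $x$ and $\hat y$ are written for reverse Kreweras only, whereas the lemma is stated for any model with unbounded $\cL$. The paper's expansion $Y_{0,1}(x)=\pm i/\sqrt{x}+1/(2t)+O(\sqrt x)$ is derived in the general unbounded case (using only that $(-1,-1),(0,1)\in\cS$ and $(-1,0),(-1,1)\notin\cS$), and gives the same two relations $w_1=0$, $w_3=w_2/t$; you should either argue similarly or note that your $\hat y$-expansion follows from Vieta in the same generality.
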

(This lemma is essentially a version of the
  identity~\eqref{model9-w} that we wrote for  model \#9,
with the point $0$ replaced by $\infty$.)
\begin{proof}
If $\mathcal L$ is unbounded then the
branch point $x_1$ is zero, and  none of the steps $(-1,0)$ and
$(-1,1)$ belong to $\cS$  (Lemma~\ref{Lem:curve_L_0_infty}). This
forces $(-1,-1)$ and $(0,1)$ to be in $\cS$. Solving the kernel for
$y$ gives, as $x\rightarrow 0$,
\beq
\label{eq:expansion_Y(x)_0}
     Y_{0,1}(x)=\pm \frac{i}{\sqrt{x}} + \frac{1}{2t}+O(\sqrt{x}).
\eeq  
Let us return to  the form $w(y)=\wp_{1,3}(\cZ(y))$
  of~\eqref{eq:expression_gluing}. The parametrization of the curve
  $K(x,y)$ by $\wp_{1,2}$ has been designed so that $f(Y(x_1))=
  \wp_{1,2}((\om_2-\om_3)/2)$, see~\cite[Sec.\ 3.2]{KR-12}, which in our case reads $f(\infty)=
  \widetilde d''(y_4)/6 =\wp_{1,2}((\om_2-\om_3)/2)$ (it is readily
  checked that $y_4$ is finite under our hypotheses). Hence
  $\cZ(\infty)=-(\om_1+\om_3)/2$, which is a zero of $\wp_{1,3}'$, but
  not a pole of $\wp_{1,3}$. Thus $w$ is analytic at infinity. Let us denote
\[ 
w(y)=w(\infty) + \frac {w_1} y +\frac{w_2} { y^2}+\frac{w_3} { y^3} + O\left(\frac 1
  {y^4}\right).
\] 
Writing that $w(Y_0)=w(Y_1)$ near $x=0$, with the $Y_k(x)$'s given
by~\eqref{eq:expansion_Y(x)_0}, {we obtain}
 %implies 
% that 
$w_1=0$ and $w_3=w_2/t$. As in the proof of~\eqref{model9-w}, the fact that
$w_2\not = 0$ comes from the fact that  $\cZ(\infty)=-(\om_1+\om_3)/2$
is a zero of $\wp_{1,3}'$ but not of its derivative. The lemma then follows.
\end{proof}

%============================================================
\subsection{A connection with  integral representations of $\boldsymbol{Q(x,y)}$}
\label{sec:integrals}
%============================================================

Prior to this paper, for a non-singular model with an infinite group, the series
$Q(x,y)$ was expressed as a contour integral involving the gluing
function $w(y)$ (a.k.a.\ weak invariant)~\cite{Ra-12}. 
%see~\cite[Thm.\ 1]{Ra-12}). 
If the model has a decoupling function, we have now obtained
a simpler, integral-free expression in Theorem~\ref{thm:9models}. We
explain here, without giving all details, how to derive it from the
integral one, in the analytic setting of Section~\ref{sec:analysis}.
To avoid technicalities we only consider models 
such that $0\notin[x_1,x_2]$, thereby excluding models \#2, \#7 and \#9.

Let $\wx(x)$ be the counterpart of the weak invariant $w(y)$, but for
the variable $x$. In particular, $\wx(x)$ is a gluing function for the
 domain $\mathcal G_\mathcal M$ already introduced in the proof of
 Proposition~\ref{prop:S-properties}.
Then, for $x\in\mathcal G_\mathcal M\setminus [x_1,x_2]$,
it is known that
\begin{equation}
\label{eq:Ra-12_integral_contour_bis}
     R(x)-R(0)= xY_0(x)+\frac{1}{2\pi i}\int_{x_1}^{x_2} u(Y_0(u-0i)-Y_1(u-0i))\left\{\frac{\wx'(u)}{\wx(u)-\wx(x)}-\frac{\wx'(u)}{\wx(u)-\wx(0)}\right\}\text{d}u,
\end{equation}
where $Y_k(u\pm 0i)$ stands for $\lim Y_k(x)$ when $x\to u$ with
$\Im(\pm x)>0$. This  is
Theorem~1 in~\cite{Ra-12}, stated here with greater precision (indeed,
the first
term in the integrand is written as $Y_0(u)-Y_1(u)$ in~\cite{Ra-12}).
Recall from Subsection~\ref{sec:prel} that
  the functions $Y_0$ and $Y_1$ are not meromorphic on $[x_1,x_2]$,
  but  admit limits from above and below. These limits satisfy  
\begin{equation}
\label{eq:limits_prop}
     Y_0(u\pm 0i)=\overline{Y_1(u\pm 0i)},\quad
     \Im(Y_0(u-0i))>0,\quad 
     \Im(Y_0(u+0i))<0. 
\end{equation}     
More details can be found in the proof of~\cite[Thm.~1]{Ra-12}, or in
\cite[Sec.~4]{KuRa-11} for Gessel's model. Note that the
  assumption $0\not \in [x_1, x_2]$ guarantees that the term $\wx
  (u)-\wx(0)$ does not vanish. When $x\in [x_1,
  x_2]$, and more generally when $x$ is in the unit disk, $R(x)$ is
  analytic, even though the two terms
  of~\eqref{eq:Ra-12_integral_contour_bis} are not analytic along
  this interval.

The first crucial point is that
we can replace $\wx$ by $\wy(Y_0)$ in~\eqref{eq:Ra-12_integral_contour_bis}, where $w$ is the gluing function~\eqref{eq:expression_gluing}
for $\cG_\cL$. This comes from a combination of
three facts: 
\begin{itemize}
     \item as demonstrated in~\cite[Thm.~6]{Ra-12}, $\wy(Y_0(x))$ is a
       conformal gluing function for $\cG_\cM$, in the sense
       that {it} %is
       satisfies  Proposition~\ref{Prop:properties_w} ---
       except that we are now in the $x$ variable, and that the pole
       is located at $X(y_2)$ rather than $x_2$ (note that the
         invariance
 property $w(Y_0(x))=w(Y_1(x))$ spares us the
         trouble of taking upper or lower limits when defining
         $w(Y_0(x))$ for $x\in [x_1, x_2]$);
     \item any two conformal gluing functions $w_1$ and $w_2$ are
       related by a homography. That is, $w_1=\frac{aw_2 +b}{cw_2
         +d}$, for some coefficients $a,b,c,d\in\mathbb C$
       (depending on $t$) such that $ad-bc\neq0$, see~\cite[Rem.~6]{Ra-12};
     \item  the quantity
  \[ \frac{\wx'(u)}{\wx(u)-\wx(x)}-\frac{\wx'(u)}{\wx(u)-\wx(0)}\]  in the right-hand side of~\eqref{eq:Ra-12_integral_contour_bis} takes the 
same  value, should  $\wx$ be replaced by $\frac{a\wx +b}{c\wx +d}$.
\end{itemize}

Now assume that the model admits a (rational) decoupling function $G$.
Then 
$u(Y_0(u)-Y_1(u))=G(Y_0(u))-G(Y_1(u))$ (Lemma~\ref{lem:dec-weak}). 
The integral in~\eqref{eq:Ra-12_integral_contour_bis} thus becomes:
\[
  \frac{1}{2\pi i}\int_{x_1}^{x_2}
\left(G(Y_0(u-0i))-G(Y_1(u-0i))\right)
D(Y_0(u))\,Y_0'(u)\text{d}u,
\] 
with
\[  {D(v)=
\frac{\wy'(v)}{\wy(v)-\wy(Y_0(x))}-\frac{\wy'(v)}{\wy(v)-\wy(Y_0(0))}.}
\] 
Again, the invariant condition
  $w(Y_0(u+0i))=w(Y_0(u-0i))$ for $u\in[x_1,x_2]$ allows us to replace
    $u$ by $u\pm0i$ in the term $D(Y_0(u))Y_0'(u)$ above. 

Let us write the above integral as a difference
  $T_0-T_1$ of two terms, one (namely $T_0$) involving $G(Y_0(u-0i))$ and the
  other one $T_1$ involving $G(Y_1(u-0i))$.
Recall that  $Y(x_1)\leq0$ and $Y(x_2)>0$
(Lemma~\ref{Lem:curve_L_0_infty}), and the properties~\eqref{eq:limits_prop}. The change of variable $v=Y_0(u-0i)$ in $T_0$ gives 
\begin{equation*}
     T_0=\frac{1}{2\pi i}\int_{\mathcal L\cap \{v:\Im(v)\geq 0\}} G(v)
     {D(v)}\text{d}v,
\end{equation*}
where the contour is oriented
 clockwise. For the  integral $T_1$, replacing $G(Y_1(u-0i))$ by
 $G(\overline{Y_0(u-0i)})$ and performing the same change of variables
 gives 
 \[ 
     T_1=\frac{1}{2\pi i}\int_{\mathcal L\cap
 \{v:\Im(v)\geq 0\}} G(\overline{v}){D(v)} \text{d}v
=
\frac{1}{2\pi i}\int_{\mathcal L \cap \{v:\Im(v)\leq 0\}} G(
v)D(v) \text{d}v,
\] 
where the contour in the second expression is now oriented counterclockwise (we have used the
invariant property $w(v)=w(\overline{v})$ on $\cL$). Finally, for $x \in
\cG_\cM\setminus[x_1, x_2],$ we have
rewritten~\eqref{eq:Ra-12_integral_contour_bis} as:
\begin{equation}
\label{eq:bef_res}
     R(x)-R(0) =xY_0(x)   -\frac{1}{2\pi i}\int_{\mathcal L} G(v)
\left\{\frac{\wy'(v)}{\wy(v)-\wy(Y_0(x))}-\frac{\wy'(v)}{w(v)-\wy(Y_0(0))}\right\}
\text{d}v,
\end{equation}
with $\mathcal L$ oriented counterclockwise.
The integrand is meromorphic in $\cG_\cL$, and we are going to compute the above integral with the residue theorem.

Recall that we  only discuss  here  models 1, 3, 4, 5, 6 and 8 of Theorem~\ref{thm:9models}, where $G$
has a unique pole $p$, which is simple,  equals $0$ or
$-1$, and belongs to $\cG_\cL$. The residue of $G$ at $p$ is still
denoted by $r$. The poles of the above integrand lying in $\cG_\cL$
are thus  $p$, $Y_0(x)$ and
$Y_0(0)$ (indeed, it is readily checked that the unique pole of $w$,
  located at $y_2$, does not give any pole in the integrand). Note that $Y_0(0)$ is the value denoted $\alpha$ in
  Theorem~\ref{thm:9models}, and belongs to $\{-1,0\}$. As in
  Theorem~\ref{thm:9models}, there are  two  cases: if $p\neq \alpha$
  (models 1 and 6), there are  three distinct poles,  all of
    which are simple, and %we have
\begin{equation*}
     R(x)-R(0)= xY_0(x)-r
     \left\{\frac{\wy'(p)}{\wy(p)-\wy(Y_0(x))}-\frac{\wy'(p)}{\wy(p)-\wy(\alpha)}\right\}
     -G(Y_0(x))+G(\alpha).
\end{equation*}
We recover the expression~\eqref{thm:9models} {for} %of
$S(y)$ using~\eqref{e2}
 and the fact that $Y_0(X_0(y))=y$ in $\mathcal G_\mathcal L$
 (see~\cite[Lem.~3(ii)]{Ra-12}).

Now if $p=Y_0(0)= \alpha$ (models 3, 4, 5, 8)
there are only two poles, one at $Y_0(x)$ (of order $1$) and the other
at $p$ (of order $2$). The residue at $Y_0(x)$ is again
$-G(Y_0(x))$. The expansion around $p$ of the integrand
in~\eqref{eq:bef_res} is 
\begin{equation*}
     -\frac{r}{(v-p)^2}-\frac{1}{v-p}\left(g_0+r \left\{\frac{\wy'(p)}{\wy(Y_0(x))-\wy(p)}+\frac{\wy''(p)}{2\wy'(p)}\right\}\right)+O(1),
\end{equation*}
where $g_0$ still denotes the constant term in the expansion of $G$
around $p$.
The residue theorem gives
\begin{equation*}
     R(x)-R(0)= xY_0(x)-G(Y_0(x))+g_0+r \left\{\frac{\wy'(p)}{\wy(Y_0(x))-\wy(p)}+\frac{\wy''(p)}{2\wy'(p)}\right\},
\end{equation*}
and we conclude as above using~\eqref{e2}.

%=======================================================
\subsection{Explicit differential equations in $\boldsymbol t$}
\label{sec:DE-t}
%=======================================================
In Section~\ref{sec:diff-explicit}, we have obtained explicit differential equations in
$y$ for the series $Q(0,y)$, in the $9$ decoupled cases. What about the
length variable $t$? It seems extremely heavy to make the closure properties
used in Section~\ref{sec:DA} effective. One alternative approach would be to mimic Tutte's solution
of~\eqref{eq:Tutte}: he first found  a non-linear differential equation
valid for infinitely many values of $q$ (for which $G(1,0)$ is in fact
algebraic), and  then concluded
by a continuity argument.
In our context, this would mean introducing weights so as to obtain a family
of algebraic
models converging to a D-algebraic one.

Let us mention another analogy with Tutte's work. Theorem~1
in~\cite{KR-12} states  that for any non-singular
  infinite group model, there exists a dense set of values
  $t\in(0,1/\vert\mathcal S\vert)$ such that the generating function
  $Q(x,y;t)$ is D-finite in $x$ and $y$. This paper leads us to
  believe that  for decoupled models, this specialization of $Q(x,y;t)$ will even
  be algebraic. 
Then  $Q(x,y)$ would be algebraic over
$\rs(x,y)$ for infinitely many values of $t$, while for Tutte's problem, $G(1,0)$
is algebraic over $\cs(t)$ for infinitely many values of the parameter $q$.

%===========================
\subsection{Completing the classification of quadrant walks}
%=====================================

For each of the 79 quadrant models, one now knows
  whether the series $Q(x,y;t)\equiv Q(x,y)$ is algebraic/D-finite/D-algebraic or not (Table~\ref{tab:nat}). 
One can ask the same question for interesting specializations
  of $Q(x,y)$, such as  $Q(0,0)$ and $Q(1,1)$.
These questions are
  solved for finite group models~\cite{BMM-10,BoKa-10,MR3588720}, but
  some remain open  in the case of an  infinite group:
\begin{itemize}
\item For the 5 singular models, it is known that $Q(1,1)$
  is not  D-finite~\cite{MiRe09,MeMi13}. What about  D-algebraicity? 
\item For the 51 non-singular  models for which $Q(x,y)$ is not
  D-finite, could the specialization
      $Q(1,1)$ still  be D-finite? (This is actually known  to be false in $16$ cases, as follows from~\cite{BoRaSa-14,Du-14,denisov-wachtel}.)
  Is it D-algebraic for more models than
  those of  Table~\ref{tab:decoupling_functions-infinite}?
\item For these 51 models, it is known that  $Q(0,0)$ is not D-finite~\cite{BoRaSa-14}. But is
  it D-algebraic for more models than
  those of  Table~\ref{tab:decoupling_functions-infinite}?
\end{itemize}

%==============================================================
\subsection{Towards uniform proofs}
%===============================================================

Maybe the most tantalizing open problem about the classification of quadrant walks would be to give a \emm uniform, proof of Table~\ref{tab:nat} (as for instance in the continuous setting of~\cite{penta}).
Ideally, one could dream of a uniform criterion which would apply automatically to any weighted quadrant model and determine the nature of the associated \gf. At this point, we know that the classification of the 79 models in the algebraic/D-finite/D-algebraic/D-transcendental hierarchy coincides with the classification in terms of the existence or non-existence of rational invariants and decoupling functions. However only  some of the implications are constructive. For instance, this paper derives \emm positive results, (like algebraicity and D-algebraicity) from the existence of invariants and/or decoupling functions. But the transcendence  results (in the finite group case) have \emm  not, been derived from the non-existence of decoupling functions,  but instead rely on independent arguments~\cite{MR3588720}. Until a very recent preprint~\cite{hardouin-singer2020}, the same was true of the D-transcendental results, originally established in~\cite{DHRS-17,DHRS-sing,DH-t}. Here are some open questions in this direction:

\begin{itemize}
\item 
  The present paper shows that exactly $4$ of the $23$ finite-group models have a decoupling function, and uses this   function to prove  algebraicity of the associated \gf. Can the transcendence of the remaining 19 models be deduced from the non-existence of a decoupling function? Would such a criteria hold
  for weighted models?

  \item
  Is there a way to prove
D-finiteness for the other 19 other finite-group models, using only the rational invariant? Is there maybe something like a \emm weak decoupling function,?

\item Can one provide a proof of non-D-finiteness of the $9$ D-algebraic models with an infinite group based on the fact that no rational invariant exists for them?
\end{itemize}

%=======================================================
\subsection{Other walk  models}
\label{sec:other}
%=======================================================
Could there be an invariant approach for quadrant walks with
  large steps~\cite{FaRa,BBMM-18}? For walks in a higher dimensional
  cone~\cite{BaKa,BoBoKaMe16,DuHoWa-16}? For walks avoiding a
  quadrant~\cite{BM-three-quadrants,BMW2020,Ra-Tr} or more generally, confined in
  an arbitrary cone~\cite{budd,elvey-price2020}? 
An invariant approach has  already been applied successfully to walks of the Kreweras trilogy avoiding a quadrant~\cite{mbm-K-tq}.

       \bigskip

\noindent {\bf Acknowledgments.} We  thank Charlotte Hardouin and Irina Kurkova
for interesting discussions, {and Andrew Elvey Price, who indicated a
shorter proof of Proposition~\ref{prop:P-DA}}.

\bibliographystyle{alpha}

\begin{thebibliography}{BMEPF{\etalchar{+}}21}

\bibitem[Abh90]{abhyankar}
S.~S. Abhyankar.
\newblock {\em Algebraic geometry for scientists and engineers}, volume~35 of
  {\em Mathematical Surveys and Monographs}.
\newblock American Mathematical Society, Providence, RI, 1990.

\bibitem[Abr95]{abramov}
S.~A. Abramov.
\newblock Rational solutions of linear difference and \emph{q}-difference
  equations with polynomial coefficients.
\newblock In {\em Proceedings of the 1995 International Symposium on Symbolic
  and Algebraic Computation, {ISSAC}'95, Montreal, Canada, July 10-12, 1995},
  pages 285--289, New York, NY, USA, 1995. ACM.

\bibitem[AWZ93]{adan}
I.~J. B.~F. Adan, J.~Wessels, and W.~H.~M. Zijm.
\newblock A compensation approach for two-dimensional {M}arkov processes.
\newblock {\em Adv. in Appl. Probab.}, 25(4):783--817, 1993.

\bibitem[BBM11]{BeBM-11}
O.~Bernardi and M.~Bousquet-M{\'e}lou.
\newblock Counting colored planar maps: algebraicity results.
\newblock {\em J. Combin. Theory Ser. B}, 101(5):315--377, 2011.
\newblock \href{http://arxiv.org/abs/0909.1695}{ArXiv:0909.1695}.

\bibitem[BBM17]{BeBM-15}
O.~Bernardi and M.~Bousquet-M{\'e}lou.
\newblock Counting coloured planar maps: differential equations.
\newblock {\em Comm. Math. Phys.}, 354(1):31--84, 2017.
\newblock \href{http://arxiv.org/abs/1507.02391}{ArXiv:1507.02391}.

\bibitem[BBMKM16]{BoBoKaMe16}
A.~Bostan, M.~Bousquet-M{\'e}lou, M.~Kauers, and S.~Melczer.
\newblock On 3-dimensional lattice walks confined to the positive octant.
\newblock {\em Ann. Comb.}, 20(4):661--704, 2016.
\newblock \href{http://arxiv.org/abs/1409.3669}{ArXiv:1409.3669}.

\bibitem[BBMMar]{BBMM-18}
A.~Bostan, M.~Bousquet-Mélou, and S.~Melczer.
\newblock Walks with large steps in an orthant.
\newblock {\em J. Eur. Math. Soc. (JEMS)}, to appear.
\newblock \href{https://arxiv.org/abs/1806.00968}{ArXiv:1806.00968}.

\bibitem[BBMR16]{BeBMRa-FPSAC-16}
O.~Bernardi, M.~Bousquet-M\'elou, and K.~Raschel.
\newblock Counting quadrant walks via {T}utte's invariant method
  \href{https://fpsac2016.sciencesconf.org/browse/author?authorid=413011}{(extended
  abstract)}.
\newblock In {\em 28th {I}nternational {C}onference on {F}ormal {P}ower
  {S}eries and {A}lgebraic {C}ombinatorics ({FPSAC} 2016)}, Discrete Math.
  Theor. Comput. Sci. Proc., AT, pages 203--214. Assoc. Discrete Math. Theor.
  Comput. Sci., Nancy, 2016.
\newblock \href{http://arxiv.org/abs/1511.04298}{ArXiv:1511.04298}.

\bibitem[BCvH{\etalchar{+}}17]{MR3588720}
A.~Bostan, F.~Chyzak, M.~van Hoeij, M.~Kauers, and L.~Pech.
\newblock Hypergeometric expressions for generating functions of walks with
  small steps in the quarter plane.
\newblock {\em European J. Combin.}, 61:242--275, 2017.
\newblock \href{https://arxiv.org/abs/1606.02982}{ArXiv:1606.02982}.

\bibitem[Ber07]{Bern07}
O.~Bernardi.
\newblock Bijective counting of {K}reweras walks and loopless triangulations.
\newblock {\em J. Combin. Theory Ser. A}, 114(5):931--956, 2007.
\newblock \href{https://arxiv.org/abs/math/0605320}{ArXiv:math.0605320}.

\bibitem[BF02]{banderier-flajolet}
C.~Banderier and P.~Flajolet.
\newblock Basic analytic combinatorics of directed lattice paths.
\newblock {\em Theoret. Comput. Sci.}, 281(1-2):37--80, 2002.

\bibitem[BGRR18]{bouvel-baxter}
M.~Bouvel, V.~Guerrini, A.~Rechnitzer, and S.~Rinaldi.
\newblock Semi-{B}axter and strong-{B}axter: two relatives of the {B}axter
  sequence.
\newblock {\em SIAM J. Discrete Math.}, 32(4):2795--2819, 2018.
\newblock \href{http://arxiv.org/abs/1702.04529}{ArXiv:1702.04529}.

\bibitem[BK10]{BoKa-10}
A.~Bostan and M.~Kauers.
\newblock The complete generating function for {G}essel walks is algebraic.
\newblock {\em Proc. Amer. Math. Soc.}, 138(9):3063--3078, 2010.
\newblock \href{http://arxiv.org/abs/0909.1965}{ArXiv:0909.1965}.

\bibitem[BKR17]{BKR-13}
A.~Bostan, I.~Kurkova, and K.~Raschel.
\newblock A human proof of {G}essel's lattice path conjecture.
\newblock {\em Trans. Amer. Math. Soc.}, 369(2):1365--1393, 2017.
\newblock \href{http://arxiv.org/abs/1309.1023}{ArXiv:1309.1023}.

\bibitem[BKY16]{BaKa}
A.~Bacher, M.~Kauers, and R.~Yatchak.
\newblock Continued classification of 3{D} lattice walks in the positive
  octant.
\newblock In {\em Proceedings of FPSAC'16}, DMTCS Proceedings, pages 95--106,
  2016.
\newblock \href{http://arxiv.org/abs/1511.05763}{ArXiv:1511.05763}.

\bibitem[BM]{mbm-K-tq}
M.~Bousquet-M\'elou.
\newblock Lattice walks avoiding a quadrant: the {K}reweras trilogy.
\newblock In preparation.

\bibitem[BM03]{bousquet-motifs}
M.~Bousquet-M{\'e}lou.
\newblock
  \href{http://www.combinatorics.org/ojs/index.php/eljc/article/view/v9i2r19}{Four
  classes of pattern-avoiding permutations under one roof: generating trees
  with two labels}.
\newblock {\em Electron. J. Combin.}, 9(2):Research paper 19, 31 pp.
  (electronic), 2002/03.

\bibitem[BM02]{bousquet-versailles}
M.~Bousquet-M{\'e}lou.
\newblock Counting walks in the quarter plane.
\newblock In {\em Mathematics and computer science~$2$, (Versailles, $2002$)},
  Trends Math., pages 49--67. Birkh\"auser, Basel, 2002.
\newblock \href{https://arxiv.org/abs/1708.06192}{ArXiv:1708.06192}.

\bibitem[BM11]{mbm-ascending}
M.~Bousquet-M\'elou.
\newblock Counting permutations with no long monotone subsequence via
  generating trees and the kernel method.
\newblock {\em J. Algebraic Combin.}, 33(4):571--608, 2011.
\newblock \href{https://arxiv.org/abs/1006.0311}{ArXiv:1006.0311}.

\bibitem[BM16a]{mbm-gessel}
M.~Bousquet-M{\'e}lou.
\newblock An elementary solution of {G}essel's walks in the quadrant.
\newblock {\em Adv. Math.}, 303:1171--1189, 2016.
\newblock \href{https://arxiv.org/abs/1503.08573}{ArXiv:1503.08573}.

\bibitem[BM16b]{BM-three-quadrants}
M.~Bousquet-M\'elou.
\newblock Square lattice walks avoiding a quadrant.
\newblock {\em J. Combin. Theory Ser. A}, 144:37--79, 2016.
\newblock
  \href{https://arxiv.org/abs/https://arxiv.org/abs/1511.02111}{ArXiv:1511.02111}.

\bibitem[BMEPF{\etalchar{+}}21]{penta}
M.~Bousquet-M\'elou, A.~Elvey~Price, S.~Franceschi, Ch. Hardouin, and
  K.~Raschel.
\newblock The stationary distribution of reflected {B}rownian motion in a
  wedge: differential properties.
\newblock \href{https://arxiv.org/abs/2101.01562}{Ar{X}iv:2101.01562}, 2021.

\bibitem[BMJ06]{mbm-jehanne}
M.~Bousquet-M\'elou and A.~Jehanne.
\newblock Polynomial equations with one catalytic variable, algebraic series
  and map enumeration.
\newblock {\em J. Combin. Theory Ser. B}, 96:623--672, 2006.
\newblock \href{http://arxiv.org/abs/math/0504018}{Arxiv:0504018}.

\bibitem[BMM10]{BMM-10}
M.~Bousquet-M{\'e}lou and M.~Mishna.
\newblock Walks with small steps in the quarter plane.
\newblock In {\em Algorithmic probability and combinatorics}, volume 520 of
  {\em Contemp. Math.}, pages 1--39. Amer. Math. Soc., Providence, RI, 2010.
\newblock \href{http://arxiv.org/abs/0810.4387}{ArXiv:0810.4387}.

\bibitem[BMP00]{bousquet-petkovsek-recurrences}
M.~Bousquet-M{\'e}lou and M.~Petkov{\v{s}}ek.
\newblock Linear recurrences with constant coefficients: the multivariate case.
\newblock {\em Discrete Math.}, 225(1-3):51--75, 2000.

\bibitem[BMW20]{BMW2020}
M.~Bousquet-Mélou and M.~Wallner.
\newblock More models of walks avoiding a quadrant.
\newblock In {\em AofA 2020 (Analysis of Algorithms)}, volume 159 of {\em
  LIPIcs}, 2020.
\newblock \href{https://drops.dagstuhl.de/opus/volltexte/2020/12043/}{DOI:
  10.4230/LIPIcs.AofA.2020.13}.

\bibitem[Bro65]{brown-quadrangulations}
W.~G. Brown.
\newblock Enumeration of quadrangular dissections of the disk.
\newblock {\em Canad. J. Math.}, 17:302--317, 1965.

\bibitem[BRS14]{BoRaSa-14}
A.~Bostan, K.~Raschel, and B.~Salvy.
\newblock Non-{D}-finite excursions in the quarter plane.
\newblock {\em J. Combin. Theory Ser. A}, 121:45--63, 2014.
\newblock \href{http://arxiv.org/abs/1205.3300}{ArXiv:1205.3300}.

\bibitem[BT64]{brown-tutte-non-separable}
W.~G. Brown and W.~T. Tutte.
\newblock On the enumeration of rooted non-separable planar maps.
\newblock {\em Canad. J. Math.}, 16:572--577, 1964.

\bibitem[Bud20]{budd}
T.~Budd.
\newblock Winding of simple walks on the square lattice.
\newblock {\em J. Combin. Theory Ser. A}, 172:105191, 59, 2020.
\newblock \href{http://arxiv.org/abs/1709.04042}{ArXiv:1709.04042}.

\bibitem[CB83]{cohen-boxma}
J.~W. Cohen and O.~J. Boxma.
\newblock {\em Boundary value problems in queueing system analysis}, volume~79
  of {\em North-Holland Mathematics Studies}.
\newblock North-Holland Publishing Co., Amsterdam, 1983.

\bibitem[CDD{\etalchar{+}}07]{chen-crossings}
W.~Y.~C. Chen, E.~Y.~P. Deng, R.~R.~X. Du, R.~P. Stanley, and C.~H. Yan.
\newblock Crossings and nestings of matchings and partitions.
\newblock {\em Trans. Amer. Math. Soc.}, 359(4):1555--1575, 2007.

\bibitem[CF16]{chapuy-fang}
G.~Chapuy and W.~Fang.
\newblock Generating functions of bipartite maps on orientable surfaces.
\newblock {\em Electron. J. Combin.}, 23(3):Paper 3.31, 37, 2016.
\newblock \href{https://arxiv.org/abs/1502.06239}{ArXiv:1502.06239}.

\bibitem[DH19]{DH-t}
T.~Dreyfus and C~Hardouin.
\newblock Length derivative of the generating series of walks confined in the
  quarter plane.
\newblock \href{https://arxiv.org/abs/1902.10558}{ArXiv:1902.10558}, 2019.

\bibitem[DHR18]{dreyfus-Mahler}
T.~Dreyfus, C.~Hardouin, and J.~Roques.
\newblock Hypertranscendence of solutions of {M}ahler equations.
\newblock {\em J. Eur. Math. Soc. (JEMS)}, 20(9):2209--2238, 2018.
\newblock \href{http://arxiv.org/abs/1507.03361}{ArXiv:1507.03361}.

\bibitem[DHRS18]{DHRS-17}
T.~Dreyfus, C.~Hardouin, J.~Roques, and M.~F. Singer.
\newblock On the nature of the generating series of walks in the quarter plane.
\newblock {\em Invent. Math.}, 213(1):139--203, 2018.
\newblock \href{http://arxiv.org/abs/1702.04696}{ArXiv:1702.04696}.

\bibitem[DHRS20]{DHRS-sing}
T.~Dreyfus, C.~Hardouin, J.~Roques, and M.~Singer.
\newblock Walks in the quarter plane: genus zero case.
\newblock {\em J. Combin. Theory Ser. A}, 174:105251, 25, 2020.
\newblock \href{https://arxiv.org/abs/1710.02848}{ArXiv:1710.02848}.

\bibitem[DHW16]{DuHoWa-16}
D.~K. Du, Q.-H. Hou, and R.-H. Wang.
\newblock Infinite orders and non-{D}-finite property of 3-dimensional lattice
  walks.
\newblock {\em Electron. J. Combin.}, 23(3):Paper 3.38, 15, 2016.
\newblock \href{https://arxiv.org/abs/1507.03705}{ArXiv:1507.03705}.

\bibitem[Dur14]{Du-14}
J.~Duraj.
\newblock Random walks in cones: the case of nonzero drift.
\newblock {\em Stochastic Process. Appl.}, 124(4):1503--1518, 2014.
\newblock \href{http://arxiv.org/abs/1306.5996}{ArXiv:1306.5996}.

\bibitem[DW15]{denisov-wachtel}
D.~Denisov and V.~Wachtel.
\newblock Random walks in cones.
\newblock {\em Ann. Probab.}, 43(3):992--1044, 2015.
\newblock \href{http://arxiv.org/abs/1110.1254}{ArXiv:1110.1254}.

\bibitem[EP20]{elvey-price2020}
A.~Elvey~Price.
\newblock
  \href{https://www.mat.univie.ac.at/~slc/wpapers/FPSAC2020/43.html}{Counting
  lattice walks by winding angle}.
\newblock In {\em 32nd {I}nternational {C}onference on {F}ormal {P}ower
  {S}eries and {A}lgebraic {C}ombinatorics ({FPSAC} 2020)}, volume 84B of {\em
  S\'{e}m. Lothar. Combin.}, pages Art. 43, 12, 2020.

\bibitem[FH84]{flatto-hahn}
L.~Flatto and S.~Hahn.
\newblock Two parallel queues created by arrivals with two demands. {I}.
\newblock {\em SIAM J. Appl. Math.}, 44(5):1041--1053, 1984.

\bibitem[FIM99]{FIM-99}
G.~Fayolle, R.~Iasnogorodski, and V.~Malyshev.
\newblock {\em Random walks in the quarter-plane}, volume~40 of {\em
  Applications of Mathematics (New York)}.
\newblock Springer-Verlag, Berlin, 1999.

\bibitem[FR15]{FaRa}
G.~Fayolle and K.~Raschel.
\newblock About a possible analytic approach for walks in the quarter plane
  with arbitrary big jumps.
\newblock {\em C. R. Math. Acad. Sci. Paris}, 353(2):89--94, 2015.
\newblock \href{https://arxiv.org/abs/1406.7469}{Arxiv:1406.746}.

\bibitem[Ges86]{gessel-proba}
I.~M. Gessel.
\newblock A probabilistic method for lattice path enumeration.
\newblock {\em J. Statist. Plann. Inference}, 14(1):49--58, 1986.

\bibitem[Gra02]{grabiner}
D.~J. Grabiner.
\newblock Random walk in an alcove of an affine {W}eyl group, and non-colliding
  random walks on an interval.
\newblock {\em J. Combin. Theory Ser. A}, 97(2):285--306, 2002.

\bibitem[GWW98]{gessel-weinstein-wilf}
I.~M. Gessel, J.~Weinstein, and H.~S. Wilf.
\newblock Lattice walks in {${\bf Z}\sp d$} and permutations with no long
  ascending subsequences.
\newblock {\em Electron. J. Combin.}, 5(1):Research Paper 2, 11 pp.\
  (electronic), 1998.

\bibitem[GZ92]{gessel-zeilberger}
I.~M. Gessel and D.~Zeilberger.
\newblock Random walk in a {W}eyl chamber.
\newblock {\em Proc. Amer. Math. Soc.}, 115(1):27--31, 1992.

\bibitem[HS20]{hardouin-singer2020}
Ch. Hardouin and M.~F. Singer.
\newblock On differentially algebraic generating series for walks in the
  quarter plane.
\newblock \href{http://arxiv.org/abs/2010.00963}{ArXiv:2010.00963}, 2020.

\bibitem[JS87]{jones-singerman}
G.~A. Jones and D.~Singerman.
\newblock {\em Complex functions. An algebraic and geometric viewpoint}.
\newblock Cambridge University Press, Cambridge, 1987.

\bibitem[KKZ09]{KaKoZe08}
M.~Kauers, C.~Koutschan, and D.~Zeilberger.
\newblock Proof of {I}ra {G}essel's lattice path conjecture.
\newblock {\em Proc. Nat. Acad. Sci. USA}, 106(28):11502--11505, 2009.
\newblock \href{http://arxiv.org/abs/0806.4300}{ArXiv:0806.4300}.

\bibitem[KMSW19]{kenyon-bipolar}
R.~Kenyon, J.~Miller, S.~Sheffield, and D.~B. Wilson.
\newblock Bipolar orientations on planar maps and {${\rm SLE}_{12}$}.
\newblock {\em Ann. Probab.}, 47(3):1240--1269, 2019.
\newblock \href{https://arxiv.org/abs/1511.04068}{ArXiv:1511.04068}.

\bibitem[KR11]{KuRa-11}
I.~Kurkova and K.~Raschel.
\newblock Explicit expression for the generating function counting {G}essel's
  walks.
\newblock {\em Adv. in Appl. Math.}, 47(3):414--433, 2011.
\newblock \href{https://arxiv.org/abs/0912.0457}{ArXiv:0912.0457}.

\bibitem[KR12]{KR-12}
I.~Kurkova and K.~Raschel.
\newblock On the functions counting walks with small steps in the quarter
  plane.
\newblock {\em Publ. Math. Inst. Hautes \'Etudes Sci.}, 116:69--114, 2012.
\newblock \href{http://arxiv.org/abs/1107.2340}{ArXiv:1107.2340}.

\bibitem[Kra89]{kratt-skew}
C.~Krattenthaler.
\newblock Enumeration of lattice paths and generating functions for skew plane
  partitions.
\newblock {\em Manuscripta Math.}, 63(2):129--155, 1989.

\bibitem[KY15]{KaYa-15}
M.~Kauers and R.~Yatchak.
\newblock Walks in the quarter plane with multiple steps.
\newblock In {\em Proceedings of {FPSAC} 2015}, Discrete Math. Theor. Comput.
  Sci. Proc., pages 25--36. Assoc. Discrete Math. Theor. Comput. Sci., Nancy,
  2015.
\newblock \href{http://arxiv.org/abs/1411.3537}{ArXiv:1411.3537}.

\bibitem[Lip88]{lipshitz-diag}
L.~Lipshitz.
\newblock The diagonal of a {$D$}-finite power series is {$D$}-finite.
\newblock {\em J. Algebra}, 113(2):373--378, 1988.

\bibitem[Lip89]{lipshitz-df}
L.~Lipshitz.
\newblock D-finite power series.
\newblock {\em J. Algebra}, 122:353--373, 1989.

\bibitem[Lit00]{Li-00}
G.~Litvinchuk.
\newblock {\em Solvability theory of boundary value problems and singular
  integral equations with shift}.
\newblock Kluwer Academic Publishers, Dordrecht, 2000.

\bibitem[LSW17]{li-schnyder}
Y.~Li, X.~Sun, and S.~S. Watson.
\newblock Schnyder woods, {SLE}(16), and {L}iouville quantum gravity.
\newblock \href{https://arxiv.org/abs/1705.03573}{ArXiv:1705.03573}, 2017.

\bibitem[Mal04]{malgrange-non-linear}
B.~Malgrange.
\newblock On nonlinear differential {G}alois theory.
\newblock In {\em Frontiers in mathematical analysis and numerical methods},
  pages 185--196. World Sci. Publ., River Edge, NJ, 2004.

\bibitem[Mis09]{mishna-jcta}
M.~Mishna.
\newblock Classifying lattice walks restricted to the quarter plane.
\newblock {\em J. Combin. Theory Ser. A}, 116(2):460--477, 2009.
\newblock \href{http://arxiv.org/abs/math/0611651}{ArXiv:0611651}.

\bibitem[MM14]{MeMi13}
S.~Melczer and M.~Mishna.
\newblock Singularity analysis via the iterated kernel method.
\newblock {\em Combin. Probab. Comput.}, 23(5):861--888, 2014.
\newblock \href{http://arxiv.org/abs/1303.3236}{ArXiv:1303.3236}.

\bibitem[MR09]{MiRe09}
M.~Mishna and A.~Rechnitzer.
\newblock Two non-holonomic lattice walks in the quarter plane.
\newblock {\em Theoret. Comput. Sci.}, 410(38-40):3616--3630, 2009.
\newblock \href{http://arxiv.org/abs/math/0701800}{ArXiv:0701800}.

\bibitem[Pop86]{popescu}
D.~Popescu.
\newblock General {N}\'eron desingularization and approximation.
\newblock {\em Nagoya Math. J.}, 104:85--115, 1986.

\bibitem[Ras12]{Ra-12}
K.~Raschel.
\newblock Counting walks in a quadrant: a unified approach via boundary value
  problems.
\newblock {\em J. Eur. Math. Soc. (JEMS)}, 14(3):749--777, 2012.
\newblock \href{http://arxiv.org/abs/1003.1362}{ArXiv:1003.1362}.

\bibitem[Rit50]{ritt-vol2}
J.~F. Ritt.
\newblock {\em Differential {A}lgebra}.
\newblock American Mathematical Society Colloquium Publications, Vol. XXXIII.
  American Mathematical Society, New York, N. Y., 1950.

\bibitem[RT19]{Ra-Tr}
K.~Raschel and A.~Trotignon.
\newblock
  \href{https://www.combinatorics.org/ojs/index.php/eljc/article/view/v26i3p31}{On
  walks avoiding a quadrant}.
\newblock {\em Electron. J. Combin.}, 26(3):Paper 3.31, 34, 2019.

\bibitem[Sta99]{stanley-vol2}
R.~P. Stanley.
\newblock {\em Enumerative combinatorics $2$}, volume~62 of {\em Cambridge
  Studies in Advanced Mathematics}.
\newblock Cambridge University Press, Cambridge, 1999.

\bibitem[Swa98]{swan-popescu}
R.~G. Swan.
\newblock N\'eron-{P}opescu desingularization.
\newblock In {\em Algebra and geometry ({T}aipei, 1995)}, volume~2 of {\em
  Lect. Algebra Geom.}, pages 135--192. Int. Press, Cambridge, MA, 1998.

\bibitem[Tut73]{lambda12}
W.~T. Tutte.
\newblock Chromatic sums for rooted planar triangulations: the cases {$\lambda
  =1$} and {$\lambda =2$}.
\newblock {\em Canad. J. Math.}, 25:426--447, 1973.

\bibitem[Tut84]{tutte-differential}
W.~T. Tutte.
\newblock Map-colourings and differential equations.
\newblock In {\em Progress in graph theory ({W}aterloo, {O}nt., 1982)}, pages
  477--485. Academic Press, Toronto, ON, 1984.

\bibitem[Tut95]{tutte-chromatic-revisited}
W.~T. Tutte.
\newblock Chromatic sums revisited.
\newblock {\em Aequationes Math.}, 50(1-2):95--134, 1995.

\bibitem[vdPS97]{vdP-singer-difference}
M.~van~der Put and M.~F. Singer.
\newblock {\em Galois theory of difference equations}, volume 1666 of {\em
  Lecture Notes in Mathematics}.
\newblock Springer-Verlag, Berlin, 1997.

\bibitem[vdPS03]{vdP-singer-Galois-diff}
M.~van~der Put and M.~F. Singer.
\newblock {\em Galois theory of linear differential equations}, volume 328 of
  {\em Grundlehren der Mathematischen Wissenschaften [Fundamental Principles of
  Mathematical Sciences]}.
\newblock Springer-Verlag, Berlin, 2003.

\bibitem[WW62]{ww}
E.~T. Whittaker and G.~N. Watson.
\newblock {\em A course of modern analysis. {A}n introduction to the general
  theory of infinite processes and of analytic functions: with an account of
  the principal transcendental functions}.
\newblock Fourth edition. Reprinted. Cambridge University Press, New York,
  1962.

\end{thebibliography}

\newcommand{\etalchar}[1]{$^{#1}$}

\label{sec:biblio}

\appendix

%%%%%%%%%%%%%%%%%%%%%%%%%%%%%%%%%%%%%%%%%%%%%%%%%%%%%%%%%%%%%%%
\section{Solving algebraic models}
\label{app:alg}
%%%%%%%%%%%%%%%%%%%%%%%%%%%%%%%%%%%%%%%%%%%%%%%%%%%%%%%%%%%%%%%
In this section, we consider in turn the eight models of
Figure~\ref{fig:alg_models}
and solve them using the invariants of Tables~\ref{tab:ratinv} and~\ref{tab:ratinv-weighted} and the decoupling
functions of Table~\ref{tab:decoupling_functions-finite}. We work systematically with the variable
$y$ (as in Section~\ref{sec:gessel}), thus using the invariant $J(y)$,
the decoupling function $G(y)$ and
\[ 
L(y)=S(y)-G(y),
\] 
with $S(y)=K(0,y)Q(0,y)$.
In each case (except for the reverse Kreweras walks), we construct from $J(y)$ and $L(y)$ a series in~$t$ with
polynomial coefficients in $y$ satisfying the conditions of
Lemma~\ref{lem:inv}. This construction is very similar to what we did in
Section~\ref{sec:gessel} for Gessel's model.
Applying Lemma~\ref{lem:inv}, and replacing $S(y)$ by its expression in terms
of $Q(0,y)$, gives an equation of the form
\beq\label{quad1}
\Pol(Q(0,y), A_1, \ldots, A_k,t,y)=0,
\eeq
 where $\Pol(x_0, x_1, \ldots, x_k, t,y)$ is a polynomial with
 rational coefficients, and $A_1, \ldots, A_k$ are $k$ auxiliary series depending
on  $t$ only (in what follows, they are always derivatives of
$Q(0,y)$ with respect to $y$, evaluated at $y=0$ or $y=-1$).  In the
 case of reverse Kreweras' walks, Lemma~\ref{lem:inv} is replaced by
 the substitution-free approach of Section~\ref{sec:alternative},
 and~\eqref{quad1} follows from~\eqref{eqinv-revK}.

We have described in~\cite{mbm-jehanne} a strategy to solve equations
of the form~\eqref{quad1}, which we apply successfully in all eight cases. One key point is
to decide how many solutions $Y\equiv Y(t)$ the following equation has:
\beq
\frac{\partial \Pol }{\partial x_0}(Q(0,Y), A_1, \ldots, A_k,t,Y)=0,
\label{quad2} \eeq
and to note that each of them also satisfies
\beq
\frac{\partial \Pol }{\partial y}(Q(0,Y), A_1, \ldots, A_k,t,Y)=0.
\label{quad3} \eeq
Each of these series $Y$ is also a double root of the  discriminant of $\Pol$
with respect to its first variable, evaluated at $A_1,\ldots,  A_k,
t,y$ (and seen as a polynomial in $y$);
see~\cite[Thm.~14]{mbm-jehanne}.  Note that this method does not
require  to \emm determine, the series $Y$,
but only  to decide how many such series exist, and, possibly, compute
their first few terms.

In addition to the original paper~\cite{mbm-jehanne}, we refer the
reader to~\cite[Sec.~3.4]{mbm-gessel} where an equation of this type, arising in
Gessel's model and involving three series $A_i$, is solved.

This section is supported by a {\sc Maple} session available on the
authors' webpages, where all calculations are detailed.

%===============================================
 \subsection{Kreweras' model}
%===============================================
The invariant $J(y)$ and the decoupling function $G(y)$ have poles at
$y=0$, respectively double and simple. By eliminating these poles and
applying Lemma~\ref{lem:inv} with $\rho=1/2$, we find
\[ 
J(y)=C_2L(y)^2+C_1L(y)+C_0,
\] 
with
\[  
C_2=t, \qquad C_1=-1, \qquad  {C_0=-2tS'(0)}
\] 
(we have used the fact that $S(0)=0$, which stems from $K(0,0)=0$).
Returning to the original series $Q(0,y)$, this gives an equation of
the form~\eqref{quad1}:
\[ 
 t^2y^2Q(0,y)^2+(2t-y)Q(0,y)-2tQ(0,0)+y=0,
\] 
which  coincides with Eq.~(11) in~\cite{bousquet-versailles}.
This equation is then readily solved using the strategy of~\cite{mbm-jehanne} (as
was done in~\cite{bousquet-versailles}), and yields  Thm.~2.1
of~\cite{bousquet-versailles}. The series $Q(0,0)$ is cubic, and has a
rational expression in terms of the unique series $Z\equiv Z(t)$
having constant term~$0$ and satisfying $Z=t(2+Z^3)$. The series
$Q(0,y)$ is quadratic over $\qs(y,Z)$. By symmetry, $Q(x,0)=Q(0,x)$,
and one can get back to $Q(x,y)$ using the main functional equation~\eqref{eq:functional_equation}.

%===============================================
\subsection{The reverse Kreweras model}
%===============================================
This is the model for which we had to develop a substitution-free
version of the invariant lemma in Section~\ref{sec:alternative}. We  start
from~\eqref{eqinv-revK}, which gives an equation of the form~\eqref{quad1}:
\beq\label{eqcat-rev}
t^2yQ(y)^2+(-t^2yA_1+ty^3-y^2+t)Q(y)-tyA_2-tA_1+y^2=0,
\eeq
where $Q(y)$ stands for $Q(0,y)$, $A_1$ is $Q(0,0)\equiv Q(0)$ and $A_2$ is
$Q'_y(0,0)\equiv Q'(0)$.

 Equation~\eqref{quad2} has two roots $Y_+$ and $Y_-$, which are power
series in $\sqrt t$. Following the approach
of~\cite[Sec.~7]{mbm-jehanne}, we write that the discriminant of $\Pol$
with respect to its first variable, evaluated at $A_1, A_2, t,y$, has
two double roots in $y$ (namely $Y_+$ and $Y_-$). This gives two polynomial equations relating
$A_1$ and $A_2$, from which one  derives cubic equations for
$A_1$ and $A_2$. Both series have a rational expression in terms of
the unique series $Z\equiv Z(t)$, with constant term 0, satisfying 
$Z=t(2+Z^3)$ (this is the same parametrization as in Kreweras' model).
The series $Z$ is denoted by $W$ {in~\cite[Prop.~14]{BMM-10}}.

Once $A_1$ and $A_2$ are known, one recovers $Q(0,y)$ thanks
to~\eqref{eqcat-rev}, and this yields the expression of
$Q(0,y)$ given in Prop.~14 of~\cite{BMM-10}. This series has
degree $2$ over $\qs(y,Z)$. 

This model was first solved in~\cite[Thm.~2.3]{mishna-jcta}.

%===============================================
\subsection{The double Kreweras model}\label{sec:double}
%===============================================

The series $L(y){=S(y)+1/y}$ has just one simple pole at $y=0$, but the invariant $J(y)$
has  a second pole at $y=-1$. We first eliminate it by considering
$(L(y)-L(-1))J(y)$. 
Note that $K(0,-1)=0$, hence  $S(-1)=0$ and $L(-1)={-1}$.
%Note that  $S(-1)=0$, since $K(0,-1)=0$. This simplifies the expression for $L(-1)={-1}$. 
Then  by eliminating poles at
$y=0$,  and applying Lemma~\ref{lem:inv} with $\rho=1$, we find 
\beq\label{eqcat-double}
(L(y)-L(-1))J(y)=C_2L(y)^2+C_1L(y)+C_0,
\eeq
with
\[  
C_2=t, \qquad C_1=-1-t-{t S(0)}, \qquad  C_0=-t(1+S(0)+S'(0)).
\] 
Returning to the original series $Q(0,y)$, which we denote by
$Q(y)$ here, this gives an equation of the form~\eqref{quad1}, of degree $2$ in
$Q(y)$, and involving two auxiliary series $A_1=Q(0)$ and
$A_2=Q'(0)$.

 Equation~\eqref{quad2} has two roots, which are power
series in $\sqrt t$. Following the approach
of~\cite[Sec.~7]{mbm-jehanne}, we write that the discriminant of $\Pol$
with respect to its first variable, evaluated at $A_1, A_2, t,y$, has
two double roots in $y$. This gives two polynomial equations relating
$A_1$ and $A_2$, from which one  derives quartic equations for
$A_1$ and $A_2$. Both series have a rational expression in terms of
the unique series $Z\equiv Z(t)$, with constant term $0$, satisfying
\[ 
Z(1-Z)^2=t(Z^4-2Z^3+6Z^2-2Z+1).
\] 
This series was introduced in~\cite{BMM-10}, where this model was solved for
the first time.

Once $A_1$ and $A_2$ are known, one recovers $Q(0,y)$ thanks
to~\eqref{eqcat-double} (using $S(y)=K(0,y)Q(0,y)$), and this yields
the expression for
% of
$Q(0,y)$ given in Prop.~15 of~\cite{BMM-10}. 

%===============================================
\subsection{Gessel's model}\label{app:gessel}
%===============================================
We start from~\eqref{eq:JL}, with the values of $C_0, C_1, C_2$
  and $C_3$
given in Proposition~\ref{prop:abcd}. Returning to the original series $Q(0,y)$, which we denote again
$Q(y)$, this gives an equation of the form~\eqref{quad1}, of degree $3$ in
$Q(y)$, and involving three auxiliary series $A_1=Q(0)$, $A_2=Q(-1)$
and $A_3=Q'(-1)$ (note that these series are slightly different
  from those involved in~\eqref{eq-a-voir}, which were expressed in terms of
  $S$ rather than $Q$).

The equation~\eqref{quad2} has three roots, which are power
series in $t$. We can compute their first coefficients, which appear
suspiciously simple: $Y_\circ=1+O(t^6)$, $Y_+=t+2t^2+5t^4+14t^4+42t^5+O(t^6)$,
$Y_-=-t+2t^2+5t^4-14t^4+42t^5+O(t^6)$. The first series thus seems to
be constant, while the other two would involve Catalan numbers. These guesses can be proved as follows: If we eliminate $A_2$ and
$A_3$ between~\eqref{quad1},~\eqref{quad2} and~\eqref{quad3}, we find
that each series $Y$ must satisfy:
\[ 
Y ( Y-1 )  ( t{Y}^{2}+2tY+t+Y )  ( t{
Y}^{2}+2tY+t-Y )  
\left( {t}^{2} ( Y+1 ) ^{2}Q(Y)-{t}^{2}( Y+1 ) A_1-
Y \right)=0.
\] 
Using the first few coefficients of the three series $Y$, we conclude that
indeed $Y_\circ=1$, while $Y_+$ and $Y_-$ satisfy respectively
\[ 
Y_+=t(1+Y_+)^2\qquad \hbox{and} \qquad Y_-=-t(1+Y_-)^2,
\] 
or equivalently,
\[ 
1=t\left (2+Y_++ \frac 1 {Y_+}\right) \qquad \hbox{and} \qquad 1=-t\left
(2+Y_-+ \frac 1 {Y_-}\right).
\] 
Following the approach
of~\cite[Sec.~7]{mbm-jehanne}, we write that the discriminant of $\Pol$
with respect to its first variable, evaluated at $A_1, A_2, A_3, t,y$, has
three double roots in $y$, namely $Y_\circ, Y_+$ and $Y_-$. Up to a power of $y$, this discriminant can
be written as a polynomial $\Delta(s)$ of degree $6$ in $s:=y+1/y$. The above
equations satisfied by the series $Y$ show that $\Delta(s)$ vanishes at $s=2$, at
$s=1/t-2$ and $s=-1/t-2$. This gives three polynomial equations relating
$A_1$, $A_2$ and $A_3$, from which one finally derives a quartic equation for
$A_2$, and equations of degree $8$ for $A_1$ and $A_3$. As in previous
papers dealing with Gessel's model, we introduce  the quartic
series $T\equiv T(t)$ as the unique solution  with constant term $1$ of
\[ 
T=1+ 256 t^2\frac{T^3}{(3+T)^3},
\] 
and denote $Z= \sqrt T=1 +O(t)$.
Then we find that
\begin{align*}
A_1&=Q(0,0)=\frac{32\,Z ^3(3+3Z -3Z ^2+Z ^3)}{(1+Z )(Z ^2+3)^3},
\\
A_2&=Q(0,-1)= 2\,{\frac {{T}^{3}+{T}^{2}+27\,T+3}{ \left( T+3 \right)
    ^{3}}},
\\
A_3&=Q'_y(0,-1)={\frac { \left( Z-1 \right)  \left( {Z}^{8}-{Z}^{7}-8\,{Z}^{5}+19\,{Z}
^{4}+7\,{Z}^{3}+10\,{Z}^{2}+2\,Z+2 \right) }{{Z}^{3} \left( {Z}^{2}+3
 \right) ^{3}}}.
\end{align*}
Once $A_1, A_2$ and $A_3$ are known, one returns
to the equation that relates them to $Q(0,y)$ (this is essentially~\eqref{eq:JL}). Expressing $t^2$ and the series $A_i$ as rational
functions in $Z$ shows that $Q(0,y)$ is cubic over $\qs(y,Z)$, and one
recovers its expression given in~\cite[Thm.~1]{mbm-gessel} or~\cite{BoKa-10}. 

It remains to get back to $Q(x,0)$, which can be done using the
equation $R(x)+S(Y_0)=xY_0+R(0)$, where $Y_0$ is the root of the kernel
that is a power series in $t$. In fact, we prefer to handle $Q(xt,0)$
rather than $Q(x,0)$, because it is an even function of $t$. It is found that $Q(xt,0)$ has degree 3
over $\qs(x,Z)$, and one recovers its expression given in~\cite[Thm.~1]{mbm-gessel} or~\cite{BoKa-10}.

%===============================================
\subsection{First weighted model}
%===============================================
This is the model involving an arbitrary weight $\lambda$. The
invariant $J(y)$, and the decoupling function $G(y)$, have a pole at
$y=-1$, respectively double and simple. We
eliminate it and apply Lemma~\ref{lem:inv} with $\rho=1/2$ 
to obtain
\[ 
J(y)=C_2L(y)^2+C_1L(y)+C_0,
\] 
with 
\[ 
{C_2=-t^2}, \qquad C_1=t, \qquad C_0=-t^2+2t (1+\lambda t) S'(-1).
\] 
Returning to the series $Q(0,y)\equiv Q(y)$, this gives  an equation of the form
\eqref{quad1} involving a single series $A_i$, namely $A_1=Q(0,-1)$:
\beq\label{eqcat-lambda}
 \left( y+1
 \right) ^{2}{t}^{2}  Q (y)   ^{2}
+ \left( 2\,\lambda\,t-y+1 \right) Q ( y ) 
-2\left( \lambda\,t+1 \right)A_1 +y+1 {=0}.
\eeq
 Equation~\eqref{quad2} has a root $Y=1+ 2\lambda t +O(t^2)$, and
the discriminant of $\Pol$ with respect to its first variable thus has a
double root in $y$. This gives for $A_1$ a cubic equation, which can
be parametrized rationally by  the unique series $Z\equiv
Z(t)$, with coefficients in $\qs(\lambda)$ and constant term $0$, satisfying:
\[ 
Z(1+4Z)=t\left(1+6Z+12Z^2+4(2+\lambda)Z^3\right).
\]  
This series was introduced in~\cite{KaYa-15}, where this model
was first solved, using heavy computer algebra. 

By
setting $y=0$ in~\eqref{eqcat-lambda} {(with $t$ and $A_1$ expressed in
terms of $Z$)}, we find that $Q(0,0)$ {lies in $\qs(\lambda,\sqrt{1+4Z})$}, and  has degree $6$ over
$\qs(t,\lambda)$.  
More generally, $Q(0,y)$ is quadratic over $\qs(\lambda,y,Z)$, and
we recover its expression in terms of $Z$ given
in~\cite[Sec.~5.3]{KaYa-15}  (note that the model we consider
here differs  by a diagonal symmetry from the one
of~\cite{KaYa-15}). Moreover, $Q(0,y)$ has a rational expression in
terms of $\lambda$, $Z$ and $V$, where $V$ is the unique series in
$t$, with coefficients in {$\qs(\lambda,y)$} and constant term $0$,
satisfying $V=Z(Zy-1)(1+V)^2$.

It remains to get back to $Q(x,0)$, which can be done using the
equation $R(x)+S(Y_0)=xY_0+R(0)$, where $Y_0$ is the root of the kernel
that is a power series in $t$. It is found that $Q(x,0)$ has degree $4$
over $\qs(\lambda,x,Z)$, {degree 12 over $\qs(\lambda,t,x)$}, and one recovers the expression given
in~\cite[Sec.~5.3]{KaYa-15}. Moreover, $Q(x,0)$ admits a rational
expression in terms of $\lambda$,  $\sqrt{1+4Z}$ and~$U$, where
$U$ is the unique series in $t$ with coefficients in $\qs(\lambda)$
and constant term $0$ satisfying $U=Z((2+\lambda) xZ+x-1)(1+U)^2$.

An alternative solution is described in~\cite[Sec.~4]{mbm-gessel}.

%===============================================
\subsection{Second weighted model}\label{sec:first10}
%===============================================
In this example, $L(y)$ has 
simple poles at $0$ and at $-1$, while $J(y)$ has a double pole at both
points. Fortunately, eliminating the pole at $0$ also eliminates the
pole at $-1$, and Lemma~\ref{lem:inv}, applied with $\rho=1$, yields
\[ 
J(y)=C_2L(y)^2+C_1L(y)+C_0,
\] 
with
\[ 
C_2=t^2, \qquad C_1=-t-4t^2, \qquad C_0=-2t-4t^2+2t^2S'(0).
\] 
Returning to $Q(0,y)\equiv Q(y)$ gives a quadratic equation of the
form~\eqref{quad1}, involving a single auxiliary series
$A_1=Q(0,0)$:
\beq\label{eqcat-10-1}
{y}^{2}{t}^{2} ( y+1 ) ^{2}  Q (y)
  ^{2}+ ( 2t{y}^{3}-2t{y}^{2}-{y}^{2}-2t+y ) Q
 (y) +2tA_1 +{y}^{2}-y=0.
\eeq
Equation~\eqref{quad2} has \emm two, solutions (one more than
  needed to determine $A_1$!).
One of them  reads $2t+O(t^2)$, the other is $1-2t+O(t^3)$. Both are
double roots of the discriminant of $\Pol$ with respect to its first
variable. This gives for $A_1$ a cubic equation over $\qs(t)$, and $A_1$ admits a rational
expression in terms of the unique series $Z\equiv Z(t)$, with constant term
$0$, satisfying
\beq\label{Z-first10}
Z=t(2+2Z+4Z^2+Z^3).
\eeq
More precisely,
\beq\label{Q00-first10}
Q(0,0)=\frac{Z(4-4Z+Z^3)}{8t}.
\eeq
(In fact, $Z$ is one of the series $Y$
satisfying~\eqref{quad2}.) The series $Q(0,y)$ is quadratic over
$\qs(Z,y)$, as follows from~\eqref{eqcat-10-1}, and admits
a rational expression in  $Z$ and $\sqrt{1-yZ(2+Z)}$. Since the
model is $x/y$-symmetric, this completes its solution. 

As mentioned
in~\cite[Sec.~4]{mbm-gessel}, this model can also be solved using the
``half-orbit'' approach of~\cite[Sec.~6]{BMM-10}.

%===============================================
\subsection{Third weighted model}
%===============================================
This model is obtained by reversing steps of the previous one. In
particular, its $y$-invariant is obtained by replacing $y$ by $1/y$ in
the invariant of the previous model. It has poles at $0$ and $-1$,
while $L(y)$ has a simple pole at $0$ only. We first eliminate the
(double) pole of $J(y)$
 at $-1$, by considering $(L(y)-L(1))J(y)$: this indeed suffices, as
 $L(y)-L(1)$ has a double root at $-1$.  Then we eliminate the
 resulting double  pole at 0, and apply Lemma~\ref{lem:inv} with
 $\rho=1$ to obtain:
\[ %\beq\label{eqcat-second10}
(L(y)-L(-1))J(y)=C_2L(y)^2+C_1L(y)+C_0,
\] %\eeq
where
\[ 
C_2=t^2, \quad C_1=-t(2+5t+tS(0)),\quad
C_0= t ( 5\,t+2 ) S(0) +t ( 1+3\,t ) 
 S'(0) +{\frac { ( 1+3\,t
 )  ( 13\,{t}^{2}+7\,t+1 ) }{t}}.
\] 
Returning to $Q(0,y)$ gives a quadratic equation of the
form~\eqref{quad1}, involving two auxiliary  series $A_1=Q(0,0)$ and
$A_2=Q'_y(0,0)$:
\begin{multline}\label{eqcat-10-2}
y{t}^{3}  ( y+1  ) ^{4}{Q}(y)^{2}+ 
Q(y) \times 
\\
\left( {t}^{2}{y}^{5}+2 {t}^{2}{y}^{4}-{t}^{2}{y}^{3}-
t{y}^{4}-{t}^{2}{y}^{2}-3 t{y}^{3}-11 {t}^{2}y+t{y}^{2}-3 {t}^{2}-4
 ty+{y}^{2}-t -y{t}^{3}  ( y+1
  ) ^{2}{A_1} \right)
\\ -t  ( t{y}^{3}-t{y}^{2}-11 ty-{y}^{2}-3 
t-4 y-1  ) { A_1}+ty  ( 1+3 t  ) {A_2}+{y}^{2}
  ( t{y}^{2}+2 ty-2 t-1  ) 
=0.
\end{multline}
 Two series (in $\sqrt t$)
satisfy~\eqref{quad2}. Hence  the discriminant of
$\Pol$ with respect to its first variable admit two double roots. This
gives a pair of equations satisfied by $A_1$ and $A_2$, and finally
both series  turn out to be 
cubic over $\qs(t)$. Moreover, they have rational expressions in terms of the
series $Z$ defined by~\eqref{Z-first10}. Of course, $A_1=Q(0,0)$ is
still given by~\eqref{Q00-first10}, since reversing steps does not
change the excursion \gf. For $A_2$, we find
\[ 
A_2=\frac{Z^2(Z+2)(Z^5+28Z^4+4Z^3-56Z^2+32)}{{256} \, t(1+Z)^2}.
\] 
Returning to~\eqref{eqcat-10-2} shows that $Q(0,y)$ is quadratic
over $\qs(y,Z)$, and can be expressed rationally in terms of $y, Z$
and $\sqrt{{4}-4(y-2)Z+(4-8y+y^2)Z^2-6yZ^3-yZ^4}$.

As mentioned
in~\cite[Sec.~4]{mbm-gessel}, this model can also be solved using the
``half-orbit'' approach of~\cite[Sec.~6]{BMM-10}.

%===============================================
\subsection{Fourth (and last) weighted model}
%===============================================
This model,  which has never been solved so far, differs from the one
of Section~\ref{sec:first10} by a reflection in a vertical line. Hence
it has the same $y$-invariant, but
the $x/y$-symmetry is lost. 
%Its $y$-invariant is the same as for the model of
%Section~\ref{sec:first10}. 
The $y$-invariant has double poles at $0$ and
$-1$, while $L(y)$ only has a simple pole at $0$. We first eliminate the
pole of $J(y)$ at $-1$ by considering $(L(y)-L(-1))J(y)$ (again, this
is sufficient since $-1$ is a double root of $L(y)-L(-1)$). Then we
eliminate the pole at $0$, apply Lemma~\ref{lem:inv} with $\rho=1$,
and obtain:
\beq\label{eqcat-third10}
(L(y)-L(-1))J(y)=C_3L(y)^3+C_2L(y) ^2+C_1L(y)+C_0,
\eeq
where
\[ 
C_3=t^2, \qquad C_2=-t-2t^2S(0), \qquad C_1= -3t(1+3t)+t(1-2t)S(0)+t^2S(0)^2-2t^2S'(0),
\] 
and
\[ 
C_0=-1-7t-17t^2+2t(1+2t)S(0)+2t^2S(0)^2+t(1-2t)S'(0)-t^2S''(0).
\] 
Returning to $Q(0,y)\equiv Q(y)$, this gives a cubic equation of the
form~\eqref{quad1}, involving no less than three auxiliary series,
namely $A_1=Q(0)$, $A_2=Q'(0)$ and $A_3=Q''(0)$, the
derivatives being still taken with respect to $y$.

As in Gessel's case, we find that three series $Y$
cancel~\eqref{quad2}. One of them is a series in $t$, namely
$Y_\circ=2t+4t^2+24t^3+O(t^4)$, and the other two are series in $s:=\sqrt
t$, namely $Y_+=s+s^2+7s ^5/2+O(s^6)$ and
$Y_-=-s+s^2-7s ^5/2+O(s^6)$. Here there is no obvious guess for their
exact values. However, upon eliminating $A_2$ and
$A_3$ between~\eqref{quad1},~\eqref{quad2} and~\eqref{quad3}, we find
that each series $Y$ must satisfy:
\[ 
Y ( Y+1 )  ( 3t{Y}^{3}-3\,t{Y}^{2}+{Y}^{3
}+tY-{Y}^{2}+t )  ( t{Y}^{3}+4t{Y}^{2}+2tY+2t-Y)  ( tYQ(Y)+1 ) =0.
\] 
Using the first few coefficients of the three series $Y$, we conclude that
$P_0(Y_\circ)=0$ and $P_1(Y_+)=P_1(Y_-)=0$, with 
\[ 
P_0(y)=t{y}^{3}+4t{y}^{2}+2ty+2t-y \qquad\hbox{and}\qquad P_1(y)=3t{y}^{3}-3t{y}^{2}+{y}^{3
}+ty-{y}^{2}+t.
\] 
In particular, the three series $Y$ are cubic.
Following the approach
of~\cite[Sec.~7]{mbm-jehanne}, we conclude that the discriminant of $\Pol$
with respect to its first variable, evaluated at $A_1, A_2, A_3, t,y$, has
three double roots in $y$, namely $Y_\circ, Y_+$ and $Y_-$.

To get a clearer view of what happens, we first note that
$P_1(y)=y^3P_0(-1+1/y)$. Hence the three roots of $P_0$ are $Y_\circ$,
$-1+1/Y_+$ and $-1+1/Y_-$. Equivalently, $P_0(y)=(1+y)^3P_1(1/(1+y))$,
and the three roots of $P_1$ are $Y_+, Y_-$ and $1/(1+Y_\circ)$. 

Up to powers of $t$, $y$ and $y+1$, the discriminant of  $\Pol$
with respect to its first variable, evaluated at $A_1, A_2, A_3, t,y$,
is a polynomial in $t$, $A_1, A_2, A_3, y$, of degree $15$ in $y$, with
dominant coefficient $-4t^3(1+3t)^3$. We denote it by $\Delta(y)$.
Then the polynomial
\beq\label{pol-diff}
(y-1)^6\Delta(y)-y^{18}(y+1)^6 \Delta(-1+1/y)
\eeq
has a factor $P_1(y)^2$. Specializing this identity at $y=Y_+$ shows
that $-1+1/Y_+$ is also a root of $\Delta(y)$. Of course, the same
holds for $-1+1/Y_-$, so that finally \emm all, roots of $P_0(y)$ cancel
$\Delta(y)$. Moreover, since the above polynomial~\eqref{pol-diff} admits $P_1(y)^2$ as
a factor,  $-1+1/Y_+$ and $-1+1/Y_-$ are in fact \emm
double, roots of $\Delta(y)$. This means that $\Delta(y)$ has a factor
$P_0(y)^2$. 

Replacing $y$ by $1/(1+y)$ in~\eqref{pol-diff} shows that 
\[ 
y^6(y+1)^{18}\Delta(1/(1+y))-(y+2)^6 \Delta(y)
\] 
admits $P_0(y)^2$ as a factor. Using the same argument as before, we
conclude that $\Delta(y)$ is also divisible by $P_1(y)^2$. 

We have now proved that $\Delta(y)$ is divisible by
$P_0(y)^2P_1(y)^2$. Performing the Euclidean division of $\Delta(y)$
by $P_0(y)^2P_1(y)^2$ yields a
remainder of degree 11 in $y$, and all its coefficients (which are
polynomials in $t$ and the $A_i$'s) must vanish. By performing
eliminations between three of them (we have chosen the coefficients of
$y^{11}, y^1$ and $y^0$), we find that the three
series $A_i$ have  degree~$8$, and in fact belong to the
same extension of degree~8 of $\qs(t)$. In
particular, $A_1=Q(0,0)$ reads
\[ 
A_1= \frac{-1-6t+\sqrt{Z}}{2t^2},
\]           
where $Z=1+12t+40t^2+O(t ^3)$ satisfies a quartic equation:
\begin{multline*}
   27\,{Z}^{4}- 
18 \left( 10000\,{t}^{4}+9000\,{t}^{3}+2600\,{t}^{2}+240\,t+1 \right)
{Z}^{2}
\\
+8 \left( 10\,{t}^{2}+6\,t+1 \right) 
 \left( 102500\,{t}^{4}+73500\,{t}^{3}+14650\,{t}^{2}+510\,t-1
 \right) Z\\
= \left( 10000\,{t}^{4}+9000\,{t}^{3}+2600\,{t}^{2}+240\,t+1
 \right) ^{2}.
\end{multline*}
This equation has genus 1, so there will not be any rational
parametrization in this case. The Galois group of the above polynomial
is the symmetric group on four elements, hence there is no extension
of order $2$ between $\qs(t)$ and $\qs(t,Z).$

Once $A_1, A_2$ and $A_3$ are determined, one returns
to~\eqref{eqcat-third10}. Expressing  the series $A_i$ as rational
functions in $t$ and $A_1$ shows that $Q(0,y)$ is cubic over
$\qs(t,y,A_1)$, and by eliminating $A_1$, one finds that it has degree
$24$ over $\qs(t,y)$.

It remains to get back to $Q(x,0)$, which can be done using the
equation $R(x)+S(Y_0)=xY_0+R(0)$, where $Y_0$ is the root of the kernel
that is a power series in $t$.  It is found that $Q(x,0)$ has degree $3$
over $\qs(t,x,A_1)$, and degree $24$ over $\qs(t,x)$.

\end{document}